\newtheorem{thm}[subsubsection]{Theorem}
\newtheorem{lem}[subsubsection]{Lemma}
\newtheorem{prop}[subsubsection]{Proposition}
\newtheorem{cor}[subsubsection]{Corollary}
\theoremstyle{remark}
\newtheorem{rem}[subsubsection]{Remark}
\theoremstyle{definition}
\newtheorem{defn}[subsubsection]{Definition}
\newtheorem{eg}[subsubsection]{Example}
\numberwithin{equation}{subsubsection}
\nc\twist[2]{{_{#1} #2}}
\begin{document}
\title{The moduli stack of $G$-bundles}
\author{Jonathan Wang}
\maketitle


\tableofcontents 

\section{Introduction} 
Let $X$ be a smooth projective curve over an algebraically closed field $k$ of 
characteristic $0$. Then the moduli stack $\Bun_r$ is defined to be the $k$-stack
whose fiber over a $k$-scheme $T$ consists of the groupoid of vector bundles, or equivalently
locally free sheaves, of rank $r$ on $X \xt T$. 
The geometric Langlands program studies the correspondence between $\cD$-modules 
on $\Bun_r$ and local systems on $X$, and this correspondence has deep connections to
both number theory and quantum physics (see \cite{Frenkel-gauge}). 
It is therefore important to understand 
the moduli problem of vector bundles on families of curves. 
As vector bundles of rank $r$ correspond to $\GL_r$-bundles, we naturally extend our
study to $G$-bundles for an algebraic group $G$. 
Since $G$-bundles may have very large automorphism groups, the natural setting
for understanding their moduli problem is the moduli stack 
(as opposed to the coarse moduli space). 

Let $X \to S$ be a morphism of schemes over an arbitrary 
base field $k$. We focus our attention on the $S$-stack $\Bun_G$ whose fiber over an $S$-scheme 
$T$ consists of the groupoid of principal $G$-bundles over $X \xt_S T$. Artin's 
notion of an algebraic stack provides a nice framework for our study of $\Bun_G$. 
The goal of this paper is to give an expository account 
of the geometric properties of the moduli stack $\Bun_G$. The main theorem concerning 
$\Bun_G$ is the following:

\begin{thm} \label{BunG-alg} 
For a flat, finitely presented, projective morphism 
$X \to S$, the $S$-stack $\Bun_G$ is an algebraic stack 
locally of finite presentation over $S$, with a schematic, affine diagonal of finite presentation. 
Additionally, $\Bun_G$ admits an open covering by algebraic substacks of finite presentation
over $S$. 
\end{thm}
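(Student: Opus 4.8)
The plan is to reduce the general case to that of $\GL_n$ via a faithful representation and to treat $\GL_n$ by hand with Quot schemes. First I would dispatch the diagonal. Given $G$-bundles $P_1,P_2$ over $X\xt_S T$, I would note that $\operatorname{Isom}_G(P_1,P_2)$ is affine of finite presentation over $X\xt_S T$ (being étale-locally $G\xt(X\xt_S T)$), and identify the diagonal of $\Bun_G$ with its functor of sections, that is, the Weil restriction $\operatorname{Res}_{(X\xt_S T)/T}\operatorname{Isom}_G(P_1,P_2)$. Because $X\to S$ is flat, projective and finitely presented and the target is affine over $X\xt_S T$, this Weil restriction is representable by an affine $T$-scheme of finite presentation (FGA), which gives the schematic, affine, finitely presented diagonal at once.

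Next I would establish the $\GL_n$ case directly. Fixing a relatively ample $\mathcal{O}(1)$ and writing $p\colon X\xt_S T\to T$, relative Serre vanishing produces, for any rank-$n$ bundle $E$ and $m\gg 0$, a presentation as a surjection $\mathcal{O}(-m)^{\oplus N}\to E$ with $N$ the locally constant rank of $p_*E(m)$. I would let $\Bun_{\GL_n}^{\le m}\subseteq\Bun_{\GL_n}$ be the open substack where this holds with $N$ fixed, and identify it with the quotient stack $[\operatorname{Quot}^\circ/\GL_N]$, where $\operatorname{Quot}^\circ$ is the open locus of the relative Quot scheme of $\mathcal{O}(-m)^{\oplus N}$ parametrizing locally free rank-$n$ quotients that induce an isomorphism on $p_*(-)(m)$. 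Since relative Quot schemes are projective over $S$, each $\Bun_{\GL_n}^{\le m}$ is then algebraic of finite presentation over $S$, and their union is $\Bun_{\GL_n}$; this yields algebraicity, local finite presentation, and the desired open cover in the $\GL_n$ case.

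To pass to general $G$, I would choose a closed embedding $G\hookrightarrow\GL_n$ and invoke Chevalley's theorem to realize $\GL_n/G$ as a locally closed subvariety of some $\mathbf{P}(V)$. Extension of structure group gives $r\colon\Bun_G\to\Bun_{\GL_n}$, $P\mapsto P\xt^{G}\GL_n$, whose fiber over a bundle $E$ is the functor of sections of the associated bundle $E/G\to X\xt_S T$; the latter embeds in $\mathbf{P}(E\xt^{\GL_n}V)$ and is therefore quasi-projective over $X\xt_S T$, so its section functor is representable by a scheme locally of finite presentation over $T$, namely an open subfunctor of the relative Hilbert scheme cut out by the graph condition. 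Thus $r$ is representable by schemes and locally of finite presentation, so pulling back a smooth atlas of $\Bun_{\GL_n}$ along $r$ furnishes a smooth atlas of $\Bun_G$; together with the diagonal from the first paragraph, this shows $\Bun_G$ is algebraic and locally of finite presentation over $S$. For the cover I would take the open substacks $\Bun_G^{\le m,h}\subseteq r^{-1}(\Bun_{\GL_n}^{\le m})$ on which the reduction section has Hilbert polynomial $h$: since the components of the relative Hilbert scheme are open and of finite type, each such substack is open in $\Bun_G$ and of finite presentation over $S$, and together they cover $\Bun_G$.

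The main obstacle is the boundedness underlying the second paragraph: one must choose $m$ and $N=h^0(E(m))$ uniformly over flat families, which is exactly where flatness and finite presentation of $X\to S$ together with relative Serre vanishing are indispensable, and one must verify the quotient-stack presentation $\Bun_{\GL_n}^{\le m}\cong[\operatorname{Quot}^\circ/\GL_N]$ in detail. The representability of the Weil restriction and of the section functor in the first and third paragraphs is standard but likewise rests essentially on the projectivity of $X\to S$.
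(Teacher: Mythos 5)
Your proposal follows essentially the same route as the paper: the diagonal is treated by representing sections of the affine $\Isom$-scheme over $X_T$ (the paper's Lemma~\ref{sect-affine}, fed through Lemmas~\ref{BG-alg} and \ref{morphism-hom}); the $\GL_r$ case by exhibiting the open substacks $\cU_n^\Phi$ as quotient stacks $[Y_n^\Phi/\GL_{\Phi(n)}]$ of open subschemes of Quot schemes (Lemmas~\ref{W-rep}--\ref{Y_n-U_n}); and the general case by a closed embedding $G \into \GL_r$, with the fibers of $\Bun_G \to \Bun_r$ realized as section schemes of the quasi-projective associated bundle, open inside a Hilbert scheme stratified by Hilbert polynomials (Lemma~\ref{BH-BG}, Theorem~\ref{sect-general}, Corollary~\ref{BunG-cog}). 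The one point of bookkeeping to tighten is that finite presentation of the $\GL_r$ charts requires fixing the full Hilbert polynomial $\Phi$, not merely $N = h^0(E(m))$, exactly as the paper does with $\cU_n^\Phi$.
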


The proof that $\Bun_G$ is algebraic has been known to experts for some time, 
and the result follows from \cite{Olsson-Hom-stack} (see also \cite{Behrend-thesis},
\cite{Broshi}, and \cite{Sorger}). To our knowledge, 
however, there is no complete account in the literature of the proof we present, which 
gives more specific information for the case of an algebraic group $G$. 

We now summarize the contents of this paper. In \S\ref{section:quotient-stacks}, we introduce
the stack quotient $[Z/G]$ of a $k$-scheme $Z$ by a $G$-action, which is a fundamental object for
what follows. We show that these quotients are algebraic stacks, and we discuss 
the relations between two different quotients. In order to study $\Bun_G$, we 
prove several properties concerning Hom stacks in \S\ref{section:hom-stacks}. 
The key result in this section is that the sheaf of sections of a quasi-projective 
morphism $Y \to X$ is representable over $S$. Theorem~\ref{BunG-alg} is proved in 
\S\ref{section:Bun_G-presentation}. In \S\ref{section:level-structure},
we define a level structure on $\Bun_G$ using nilpotent thickenings of an $S$-point of
$X$. This provides an alternative presentation of a quasi-compact open substack of
$\Bun_G$. Lastly in \S\ref{section:smoothness}, we prove that $\Bun_G$ is smooth over $S$ 
if $G$ is smooth over $k$ and $X \to S$ has fibers of dimension $1$.

\subsection{Acknowledgments} 
I would like to thank my advisor, Dennis Gaitsgory, for introducing and teaching 
this beautiful subject to me. My understanding of what is written here results from
his invaluable support and guidance. I also wish to thank Thanos D.~Papa\"ioannou
and Simon Schieder for many fruitful discussions.  

\subsection{Notation and terminology} 
We fix a field $k$, and we will mainly work in the category $\Sch_{/k}$ of schemes over $k$. 
For two $k$-schemes $X$ and $Y$, we will use $X \xt Y$ to denote $X \xt_{\spec k} Y$. 
We will consider a scheme as both a geometric object and a sheaf 
on the site $(\Sch)_{\textrm{fpqc}}$ without distinguishing the two. 
For $k$-schemes $X$ and $S$, we denote $X(S) = \Hom_k(S,X)$. For two
morphisms of schemes $X \to S$ and $Y \to S$, we use $\pr_1 : X \xt_S Y \to X$
and $\pr_2 : X \xt_S Y \to Y$ to denote the projection morphisms when there is no
ambiguity. Suppose we have a morphism of schemes $p:X \to S$ and an $\cO_X$-module $\cF$ 
on $X$. For a morphism $T \to S$, we will use the notation $X_T = X \xt_S T$
and $p_T = \pr_2 : X_T \to T$, which will be clear from the context. 
In the same manner, we denote $\cF_T = \pr_1^*\cF$ 
(this notation also applies when $X=S$). For a point $s \in S$, we let $\kappa(s)$ 
equal the residue field. Then using the previous notation, we call the fiber 
$X_s = X_{\spec \kappa(s)}$ and $\cF_s = \cF_{\spec \kappa(s)}$.
For a locally free $\cO_X$-module $\cE$ of finite rank, 
we denote the dual module $\cE^\vee = \cHom_{\cO_X}(\cE,\cO_X)$, which is also
locally free of finite rank. 

We will use Grothendieck's definitions of quasi-projective \cite[5.3]{EGA2} 
and projective \cite[5.5]{EGA2} morphisms. That is, a morphism $X \to S$ of schemes
is quasi-projective if it is of finite type and there exists a relatively ample
invertible sheaf on $X$. A morphism $X \to S$ is projective if there exists
a quasi-coherent $\cO_S$-module $\cE$ of finite type such that $X$ is $S$-isomorphic to
a closed subscheme of $\bP(\cE)$.
We say that $X \to S$ is locally quasi-projective (resp.~locally projective) 
if there exists an open covering $(U_i \subset S)$ such that the morphisms $X_{U_i} \to U_i$ 
are quasi-projective (resp.~projective). 
Following \cite{AK80}, a morphism $X \to S$
is strongly quasi-projective (resp.~projective) if it is finitely presented 
and there exists a locally free $\cO_S$-module $\cE$ of constant finite rank such that
$X$ is $S$-isomorphic to a retrocompact (resp.~closed) subscheme of $\bP(\cE)$. 

We will use $G$ to denote an algebraic group over $k$, by which we mean 
an affine group scheme of finite type over $k$ (note that we do not require $G$
to be reduced). For a $k$-scheme $S$, we say that a sheaf $\cP$ on $(\Sch_{/S})_{\textrm{fpqc}}$
is a right $G$-bundle over $S$ if it is a right $G|_S$-torsor 
(see \cite{De-Ga}, \cite{Giraud} for basic
facts on torsors). Since affine morphism of schemes are effective
under fpqc descent \cite[Theorem 4.33]{FGA-explained}, the $G$-bundle $\cP$ is 
representable by a scheme affine over $S$. 
As $G \to \spec k$ is fppf, $\cP$ is in fact locally trivial in the fppf topology. 
If $G$ is smooth over $k$, then $\cP$ is 
locally trivial in the \'etale topology. 

For a sheaf of groups $\cG$ on a site $\cC$ and $S \in \cC$, we will use
$g \in \cG(S)$ to also denote the automorphism of $\cG|_S$ defined by left multiplication
by $g$. This correspondence gives an isomorphism of sheaves $\cG \simeq \Isom(\cG,\cG)$,
where the latter consists of right $\cG$-equivariant morphisms.

We will use pseudo-functors, instead of fibered categories, in our formal setup of
descent theory. The reasoning is that this makes our definitions more intuitive, and
pseudo-functors are the $2$-category theoretic analogue of presheaves of sets. 
See \cite[3.1.2]{FGA-explained} for the correspondence between pseudo-functors and
fibered categories. By a stack we mean a pseudo-functor satisfying the usual descent
conditions. We will implicitly use the $2$-Yoneda lemma, i.e., for a pseudo-functor 
$\cY$ and a scheme $S$, an element of $\cY(S)$ corresponds to a morphism $S \to \cY$.
We say that a morphism $\cX \to \cY$ of pseudo-functors is representable (resp.~schematic)
if for any scheme $S$ mapping to $\cY$, the $2$-fibered product $\cX \xt_\cY S$
is isomorphic to an algebraic space (resp.~a scheme). 

We use the definition of an algebraic stack given in \cite{stacks-project}:
an algebraic stack $\cX$ over a scheme $S$ is a stack in groupoids on 
$(\Sch_{/S})_{\textrm{fppf}}$ such that the diagonal $\cX \to \cX \xt_S \cX$ is
representable and there exists a scheme $U$ and a smooth surjective morphism $U \to \cX$.
We call such a morphism $U \to \cX$ a presentation of $\cX$.
Note that this definition is weaker than the one in \cite{LMB} as there are less 
conditions on the diagonal. We will use the definitions of properties of 
algebraic stacks and properties of morphisms between algebraic stacks from 
\cite{LMB} and \cite{stacks-project}.

\section{Quotient stacks} \label{section:quotient-stacks}
In this section, fix a $k$-scheme $Z$ with a right $G$-action $\alpha : Z \xt G \to Z$.
All schemes mentioned will be $k$-schemes. 

\begin{defn} \label{defn:quotient-stack} 
The stack quotient $[Z/G]$ is the pseudo-functor $(\Sch_{/k})^\op \to \Gpd$ with
\[ 
[Z/G](S) = \{ \mbox{Right $G$-bundle $\cP \to S$ and $G$-equivariant morphism $\cP \to Z$} \} 
\]
where a morphism from $\cP \to Z$ to $\cP'\to Z$ is a $G$-equivariant morphism 
$\cP \to \cP'$ over $S \xt Z$. 

For $\cdot = \spec k$ with the trivial $G$-action, we call $BG = [\cdot/G]$. 
\end{defn}

\noindent
By considering all of our objects as sheaves on $(\Sch_{/k})_{\textrm{fpqc}}$, we observe 
that $[Z/G]$ is an fpqc stack. The main result of this section is that $[Z/G]$ is
an algebraic stack:

\begin{thm}\label{Z/G-alg} 
The $k$-stack $[Z/G]$ is an algebraic stack with a schematic, separated diagonal. 
If $Z$ is quasi-separated, then the diagonal $\Delta_{[Z/G]}$ is quasi-compact. 
If $Z$ is separated, then $\Delta_{[Z/G]}$ is affine.  
\end{thm}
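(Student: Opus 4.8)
The stack $[Z/G]$ is already an fpqc (hence fppf) stack in groupoids, as noted after Definition~\ref{defn:quotient-stack}, so the plan is to verify the two remaining inputs to the definition of \cite{stacks-project}: that the diagonal is representable, in fact schematic, and that there is a suitable atlas. I would treat the atlas first, as it is the more formal of the two, and then concentrate on the diagonal, where the hypotheses on $Z$ enter.

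For the atlas, consider the canonical morphism $\pi : Z \to [Z/G]$ sending a $T$-point $z \in Z(T)$ to the trivial bundle $G \xt T$ equipped with the $G$-equivariant map $(g,t)\mapsto \alpha(z(t),g)$. For any $T \to [Z/G]$ classifying a pair $(\cP,f)$, one checks directly that $Z \xt_{[Z/G]} T$ represents the functor of trivializations of $\cP$, so that $Z \xt_{[Z/G]} T \simeq \cP$. Since each $\cP$ is affine over $T$ and, being a $G$-bundle with $G \to \spec k$ fppf, is faithfully flat and of finite presentation over $T$, the morphism $\pi$ is representable, affine, surjective, flat and locally of finite presentation. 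If $G$ is smooth over $k$ then $\pi$ is smooth and is a presentation directly; in general I would invoke the criterion of \cite{stacks-project} that a stack with representable diagonal admitting a flat, locally finitely presented, surjective morphism from a scheme is algebraic. Alternatively, choosing a closed embedding $G \hookrightarrow \GL_n$ and using the comparison of quotient stacks to rewrite $[Z/G]$ as a quotient by the smooth group $\GL_n$ produces an honest smooth atlas.

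For the diagonal, fix a scheme $T$ with two objects $(\cP_1,f_1)$ and $(\cP_2,f_2)$, so that $T \xt_{[Z/G]\xt_k[Z/G]}[Z/G]$ is the sheaf $\Isom_T\big((\cP_1,f_1),(\cP_2,f_2)\big)$ of $G$-equivariant isomorphisms commuting with the maps to $Z$. The key computation is to realize this as a quotient: set $W = (\cP_1 \xt_T \cP_2)\xt_{Z\xt_k Z,\,\Delta_Z} Z$, a $G$-stable subscheme of the $T$-affine scheme $\cP_1 \xt_T \cP_2$ (cut out by the agreement of $f_1$ and $f_2$ along the two factors), and identify the Isom sheaf with $W/G$. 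Since $\cP_1 \xt_T \cP_2 \to T$ is a $G$-torsor, the inclusion $W \hookrightarrow \cP_1 \xt_T \cP_2$ descends along this fppf torsor to an immersion $W/G \hookrightarrow (\cP_1\xt_T\cP_2)/G$ into a scheme affine over $T$; hence $W/G$ is a scheme and the diagonal is schematic and separated. Tracking the nature of $\Delta_Z$ then yields the refinements: $\Delta_Z$ is always an immersion, it is quasi-compact exactly when $Z$ is quasi-separated, and a closed immersion when $Z$ is separated, and these properties pass to $W \hookrightarrow \cP_1\xt_T\cP_2$, descend to $W/G \hookrightarrow (\cP_1\xt_T\cP_2)/G$, and compose with the affine structure map to $T$ to give respectively quasi-compactness and affineness of $\Delta_{[Z/G]}$.

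The main obstacle is this diagonal analysis, specifically the passage from the trivialized, fppf-local picture to a global one: one must check that $(\cP_1\xt_T\cP_2)/G$ and $W/G$ are genuine schemes rather than mere algebraic spaces, which I would do via fppf descent of affine morphisms and of (closed) immersions, and one must verify that the descended morphisms inherit the immersion, quasi-compactness, and affineness properties of $\Delta_Z$. Once this is in place, the atlas and the cited criterion assemble the three assertions with little extra work.
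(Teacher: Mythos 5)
Your proposal is correct, but it takes a more hands-on route than the paper. The paper reduces everything to $BG$: it first proves $BG$ is algebraic via Artin's theorem (\cite[Th\'eor\`eme 10.1]{LMB}), using the identification $\Isom_{BG(S)}(\cP,\cP') \simeq \twist{\cP \xt_S \cP'}{(G)}$, then transports an atlas to $[Z/G]$ along the representable change-of-space morphism $f : [Z/G] \to BG$ (taking a scheme \'etale over the algebraic space $U \xt_{BG} [Z/G]$), and finally reads off the diagonal by factoring $\Delta_{[Z/G]}$ as $\Delta_f$ followed by a base change of the affine $\Delta_{BG}$, converting (quasi-)separatedness of $f$ into properties of $\Delta_f$ via cited lemmas. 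You instead take the atlas $Z \to [Z/G]$ directly (your identification $Z \xt_{[Z/G]} T \simeq \cP$ is the paper's Corollary~\ref{Z-Z/G}) together with the same Artin-type criterion, and you compute the Isom sheaf by hand as $W/G \into (\cP_1 \xt_T \cP_2)/G$. Your computation is in fact the paper's factorization made concrete: $(\cP_1 \xt_T \cP_2)/G$ for the diagonal action is exactly $\twist{\cP_1\xt_T\cP_2}{(G)}$, the fiber of $\Delta_{BG}$, and $W/G$ is the fiber of $\Delta_f$. What your version buys is a self-contained scheme-level argument that even yields slightly more in the general case (the diagonal is an immersion into a scheme affine over $T$, a fortiori separated), while the paper's version is more formal and reuses its change-of-space lemmas. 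Three points to tighten: first, $\cP_1 \xt_T \cP_2 \to T$ is a $G \xt G$-torsor, not a $G$-torsor; the torsor along which you descend is $\cP_1 \xt_T \cP_2 \to (\cP_1 \xt_T \cP_2)/G$ for the diagonal action, which is what your argument actually uses. Second, the deferred descent of possibly non-quasi-compact immersions is genuinely fine here, but only because the covering is fppf and hence \emph{open}: openness ensures that the locally closed image and the maximal open subscheme in which it is closed both descend, after which effectivity follows from descent of closed (hence affine) immersions; for a merely fpqc covering this step would be problematic. Third, your parenthetical alternative of rewriting $[Z/G]$ as a $\GL_n$-quotient requires the contracted product $(Z \xt \GL_n)/G$ to be a scheme, which can fail for arbitrary $Z$ (descent along the $G$-torsor $\GL_n \to G\bs\GL_n$ need not be effective); your main route avoids this, and note that for non-smooth $G$ the paper likewise cannot avoid Artin's theorem, which it applies to $BG$.
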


\begin{rem} \label{rem:general-group-scheme}
In fact, the proof will show that Theorem~\ref{Z/G-alg} holds 
if $G$ is a group scheme affine, flat, and of finite presentation over a base scheme $S$ 
and $Z$ is an $S$-scheme with a right $G$-action, in which case $[Z/G]$ is an algebraic 
$S$-stack. We choose to work over the base $\spec k$ in this section because some results 
in \S\ref{section:cog} do not hold in greater generality.
\end{rem}

\subsection{Characterizing $[Z/G]$} 
It will be useful in the future to know when a stack satisfying some properties is 
isomorphic to $[Z/G]$. In particular, we show the following (the conditions
for a morphism from a scheme to a stack to be a $G$-bundle will be made precise later). 

\begin{lem}\label{Z/G=X}
Suppose $\cY$ is a $k$-stack and $\sigma_0 : Z \to \cY$ is a $G$-bundle. 
Then there is an isomorphism $\cY \to [Z/G]$ of stacks such that the triangle
\[ \xymatrix{ & Z \ar[ld]_{\sigma_0} \ar[rd]^{\alpha : Z \xt G \to Z} 
\\ \cY \ar[rr]^-\sim && [Z/G] } \]
is $2$-commutative.
\end{lem}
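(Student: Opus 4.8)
The plan is to construct a morphism $F \colon \cY \to [Z/G]$ by taking fiber products over $\cY$, to produce its quasi-inverse by descent along $G$-bundles, and finally to check $2$-commutativity of the triangle. Because $\sigma_0 \colon Z \to \cY$ is a $G$-bundle, for every $k$-scheme $T$ and every object $y \in \cY(T)$ (equivalently a morphism $T \to \cY$) the fiber product $\cP_y := Z \xt_\cY T$ is representable, the projection $\cP_y \to T$ is a right $G$-bundle, and the other projection $\cP_y \to Z$ is $G$-equivariant. I would set $F(y) = (\cP_y \to T,\ \cP_y \to Z) \in [Z/G](T)$; isomorphisms in $\cY(T)$ and base change along $T' \to T$ are transported by the functoriality of the fiber product, which supplies the coherence data making $F$ a morphism of stacks.

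For the quasi-inverse I would, given $(\cP \to T, f \colon \cP \to Z) \in [Z/G](T)$, descend the composite $\sigma_0 \circ f \colon \cP \to \cY$ along the cover $\cP \to T$. Since $\cP \to T$ is a $G$-torsor, the map $(\pr_\cP, \alpha_\cP) \colon \cP \xt G \to \cP \xt_T \cP$ is an isomorphism, and under it the two projections $\cP \xt_T \cP \rightrightarrows \cP$ become $\pr_\cP$ and the action $\alpha_\cP$. Using that $f$ is $G$-equivariant together with the canonical $2$-isomorphism $\sigma_0 \circ \alpha \simeq \sigma_0 \circ \pr_Z$ expressing that the $G$-action on $Z$ lies over $\cY$, I obtain a descent datum on $\sigma_0 \circ f$, whose cocycle condition reduces to associativity of the action. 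As $\cY$ is a stack and $\cP \to T$ is an fppf cover, this descends to an essentially unique $y \in \cY(T)$, and I define $H(\cP, f) = y$.

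It then remains to compare the composites. For $F \circ H$, the defining $2$-isomorphism $\sigma_0 \circ f \simeq y \circ (\cP \to T)$ induces, by the universal property of the fiber product, a $G$-equivariant $T$-morphism $\cP \to \cP_y = Z \xt_\cY T$, which is automatically an isomorphism since any $G$-equivariant morphism of $G$-torsors over $T$ is one. For $H \circ F$, the tautological descent datum attached to $\cP_y \to T$ together with uniqueness of descent yields $H(F(y)) \simeq y$. To check the triangle, I would evaluate $F$ on the universal point: $F(\sigma_0)$ is given by $Z \xt_\cY Z$ with its two projections, and the torsor isomorphism $Z \xt_\cY Z \simeq Z \xt G$ identifies this with the canonical atlas $Z \to [Z/G]$ sending a $T$-point $z$ to the trivial bundle $G|_T$ with equivariant map $\alpha(z \circ \pr_T, -) \colon G|_T \to Z$, which is exactly the arrow $\alpha$ labelling the right edge.

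The main obstacle I anticipate is the bookkeeping in the descent step. One must pin down the precise sense in which $\sigma_0$ is a $G$-bundle (so that $Z \xt_\cY Z \simeq Z \xt G$ and $\cP \to T$ is genuinely an fppf cover), track the various $2$-isomorphisms carefully so that the descent datum really satisfies the cocycle identity, and confirm that the topology in which $\cY$ is a stack is fine enough to permit descent along $G$-bundles (fppf suffices, since $G$-bundles are fppf-locally trivial). Once these $2$-categorical compatibilities are in place, the remaining verifications are formal.
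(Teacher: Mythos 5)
Your proposal is correct, and its first half coincides with the paper's argument: your $F(y) = (Z \xt_\cY T \to T,\ Z \xt_\cY T \to Z)$ is exactly the paper's morphism $F$, and your verification of the triangle via the torsor isomorphism $(\alpha,\pr_1) : Z \xt G \to Z \xt_\cY Z$ is precisely the paper's Step 1 (see Remark~\ref{atlas-Gsheaf}). Where you genuinely diverge is in proving that $F$ is an isomorphism. The paper never constructs a quasi-inverse: it checks full faithfulness directly, by trivializing both bundles over an fppf cover and using the section calculus of Lemma~\ref{Z/G-triv} to produce an inverse on Isom sheaves, and then verifies essential surjectivity only fppf-locally, concluding by \cite[Lemma 046N]{stacks-project}. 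You instead build a global quasi-inverse $H$ by descending $f^*\sigma_0$ along the fppf cover $\cP \to T$, with descent datum pulled back from $\rho : \pr_1^*\sigma_0 \to \alpha^*\sigma_0$ along $f \xt \id_G$ under the identification $\cP \xt G \simeq \cP \xt_T \cP$; the cocycle identity is then exactly the associativity axiom \ref{sect:G-invariant}(2) on $\rho$ --- note that it is this coherence datum, and not mere associativity of the $G$-action on $Z$, that is needed, which is the one place where your phrase ``associativity of the action'' should be sharpened. Your route buys a self-contained, constructive proof that makes explicit where the stack condition on $\cY$ enters (object descent along $\cP \to T$, legitimate since $G$-bundles are fppf covers, as you correctly flag); it costs the heavier $2$-categorical bookkeeping you anticipate: $H$ must be made functorial on morphisms of $[Z/G](T)$ (automatic from the equivalence between $\cY(T)$ and descent data along $\cP \to T$), your $G$-equivariant comparison map $\cP \to Z \xt_\cY T$ must also be checked to be a morphism over $Z$ (it is, since its first component is $f$), and for $H \circ F \simeq \id$ the tautological isomorphism over $Z \xt_\cY T$ must be shown compatible with the descent datum, which uses precisely the formula $(a,b,\phi).g = (a.g,\, b,\, \phi \circ \rho_{a,g}^{-1})$ for the induced action from \S\ref{sect:G-invariant}. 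The paper's route, by contrast, only ever needs descent of morphisms (the sheaf property of Isom) together with local triviality, outsourcing the object-descent mechanism to the cited Stacks Project lemma; both proofs ultimately rest on the same two ingredients, the coherence of $\rho$ and the fact that a $G$-equivariant morphism of torsors, in particular a torsor with a section mapping to another torsor, is an isomorphism.
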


\begin{rem}\label{P/G}
Lemma~\ref{Z/G=X} in particular shows that if $X$ is a scheme and
$Z \to X$ is a $G$-bundle over $X$, then $X \simeq [Z/G]$. 
Therefore these two notions of a quotient, 
as a scheme and as a stack, coincide. 
\end{rem}

The rest of this subsection will be slightly technical. 
We first define the notion of a $G$-invariant morphism to a stack,
which then naturally leads to the definition of a morphism being a $G$-bundle. 
We then introduce two lemmas, which we use to prove Lemma~\ref{Z/G=X}.

\subsubsection{$G$-invariant morphisms to a stack} \label{sect:G-invariant}
Let $\cY$ be a $k$-stack and $\sigma_0 : Z \to \cY$ a morphism of stacks. 
We say $\sigma_0$ is \emph{$G$-invariant} if the following conditions are satisfied:
\begin{enumerate}
\item The diagram
\[ \xymatrix{ Z \xt G \ar[r]^-\alpha \ar[d]_{\pr_1} & Z \ar[d] \\ Z \ar[r]
& \cY } \] 
is $2$-commutative. This is equivalent to having a $2$-morphism $\rho : \pr_1^*\sigma_0 
\to \alpha^*\sigma_0$. 
\item For a scheme $S$, if we have $z \in Z(S)$ and $g\in G(S)$, then  
we let $\rho_{z,g}$ denote the corresponding $2$-morphism
\[ z^*\sigma_0 \simeq (z,g)^*\pr_1^*\sigma_0 \overset{(z,g)^*\rho}\lto (z,g)^*\alpha^*\sigma_0
\simeq (z.g)^*\sigma_0. \]
The $\rho_{z,g}$ must satisfy an associativity condition. 
More precisely, for $z \in Z(S)$ and $g_1,g_2 \in G(S)$, we require 
\[ \xymatrix{ z^*\sigma_0 \ar[r]^{\rho_{z,g_1}} \ar[d]_{\rho_{z,g_1g_2}} & 
(z. g_1)^*\sigma_0 \ar[d]^{\rho_{z. g_1,g_2}} \\ 
(z. (g_1g_2))^*\sigma_0 \ar@{=}[r] & ((z. g_1). g_2)^*\sigma_0 } \]
to commute.
\end{enumerate}
For a general treatment of group actions on stacks, we refer the reader to 
\cite{Romagny}. 
\medskip

Suppose that $\sigma_0 : Z \to \cY$ is a $G$-invariant morphism of stacks. 
For any scheme $S$ with a morphism $\sigma : S \to \cY$, the $2$-fibered product $Z \xt_\cY S$ is
a sheaf of sets. We show that $Z \xt_\cY S$ has an induced right $G$-action such that
$Z \xt_\cY S \to S$ is $G$-invariant and $Z \xt_\cY S \to Z$ is $G$-equivariant. 
By definition of the $2$-fibered product, for a scheme $T$ we have 
\begin{equation} \label{eqn:ZxS}
\Bigl(Z \xt_\cY S \Bigr)(T) 
= \bigl\{ (a,b,\phi) \mid a\in Z(T), b \in S(T), \phi : a^*\sigma_0 \to b^*\sigma \bigr\}.
\end{equation}
We define a right $G$-action on $Z \xt_\cY S$ by letting $g \in G(T)$ act via
\[ (a,b,\phi).g = (a.g,b,\phi \circ \rho_{a,g}^{-1}). \]
Since $\rho_{a.g_1,g_2}\circ\rho_{a,g_1} = \rho_{a,g_1g_2}$, this defines a natural 
$G$-action. From this construction, it is evident that $Z\xt_\cY S \to S$ is $G$-invariant
and $Z \xt_\cY S \to Z$ is $G$-equivariant.

\subsubsection{$G$-bundles over a stack} \label{section:G-bundle-stack}

Let $\cY$ be a stack and $\sigma_0 : Z \to \cY$ a $G$-invariant morphism of stacks.
We say that $Z \to \cY$ is a \emph{$G$-bundle} if for any scheme $S$ mapping to $\cY$, 
the induced $G$-action on $Z \xt_\cY S \to S$ gives a $G$-bundle. In particular, $\sigma_0$ 
is schematic. 

\subsubsection{}
Let $\tau_0 \in [Z/G](Z)$ correspond to the trivial $G$-bundle $\pr_1 : Z \xt G \to Z$
with the $G$-equivariant morphism $\alpha : Z \xt G \to Z$. 

\begin{lem}\label{Z/G-atlas} 
The diagram
\[ \xymatrix{ Z \xt G \ar[r]^\alpha \ar[d]_{\pr_1} & Z \ar[d]^{\tau_0} \\ 
Z \ar[r]^{\tau_0} & [Z/G] } \] 
is a $2$-Cartesian square, and $\tau_0$ is a $G$-invariant morphism.  \end{lem}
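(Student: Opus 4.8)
The plan is to prove the two assertions in turn: first that $\tau_0 : Z \to [Z/G]$ is $G$-invariant, and then, using this, that the displayed square is $2$-Cartesian by computing both sides on $T$-points. For $G$-invariance I would produce the required $2$-morphism $\rho : \pr_1^*\tau_0 \to \alpha^*\tau_0$ over $Z \xt G$. Both pullbacks have the trivial $G$-bundle as underlying torsor: $\pr_1^*\tau_0$ carries the structure map $((z,g),h)\mapsto z.h$ to $Z$, while $\alpha^*\tau_0$ carries $((z,g),h)\mapsto (z.g).h$. A right-$G$-equivariant morphism between trivial bundles is left multiplication by a unique element (the isomorphism $G \simeq \Isom(G,G)$ of the notation section), so I would take $\rho$ to be left multiplication by $g^{-1}$ and check that this is the unique choice making the two maps to $Z$ agree. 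Unwinding the definition in \S\ref{sect:G-invariant}, the induced $\rho_{z,g} : z^*\tau_0 \to (z.g)^*\tau_0$ is again left multiplication by $g^{-1}$, and the associativity diagram collapses to the identity $g_2^{-1}g_1^{-1} = (g_1g_2)^{-1}$ in $G(S)$; this is condition (2), so $\tau_0$ is $G$-invariant.

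For the $2$-Cartesian claim I would invoke the description \eqref{eqn:ZxS} of the fiber product, now legitimate since $\tau_0$ is $G$-invariant: $Z \xt_{[Z/G]} Z$ is the sheaf whose $T$-points are triples $(a,b,\phi)$ with $a,b \in Z(T)$ and $\phi : a^*\tau_0 \to b^*\tau_0$. The crux is to pin down such $\phi$. As above, $a^*\tau_0$ and $b^*\tau_0$ are trivial bundles over $T$ with structure maps $(t,h)\mapsto a(t).h$ and $(t,h)\mapsto b(t).h$, so $\phi$ is left multiplication by a unique $g_0 \in G(T)$, and compatibility with the maps to $Z$ (read off at $h=e$) is precisely the equation $a = b.g_0$. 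This yields a bijection $(a,b,\phi)\leftrightarrow (b,g_0)$, natural in $T$, hence an isomorphism $Z \xt_{[Z/G]} Z \xrightarrow{\sim} Z \xt G$.

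To finish I would check that this isomorphism intertwines the projection to $a$ with $\alpha$ and the projection to $b$ with $\pr_1$, and that the canonical map $Z \xt G \to Z \xt_{[Z/G]} Z$ determined by $\rho$, namely $(z,g)\mapsto (z.g,\,z,\,\rho_{z,g}^{-1})$, is inverse to the bijection above. Since $\rho_{z,g}^{-1}$ is left multiplication by $g$, the triple $(z.g,z,\rho_{z,g}^{-1})$ corresponds to $(b,g_0)=(z,g)$, as required. This exhibits the square as $2$-Cartesian and, together with $G$-invariance, shows $\tau_0$ is a $G$-bundle, i.e.\ a presentation of $[Z/G]$.

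The routine parts are the bookkeeping of pullbacks and the left-versus-right multiplication conventions. The main obstacle is purely $2$-categorical: one must verify that the map $Z \xt G \to Z \xt_{[Z/G]} Z$ induced by the $2$-morphism $\rho$ genuinely coincides with the inverse of the hand-built bijection, i.e.\ that the chosen $\rho_{z,g}$ matches the morphism $\phi$ attached to each triple. Once the identification $G \simeq \Isom(G,G)$ is used to rigidify every morphism of trivial bundles, all of these $2$-morphisms become forced, so no geometry beyond the structure of the trivial torsor enters.
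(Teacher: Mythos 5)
Your proposal is correct and takes essentially the same approach as the paper: both arguments rest on rigidifying every morphism of trivial $G$-bundles as left multiplication by a unique section of $G$, deducing that $(\alpha,\pr_1) : Z \xt G \to Z \xt_{[Z/G]} Z$ is an isomorphism, and verifying the associativity condition via $g_2^{-1}g_1^{-1} = (g_1g_2)^{-1}$. The only difference is ordering --- the paper proves the square is $2$-Cartesian first and then extracts $\rho$ by specializing to the universal point $S = Z \xt G$, while you build $\rho$ by hand first and then compute the fibered product --- which is cosmetic (note also that the triple description of $T$-points of $Z \xt_{[Z/G]} Z$ is just the definition of the $2$-fibered product and does not actually require $G$-invariance, so your caveat ``now legitimate'' is unnecessary, though harmless).
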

\begin{proof}
Suppose for a scheme $S$ we have $z,z' \in Z(S)$ and $\phi:z'^*\tau_0 \simeq z^*\tau_0$.
Then $\phi$ corresponds to $g \in G(S)$ such that 
\[ \xymatrix{ S \xt G \ar[rr]^{g} \ar[rd]_{\alpha(z'\xt \id)} && 
S \xt G \ar[ld]^{\alpha(z\xt\id)}  \\ & Z } \]
commutes. This implies that $z. g = z'$. Conversely, $z\in Z(S)$
and $g \in G(S)$ uniquely determine $z' = z. g$ and a $G$-equivariant morphism 
$z'^*\tau_0 \to z^* \tau_0$ corresponding to $g$. Therefore the morphism
\[ (\alpha,\pr_1) : Z \xt G \to Z \xt_{[Z/G]} Z \] 
is an isomorphism.
Taking $S = Z \xt G$ and $\id_{Z \xt G}$ gives an isomorphism
\[ \rho^{-1} : \alpha^*(\tau_0) = (\pr_1.\pr_2)^*(\tau_0) \to  \pr_1^*(\tau_0) ,\] 
which is defined explicitly by $Z \xt G\xt G \to Z \xt G \xt G : (z,g_1,g_2) \mapsto 
(z,g_1,g_1 g_2)$. From this we see that for a scheme $S$ and $z\in Z(S), g\in G(S)$, 
the morphism $\rho_{z,g}$ corresponds to the morphism of schemes $S \xt G \to S \xt G : 
(s,g_0) \mapsto (s, g(s)^{-1} g_0)$. Therefore $g_2^{-1}g_1^{-1} = (g_1g_2)^{-1}$ implies 
$\tau_0$ is a $G$-invariant morphism.
\end{proof}

\begin{rem}\label{atlas-Gsheaf} 
Now that we know $\tau_0$ is $G$-invariant, $Z \xt_{[Z/G]} Z$ has an induced $G$-action
as a sheaf of sets. The morphism $(\alpha,\pr_1) :Z \xt G \to Z\xt_{[Z/G]} Z$ sends $(z,g) 
\mapsto (z.g, z, \rho_{z,g}^{-1})$ on $S$-points, so the associativity condition on $\rho$
implies that $(\alpha,\pr_1)$ is a $G$-equivariant morphism of sheaves of sets.
\end{rem}

\begin{lem}\label{Z/G-triv} 
Let $\tau=(f:\cP \to Z) \in [Z/G](S)$ for a scheme $S$, and suppose $\cP$ 
admits a section $s \in \cP(S)$. Then the $G$-equivariant morphism 
$\wtilde s : S\xt G \to \cP$ induced by $s$ gives an isomorphism $(f\circ s)^*\tau_0 \to 
\tau \in [Z/G](S)$.
\end{lem}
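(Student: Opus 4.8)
The plan is to make both objects of $[Z/G](S)$ explicit and then verify that $\wtilde s$ respects all of the data. First I would unwind the source object. Writing $z = f \circ s : S \to Z$, recall that $\tau_0 \in [Z/G](Z)$ is the trivial bundle $\pr_1 : Z \xt G \to Z$ equipped with the equivariant map $\alpha$. Pulling back along $z$ identifies the underlying bundle $z^*(\pr_1 : Z \xt G \to Z)$ with the trivial $G$-bundle $\pr_1 : S \xt G \to S$, and identifies the map to $Z$ with $\alpha \circ (z \xt \id) : S \xt G \to Z$, which on $T$-points sends $(s',g) \mapsto z(s'). g$. Thus $(f \circ s)^*\tau_0$ is the trivial bundle over $S$ together with this twisted structure map to $Z$.

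Next I would recall why $\wtilde s$ is an isomorphism of the underlying $G$-bundles over $S$. By construction $\wtilde s : S \xt G \to \cP$ sends $(s',g) \mapsto s(s'). g$, so it is $G$-equivariant and covers $\id_S$ (the bundle projection of $s(s'). g$ is $s'$, since $s$ is a section and the action preserves fibers). As a $G$-equivariant morphism of $G$-bundles over $S$ covering $\id_S$, $\wtilde s$ is automatically an isomorphism; this is the usual statement that a $G$-bundle admitting a section is trivial.

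It then remains to check that $\wtilde s$ is a morphism in the groupoid $[Z/G](S)$, i.e.\ that, in addition to lying over $S$, it commutes with the two maps to $Z$. This is the only step with any content, and it reduces to the equivariance of $f$: chasing $T$-points, $f \circ \wtilde s$ sends $(s',g) \mapsto f(s(s'). g) = f(s(s')). g = z(s'). g$, where the middle equality uses that $f$ is $G$-equivariant. This agrees with the structure map $\alpha \circ (z \xt \id)$ of $(f \circ s)^*\tau_0$ found in the first step, so $\wtilde s$ is a $G$-equivariant morphism over $S \xt Z$, hence an isomorphism $(f \circ s)^*\tau_0 \to \tau$ in $[Z/G](S)$.

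I do not expect any genuine obstacle here; the lemma is essentially bookkeeping. The one point that must be handled with care is correctly computing the pullback $(f \circ s)^*\tau_0$ — in particular its twisted map to $Z$ — since the whole verification hinges on matching that map against $f \circ \wtilde s$.
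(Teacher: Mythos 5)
Your proof is correct and follows essentially the same route as the paper: both compute $(f\circ s)^*\tau_0$ via the Cartesian square as the trivial bundle $S\xt G$ with structure map $\alpha\circ((f\circ s)\xt\id)$, and then verify $f\circ\wtilde s$ equals this map using $G$-equivariance of $f$ (your pointwise chase is just the unwound version of the paper's remark that two $G$-equivariant maps out of $S\xt G$ agreeing on the unit section coincide). Your explicit observation that $\wtilde s$ is automatically an isomorphism of the underlying bundles is a harmless addition, since in the paper it is subsumed by the fact that $[Z/G](S)$ is a groupoid.
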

\begin{proof} Let $a = f \circ s : S \to \cP \to Z$. We have a Cartesian square
\[ \xymatrix@C=1.5cm{ S \xt G \ar[r]^-{a \xt \id} \ar[d] & Z \xt G \ar[d]^{\pr_1} \ar[r]^\alpha & 
Z  \\ S \ar[r]^a & Z } \]
so $a^*\tau_0 = (\alpha(a\xt \id) : S\xt G \to Z)$. The diagram 
\[ \xymatrix{ S \xt G \ar[rr]^-{\wtilde s} \ar[rd]_{\alpha (a\xt \id)} && \cP \ar[ld]^f \\ 
& Z } \]
is commutative by $G$-equivariance, since the morphisms agree after composing by the section 
$S \to S \xt G$ corresponding to $1 \in G(S)$. Therefore $\wtilde s$ is a morphism in 
$[Z/G](S)$.
\end{proof}

We are now ready to prove the claimed characterization of $[Z/G]$.

\begin{proof}[Proof of Lemma~\ref{Z/G=X}] 
We define the morphism $F : \cY \to [Z/G]$ by sending $\sigma \in \cY(S)$ to 
\[ F(\sigma) = 
\xymatrix@C=1.5cm{ \ds Z \xt_\cY S \ar[r] \ar[d] & Z \ar@{.>}[d]^{\sigma_0}
\\  S \ar@{.>}[r]^\sigma & \cY } \]
for any scheme $S$. Since $Z \to \cY$ is a $G$-bundle, $Z\xt_\cY S \to Z$ is indeed
an object of $[Z/G](S)$ (see \S\ref{section:G-bundle-stack}). 
For $\sigma, \sigma' \in \cY(S)$, let $\cP = Z \xt_{\cY,\sigma} S$ and 
$\cP' = Z \xt_{\cY,\sigma'} S$ be the fibered products. We have  
$\cP(T) = \{ (a,b,\phi)\}$ following the notation of $\eqref{eqn:ZxS}$. 
A morphism $\psi : \sigma \to \sigma'$ induces 
a $G$-equivariant morphism $\cP \to \cP'$ over $S \xt Z$ by sending 
\begin{equation} \label{eqn:F_S} 
(a,b,\phi) \mapsto (a,b,b^*\psi \circ \phi). 
\end{equation}
This endows $F$ with the structure of a morphism of stacks $\cY \to [Z/G]$.
\smallskip

We will first show $2$-commutativity of the triangle. Then we prove that
$F$ is fully faithful and essentially surjective to deduce that it is an isomorphism. 
\smallskip

\emph{Step 1.} 
Showing the triangle is $2$-commutative is equivalent to giving a morphism $\tau_0 \to F(\sigma_0)$
in $[Z/G](Z)$. The discussion of Remark~\ref{atlas-Gsheaf} shows that 
\[ (\alpha, \pr_1): Z \xt G \to Z \xt_\cY Z\] 
gives a $G$-equivariant morphism of sheaves over $Z \xt Z$, and this gives the desired morphism
$\tau_0 \to F(\sigma_0)$ in $[Z/G](Z)$. 

\smallskip

\emph{Step 2.}
We show that $F$ is fully faithful. Since $\cY, [Z/G]$ are stacks, $F$ is fully faithful if 
and only if for any fixed scheme $S$ and $\sigma, \sigma' \in \cY(S)$, the induced morphism of 
sheaves of sets 
\[ F : \Isom_{\cY(S)}(\sigma,\sigma') \to \Isom_{[Z/G](S)}(F(\sigma),F(\sigma')) \]
on $\Sch_{/S}$ is an isomorphism. 
Let $\cP = Z \xt_{\cY,\sigma} S$ and $\cP'= Z \xt_{\cY,\sigma'} S$. 
By restricting to an fppf covering $(S_i \to S)$, we reduce to the case where 
$\cP, \cP'$ are both trivial bundles.  
Let $\Psi : \cP \to \cP'$ be a $G$-equivariant morphism over $S \xt Z$. Since $\cP$ is trivial, 
there is a section $s \in \cP(S)$ corresponding to $(a, \id_S, \phi : a^*\sigma_0 \to \sigma)$
with $a \in Z(S)$. 
Then $\Psi$ sends $s$ to some element $(a,\id_S, \phi' : a^*\sigma_0 \to \sigma') \in \cP'(S)$. 
We define a morphism of sets
\[ L : \Hom_{[Z/G](S)}(F(\sigma),F(\sigma')) \to \Hom_{\cY(S)}(\sigma,\sigma').\]
by $L(\Psi) = \phi' \circ \phi^{-1} : \sigma \to \sigma'$.

It remains to check that $F,L$ are mutually inverse (here by $F$ we really mean the morphism 
$F_S$ of Hom sets). Starting with 
$\psi : \sigma \simeq \sigma'$, we have from \eqref{eqn:F_S} that 
$F(\psi)$ sends $s$ to $(a,\id_S, \psi \circ \phi) \in \cP'(S)$. 
Thus $LF(\psi) = (\psi \circ \phi) \circ \phi^{-1} =\psi$. If we instead start with 
$\Psi : \cP \to \cP'$, then $L(\Psi) = \phi' \circ \phi^{-1}$ induces 
\[ FL(\Psi) : s \mapsto (a,\id_S, (\phi' \circ \phi^{-1}) \circ \phi) \in \cP'(S) \]
again by \eqref{eqn:F_S}. A $G$-equivariant morphism of trivial bundles $\cP \to \cP'$ over $S$ is 
determined by the image of the section $s \in \cP(S)$; hence $\Psi = FL(\Psi)$. 
We conclude that $F$ is fully faithful.

\smallskip

\emph{Step 3.} 
We now prove that $F$ is essentially surjective and hence an isomorphism of stacks. 
Let $\tau = (f: \cP \to Z) \in [Z/G](S)$. Choose an fppf covering $(j_i : S_i \to S)$ 
trivializing $\cP$. Then we have sections $s_i \in \cP(S_i)$. Let $f_i$ denote the restriction
of $f$ to $\cP_{S_i}$. Lemma~\ref{Z/G-triv} gives an isomorphism 
$(f_i \circ s_i)^*\tau_0 \simeq j_i^*\tau$. We already know that $\tau_0 \simeq F(\sigma_0)$
from Step 1.
Therefore \[ F((f_i\circ s_i)^*\sigma_0) \simeq (f_i \circ s_i)^*F(\sigma_0) \simeq 
(f_i\circ s_i)^*\tau_0 \simeq j_i^*\tau. \]
By \cite[Lemma \href{http://math.columbia.edu/algebraic_geometry/stacks-git/locate.php?tag=046N}{046N}]{stacks-project}, we conclude that $F$ is an isomorphism. 
\end{proof}

\subsection{Twisting by torsors} In this subsection, we review a construction that will
come up in many places. 
Let $\cC$ be a subcanonical site 
with a terminal object and a sheaf of groups $\cG$. 
For $S \in \cC$, let $\cP$ be a right $\cG|_S$-torsor over $S$. Suppose we have
a sheaf of sets $\cF$ on $\cC$ with a left $\cG$-action. Then $\cG|_S$ acts on 
$\cP \xt \cF$ from the right by $(p,z).g = (p.g,g^{-1}.z)$. We have a presheaf $\cQ$
on $\cC/S$ defined by taking $\cG$-orbits $\cQ(U) = (\cP(U) \xt \cF(U)) / \cG(U)$.
We define the sheaf 
\[ \twist \cP \cF = (\cP \xt \cF)/\cG = \cP \xt^\cG \cF \]
obtained by twisting $\cF$ by $\cP$ to be the sheafification of $\cQ$. 

Since sheaves on $\cC$ form a stack, we give an alternative description of
$\twist \cP \cF$ by providing a descent datum. Let $(S_i \to S)$ be a covering
such that $\cP|_{S_i} \simeq \cG|_{S_i}$. Then $(\cG|_{S_i}, g_{ij})$
give a descent datum of $\cP$, for some $g_{ij} \in \cG(S_i \xt_S S_j)$. Note that 
\[ (\cP|_{S_i} \xt \cF)/\cG \simeq (\cG|_{S_i} \xt \cF)/\cG \simeq \cF|_{S_i}, \]
and $\cQ|_{S_i} \simeq \cF|_{S_i}$ is already a sheaf. By the definition of the 
group action on $\cP \xt \cF$, we see that the transition morphism 
$\varphi_{ij} : (\cF|_{S_j})|_{S_i \xt_S S_j}
\to (\cF|_{S_i})|_{S_i \xt_S S_j}$ is given by the left action of $g_{ij}$. 
Since sheafification commutes with the restriction $\cC/S_i \to \cC/S$, we conclude that
\[ (\cF|_{S_i}, \varphi_{ij}) \]
gives a descent datum for $\twist \cP \cF$. 

If $\cF$ is instead a sheaf with a right $\cG$-action, we will use $\twist \cP \cF$ to
denote the twist of $\cF$ considered with the inverse left $\cG$-action.
\medskip

We will use the above twisting construction for the big site $\cC = (\Sch_{/k})_{\textrm{fppf}}$. 
Let $V$ be a $G$-representation. Then $V$ can be considered 
as an abelian fppf sheaf via pullback (see \cite[Lemma \href{http://math.columbia.edu/algebraic_geometry/stacks-git/locate.php?tag=03DT}{03DT}]{stacks-project})
with a left $G$-action. Then for a right $G$-bundle $\cP$ over $S$, 
we have a quasi-coherent sheaf $\twist \cP V$ on $S$ by descent 
\cite[Theorem 4.2.3]{FGA-explained}.

For a $k$-scheme $Y$ with a left or right $G$-action and $\cP$ a right $G$-bundle over $S$, we
can form the \emph{associated fiber bundle} $\twist \cP Y$ over $S$ and 
ask when it is representable 
by a scheme. This will be a key topic in the next subsection.

\subsection{Change of space} 
Let $\beta : Z'\to Z$ be a $G$-equivariant morphism of schemes with right $G$-action. 
Then there is a natural morphism of stacks $[Z'/G] \to [Z/G]$ defined by sending 
\[ (\cP \to Z') \mapsto (\cP \to Z' \oarrow \beta Z).\] 
The next lemma shows that under certain conditions, this morphism is schematic.

\begin{lem} \label{cos-representable} 
The morphism $[Z'/G] \to [Z/G]$ is representable. 
If the morphism of schemes $Z' \to Z$ is affine 
(resp.~quasi-projective with a $G$-equivariant relatively ample invertible sheaf), then 
the morphism of quotient stacks is schematic and affine (resp.~quasi-projective). 
If $Z' \to Z$ has a property that is fppf local on the target, then
so does the morphism of quotient stacks.
\end{lem}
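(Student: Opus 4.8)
The plan is to verify every assertion after base change along an arbitrary scheme $S \to [Z/G]$, which is classified by a right $G$-bundle $\cQ \to S$ together with a $G$-equivariant morphism $h : \cQ \to Z$. Write $W = [Z'/G] \xt_{[Z/G]} S$. The crucial observation is that $\cQ \to S$ is itself an fppf covering (since $G \to \spec k$ is fppf), and that after pulling back along it the fiber product trivializes to a base change of $\beta$: I claim there is a canonical isomorphism
\[ W \xt_S \cQ \ \cong\ Z' \xt_{Z,h} \cQ . \]
Granting this, the morphism $[Z'/G] \to [Z/G]$ inherits from $\beta$ every property that is stable under base change and fppf local on the target, by descent along $\cQ \to S$.

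To establish the displayed isomorphism I would compute $T$-valued points over $\cQ$. After base change the bundle $\cQ$ acquires its tautological section, so the composite $\cQ \to S \to [Z/G]$ is $2$-isomorphic to $\tau_0 \circ h$; thus $W \xt_S \cQ = [Z'/G] \xt_{[Z/G]} \cQ$ with the map $\cQ \to [Z/G]$ factoring through the atlas $\tau_0$. A $T$-point is then a $G$-equivariant morphism from a trivialized $G$-bundle to $Z'$ lying over the given map to $Z$; by the mechanism of Lemma~\ref{Z/G-triv} such a morphism is determined by its value on the unit section, that is, by a point $a' \in Z'(T)$ with $\beta \circ a'$ equal to the induced map $T \to Z$. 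These are exactly the $T$-points of $Z' \xt_{Z,h}\cQ$, and the relative automorphisms are trivial, so $W$ is a sheaf and the isomorphism holds. (Taking $S = Z$ and $\cQ = Z \xt G$ recovers the clean statement that over the atlas $\tau_0$ the morphism pulls back to $\beta : Z' \to Z$, i.e.\ $[Z'/G]\xt_{[Z/G]} Z \cong Z'$.)

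Since $W$ is an fppf sheaf over $S$ and $W \xt_S \cQ \cong Z'\xt_{Z,h}\cQ$ is a scheme, with $W \xt_S \cQ \to W$ a representable fppf surjection, $W$ is an algebraic space by fppf descent \cite{stacks-project}; hence the morphism is representable. If $\beta$ is affine then so is its base change $Z'\xt_{Z,h}\cQ \to \cQ$, and as affineness is fppf local on the target, $W \to S$ is affine, so in particular $W$ is a scheme and the morphism of quotient stacks is schematic and affine. In the same way, if $\beta$ has a property $P$ that is fppf local on the target, then the base change has $P$ and descent along $\cQ \to S$ gives that $W \to S$ has $P$; as $S \to [Z/G]$ was arbitrary, the morphism of quotient stacks has $P$.

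\textbf{The main obstacle} is the quasi-projective case, since quasi-projectivity is \emph{not} fppf local on the target: a relatively ample sheaf on $\cQ$ need not descend to $S$. This is exactly where the $G$-equivariant structure enters. Let $L$ be a $G$-equivariant relatively ample invertible sheaf on $Z' \to Z$. Its equivariant structure furnishes a descent datum for $\pr_{Z'}^* L$ on $Z' \xt_{Z,h}\cQ \cong W \xt_S \cQ$ relative to $\cQ \to S$: using $\cQ \xt_S \cQ \cong \cQ \xt G$, the two pullbacks to the double cover differ by the action morphism, and the equivariance isomorphism together with its cocycle condition supplies and glues the required identification. Hence $\pr_{Z'}^* L$ descends to an invertible sheaf $L_W$ on $W$ (the twisting construction of the previous subsection, applied to $L$). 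Finally $W \to S$ is quasi-compact and of finite type, these descending from $\beta$, and $L_W$ is relatively ample over $S$ because relative ampleness may be checked after the fppf base change $\cQ \to S$ \cite{stacks-project}, over which $L_W$ pulls back to the base change of the relatively ample $L$. Therefore $W \to S$ is quasi-projective, in particular schematic, and the morphism of quotient stacks is quasi-projective. The one point that must be handled with care throughout is the compatibility of the equivariant structure on $L$ with the descent datum of $\cQ$ — the cocycle condition — which is precisely what makes $L_W$ exist.
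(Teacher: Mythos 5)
Your proof is correct, and at bottom it is the same strategy as the paper's --- trivialize over an fppf cover, reduce to the base change of $\beta$, and descend, with the $G$-equivariant structure on the ample sheaf supplying the cocycle --- but your implementation differs in a way worth recording. The paper first applies Lemma~\ref{cos-fiber} to rewrite $[Z'/G] \xt_{[Z/G]} S$ as the quotient $[(Z' \xt_Z \cP)/G]$, then chooses a cover $(S_i \to S)$ admitting sections $s_i$ of the bundle and writes out an explicit \v{C}ech descent datum $(Z' \xt_{Z, f\circ s_i} S_i, \varphi_{ij})$ whose transition maps come from the action of the cocycles $g_{ij}^{-1}$. You bypass Lemma~\ref{cos-fiber} entirely: you take the canonical cover $\cQ \to S$ given by the bundle itself, prove the intrinsic identity $W \xt_S \cQ \simeq Z' \xt_{Z,h} \cQ$ via the tautological section and Lemma~\ref{Z/G-triv} (your point computation is right: an isomorphism with $(h\circ q)^*\tau_0$ trivializes the bundle, and a $G$-equivariant map from a trivial bundle is determined by its value on the unit section), and then conclude by the bootstrap theorem. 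Your route buys cleaner statements --- in particular the isolated fact that the pullback of $[Z'/G] \to [Z/G]$ along the atlas is $\beta$ itself, with no indices, and on $\cQ \xt_S \cQ \xt_S \cQ \simeq \cQ \xt G \xt G$ the cocycle condition for $\pr^*\cL$ is literally the associativity of the equivariant structure. The paper's more explicit bookkeeping pays off downstream: the descent datum $(Z' \xt_{Z, f\circ s_i} S_i, \varphi_{ij})$ is reused verbatim in Corollary~\ref{Z/G-BG} (identifying the fiber with the twist $\twist \cP {Z'}$) and in the change-of-group arguments of \S\ref{section:cog}. One small caution on your quasi-projective case: you phrase the final step as descending $\pr^*\cL$ to an invertible sheaf $L_W$ on the algebraic space $W$, checking relative ampleness after the fppf base change $\cQ \to S$, and then concluding $W$ is a scheme; this works (the needed facts --- fppf descent of relative ampleness and the fact that an algebraic space carrying a relatively ample invertible sheaf over $S$ is a scheme --- are in the literature you cite), but it is slightly more economical to quote, as the paper does, \cite[VIII, Proposition 7.8]{SGA1} directly: a descent datum of quasi-projective schemes equipped with a compatible descent datum of relatively ample invertible sheaves is effective, applied to $\bigl(Z' \xt_{Z,h} \cQ,\, \pr^*\cL\bigr)$ with transition data given by the $G$-action and the equivariant structure, which avoids the detour through algebraic spaces for that clause.
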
 

First, we need a formal result on $2$-fibered products of quotient stacks. 

\begin{lem} \label{cos-fiber}
Let $\beta_i : Z_i \to Z$ be $G$-equivariant morphisms of
schemes with right $G$-action for $i=1,2$. Then the square 
\[ \xymatrix{ \ds \Bigl[ \bigl(Z_1 \xt_Z Z_2\bigr) / G \Bigr] \ar[r] \ar[d] & [Z_1/G] \ar[d] \\ 
[Z_2/G] \ar[r] & [Z/G] } \]
induced by the $G$-equivariant projections $Z_1 \xt_Z Z_2 \to Z_i$ is $2$-Cartesian. 
\end{lem}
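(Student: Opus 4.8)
The plan is to verify the $2$-Cartesian property directly by computing the groupoid of maps from an arbitrary test scheme $S$ into the proposed fiber product and comparing it with the groupoid of maps into $\bigl[(Z_1\times_Z Z_2)/G\bigr]$. Since all four corners are stacks and everything is $2$-categorical, the cleanest route is to unwind the definition of the $2$-fibered product $[Z_1/G]\times_{[Z/G]}[Z_2/G]$ and exhibit a natural equivalence of groupoids with $\bigl[(Z_1\times_Z Z_2)/G\bigr](S)$, functorially in $S$.

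First I would describe an object of $[Z_1/G]\times_{[Z/G]}[Z_2/G]$ over $S$. By definition of the $2$-fibered product, such an object consists of a right $G$-bundle $\cP_1\to S$ with a $G$-equivariant map $f_1:\cP_1\to Z_1$, a right $G$-bundle $\cP_2\to S$ with a $G$-equivariant map $f_2:\cP_2\to Z_2$, together with a $2$-isomorphism between the two induced objects of $[Z/G](S)$. The latter is a $G$-equivariant isomorphism of bundles $\psi:\cP_1\xrightarrow{\sim}\cP_2$ over $S$ such that $\beta_1\circ f_1=\beta_2\circ f_2\circ\psi$ as maps $\cP_1\to Z$. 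Using $\psi$ to identify $\cP_1$ with $\cP_2$, I may regard this data as a single $G$-bundle $\cP\to S$ equipped with two $G$-equivariant maps $f_1:\cP\to Z_1$ and $f_2:\cP\to Z_2$ satisfying $\beta_1\circ f_1=\beta_2\circ f_2$. By the universal property of the fiber product $Z_1\times_Z Z_2$ in schemes, the pair $(f_1,f_2)$ is the same datum as a single map $\cP\to Z_1\times_Z Z_2$, and $G$-equivariance of $f_1,f_2$ together with the diagonal $G$-action on $Z_1\times_Z Z_2$ makes this map $G$-equivariant. This is precisely an object of $\bigl[(Z_1\times_Z Z_2)/G\bigr](S)$.

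Next I would check that this assignment is functorial and fully faithful on morphisms: a morphism in the fiber-product groupoid is a compatible pair of bundle isomorphisms over $S\times Z_i$ commuting with the gluing $2$-isomorphisms, which under the identification corresponds exactly to a single $G$-equivariant isomorphism of $G$-bundles over $S\times(Z_1\times_Z Z_2)$. I would then observe that the two composite morphisms $\bigl[(Z_1\times_Z Z_2)/G\bigr]\to[Z_i/G]\to[Z/G]$ agree up to the canonical $2$-isomorphism coming from $\beta_1\circ\pr_1=\beta_2\circ\pr_2$, so the comparison functor is compatible with the projections to the corners, and hence the induced map $\bigl[(Z_1\times_Z Z_2)/G\bigr]\to[Z_1/G]\times_{[Z/G]}[Z_2/G]$ is the claimed equivalence. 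Since an equivalence of stacks that is compatible with the projection maps is exactly what it means for the square to be $2$-Cartesian, this finishes the argument.

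The main obstacle, and the place requiring the most care, is bookkeeping the $2$-isomorphisms rather than the underlying objects: one must confirm that after transporting $\cP_2$ to $\cP_1$ along $\psi$, the resulting descent/gluing data match on the nose, and that composition of morphisms is respected so that the comparison is genuinely an equivalence of groupoids and not merely a bijection on isomorphism classes. A secondary point is justifying that $G$-equivariance of the two maps $f_1,f_2$ into the individual $Z_i$ is equivalent to $G$-equivariance of the combined map into $Z_1\times_Z Z_2$ for the diagonal action; this is a direct consequence of the fact that the projections $Z_1\times_Z Z_2\to Z_i$ are $G$-equivariant and the universal property respects the $G$-action, but it is worth stating explicitly.
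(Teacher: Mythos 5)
Your proposal is correct and takes essentially the same route as the paper: the paper likewise unwinds $\bigl([Z_1/G] \xt_{[Z/G]} [Z_2/G]\bigr)(S)$ as tuples $(\cP_1,f_1,\cP_2,f_2,\phi)$ with $\phi:\cP_1\to\cP_2$ a $G$-equivariant morphism over $S\xt Z$, uses $\phi$ to replace the pair of bundles by a single bundle with map $(f_1,f_2\circ\phi):\cP_1\to Z_1\xt_Z Z_2$ (essential surjectivity), and verifies full faithfulness by observing that a pair of morphisms intertwining the identity $2$-isomorphisms must coincide, hence amounts to one $G$-equivariant morphism over $S\xt(Z_1\xt_Z Z_2)$. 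The $2$-isomorphism bookkeeping you single out as the main obstacle is handled in the paper exactly as you suggest, by strictifying along $\phi$ before comparing morphisms.
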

\begin{proof}
For a scheme $S$, we have 
\[  \Bigl([Z_1/G] \xt_{[Z/G]} [Z_2/G] \Bigr)(S) = \left\{ \mbox{ 
\begin{tabular}{c} $f_1:\cP_1 \to Z_1$, $f_2: \cP_2 \to Z_2$ \\
$G$-equivariant morphism $\phi : \cP_1 \to \cP_2$ over $S \xt Z$ \end{tabular} } \right\}. \]
Observe that we have a morphism from 
$(\cP_1, f_1,\cP_2, f_2, \phi)$ to $(\cP_1, f_1, \cP_1, f_2 \circ \phi, \id_{\cP_1})$ 
via $\id_{\cP_1}, \phi^{-1}$. The latter is the image of $(\cP_1, 
(f_1,f_2\circ \phi) : \cP_1 \to Z_1 \xt_Z Z_2) \in [(Z_1 \xt_Z Z_2)/G](S)$.  
Hence the functor
\[ F: \Bigl[ \bigl(Z_1 \xt_Z Z_2\bigr)/G \Bigr](S) \to \Bigl([Z_1/G] \xt_{[Z/G]} [Z_2/G]\Bigr)(S) \]
is essentially surjective. 

Take $(f:\cP \to Z_1 \xt_Z Z_2) \in [(Z_1 \xt_Z Z_2)/G](S)$ and let $f_1 : \cP \to Z_1, 
f_2 : \cP \to Z_2$ be the morphisms corresponding to $f$. Then the image of $(\cP,f)$ under $F$ is 
$\tau = (\cP,f_1,\cP,f_2,\id_\cP)$. We similarly associate to $(\cP',f') \in [(Z_1\xt_Z Z_2)/G](S)$
the corresponding $f'_1, f'_2$ and $\tau'$. A morphism $\tau \to \tau'$ is then 
a pair of $G$-equivariant morphisms $\gamma_1 : \cP \to \cP'$ over $S \xt Z_1$ and 
$\gamma_2 : \cP \to \cP'$ over $S \xt Z_2$ intertwining $\id_\cP$ and $\id_{\cP'}$. 
The last condition implies $\gamma_1 = \gamma_2$. Thus $\gamma_1$ is a $G$-equivariant
morphism over $S \xt (Z_1 \xt_Z Z_2)$, i.e., a morphism $(\cP,f) \to (\cP',f')$. We conclude that
$F$ is fully faithful and hence an isomorphism. 
\end{proof}

\begin{proof}[Proof of Lemma~\ref{cos-representable}]
For a scheme $S$, take $(f:\cP \to Z) \in [Z/G](S)$. Remark~\ref{P/G} gives an 
isomorphism $S \simeq [\cP/G]$. Hence by Lemma~\ref{cos-fiber} we have a Cartesian square
\[ \xymatrix{ \ds \Bigl[ \bigl(Z' \xt_Z \cP \bigr)/G \Bigr] \ar[r] \ar[d] & [Z'/G] \ar[d] \\ 
S \ar[r]^{\cP \to Z} &  [Z/G]}\]
We prove that $[(Z' \xt_Z \cP)/G]$ is representable by an algebraic space. 
Remark~\ref{P/G} implies that $[\cP/G](T)$ is an equivalence relation for any scheme $T$. 
It follows that $[(Z' \xt_Z \cP)/G](T)$ is an equivalence relation, so $[(Z' \xt_Z \cP)/G]$
is isomorphic to a sheaf of sets. 
Pick an fppf covering $(S_i \to S)$ so that each $\cP_{S_i}$ admits 
a section $s_i \in \cP(S_i)$. Note that we have a morphism $S_i \xt G \to \cP_{S_i} \to Z$. 
There is a $G$-equivariant isomorphism 
\[ \gamma_i : \Bigl(Z' \xt_{Z, f\circ s_i} S_i \Bigr) \xt G \to Z' \xt_Z (S_i \xt G)  \] 
over $S_i$ where on the left hand side $G$ acts as a trivial bundle over 
$Z' \xt_{Z,f\circ s_i} S_i$, while on the right
hand side $G$ acts on $Z'$ and $S_i \xt G$. The morphism is 
defined by sending $(a,g) \mapsto (a.g,g)$ for $a \in Z'(T), g \in G(T)$ and $T$ an $S_i$-scheme. 
Applying Remark~\ref{P/G} again, we have an isomorphism 
\[ \Phi_i :
Z' \xt_{Z,f\circ s_i} S_i \simeq \Bigl[ \bigl(Z' \xt_{Z,f\circ s_i} S_i \bigr) \xt G/G\Bigr] 
\overset{\gamma_i}\lto
\Bigl[ \bigl(Z' \xt_Z (S_i \xt G) \bigr)/G\Bigr] \overset{\id \xt \wtilde{s_i}}\lto 
\Bigl[ \bigl(Z' \xt_Z \cP_{S_i} \bigr)/G\Bigr] \]
over $S_i$
defined explicitly on $T$-points by sending $a \in (Z' \xt_{Z,f\circ s_i} S_i)(T)$ to  
\[ \xymatrix{ T \xt G \ar[r]^-{a\xt \id} \ar[d] & \ds \Bigl(Z' \xt_{Z,f\circ s_i} S_i\Bigr) \xt G 
\ar[r]^-{\gamma_i} & \ds Z'\xt_Z (S_i \xt G) \ar[r]^-{\id \xt \wtilde{s_i}} & \ds Z' \xt_Z \cP 
\\ T } \]
Therefore after restricting to an fppf covering, $[(Z' \xt_Z \cP)/G] \xt_S S_i 
\simeq [(Z' \xt_Z \cP_{S_i})/G]$ becomes
representable by a scheme. Since algebraic spaces satisfy descent \cite[Lemma \href{http://math.columbia.edu/algebraic_geometry/stacks-git/locate.php?tag=04SK}{04SK}]{stacks-project}, we deduce that
$[(Z' \xt_Z \cP)/G]$ is representable by an algebraic space. 
Before proving when $[(Z' \xt_Z \cP)/G]$ is representable by a scheme, we give
a description of its descent datum. 
\medskip

{\it Descent datum of $[(Z' \xt_Z \cP)/G]$.}
Let $S_{ij} = S_i \xt_S S_j$. There are $g_{ij} \in G(S_{ij})$ such that
$s_j = s_i. g_{ij} \in \cP(S_{ij})$ (we slightly abuse notation and omit restriction signs
when the context is clear). Then the action of $g_{ij}^{-1}$ on $Z'$ induces an isomorphism
\[ \varphi_{ij} : Z' \xt_{Z, f\circ s_j} S_{ij} \to Z' \xt_{Z, (f\circ s_j).g_{ij}^{-1}} S_{ij} 
= Z' \xt_{Z, f\circ s_i} S_{ij}.
\]
It follows from the definitions that for $a \in (Z' \xt_{Z,f\circ s_j} S_{ij})(T)$, the square
\[ \xymatrix@C=4cm{ T \xt G \ar[d]_{g_{ij}} \ar[r]^{(\id \xt \wtilde{s_j}) \circ 
\gamma_j \circ (a \xt \id)} & Z' \xt_Z \cP_{S_{ij}} \ar@{=}[d] \\
T \xt G \ar[r]^{(\id \xt \wtilde{s_i}) \circ \gamma_i \circ (\varphi_{ij} \circ (a \xt \id))} & 
Z' \xt_Z \cP_{S_{ij}} } \]
is commutative. Therefore $\Phi_j(a)$ and $(\Phi_i \circ \varphi_{ij})(a)$ are isomorphic
in $[(Z' \xt_Z \cP_{S_{ij}})/G]$. We conclude that $(Z' \xt_{Z,f\circ s_i} S_i , \varphi_{ij})$
is a descent datum of $[(Z'\xt_Z \cP)/G]$ with respect to the covering $(S_i \to S)$.
\medskip

If $Z' \to Z$ is affine, then so is $Z' \xt_{Z, f\circ s_i} S_i \to S_i$. 
Affine morphisms are effective under descent \cite[Theorem 4.33]{FGA-explained}, so 
in this case $[(Z' \xt_Z \cP)/G]$ is representable by a scheme affine over $S$. 
Next suppose that $Z' \to Z$ is quasi-projective with a $G$-equivariant relatively ample
invertible $\cO_{Z'}$-module $\cL$. 
From our description of the descent datum of $[(Z' \xt_Z \cP)/G]$, 
we have a commutative square 
\[ \xymatrix { 
\ds Z' \xt_{Z, f\circ s_j} S_{ij} \ar[r]^-{\pr_{1,j}} \ar[d]_-{\varphi_{ij}}
& Z' \ar[d]^{g_{ij}^{-1}} \\ 
\ds Z' \xt_{Z, f\circ s_i} S_{ij} \ar[r]^-{\pr_{1,i}} & Z' } \] 
Therefore the $G$-equivariant structure of $\cL$ gives an isomorphism 
\[ \varphi_{ij}^* \pr_{1,i}^* \cL \simeq \pr_{1,j}^* \cL \]
of invertible sheaves relatively ample over $S_{ij}$. Since the $g_{ij}$ satisfy the cocycle
condition, the associativity property of $G$-equivariance implies that the above
isomorphisms satisfy the corresponding cocycle condition. We therefore have a descent
datum of relatively ample invertible sheaves. By descent \cite[VIII, Proposition 7.8]{SGA1}, 
we conclude that $[(Z' \xt_Z \cP)/G]$ is representable by a scheme quasi-projective 
over $S$. 
\end{proof}

\begin{cor} \label{Z/G-BG} 
For a $k$-scheme $Z$ with a right $G$-action and a right $G$-bundle $\cP$ over a $k$-scheme $S$, 
there is an isomorphism $\twist \cP Z \simeq [(Z \xt \cP)/G]$ over $S$. 
\end{cor}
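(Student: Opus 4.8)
The plan is to apply Lemma~\ref{Z/G=X} with $\cY = \twist \cP Z$ and the $G$-scheme $Z \xt \cP$ equipped with its diagonal right action $(z,p).g = (z.g, p.g)$. Unwinding the definition of the twist, $\twist \cP Z = (\cP \xt Z)/G$ is the sheafification of the presheaf of $G$-orbits of $\cP \xt Z$ under the action $(p,z).g = (p.g, z.g)$ arising from the inverse left action on $Z$; up to interchanging the two factors this is exactly the diagonal right action on $Z \xt \cP$. Thus if I can produce a morphism $\sigma_0 : Z \xt \cP \to \twist \cP Z$ that is $G$-invariant for the diagonal action and is a $G$-bundle in the sense of \S\ref{section:G-bundle-stack}, then Lemma~\ref{Z/G=X} will furnish the desired isomorphism $\twist \cP Z \simeq [(Z \xt \cP)/G]$.

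First I would define $\sigma_0$ as the composite of the swap $Z \xt \cP \simeq \cP \xt Z$ with the canonical map from $\cP \xt Z$ to its sheaf of $G$-orbits $\twist \cP Z$. This is $G$-invariant essentially by construction: since $\twist \cP Z$ is a sheaf of sets, $G$-invariance amounts to the equality $\sigma_0 \circ \alpha = \sigma_0 \circ \pr_1$ of morphisms $Z \xt \cP \xt G \to \twist \cP Z$, where $\alpha$ is the diagonal action, and this holds because $\sigma_0$ is constant on diagonal $G$-orbits.

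The substantive step is to verify that $\sigma_0$ is a $G$-bundle. As this property is fppf-local on the target, I would choose an fppf covering $(S_i \to S)$ trivializing $\cP$, so that $\cP_{S_i} \simeq S_i \xt G$ as a right $G$-bundle, and pull back along the induced fppf covering $\twist \cP Z|_{S_i} \to \twist \cP Z$. Over $S_i$ the twisting subsection identifies $\twist \cP Z|_{S_i} \simeq Z|_{S_i}$, and a direct computation with these trivializations shows that $\sigma_0$ becomes the projection $S_i \xt Z \xt G \to S_i \xt Z$, i.e., the trivial $G$-bundle. Hence $\sigma_0$ is a $G$-bundle.

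Applying Lemma~\ref{Z/G=X} to $\sigma_0$ then yields an isomorphism $\twist \cP Z \simeq [(Z \xt \cP)/G]$ whose defining triangle is $2$-commutative. Finally, composing $\sigma_0$ with the structure map $\twist \cP Z \to S$ recovers the composite of the second projection $Z \xt \cP \to \cP$ with $\cP \to S$, which matches the map to $S \simeq [\cP/G]$ induced by $Z \xt \cP \to \cP$; this shows the isomorphism is over $S$. I expect the main obstacle to be the $G$-bundle verification, where the bookkeeping of left-versus-right actions and the inverse action built into the definition of the twist must be tracked carefully so as to land on the correct diagonal action.
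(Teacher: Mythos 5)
Your proof is correct, but it follows a genuinely different route from the paper's. The paper disposes of this corollary in one line: in the proof of Lemma~\ref{cos-representable}, specialized to the structure morphism $Z \to \spec k$ (so that $Z' \xt_Z \cP$ becomes $Z \xt \cP$), the quotient $[(Z \xt \cP)/G]$ was already shown to be a sheaf admitting the descent datum $(Z \xt S_i, \varphi_{ij})$ with $\varphi_{ij}$ given by the action of $g_{ij}^{-1}$, and this is literally the descent datum of $\twist \cP Z$ from the twisting construction (the inverse left action converts left multiplication by $g_{ij}$ into the right action of $g_{ij}^{-1}$). You instead realize the canonical quotient map $\sigma_0 : Z \xt \cP \to \twist \cP Z$ as a $G$-bundle for the diagonal action and invoke the characterization Lemma~\ref{Z/G=X}. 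Both arguments bottom out in the same computation over a trivializing fppf cover of $S$, so neither is more general; what yours buys is an explicit presentation $Z \xt \cP \to \twist \cP Z$ together with the $2$-commutative triangle, while the paper's buys brevity by recycling a computation already done, and it immediately yields the Cartesian square over $BG$ recorded after the corollary. Two points worth tightening in your write-up: first, over $S_i$ the trivialized map is $(z,s,g) \mapsto (z.g^{-1},s)$ rather than the bare projection --- it equals the projection composed with the $G$-equivariant automorphism $(z,s,g) \mapsto (z.g^{-1},s,g)$ of $Z \xt S_i \xt G$, so it is \emph{isomorphic} to the trivial bundle, which is all you need (you correctly anticipated exactly this left/right bookkeeping); second, to quote Lemma~\ref{Z/G=X} verbatim you should regard the $S$-sheaf $\twist \cP Z$ as a $k$-stack in the usual way, and note that $Z \xt \cP$ is honestly a $k$-scheme because $\cP$ is representable and affine over $S$, as is required to form the quotient stack $[(Z \xt \cP)/G]$ in the sense of Definition~\ref{defn:quotient-stack}.
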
 
\begin{proof} 
The proof of Lemma~\ref{cos-representable} shows that $[(Z \xt\cP)/G]$ has a descent datum 
$(Z \xt S_i, \varphi_{ij} )$. This is the same descent datum as $\twist \cP Z$, proving the 
claim. This additionally implies that
\[ \xymatrix{ \twist \cP Z \ar[r] \ar[d] & [Z/G] \ar[d] \\ S \ar[r]^\cP & BG } \]
is a Cartesian square.
\end{proof}

\begin{cor}\label{Z-Z/G}
Given $\tau = (p : \cP \to S, f : \cP \to Z) \in [Z/G](S)$ for a $k$-scheme $S$, 
\[ \xymatrix{ \cP \ar[r]^f \ar[d]_p & Z \ar[d]^{\tau_0} \\ S \ar[r]^{\tau} & [Z/G] } \]
is a Cartesian square. In particular, $Z \to [Z/G]$ is schematic, affine, and fppf. 
\end{cor}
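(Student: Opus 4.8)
The plan is to produce a natural isomorphism $\cP \simeq Z \xt_{[Z/G]} S$ identifying the two evident projections with $f$ and $p$, and then to read off the properties of $\tau_0 : Z \to [Z/G]$ from the corresponding properties of the $G$-bundle $p : \cP \to S$. First I would unwind the $2$-fibered product: for a test scheme $T$, an object of $(Z\xt_{[Z/G]} S)(T)$ is a triple $(z,t,\psi)$ with $z \in Z(T)$, $t\in S(T)$, and $\psi$ a $2$-isomorphism $z^*\tau_0 \simeq t^*\tau$ in $[Z/G](T)$. As in Lemma~\ref{Z/G-atlas}, after pullback along $z$ the object $z^*\tau_0$ is the trivial bundle $(T\xt G \to T,\ \alpha(z\xt\id))$, while $t^*\tau = (\cP_t \to T, f_t)$ with $\cP_t = \cP\xt_{S,t} T$.

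The crux is then exactly the mechanism of Lemma~\ref{Z/G-triv}: a $2$-isomorphism $\psi$ is a $G$-equivariant isomorphism $T\xt G \to \cP_t$ over $T\xt Z$, and such a morphism is determined by the image of the section corresponding to $1\in G(T)$, namely a section $s\in\cP_t(T)$, equivalently a lift $s : T\to\cP$ of $t$ along $p$. The condition that $\psi$ lie over $Z$ forces $f\circ s = z$. Conversely, any $s\in\cP(T)$ yields a triple with $t=p\circ s$, $z=f\circ s$, and $\psi=\wtilde s$. This gives a bijection $(Z\xt_{[Z/G]} S)(T)\simeq\cP(T)$, natural in $T$, whence $\cP\simeq Z\xt_{[Z/G]}S$; tracking $s\mapsto f\circ s$ and $s\mapsto p\circ s$ shows the two projections become $f$ and $p$, so the displayed square is $2$-Cartesian.

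I expect the main obstacle to be the bookkeeping in this identification: one must check that the passage between $\psi$ and $s$ is compatible with the twisted $G$-action on fiber products from \S\ref{sect:G-invariant} (where $\rho$ enters with an inverse), so that the bijection is genuinely natural in $T$ and matches the projections without an extraneous inverse. A cleaner route that sidesteps the pointwise manipulation is to check the Cartesian property fppf-locally on $S$: over a cover $(S_i\to S)$ trivializing $\cP$, Lemma~\ref{Z/G-triv} identifies $\tau|_{S_i}$ with $a_i^*\tau_0$ for a map $a_i : S_i\to Z$, reducing the claim to the base change of the $2$-Cartesian square of Lemma~\ref{Z/G-atlas} along $a_i$; alternatively, identify $S\simeq[\cP/G]$ via Remark~\ref{P/G} and apply Lemma~\ref{cos-fiber}.

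Finally, the \emph{in particular} clause follows formally. Schematicity of $\tau_0$ is immediate, since $Z\xt_{[Z/G]}S\simeq\cP$ is a scheme for every scheme $S$ mapping to $[Z/G]$. As $G$ is affine over $k$, every $G$-bundle is affine over its base, so $p$ is affine and hence $\tau_0$ is affine; and since $G\to\spec k$ is fppf and $p$ is fppf-locally isomorphic to $S\xt G\to S$, the morphism $p$ is fppf, giving that $\tau_0$ is fppf.
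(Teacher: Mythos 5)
Your argument is correct, but your primary route is genuinely different from the paper's. You compute $T$-points of $Z \xt_{[Z/G]} S$ directly: the dictionary of Lemma~\ref{Z/G-triv} between sections of a trivialized torsor and $G$-equivariant maps out of $T \xt G$ identifies a triple $(z,t,\psi)$ with a point $s \in \cP(T)$ satisfying $z = f\circ s$ and $t = p \circ s$, and the universal case $T = \cP$, $s = \id_\cP$ produces exactly the canonical $2$-morphism filling the square. The paper argues formally instead: it applies Lemma~\ref{cos-fiber} to the $G$-equivariant morphisms $Z \xt G \to Z$ and $\cP \to Z$, obtaining a Cartesian square with corners $[\cP/G] \simeq S$ and $[(Z\xt G)/G] \simeq Z$ and fiber $[((Z \xt G) \xt_{\alpha,Z,f} \cP)/G]$, and then identifies this fiber with $\cP$ via the $G$-equivariant morphism $(f \xt \id, \alpha_\cP) : \cP \xt G \to (Z\xt G)\xt_{\alpha,Z,f}\cP$ and repeated use of Remark~\ref{P/G} --- precisely the alternative you mention in passing at the end of your third paragraph. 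The formal route reuses established machinery and makes the connecting $2$-morphism (``action and first projection,'' i.e.\ $\cP \xt G \to \cP \xt_S \cP$) explicit, which the paper exploits later in Remark~\ref{BGxZ}; your pointwise route is more elementary and makes the matching of the two projections with $f$ and $p$ visible directly, and your worry about the twisted action of \S\ref{sect:G-invariant} is harmless here, since the bijection with $\cP(T)$ never invokes the $G$-action on the fiber product. One caution about your suggested fppf-local shortcut: asserting the square is $2$-Cartesian presupposes a globally defined $2$-morphism $f^*\tau_0 \to p^*\tau$, so one must first exhibit the canonical global comparison map (as your main argument, or the paper's action-and-projection map, does) before checking it is an isomorphism over a trivializing cover --- the local $2$-morphisms $\wtilde{s_i}$ depend on the chosen sections and do not obviously glue by themselves. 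Your deduction of the ``in particular'' clause (schematic from representability of the fiber by $\cP$, affine since $G$-bundles are affine over the base by fpqc descent, fppf by local triviality) is exactly what the paper intends.
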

\begin{proof} By Lemma~\ref{cos-fiber}, we have a Cartesian square
\begin{equation} \label{eqn:Z-Z/G:fiber} 
\xymatrix{ \ds \Bigl[ \bigl((Z \xt G) \xt_{\alpha, Z, f} \cP \bigr)/G \Bigr] 
\ar[r] \ar[d] & [Z \xt G/G] \ar[d]^\alpha \\
[\cP/G] \ar[r] & [Z/G] } 
\end{equation}
where $Z \xt G$ is the trivial $G$-bundle over $Z$. 
There is a $G$-equivariant morphism \[ (f \xt \id, \alpha_\cP) : \cP \xt G \to (Z \xt G) 
\xt_{\alpha,Z,f} \cP\] where $\cP \xt G$ is the trivial $G$-bundle over $\cP$.
Action and projection induce a $G$-equivariant morphism $\cP \xt G \to \cP \xt_S \cP$ 
over $\cP \xt \cP$. Therefore we see that the diagram
\[ \xymatrix{ S \ar[dd]_{\id:\cP \to \cP} & \cP \ar[d]^{\id: \cP \xt G \to \cP \xt G}  
\ar[l]_-p \ar[d] \ar[r]^-f & Z \ar[dd]^{\id: Z \xt G \to Z \xt G} \\ 
&  [\cP \xt G/G] \ar[d]^{(f \xt \id, \alpha_\cP)} \\ 
[\cP/G] & \ds [((Z \xt G) \xt_{\alpha, Z, f} \cP)/G] \ar[r]\ar[l] & [Z \xt G/G] } \]
is $2$-commutative. Applying Remark~\ref{P/G} multiple times, we deduce that
\eqref{eqn:Z-Z/G:fiber} is isomorphic to the desired Cartesian square of the claim. 
Moreover, we see that the morphism $f^*\tau_0 \to p^*\tau$ is defined by the
action and first projection morphisms. 
\end{proof}

\begin{rem}\label{BGxZ}
Let $Z$ be a $k$-scheme and give it the trivial $G$-action. Lemma~\ref{Z/G-atlas} 
implies that $\cdot \to BG$ is $G$-invariant, so by base change $Z \to BG \xt Z$
is also $G$-invariant. For any morphism $(\cP,u) : S\to BG \xt Z$, the fibered product
$Z \xt_{BG \xt Z} S \simeq \cdot \xt_{BG} S$ is isomorphic to $\cP$ by Corollary~\ref{Z-Z/G},
and the morphism $\cP \to Z$ is $\cP \to S$ composed with $u$. Lemma~\ref{Z/G=X} then gives
an isomorphism $BG \xt Z \simeq [Z/G]$. This isomorphism implies that for a $G$-bundle
$\cP$ on $S$, any $G$-invariant morphism $\cP \to Z$ factors through a unique morphism $S \to Z$.
\end{rem}

\subsection{Change of group} \label{section:cog}
Let $H \into G$ be a closed subgroup of $G$. We want to show the following relation
between $BH$ and $BG$. 

\begin{lem} \label{BH-BG} The morphism $BH \to BG$ sending $\cP \mapsto \twist \cP G$ is schematic,
finitely presented, and quasi-projective.
\end{lem}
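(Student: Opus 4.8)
The plan is to identify $BH \to BG$ with a change-of-space morphism and then quote Lemma~\ref{cos-representable}. Regard $G$ as a scheme with the right $G$-action given by right translation; the closed subgroup $H$ acts on $G$ on the left, and I write $Z = H\backslash G$ for the quotient, equipped with the residual right $G$-action $(Hg)\cdot g' = Hgg'$. Converting the left $H$-action into a right one via $g * h = h^{-1}g$ exhibits the quotient map $G \to Z$ as a right $H$-bundle, so it defines a morphism $\sigma_0 : Z \to BH$. First I would check that $\sigma_0$ is $G$-invariant in the sense of \S\ref{sect:G-invariant} — the required $2$-isomorphism $\pr_1^*\sigma_0 \simeq \alpha^*\sigma_0$ over $Z \xt G$ comes from right-translating the tautological torsor, and the associativity of the $\rho_{z,g}$ is a direct computation. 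Then I would check that $\sigma_0$ is in fact a $G$-bundle in the sense of \S\ref{section:G-bundle-stack}: for a scheme $S \to BH$ classifying an $H$-bundle $\cR$, the fibre $Z \xt_{BH} S$ is $G$-equivariantly isomorphic to the associated $G$-bundle $\twist \cR G$, via the map sending a class $[(r,g)]$ to $Hg \in Z$. Granting this, Lemma~\ref{Z/G=X} yields an isomorphism $BH \xrightarrow{\sim} [Z/G]$ carrying $\cR$ to the pair $\bigl(\twist \cR G \to Z\bigr)$.

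Next I would check that, under this isomorphism, the morphism of the lemma becomes the change-of-space morphism $[Z/G] \to [\cdot/G] = BG$ induced by the structure morphism $Z \to \spec k$, which is $G$-equivariant since $\spec k$ carries the trivial action. This is immediate from the description above: composing $\cR \mapsto \bigl(\twist \cR G \to Z\bigr)$ with the change-of-space map forgets the morphism to $Z$ and returns the $G$-bundle $\twist \cR G$, recovering exactly $\cR \mapsto \twist \cR G$. It therefore suffices to prove the change-of-space morphism $[Z/G] \to BG$ is schematic, finitely presented, and quasi-projective.

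The geometric input is Chevalley's theorem: since $H$ is a closed subgroup of the affine algebraic group $G$, there is a finite-dimensional $G$-representation $V$ and a line whose stabiliser is $H$, producing a $G$-equivariant locally closed immersion $Z = H\backslash G \hookrightarrow \bP(V)$. Hence $Z \to \spec k$ is quasi-projective and of finite type — so finitely presented, as $\spec k$ is Noetherian — and $\cL = \cO_{\bP(V)}(1)|_Z$ is a $G$-equivariant relatively ample invertible sheaf. Now Lemma~\ref{cos-representable} applies with $Z' = Z$ and target $\spec k$: because $Z \to \spec k$ is quasi-projective with the $G$-equivariant ample sheaf $\cL$, the morphism $[Z/G] \to BG$ is schematic and quasi-projective; and because finite presentation is fppf-local on the target and $Z \to \spec k$ is finitely presented, the same lemma gives that $[Z/G] \to BG$ is finitely presented. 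Transporting along $BH \simeq [Z/G]$ then gives the claim.

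The main obstacle will be the two verifications in the first paragraph — that $\sigma_0$ is a $G$-bundle and that the resulting isomorphism intertwines $\cR \mapsto \twist \cR G$ with change of space — since both require carefully tracking the commuting left-$H$ and right-$G$ actions on $G$ together with the descent identifications of \S\ref{section:G-bundle-stack}. The genuinely nontrivial ingredient, however, is Chevalley's theorem: it is precisely the existence of the $G$-equivariant ample sheaf $\cL$ that upgrades the conclusion from ``representable by an algebraic space'' to ``quasi-projective,'' and without it Lemma~\ref{cos-representable} would yield only representability.
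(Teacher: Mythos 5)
Your proposal is correct and follows essentially the same route as the paper: identify $BH \simeq [(H\bs G)/G]$ by verifying that $H\bs G \to BH$ is a right $G$-bundle (the paper's Lemma~\ref{HG/G=BH}), match the lemma's morphism with the change-of-space morphism to $BG$, and conclude via the quasi-projective case of Lemma~\ref{cos-representable} using a $G$-equivariant ample sheaf on $H\bs G$. The only difference is one of citation: where you invoke Chevalley's theorem directly, the paper cites Demazure--Gabriel [III, \S 3, Th\'eor\`eme 5.4], which packages both Chevalley's construction and the (nontrivial, and needed) representability of the fppf quotient sheaf $H\bs G$ as a quasi-projective scheme with $G$-equivariant ample invertible sheaf.
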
 

\noindent Here we are twisting $G$ by the left $H$-action. As we shall see, 
$\cP \mapsto \twist \cP G$ gives a well-defined morphism of stacks $BH \to BG$.
\medskip

By \cite[III, \S 3, Th\'eor\`eme 5.4]{De-Ga}, 
the fppf sheaf $H\bs G$ of left cosets is representable by a quasi-projective $k$-scheme with
a $G$-equivariant, ample invertible sheaf. 
The morphism $H \xt G \to G \xt_{H\bs G} G$ induced by multiplication and projection is 
an isomorphism 
by \cite[III, \S 1, 2.4]{De-Ga}. Therefore the projection $\pi :G \to H\bs G$ is a left $H$-bundle.
Note that there is an obvious analogue of our previous discussion of quotient stacks
for schemes with left actions instead of right actions. We use $[H \bs G]$ to denote
the stack quotient by the left action.  
Then Remark~\ref{P/G} implies that $\id : G \to G$ induces an isomorphism $H\bs G \to [H\bs G]$. 

\begin{lem} \label{HG/G=BH} 
The left $H$-bundle $G \to H\bs G$ defines a right $G$-bundle $H\bs G \to [H\bs \cdot]$. 
\end{lem}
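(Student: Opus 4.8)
The plan is to verify directly that $H\bs G \to [H\bs \cdot]$ satisfies the definition of a $G$-bundle over a stack from \S\ref{section:G-bundle-stack}: I must exhibit a right $G$-action on $H\bs G$ making the morphism $G$-invariant, and then show that for every scheme $S$ mapping to $[H\bs\cdot]$ the induced $G$-action on $(H\bs G)\xt_{[H\bs\cdot]} S \to S$ makes it a right $G$-bundle. First I would set up the actions: on $G$ the left multiplication by $H$ and the right multiplication by $G$ commute, so the right $G$-action descends to $H\bs G$, and the projection $G\to H\bs G$ is simultaneously the left $H$-bundle of the hypothesis and a right $G$-equivariant morphism. Using Remark~\ref{P/G} to identify $H\bs G\simeq [H\bs G]$, I would take $\sigma_0\colon H\bs G\to[H\bs\cdot]$ to be the change-of-space morphism attached to the $H$-equivariant structure map $G\to\cdot$. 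Since $G\to\cdot$ is invariant under right multiplication by $G$ (the target is a point), this invariance supplies the $2$-morphism $\rho$ and the associativity datum required in \S\ref{sect:G-invariant}, so that $\sigma_0$ is a $G$-invariant morphism for the residual right $G$-action.

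The heart of the argument is the computation of the fibers. A morphism $S\to[H\bs\cdot]$ is the datum of a left $H$-bundle $\cQ$ over $S$. Applying the evident left-action analogue of Corollary~\ref{Z/G-BG} to the $H$-scheme $G$ gives a $2$-Cartesian square identifying $(H\bs G)\xt_{[H\bs\cdot]} S$ with the associated bundle $\twist \cQ G$, where $G$ is twisted through its left $H$-action. Because the right $G$-action on $G$ commutes with the left $H$-action, it descends to $\twist \cQ G$, and from the explicit descent data of both sides one checks that this agrees with the right $G$-action transported from $H\bs G$. It then remains to see that $\twist \cQ G$ is a right $G$-torsor over $S$, which is fppf-local on $S$: choosing an fppf covering $(S_i\to S)$ trivializing $\cQ$, the twist restricts to $\twist{(H\xt S_i)}{G}\simeq G\xt S_i$, the trivial right $G$-torsor, so $\twist \cQ G$ is locally trivial and hence a right $G$-bundle.

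The main obstacle I anticipate is bookkeeping the left/right conventions: one must keep the left $H$-action used to form $H\bs G$ and $\twist\cQ G$ strictly separate from the right $G$-action providing the bundle structure, and must genuinely identify the two a priori different right $G$-actions on the fibered product---the one transported from $H\bs G$ and the one intrinsic to $\twist\cQ G$. Once the fibers are recognized as the torsors $\twist\cQ G$, assembling the local trivializations into the assertion that $\sigma_0$ is a right $G$-bundle is routine, and combined with Lemma~\ref{Z/G=X} it even yields an isomorphism $[(H\bs G)/G]\simeq[H\bs\cdot]$.
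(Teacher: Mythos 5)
Your proposal is correct and follows essentially the same route as the paper: equip $H\bs G$ with the residual right $G$-action, obtain the $G$-invariance of $H\bs G \to [H\bs \cdot]$ from the fact that right translations of $G$ are left $H$-equivariant and die upon composing with $G \to \cdot$, and then trivialize the left $H$-bundle fppf-locally to see that each fiber is a right $G$-torsor. The only cosmetic difference is that you package the fiber of the change-of-space morphism globally as the associated bundle $\twist \cQ G$ via the left-action analogue of Corollary~\ref{Z/G-BG}, whereas the paper extracts the same local model $S_i \xt G$ (with the $G$-equivariant map $\pi \circ \pr_2$ to $H\bs G$) directly from the descent datum in the proof of Lemma~\ref{cos-representable}; since that corollary is itself proved by exactly this local computation, the two arguments coincide, including the verification you rightly flag that the induced $G$-action on the fibered product matches the descended right-multiplication action on the twist.
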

\begin{proof}
Let $\mu : G \xt G \to G$ denote the group multiplication.
We have an $H$-equivariant morphism $(\pr_1, \mu) : G \xt G \to G \xt G$ where $H$ acts 
on the first coordinate on the left hand side, and $H$ acts diagonally on the right hand side.
We have $2$-commutative squares 
\[ \xymatrix{ H \bs G \ar[d]_{\id : G \to G} & H\bs G \xt G \ar[l]_{\pr_1} \ar[r]^{\wbar\mu} 
\ar[d]|{(\pr_1,\mu) : G \xt G \to G \xt G} & H \bs G \ar[d]^{\id : G \to G} \\ 
[H\bs G] & [H \bs (G \xt G)] \ar[l]_{\pr_1} \ar[r]^{\pr_2} & [H \bs G] } \]
where the $2$-morphisms are just $\id_{G \xt G}$. Thus we have a $2$-commutative square
\[ \xymatrix{ H \bs G \xt G \ar[r]^{\wbar \mu} \ar[d]_{\pr_1} & H \bs G \ar[d] \\ 
H\bs G \ar[r] & [H \bs \cdot] } \]
with $2$-morphism $\id_{G\xt G}$, which gives $H \bs G \to [H \bs \cdot]$ the structure of
a right $G$-invariant morphism. Take a left $H$-bundle $\cP \in [H\bs \cdot](S)$ and an fppf 
covering $(S_i \to S)$ trivializing $\cP$. From the proof of
Lemma~\ref{cos-representable}, we find that
\[ \xymatrix{ S_i \xt G \ar[d] \ar[r] & [H \bs G] \ar[d] \\ 
S_i \ar[r]^{H \xt S_i} & [H \bs \cdot] } \]
is Cartesian, the top arrow corresponds to $H \xt S_i \xt G \to G : (h,a,g) \mapsto hg$,
and the associated $2$-morphism is $\id_{H \xt S_i \xt G}$. Observe that
the morphism $S_i \xt G \to [H \bs G] \simeq H \bs G$ equals $\pi \circ \pr_2$, 
which is $G$-equivariant. 
Therefore $S_i \xt G \to H\bs G \xt_{[H\bs \cdot], H \xt S_i} S_i \simeq (H \bs G \xt_{[H\bs \cdot],\cP} S) \xt_S S_i$ is a $G$-equivariant isomorphism.
Thus $H \bs G \xt_{[H\bs \cdot],\cP} S$ is a right $G$-bundle, 
which proves that $H\bs G \to [H\bs \cdot]$ is also a right $G$-bundle. 
\end{proof}

Now Lemmas \ref{Z/G=X} and \ref{HG/G=BH} give an isomorphism $[H\bs \cdot] \simeq [(H\bs G)/G]$
sending a left $H$-bundle $\cP$ over $S$ to $H\bs G \xt_{[H \bs \cdot],\cP} S \to H \bs G$.
Left $H$-bundles are equivalent to
right $H$-bundles using the inverse action, so $BH \simeq [H\bs \cdot]$. 
By keeping track of left and right actions and applying Corollary~\ref{Z/G-BG}, we 
observe that the morphism $BH \simeq [H \bs \cdot] \simeq [(H \bs G)/G] \to BG$
sends a right $H$-bundle $\cP$ to the associated fiber bundle $\twist \cP G$, 
where $G$ is twisted by the left $H$-action. The right $G$-action on $\twist \cP G$ is
induced by multiplication on the right by $G$.

\begin{proof}[Proof of Lemma~\ref{BH-BG}] 
Applying
Corollary~\ref{Z/G-BG} to the morphism $[(H\bs G)/G] \to BG$, we get a $2$-Cartesian square
\[ \xymatrix{ \twist \cE{(H \bs G)} \ar[r] \ar[d] & BH \ar[d] \\ S \ar[r]^\cE & BG } \]
Since $H \bs G$ is quasi-projective with a $G$-equivariant ample 
invertible sheaf over $\spec k$, we have that $\twist \cE {(H\bs G)}$ is representable by a 
scheme quasi-projective and of finite presentation over $S$ by Lemma~\ref{cos-representable}. 
\end{proof}

\subsection{Proving $[Z/G]$ is algebraic} We now prove Theorem~\ref{Z/G-alg}.

\begin{lem}\label{BG^2} 
For algebraic groups $G, G'$, the morphism
 $BG \xt BG' \to B(G\xt G')$ sending a $G$-bundle $\cP$ and a $G'$-bundle $\cP'$ over $S$ to 
$\cP \xt_S \cP'$ is an isomorphism.
\end{lem}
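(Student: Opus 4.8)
The plan is to prove that $F : BG \xt BG' \to B(G \xt G')$ is an equivalence by producing an explicit quasi-inverse, reducing every verification to the case of trivial torsors via fppf descent. The functor $F$ sends $(\cP, \cP')$ to $\cP \xt_S \cP'$ equipped with the componentwise $(G \xt G')$-action $(p,p').(g,g') = (p.g,\, p'.g')$, and a pair of equivariant isomorphisms $(\phi,\phi')$ to $\phi \xt_S \phi'$. First I would check that $F$ is well defined: since being a torsor is fppf local on $S$, I may pass to a covering trivializing both $\cP$ and $\cP'$, over which $\cP \xt_S \cP'$ becomes the trivial torsor $(G \xt G') \xt S$; hence $\cP \xt_S \cP'$ is a $(G \xt G')$-bundle by descent, and $F$ is a morphism of stacks.

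For the quasi-inverse I use the two projection homomorphisms $G \xt G' \to G$ and $G \xt G' \to G'$. Given a $(G \xt G')$-bundle $\cQ$ over $S$, I form the fppf quotient sheaves $\cQ/(1 \xt G')$ and $\cQ/(G \xt 1)$ by the two normal factors; the residual actions of $G \simeq (G \xt G')/(1 \xt G')$ and $G' \simeq (G \xt G')/(G \xt 1)$ make these into a right $G$-bundle and a right $G'$-bundle respectively (again checked after trivializing $\cQ$, where each quotient is the trivial bundle). This defines $H(\cQ) = \bigl(\cQ/(1 \xt G'),\, \cQ/(G \xt 1)\bigr)$, functorially in $\cQ$.

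The content of the lemma is then the two natural isomorphisms. For $H \circ F \simeq \id$, I note that $\cP'$ being a $G'$-torsor gives $\cP'/G' \simeq S$, whence $(\cP \xt_S \cP')/(1 \xt G') \simeq \cP \xt_S (\cP'/G') \simeq \cP$, and symmetrically for the other factor; these identifications are natural in $(\cP,\cP')$. For $F \circ H \simeq \id$, the two quotient maps assemble into a canonical $(G \xt G')$-equivariant morphism $\cQ \to \bigl(\cQ/(1 \xt G')\bigr) \xt_S \bigl(\cQ/(G \xt 1)\bigr)$, and I claim it is an isomorphism: over an fppf covering trivializing $\cQ$ both sides become $(G \xt G') \xt S$ and the map is the identity, so it is an isomorphism by descent. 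This single descent statement — that $\cQ \to (\cQ/G') \xt_S (\cQ/G)$ is an isomorphism — is the only real content; everything else is bookkeeping of the left/right actions and the compatibility of the quotients with base change. I expect no genuine obstacle, since the whole argument rests on fppf-local triviality of torsors together with descent. Assembling the two natural isomorphisms shows $F$ is fully faithful and essentially surjective, hence an isomorphism of stacks.
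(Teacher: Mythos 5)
Your proof is correct, and the quasi-inverse you construct is in substance the same as the paper's: the paper's inverse sends $\cE \mapsto (\twist \cE G, \twist \cE {G'})$, where $G \xt G'$ acts on each factor through the projections, and this twist is canonically isomorphic to your quotient sheaf, $\twist \cE G \simeq \cE/(1 \xt G')$ (the map $(e,g) \mapsto [e].g$ descends, since $[e.(h,h')].h^{-1}g = [e].g$). What genuinely differs is the verification. The paper chooses an fppf covering trivializing both bundles and compares descent data: $\bigl(S_i \xt G \xt G', (g_{ij},g'_{ij})\bigr)$ is a descent datum for $\cP \xt_S \cP'$, and conversely a $(G \xt G')$-cocycle is exactly a pair of cocycles; since both sides are stacks, this yields the isomorphism in one stroke, with the inverse read off from the descent data. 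You instead verify the two natural isomorphisms $H \circ F \simeq \id$ and $F \circ H \simeq \id$ directly at the sheaf level, the single nontrivial point being that $\cQ \to \bigl(\cQ/(1 \xt G')\bigr) \xt_S \bigl(\cQ/(G \xt 1)\bigr)$ is an isomorphism, checked after an fppf-local trivialization (alternatively: it is an equivariant morphism of $(G \xt G')$-torsors, hence automatically an isomorphism). Your route is cocycle-free and more canonical --- no choice of trivialization enters the construction, only the verification --- at the cost of having to justify that the quotient sheaves exist, are torsors, and commute with restriction; all of this is fine in the paper's conventions, where bundles are sheaf torsors, sheafification commutes with restriction to slice sites (as the paper itself notes in its twisting subsection), and representability by affine schemes follows from affine descent. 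The paper's cocycle computation is shorter but less intrinsic. One small step you should spell out: in $H \circ F \simeq \id$, the identification $(\cP \xt_S \cP')/(1 \xt G') \simeq \cP \xt_S (\cP'/G')$ requires the same fppf-local check, since quotient sheaves do not commute with fiber products for free.
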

\begin{proof}
Take $(\cP,\cP') \in (BG \xt BG')(S)$ and choose an fppf covering $(S_i \to S)$ 
on which both $\cP$ and $\cP'$ are trivial. Then we have descent data 
\[ (S_i \xt G, g_{ij}),\; (S_i \xt G', g'_{ij}) \] 
for $g_{ij} \in G(S_i \xt_S S_j), g'_{ij} \in G'(S_i \xt_S S_j)$,
corresponding to $\cP,\cP'$, respectively. Then 
\[ (S_i \xt G \xt G' , (g_{ij},g'_{ij}) ) \]
gives a descent datum for $\cP \xt_S \cP'$. Conversely, any such descent datum 
with $(g_{ij},g'_{ij})$ satisfying the cocycle condition implies that $g_{ij}$ and $g'_{ij}$
satisfy the cocycle condition separately. Since $BG \xt BG'$ and $B(G \xt G')$ are
both stacks, we deduce that $(\cP, \cP') \mapsto \cP \xt_S \cP'$ gives an isomorphism. 
From the descent data, we see that an inverse to this morphism is the morphism 
sending $\cE \mapsto (\twist \cE G, \twist \cE G')$, where $G \xt G'$ acts by the
projections on $G,G'$.
\end{proof}

\begin{lem}\label{BG-alg}
The $k$-stack $BG$ is an algebraic stack with a schematic, affine diagonal. 
Specifically, for right $G$-bundles $\cP, \cP'$ over $S$, there is an isomorphism 
\[ \Isom_{BG(S)}(\cP,\cP') \simeq \twist{\cP \xt_S \cP'}{(G)} \] as sheaves of sets on $\Sch_{/S}$, 
where $G\xt G$ acts on $G$ from the right by $g.(g_1,g_2) = g_1^{-1} g g_2$. 
\end{lem}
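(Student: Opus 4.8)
The plan is to deduce algebraicity of $BG$ from the atlas $\cdot \to BG$, and to reduce every assertion about the diagonal to the displayed computation of the $\Isom$ sheaf. First I would recall from Lemma~\ref{Z/G-atlas} (equivalently Corollary~\ref{Z-Z/G} with $Z=\cdot$) that $\tau_0:\cdot\to BG$ is schematic and that its base change along itself is $G\to\cdot$, which by Corollary~\ref{Z-Z/G} is fppf and surjective. Since $G$ is not assumed smooth, $\cdot\to BG$ need not be smooth, so I would invoke the criterion that a stack in groupoids with representable diagonal admitting a surjective, flat, finitely presented morphism from a scheme is an algebraic stack \cite{stacks-project}. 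Thus, granting that the diagonal is schematic and affine, $BG$ is algebraic with the asserted diagonal.

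For the diagonal, I would note that for a scheme $S$ and a morphism $(\cP,\cP'):S\to BG\xt BG$, the fiber $BG\xt_{BG\xt BG}S$ is by definition the sheaf $\Isom_{BG(S)}(\cP,\cP')$ (a $T$-point is a bundle together with isomorphisms to $\cP|_T$ and $\cP'|_T$, i.e.\ an isomorphism $\cP|_T\simeq\cP'|_T$). Hence it suffices to identify this sheaf with $\twist{\cP\xt_S\cP'}{G}$ and to show the latter is affine over $S$. For affineness I would first check that $g.(g_1,g_2)=g_1^{-1}gg_2$ is a genuine right $(G\xt G)$-action on $G$, apply Lemma~\ref{cos-representable} to the $(G\xt G)$-equivariant affine morphism $G\to\spec k$ to conclude that $[G/(G\xt G)]\to B(G\xt G)$ is schematic and affine, and then base change along the classifying map $\cP\xt_S\cP':S\to B(G\xt G)$. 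Here Lemma~\ref{BG^2} identifies $(\cP,\cP')$ with the $(G\xt G)$-bundle $\cP\xt_S\cP'$, and Corollary~\ref{Z/G-BG} identifies the resulting fiber product with $\twist{\cP\xt_S\cP'}{G}$.

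It then remains to match $\Isom_{BG(S)}(\cP,\cP')$ with $\twist{\cP\xt_S\cP'}{G}$ as sheaves, which I would do by comparing descent data. Choose an fppf cover $(S_i\to S)$ trivializing both bundles, with transition elements $g_{ij},g'_{ij}$. Using the canonical identification $G\simeq\Isom(G,G)$ from the Notation section, every $G$-equivariant isomorphism $\cP|_{S_i}\to\cP'|_{S_i}$ is left multiplication by a unique $h_i\in G(S_i)$, and comparing the two trivializations over $S_{ij}$ yields the gluing relation $h_i=g'_{ij}h_jg_{ij}^{-1}$. On the other side, the descent-datum description in the twisting subsection shows that $\twist{\cP\xt_S\cP'}{G}$ has transition $h\mapsto g_{ij}h(g'_{ij})^{-1}$; these two descent data are intertwined by the inversion automorphism of $G$, which is precisely what the stated action convention $g.(g_1,g_2)=g_1^{-1}gg_2$ records. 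Thus inversion on $G$ glues to the desired isomorphism of sheaves.

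The main obstacle is purely bookkeeping: arranging the left/right action conventions and the orientation of the transition maps so that the displayed action is exactly the one under which the descent datum of the twist agrees, via inversion on $G$, with that of the $\Isom$ sheaf. Everything else—algebraicity from the fppf atlas and affineness of the twist—is formal given Lemmas~\ref{cos-representable} and \ref{BG^2} together with Corollary~\ref{Z/G-BG}.
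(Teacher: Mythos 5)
Your proposal is correct, and its overall skeleton coincides with the paper's: identify the fiber of the diagonal over $(\cP,\cP'):S \to BG \xt BG$ with the sheaf $\Isom_{BG(S)}(\cP,\cP')$, exhibit this sheaf as a twist of the affine scheme $G$, and deduce algebraicity from the schematic, affine, fppf atlas $\cdot \to BG$ (Corollary~\ref{Z-Z/G}) via Artin's theorem --- the paper cites \cite[Th\'eor\`eme 10.1]{LMB} where you cite the equivalent criterion from \cite{stacks-project}, likewise noting that the flatness case is only needed when $G$ is not smooth. Where you genuinely diverge is in how the identification $\Isom_{BG(S)}(\cP,\cP') \simeq \twist{\cP \xt_S \cP'}{(G)}$ is produced. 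The paper realizes the diagonal of $BG$ as the change-of-group morphism $BG \to B(G \xt G)$ attached to the diagonal embedding $G \into G \xt G$, so that both the action and the isomorphism fall out of the proofs of Lemmas~\ref{BH-BG} and \ref{BG^2}: the action $g.(g_1,g_2)=g_1^{-1}gg_2$ arises as the induced right $(G\xt G)$-action on the coset space $G \bs (G \xt G) \simeq G$, $(g_1,g_2)\mapsto g_1^{-1}g_2$. You instead take the action as given, obtain affineness by applying Lemma~\ref{cos-representable} directly to the $(G\xt G)$-equivariant affine morphism $G \to \spec k$ together with Lemma~\ref{BG^2} and Corollary~\ref{Z/G-BG}, and verify the identification by an explicit comparison of descent data; your bookkeeping checks out (the $\Isom$ sheaf glues by $h_i = g'_{ij}h_j g_{ij}^{-1}$, the twist by $h \mapsto g_{ij}h(g'_{ij})^{-1}$, and inversion on $G$ intertwines the two, which is precisely the discrepancy the stated right-action convention encodes). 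The trade-off: the paper's route is shorter given the change-of-group machinery already in place and explains conceptually where the action $g.(g_1,g_2)=g_1^{-1}gg_2$ comes from, while your route is more elementary at this point in the development, does not rest on quoting interior details of earlier proofs, and makes the isomorphism completely explicit --- in particular it records the inversion that the paper's terse ``by the proofs of Lemmas~\ref{BH-BG} and \ref{BG^2}'' leaves implicit. Both arguments are complete.
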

\begin{proof}
For a scheme $S$ and $\cP, \cP' \in BG(S)$, we have a Cartesian square
\[ \xymatrix{ \Isom_{BG(S)}(\cP,\cP') \ar[r] \ar[d] & BG \ar[d] \\ S \ar[r]^{\cP,\cP'} & 
BG \xt BG } \]
Observe that there is an isomorphism $G \bs (G \xt G) \simeq G$ by sending $(g_1,g_2) \mapsto
g_1^{-1} g_2$. The induced right action of $G \xt G$ on $G$ is then $g.(g_1,g_2) = g_1^{-1}g g_2$
as claimed. By the proofs of Lemmas~\ref{BH-BG} and \ref{BG^2}, we get an isomorphism 
$\Isom_{BG(S)}(\cP,\cP') \simeq \twist{\cP \xt_S \cP'}{(G)}$ over $S$. 
Since $G$ is affine over $k$, the associated fiber bundle
$\twist{\cP \xt_S \cP'}{(G)}$ is representable by a scheme affine over $S$.
Hence the diagonal $BG \to BG \xt BG$ is schematic and affine.
As a special case of Lemma~\ref{Z-Z/G}, the morphism $\cdot\to BG$ is schematic, affine, and fppf. 
By Artin's Theorem \cite[Th\'eor\`eme 10.1]{LMB}, $BG$ is an algebraic stack 
(if $G$ is smooth over $k$, the last step is unnecessary).
\end{proof}

\begin{proof}[Proof of Theorem~\ref{Z/G-alg}]
Fix a scheme $S$ and $(\cP \to Z) \in [Z/G](S)$. Then by Remark~\ref{P/G} and 
Lemma~\ref{cos-representable}, the morphism $S \simeq [\cP/G] \to [Z/G]$ is representable. 
Therefore the diagonal $\Delta_{[Z/G]}$ is representable. 
By Lemma~\ref{BG-alg}, there exists a scheme $U$ and a smooth surjective morphism $U \to BG$. 
The change of space morphism $f:[Z/G] \to BG$ is representable by Lemma~\ref{cos-representable}, 
so $U \xt_{BG} [Z/G]$ is representable by an algebraic space. Therefore there exists a scheme
$V$ with an \'etale surjective morphism $V \to U \xt_{BG} [Z/G]$. The composition $V \to [Z/G]$
thus gives a presentation. This shows that $[Z/G]$ is an algebraic stack. 

The diagonal $\Delta_{[Z/G]}$ is the composition of the morphisms 
\[ [Z/G] \overset{\Delta_f}\lto [Z/G] \xt_{BG} [Z/G] \to [Z/G] \xt [Z/G].\] 
We have a $2$-Cartesian square 
\[ \xymatrix{ \ds [Z/G] \xt_{BG} [Z/G] \ar[r] \ar[d] & [Z/G] \xt [Z/G] \ar[d] \\ 
BG \ar[r] & BG \xt BG } \]
and we know the diagonal $\Delta_{BG}$ is schematic and affine by Lemma~\ref{BG-alg}. 
By base change, this implies $[Z/G] \xt_{BG} [Z/G] \to [Z/G] \xt [Z/G]$ is also schematic 
and affine. Since $f$ is representable, 
\cite[Lemma \href{http://math.columbia.edu/algebraic_geometry/stacks-git/locate.php?tag=04YQ}{04YQ}]{stacks-project} implies that $\Delta_f$ is schematic and separated. 
If $Z$ is quasi-separated, then by descent $\twist \cP Z \to S$ is quasi-separated for any 
$\cP \in BG(S)$. Thus Corollary~\ref{Z/G-BG} implies that $f$ is quasi-separated,
and \cite[Lemma \href{http://math.columbia.edu/algebraic_geometry/stacks-git/locate.php?tag=04YT}{04YT}]{stacks-project} shows that $\Delta_f$ is quasi-compact. 
If $Z$ is separated, we deduce by the same reasoning that $f$ is separated.
Hence by \cite[Lemma \href{http://math.columbia.edu/algebraic_geometry/stacks-git/locate.php?tag=04YS}{04YS}]{stacks-project}, the relative diagonal $\Delta_f$ is a closed immersion.
Composition of the two morphisms $\Delta_f$ and $[Z/G] \xt_{BG} [Z/G] \to [Z/G] \xt [Z/G]$ 
now gives the claimed properties of $\Delta_{[Z/G]}$. 
\end{proof}

\section{Hom stacks} \label{section:hom-stacks}
For a base scheme $S$, let $X$ be an $S$-scheme and $\cY : (\Sch_{/S})^\op \to \Gpd$
a pseudo-functor.
Then we define the Hom $2$-functor $\cHom_S(X,\cY)$ by
\[ \cHom_S(X,\cY)(T) = \Hom_T(X_T, \cY_T) = \Hom_S(X_T, \cY), \]
sending an $S$-scheme $T$ to the $2$-category of groupoids.
By the $2$-Yoneda lemma, we have a natural equivalence of categories 
$\Hom_S(X_T,\cY) \simeq \cY(X_T)$. From this we deduce that if $\cY$ is an fpqc $S$-stack,
then $\cHom_S(X,\cY)$ is as well.
\smallskip

The main example of a Hom stack we will be concerned with is $\Bun_G$:

\begin{defn} \label{defn:BunG} 
Let $S$ be a $k$-scheme. Then for an $S$-scheme $X$, we define the moduli stack of 
$G$-bundles on $X \to S$ by $\Bun_G = \cHom_S(X,BG \xt S)$. 
\end{defn}

As we remarked earlier, $\Bun_G$ is an fpqc stack since $BG$ is. 
Explicitly, for an $S$-scheme $T$, $\Bun_G(T)$ is the groupoid of right $G$-bundles on $X_T$.
\smallskip

In this section, we prove some properties on morphisms between Hom stacks. 
As a corollary, we show that the diagonal of $\Bun_G$ is schematic and affine 
under certain conditions. These properties reduce
to understanding the sheaf of sections associated to a morphism of schemes, which we now
introduce. 

\subsection{Scheme of sections} \label{section:sect}
Let $S$ be a base scheme and 
$X \to S$ a morphism of schemes. Given another morphism of schemes $Y \to X$, 
we define the presheaf 
of sets $\Sect_S(X,Y)$ on $\Sch_{/S}$ to send 
\[ (T \to S) \mapsto \Hom_{X_T}(X_T, Y_T) = \Hom_X(X_T, Y). \]
Since schemes represent fpqc sheaves, 
the presheaf $\Sect_S(X,Y)$ is an fpqc sheaf of sets. 
Here is the main result of this subsection:

\begin{thm}\label{sect-general} 
Let $X \to S$ be a flat, finitely presented, projective morphism, and let $Y \to X$ be a
finitely presented, quasi-projective morphism. Then $\Sect_S(X,Y)$ is representable by a 
disjoint union of schemes which are finitely presented and
locally quasi-projective over $S$. 
\end{thm}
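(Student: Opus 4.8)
The plan is to represent $\Sect_S(X,Y)$ as an open subscheme of a Hilbert scheme by identifying a section with its graph. First I would compactify the fibers: since $Y \to X$ is quasi-projective and finitely presented, it factors as an open immersion $Y \into \overline Y$ followed by a projective, finitely presented morphism $\overline Y \to X$, obtained by taking the schematic closure of $Y$ inside some $\bP(\cE)$ (so that $Y$ is open in $\overline Y$). The composite $\overline Y \to X \to S$ is then projective and finitely presented. By Grothendieck's construction of the Hilbert scheme \cite{FGA-explained}, together with the standard limit arguments reducing a finitely presented situation to a Noetherian base, $\mathrm{Hilb}_{\overline Y/S}$ is representable by a disjoint union of finitely presented, locally projective $S$-schemes.

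Next I would build a natural transformation $\Sect_S(X,Y) \to \mathrm{Hilb}_{\overline Y/S}$ sending a section $s : X_T \to Y_T$ to its graph. Because $Y \to X$ is separated, $s$ is a closed immersion, so its image $\Gamma_s$ is a closed subscheme of $Y_T \subseteq \overline Y_T$; moreover the projection $\overline Y_T \to X_T$ restricts to an isomorphism $\Gamma_s \simeq X_T$, whence $\Gamma_s$ is flat, proper, and finitely presented over $T$ and defines a $T$-point of the Hilbert scheme. I claim this identifies $\Sect_S(X,Y)$ with the subfunctor $U \subseteq \mathrm{Hilb}_{\overline Y/S}$ of closed subschemes $Z \subseteq \overline Y_T$ such that (a) the projection $Z \to X_T$ is an isomorphism and (b) $Z$ is contained in the open $Y_T$. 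Indeed, over such a $Z$ the composite $X_T \overset{\sim}\lto Z \into Y_T$ is a section, and this inverts the graph construction, giving $\Sect_S(X,Y) \simeq U$.

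The heart of the argument is to show that $U$ is an \emph{open} subscheme. Let $\mathcal Z \subseteq \overline Y \xt_S \mathrm{Hilb}_{\overline Y/S}$ be the universal closed subscheme, with projection $q : \mathcal Z \to X \xt_S \mathrm{Hilb}_{\overline Y/S}$. Since $X \to S$ is flat and proper while $\mathcal Z$ is flat and proper over the Hilbert scheme, both source and target of $q$ are flat, proper, and finitely presented over $\mathrm{Hilb}_{\overline Y/S}$, so the locus where the fiber of $q$ is an isomorphism is open \cite{stacks-project}; this is condition (a). For condition (b), over the iso-locus the universal section fails to factor through the open $Y$ only along a closed subset of $X \xt_S \mathrm{Hilb}_{\overline Y/S}$, whose image in $\mathrm{Hilb}_{\overline Y/S}$ is closed because $X \to S$ is proper; hence the locus where it lands in $Y_T$ is open as well. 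Therefore $U$ is open, $\Sect_S(X,Y) \simeq U$ is representable, and as an open subscheme of a disjoint union of finitely presented, locally projective $S$-schemes it is a disjoint union of finitely presented, locally quasi-projective $S$-schemes, as claimed.

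I expect the main obstacle to be the openness of the isomorphism locus for $q$ in condition (a) — that for a proper, finitely presented morphism between schemes flat over a base, the set of fibers over which it is an isomorphism is open — since this rests on the structure theory of flat proper families rather than a direct computation. A secondary technical point is the descent of Grothendieck's Hilbert scheme from a Noetherian to a general finitely presented base, which I would handle by a routine approximation argument.
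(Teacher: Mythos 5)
Your overall strategy is the same as the paper's: compactify $Y$ relatively over $X$, embed $\Sect_S(X,Y)$ into $\Hilb_{\wbar Y/S}$ via graphs, and cut out the image by two open conditions. Your condition (a) (the projection $Z \to X_T$ is an isomorphism on an open locus of the base) is exactly the content of the paper's Lemma~\ref{locus-iso} combined with Lemma~\ref{sect-proper-hilb}, and your condition (b) (containment in the open $Y_T$ is an open condition, via properness of $X \to S$) is exactly Lemma~\ref{sect-open}. The ``main obstacle'' you identify, openness of the isomorphism locus for a proper morphism between flat proper families, is indeed the technical heart, and the paper proves it directly (Chevalley semicontinuity to get a finite flat locus, then comparing $\cO$ with the pushforward of the structure sheaf); citing a standard reference for it is legitimate.

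The genuine gap is in your final step, and it is precisely what the paper's Euler-characteristic bookkeeping is designed to repair. First, when $S$ is not quasi-compact, the compactification $Y \into \wbar Y \subset \bP(\cE)$ need not exist globally: producing an embedding of a quasi-projective (in Grothendieck's sense) morphism into some $\bP(\cE)$ requires quasi-compactness of the base, so $\wbar Y$, $\wbar \cL$, and hence your Hilbert-polynomial decomposition of $\Hilb_{\wbar Y/S}$ exist only Zariski-locally on $S$; local decompositions indexed by non-canonical data do not glue into a global disjoint-union decomposition. Second, even granting a global $\wbar Y$, your closing inference --- that an open subscheme of a disjoint union of finitely presented $S$-schemes is again such a disjoint union --- fails over a non-noetherian base, since an open subscheme of a finitely presented $S$-scheme need not be quasi-compact over $S$, so the pieces $U \cap \Hilb^{\Phi}$ need not be finitely presented. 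The paper resolves both issues at once by decomposing $\Hilb_{Y/S}$ itself into the loci $\Hilb_{Y/S}^{(\chi_{n,m})}$ of constant Euler characteristics $\chi((\cO_Z \ot \cL^{\otimes n} \ot \pi^*(\cK^{\otimes m}))_t)$, where $\cL$ and $\cK$ are relatively ample sheaves on $Y \to X$ and $X \to S$ that exist globally by the projectivity hypotheses: this index set is canonical and globally defined, so the decomposition of $\Sect_S(X,Y)$ is global, and after localizing to affine noetherian $S$ each piece becomes an open subfunctor of a single $\Hilb^{\Phi,\wbar\cL}_{\wbar Y/S}$ (via $\cL^{\otimes a} \ot \pi^*(\cK^{\otimes b}) \simeq \wbar\cL|_Y$), where an open immersion into a noetherian scheme is automatically finitely presented. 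As written, your argument proves the theorem for quasi-compact (after a limit argument, noetherian) $S$, but the global statement --- a disjoint union with each piece finitely presented over all of $S$ --- needs this extra device or an equivalent canonical indexing.
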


\begin{eg} Observe that if $X$ and $Z$ are schemes over $S$, then taking the
first projection $X \xt_S Z \to X$ gives an equality $\Sect_S(X, X \xt_S Z) = 
\cHom_S(X,Z)$. \end{eg}

We first consider when $Y \to X$ 
is affine, which will be of independent interest because we get a stronger result. 
We then prove the theorem by considering the cases where $Y \to X$ is an open immersion 
and a projective morphism separately. 

\begin{lem}\label{sect-vb}
Let $p:X \to S$ be a flat, finitely presented, proper morphism and $\cE$ a locally free 
$\cO_X$-module of finite rank. Then $\Sect_S(X,\rspec_X \sym_{\cO_X} \cE)$ is 
representable by a scheme affine and finitely presented over $S$. 
\end{lem}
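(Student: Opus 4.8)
The plan is to prove that the section sheaf $\Sect_S(X, \rspec_X \sym_{\cO_X}\cE)$ is representable by an affine, finitely presented $S$-scheme. The key observation is that for a morphism $T \to S$, a section of $\rspec_X \sym_{\cO_X}\cE$ over $X_T$ is the same data as an $\cO_{X_T}$-algebra homomorphism $\sym_{\cO_{X_T}}(\cE_T) \to \cO_{X_T}$, which by the universal property of the symmetric algebra is the same as an $\cO_{X_T}$-module homomorphism $\cE_T \to \cO_{X_T}$, i.e.\ an element of $\Hom_{\cO_{X_T}}(\cE_T, \cO_{X_T}) = H^0(X_T, \cE_T^\vee)$. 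Thus I would reformulate the problem: identifying $\Sect_S(X, \rspec_X\sym\cE)$ with the functor $T \mapsto H^0(X_T, (\cE^\vee)_T)$, the global sections of the pullback of the locally free sheaf $\cF := \cE^\vee$.

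First I would set up this identification carefully, checking it is compatible with base change so that it defines a morphism of fpqc sheaves, and reduce to showing that the functor $T \mapsto p_{T*}(\cF_T)(T) = \Hom_{\cO_T}(\cO_T, p_{T*}\cF_T)$ is representable. The natural tool here is cohomology and base change in the form of the theory of a \emph{finite locally free complex} computing pushforward: since $p : X \to S$ is flat, finitely presented, and proper and $\cF$ is locally free of finite rank (hence flat over $S$ and finitely presented), by the results of Grothendieck/Mumford there exists, fppf-locally on $S$, a finite complex $K^0 \to K^1 \to \cdots$ of finite locally free $\cO_S$-modules universally computing the cohomology $Rp_*\cF$; that is, for every $T \to S$ one has $H^i(X_T, \cF_T) = H^i(K^\bullet \otimes_{\cO_S}\cO_T)$. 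Granting such a complex, the functor $T \mapsto H^0(X_T,\cF_T) = \ker(K^0_T \to K^1_T)$ is represented by a closed subscheme of the vector bundle $\bV(K^0) = \rspec_S \sym(\check K^0)$ cut out by the linear map $K^0 \to K^1$, hence by an affine, finitely presented $S$-scheme.

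The main technical point to handle is that this representing complex $K^\bullet$ only exists locally on $S$, so the construction must be shown to glue. I would argue that the resulting section schemes are canonically isomorphic over overlaps — either directly, by noting that $\Sect_S(X,\rspec\sym\cE)$ is already a well-defined fpqc sheaf (as remarked in the text) whose restriction to each member of an open cover of $S$ is representable by an affine scheme, so that descent for affine morphisms \cite[Theorem 4.33]{FGA-explained} produces a global affine representing scheme — or by invoking that the ``$H^0$-functor'' is independent of the chosen complex up to canonical isomorphism. Finite presentation is then inherited since it is local on $S$ and each local model is finitely presented.

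The step I expect to be the main obstacle is establishing the existence and base-change property of the bounded complex $K^\bullet$ of finite locally free modules computing $Rp_*\cF$ in the required generality — flat, finitely presented, proper $p$ rather than the classical projective Noetherian setting. This requires either citing the finitely presented version of the theory of cohomology and base change or reducing to the Noetherian case by standard limit arguments (writing $S$ as a filtered limit of finite type $\bZ$-schemes and descending the data $X$, $\cE$). Once the complex is in hand, the passage to the representing affine scheme and the gluing are formal.
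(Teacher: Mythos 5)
Your proposal is correct, and its skeleton coincides with the paper's: reduce by limit arguments \cite[\S 8]{EGA4c} to $S$ affine noetherian, identify a section of $\rspec_X \sym_{\cO_X} \cE$ over $X_T$ with a global section of $\cF_T$ for $\cF = \cE^\vee$, and then represent the resulting linear functor $T \mapsto \Gamma(X_T,\cF_T)$. Where you diverge is the representability engine. The paper quotes \cite[Th\'eor\`eme 7.7.6, Remarques 7.7.9]{EGA3b}, which directly furnishes a coherent $\cO_S$-module $\cQ$ with functorial isomorphisms $\Hom_{\cO_T}(\cQ_T,\cO_T) \simeq \Gamma(X_T,\cF_T)$, so the representing scheme is simply $\rspec_S \sym_{\cO_S} \cQ$, globally and with no gluing. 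You instead invoke the more primitive Grothendieck--Mumford statement underlying that result: a finite complex $K^\bullet$ of finite locally free modules, which may (and should, for your kernel description) be chosen in degrees $[0,n]$, computing $H^i(X_T,\cF_T)$ universally; then $\Gamma(X_T,\cF_T) = \ker(K^0_T \to K^1_T)$ is the preimage of the zero section under the linear morphism $\rspec_S \sym\, (K^0)^\vee \to \rspec_S \sym\, (K^1)^\vee$, a closed subscheme of a vector bundle, hence affine and finitely presented over $S$. This is sound --- indeed the cited EGA result is proved from exactly such a complex, so your route amounts to unwinding the paper's black box --- but it costs you the gluing step, since $K^\bullet$ exists only affine-locally on $S$; note the cover is Zariski, so ordinary gluing of the schemes representing the restrictions of the sheaf $\Sect_S(X,\rspec_X\sym_{\cO_X}\cE)$ already suffices, and your hedge via affine fpqc descent (and the phrase ``fppf-locally'') is an unnecessary weakening of what the classical theorem gives. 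A final point of comparison: the module $\cQ$ need not be locally free, which matches the fact that your representing scheme is a closed subscheme of a bundle rather than a bundle itself; what the paper's route buys is a canonical global representing object and a one-line conclusion, while yours is more self-contained modulo cohomology and base change.
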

\begin{proof}
By \cite[0, Corollaire 4.5.5]{EGA1-2ed}, we can take an open covering of $S$ and reduce to 
considering the case where $S$ is affine. Since any ring is an inductive limit of 
finitely presented $\bZ$-algebras, by \cite[\S 8]{EGA4c} there exist an affine
scheme $S_1$ of finite type over $\spec \bZ$ with a morphism $S \to S_1$, a flat proper
morphism $X_1 \to S_1$, and a locally free $\cO_{X_1}$-module $\cE_1$ such that 
$X_1, \cE_1$ base change to $X, \cE$ under $S \to S_1$. Therefore 
\[ \Sect_S(X,\rspec_X \sym_{\cO_X} \cE) \simeq \Sect_{S_1}(X_1, \rspec_{X_1} \sym_{\cO_{X_1}} 
\cE_1) \xt_{S_1} S \]
over $S$. Replacing $S$ by $S_1$, we can assume that $S$ is affine and noetherian.

Let $\cF = \cE^\vee$, which is a locally free $\cO_X$-module. 
From \cite[Th\'eor\`eme 7.7.6, Remarques 7.7.9]{EGA3b}, there exists a coherent $\cO_S$-module 
$\cQ$ equipped with natural isomorphisms 
\begin{equation}\label{eqn:sect-vb}
 \Hom_{\cO_T}(\cQ_T,\cO_T) \simeq \Gamma(T, p_{T*}\cF_T) = \Gamma(X_T,\cF_T)
\end{equation}
for any $S$-scheme $T$. 
A section $X_T \to (\rspec_X \sym \cE) \xt_S T$ over $X_T$ is equivalent to 
a morphism of $\cO_{X_T}$-modules $\cE_T \to \cO_{X_T}$. The latter is by definition 
a global section of $\Gamma(X_T, \cE_T^\vee)$. 
Since $\cE$ is locally free, we have a canonical isomorphism $\cF_T \simeq \cE_T^\vee$. 
The isomorphism of \eqref{eqn:sect-vb} 
therefore shows that an element of $\Sect_S(X,\rspec_X \sym \cE)(T)$ is naturally
isomorphic to a morphism in $\Hom_S(T, \rspec_S \sym_{\cO_S}\cQ) \simeq \Hom_{\cO_T}(\cQ_T,\cO_T)$.
We conclude that 
\[ \Sect_S(X,\rspec_X \sym_{\cO_X} \cE) \simeq \rspec_S \sym_{\cO_S} \cQ \]
as sheaves over $S$. 
\end{proof}


\begin{lem}\label{sect-affine}
Let $p: X \to S$ be a flat, finitely presented, projective morphism. 
If $Y \to X$ is affine and finitely presented, then $\Sect_S(X,Y)$ is representable by a 
scheme affine and finitely presented over $S$. 
\end{lem}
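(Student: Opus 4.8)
The plan is to reduce the general affine case to the polynomial case already handled in Lemma~\ref{sect-vb}. The key observation is that an affine, finitely presented morphism $Y \to X$ can be written as $Y = \rspec_X \cA$ for a quasi-coherent $\cO_X$-algebra $\cA$ of finite presentation, and such an algebra is locally a quotient of a symmetric algebra on a locally free module. More precisely, first I would use the standard limit and Noetherian-reduction argument (as in the proof of Lemma~\ref{sect-vb}, via \cite[\S 8]{EGA4c}) to reduce to the case where $S$ is affine and Noetherian. Then, Zariski-locally on $X$, I can present $\cA$ as a quotient $\sym_{\cO_X}\cE \twoheadrightarrow \cA$ with $\cE$ a locally free $\cO_X$-module of finite rank. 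Globalizing this requires a little care, but since $X \to S$ is projective, one can choose $\cE$ to be a locally free module of finite rank surjecting onto $\cA$ in each degree up to the finitely many generators, so that $Y$ is realized as a closed subscheme of the vector bundle $\bV = \rspec_X \sym_{\cO_X}\cE$ over $X$.

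Once $Y \hookrightarrow \bV$ is a closed immersion over $X$, I would analyze the induced map on section sheaves. There is a natural monomorphism $\Sect_S(X,Y) \hookrightarrow \Sect_S(X,\bV)$ of fpqc sheaves, since a section landing in $Y$ is in particular a section of $\bV$. By Lemma~\ref{sect-vb}, the target $\Sect_S(X,\bV)$ is representable by an affine, finitely presented $S$-scheme, call it $W$. The remaining task is to show that the subsheaf $\Sect_S(X,Y) \subseteq W$ is representable by a closed subscheme of $W$ that is finitely presented over $S$. Writing the closed immersion $Y \hookrightarrow \bV$ via a finitely generated ideal sheaf $\cI \subseteq \sym_{\cO_X}\cE$, a section $s : X_T \to \bV_T$ factors through $Y_T$ precisely when the pullback $s^*$ annihilates $\cI$; this is a closed condition.

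To make the closed condition precise, I would pull back the universal section over $W$: there is a tautological morphism $X_W \to \bV_W$ over $X_W$, and the locus in $W$ over which this section factors through $Y_W$ is cut out by the vanishing of finitely many equations obtained by evaluating the generators of $\cI$. Using that $X \to S$ is flat and projective, pushing forward along $p$ and invoking cohomology-and-base-change (again via \cite[Th\'eor\`eme 7.7.6]{EGA3b}, in the spirit of Lemma~\ref{sect-vb}) converts these equations on $X_W$ into equations on $W$, realizing $\Sect_S(X,Y)$ as a closed subscheme $W' \subseteq W$. Since $W$ is affine and finitely presented over $S$ and $W'$ is cut out by finitely many equations, $W'$ is affine and finitely presented over $S$, as desired.

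The main obstacle I anticipate is the globalization step: writing an arbitrary finitely presented $\cO_X$-algebra $\cA$ as a quotient of a \emph{globally} defined $\sym_{\cO_X}\cE$ with $\cE$ locally free of finite rank, rather than merely locally. This is where projectivity of $X \to S$ is essential, as it guarantees enough global sections of twists of the generating module to build $\cE$; one twists by a sufficiently ample line bundle to generate $\cA$ by global sections, a Serre-type argument. The subsequent translation of the ideal-vanishing condition into closed conditions on the representing scheme $W$ is technically the delicate part, but conceptually it is the same cohomological base-change mechanism already deployed in Lemma~\ref{sect-vb}.
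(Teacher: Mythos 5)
Your proposal is correct and follows essentially the same route as the paper: reduce to a noetherian base via \cite[\S 8]{EGA4c}, use projectivity to present $\cA$ globally as a quotient of $\sym_{\cO_X}\cE_1$ with $\cE_1$ locally free (the paper captures the finitely many algebra generators in a coherent submodule $\cF \subset \cA$ and applies \cite[II, Corollary 5.18]{Hart}), and then cut $\Sect_S(X,Y)$ out of $\Sect_S(X,\rspec_X \sym \cE_1)$ as a closed subscheme using the $\cQ$-module mechanism of Lemma~\ref{sect-vb}. The paper makes your ``finitely many equations'' step precise by choosing a second locally free sheaf $\cE_2$ whose image generates the ideal $\cI$, writing $Y$ as the fiber product of $\rspec_X \sym \cE_1 \to \rspec_X \sym \cE_2$ with the zero section $X \to \rspec_X \sym \cE_2$, and base-changing the resulting closed immersion $S \to \Sect_S(X,\rspec_X \sym \cE_2)$ --- which is exactly the cohomology-and-base-change translation you anticipate, packaged functorially.
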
 
\begin{proof}
We make the same reductions as in the proof of Lemma~\ref{sect-vb} to assume $S$ is affine
and noetherian, and $X$ is a closed subscheme of $\bP^r_S$ for some integer $r$. 
Since $Y$ is affine over $X$, it has the form $Y = \rspec_X \cA$ for a quasi-coherent
$\cO_X$-algebra $\cA$. By finite presentation of $Y$ over $X$
and quasi-compactness of $X$, there are finitely many local sections of $\cA$
that generate $\cA$ as an $\cO_X$-algebra. 
By extending coherent sheaves \cite[Corollaire 9.4.3]{EGA1}, there exists 
a coherent $\cO_X$-module $\cF \subset \cA$ that contains all these sections. 
From \cite[II, Corollary 5.18]{Hart}, there exists a locally free sheaf $\cE_1$ on $X$ 
surjecting onto $\cF$. This induces a surjection of $\cO_X$-algebras 
$\sym_{\cO_X} \cE_1 \onto \cA$. Let $\cI \subset \sym \cE_1$ be the ideal sheaf. 
Apply the same argument again to get a locally free sheaf $\cE_2$ on $X$ and a morphism 
$\cE_2 \to \cI$ whose image generates $\cI$ as a $\sym_{\cO_X} \cE_1$-module. Therefore
we have a Cartesian square 
\[ \xymatrix{ Y = \rspec_X \cA \ar[r] \ar[d] & X \ar[d] \\ \rspec_X \sym \cE_1 \ar[r] & 
\rspec_X \sym \cE_2 } \]
over $X$, where $X \to \rspec_X \sym \cE_2$ is the zero section. By the universal
property of the fibered product, we have a canonical isomorphism
\[ \Sect_S(X,Y) \simeq \Sect_S(X,X) \xt_{\Sect_S(X,\rspec_X \sym \cE_2)} \Sect_S(X,\rspec_X \sym \cE_1). \]
Note that $S \simeq \Sect_S(X,X)$. 
By Lemma~\ref{sect-vb}, all three sheaves in the fibered product are representable by schemes
affine and finitely presented over $S$. Therefore $\Sect_S(X,Y)$ is representable by a scheme.
The morphism $S \simeq \Sect_S(X,X) \to 
\Sect_S(X, \rspec_X \sym \cE_2)$ sends $T \to S$ to $0 \in \Gamma(X_T, (\cE_2)_T^\vee)$, using 
the correspondence in Lemma~\ref{sect-vb}. Therefore 
\[ S \to \Sect_S(X,\rspec_X \sym \cE_2) \simeq \rspec_S \sym \cQ_2 \]
is the zero section, where $\cQ_2$ is as in Lemma~\ref{sect-vb}. In particular, it is a closed
immersion. By base change, $\Sect_S(X,Y) \to \Sect_S(X, \rspec_X \sym \cE_1)$ is also a closed
immersion. Therefore $\Sect_S(X,Y)$ is affine and finitely presented over $S$. 
\end{proof}

\begin{rem} \label{rem:sect-closed}
If $Y \to X$ is a finitely presented closed immersion, 
then note that we can take $\cE_1 = 0$ in the proof of Lemma~\ref{sect-affine}. 
It follows that $\Sect_S(X,Y) \to S$ is a closed immersion. 
\end{rem} 

\begin{lem} \label{sect-open}
Let $p: X \to S$ be a proper morphism. For any morphism $Y \to X$ and $U \into Y$ an 
open immersion, $\Sect_S(X,U) \to \Sect_S(X,Y)$ is schematic and open. 
\end{lem}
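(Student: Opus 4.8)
The plan is to check both properties simultaneously by base-changing along an arbitrary scheme-valued point of the target. So I would fix a scheme $T \to S$ together with a section $\sigma \in \Sect_S(X,Y)(T)$, i.e.\ a morphism $\sigma : X_T \to Y_T$ over $X_T$, and analyze the fibered product $\Sect_S(X,U) \xt_{\Sect_S(X,Y)} T$. Unwinding the definitions, for a $T$-scheme $T'$ this fibered product has a (necessarily unique) point precisely when the pulled-back section $\sigma_{T'} : X_{T'} \to Y_{T'}$ factors through the open subscheme $U_{T'} \into Y_{T'}$. Thus the task reduces to showing that the subfunctor of $T$ consisting of those $T' \to T$ for which $\sigma_{T'}$ factors through $U_{T'}$ is representable by an open subscheme of $T$; establishing this for every such $T$ proves at once that the morphism is schematic and that each of its base changes is an open immersion.

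Next I would identify the relevant locus. Since $U \into Y$ is an open immersion, so is $U_T \into Y_T$, and hence $V := \sigma^{-1}(U_T)$ is an open subscheme of $X_T$; let $C = X_T \setminus V$ be its closed complement with the reduced structure. The key observation is that $\sigma_{T'}$ factors through $U_{T'}$ if and only if the preimage of $C$ under the projection $X_{T'} = X_T \xt_T T' \to X_T$ is empty: indeed $\sigma_{T'}^{-1}(U_{T'})$ is exactly the preimage of $V$, so $\sigma_{T'}$ lands in $U_{T'}$ precisely when this preimage is all of $X_{T'}$, equivalently when $C \xt_T T' = \emptyset$. A standard fiberwise argument then shows that $C \xt_T T'$ is empty if and only if the image of $T' \to T$ avoids the set-theoretic image $p_T(C) \subseteq T$, using that the product of two nonempty schemes over a field is nonempty, so a nonempty fiber of $C \to T$ survives any base change.

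It remains to see that $p_T(C)$ is closed, and this is exactly where properness is used. By base change $p_T : X_T \to T$ is proper, hence a closed map, so $p_T(C)$ is closed and $W := T \setminus p_T(C)$ is open in $T$. Combining the two observations, $\sigma_{T'}$ factors through $U_{T'}$ if and only if $T' \to T$ factors through $W$, so the fibered product is represented by the open immersion $W \into T$. As $T \to \Sect_S(X,Y)$ was arbitrary, this gives that $\Sect_S(X,U) \to \Sect_S(X,Y)$ is schematic with each base change an open immersion, in particular open. The only delicate point is the passage from ``$\sigma_{T'}$ factors through $U_{T'}$'' to ``the image of $T'$ misses $p_T(C)$,'' which is precisely what properness secures by forcing the bad locus $p_T(C)$ to be closed; everything else is formal manipulation of fibered products.
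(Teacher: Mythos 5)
Your proof is correct and takes essentially the same route as the paper's: both reduce to base change along a section $\sigma \in \Sect_S(X,Y)(T)$ and identify the fibered product with the open subscheme $T - p_T\bigl(X_T - \sigma^{-1}(U_T)\bigr)$, with properness of $p_T$ closing the bad locus. Your appeal to the nonemptiness of a product of nonempty schemes over a field is exactly the paper's compositum-field argument for the set-theoretic equivalence, so there is nothing to change.
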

\begin{proof}
For an $S$-scheme $T$, 
suppose we have a morphism $f: X_T \to Y$ over $S$ corresponding to a morphism 
of sheaves $T \to \Sect_S(X,Y)$ by the Yoneda lemma. Then the fibered product
\[ T \xt_{\Sect_S(X,Y)} \Sect_S(X,U) \] is a sheaf on $\Sch_{/T}$ sending $T' \to T$ to 
a singleton set if $X_{T'} \to X_T \to Y$ factors through $U$ and to the empty set otherwise. 
We claim that this sheaf is representable by the open subscheme \[ T - p_T(X_T - f^{-1}(U))\]
of $T$, where $p_T(X_T - f^{-1}(U))$ is a closed subset of $T$ by properness of $p_T$. 
Suppose that the image of $T' \to T$ contains a point of $p_T(X_T - f^{-1}(U))$. 
In other words, there are $t' \in T'$ and $x \in X_T$ mapping to the same point of $T$,
and $f(x) \notin U$. Then the compositum field of $\kappa(t'), \kappa(x)$ over $\kappa(p_T(x))$
gives a point of $X_{T'}$ mapping to $t',x$. This shows that $X_{T'} \to Y$ does not 
factor through $U$. For the converse, suppose that there is a point $x' \in X_{T'}$ with 
image outside of $U$. Then the image $x \in X_T$ is not in $f^{-1}(U)$, so $p_{T'}(x')$ maps 
to $p_T(x) \in p_T(X_T- f^{-1}(U))$. We conclude that $T' \to T$ factors through 
$T - p_T(X_T - f^{-1}(U))$ if and only if $X_{T'} \to Y$ factors through $U$. 
Therefore $T \xt_{\Sect_S(X,Y)} \Sect_S(X,U)$ is representable by this open subscheme
of $T$. 
\end{proof}

\begin{lem} \label{locus-iso} Let $S$ be a noetherian scheme, and let $p : X \to S$
and $Z \to S$ be flat proper morphisms. Suppose there is a morphism $\pi : Z \to X$ over $S$.
Then there exists an open subscheme $S_1 \subset S$ with the following universal
property. For any locally noetherian $S$-scheme $T$, the base change 
$\pi_T : Z_T \to X_T$ is an isomorphism if and only if $T \to S$ factors through $S_1$. 
\end{lem}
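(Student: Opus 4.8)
The plan is to define $S_1$ as the set of points $s \in S$ over which the fiber $\pi_s \colon Z_s \to X_s$ is an isomorphism, and to show that this set is open and that $\pi$ restricts to an isomorphism $Z_{S_1} \to X_{S_1}$; the universal property will then follow formally. First I would record two easy facts: $\pi$ is proper, since $Z \to S$ is proper and $X \to S$ is separated; and both $X \to S$ and $Z \to S$ are flat and finitely presented, as $S$ is noetherian and the maps are proper (hence finite type).

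The heart of the matter is a local statement on $S$: for every $s \in S_1$ there is an open neighborhood $U \ni s$ with $\pi_U \colon Z_U \to X_U$ an isomorphism. Granting this, each such $U$ lies in $S_1$ (isomorphisms are stable under base change), so $S_1 = \bigcup_{s \in S_1} U$ is open, and the local isomorphisms glue to an isomorphism $\pi_{S_1}$. To produce $U$, I would run the following chain of reductions, each time shrinking $S$ around $s$ by using that $Z \to S$ (resp.\ $X \to S$) is proper, so that the image of a closed subset disjoint from the relevant fiber is a closed subset of $S$ avoiding $s$:
\begin{enumerate}
\item Since $\pi_s$ is an isomorphism it is flat, so by the fibral criterion for flatness \cite[IV, 11.3.10]{EGA4c} (using that both $X$ and $Z$ are flat over $S$) the map $\pi$ is flat at every point of $Z_s$; as the flat locus is open \cite[IV, 11.3.1]{EGA4c}, shrinking $S$ makes $\pi$ flat.
\item For $x \in X_s$ the fiber $\pi^{-1}(x)$ equals $\pi_s^{-1}(x)$ and hence is a single point; by upper semicontinuity of fiber dimension \cite[IV, 13.1.4]{EGA4c}, shrinking $S$ makes $\pi$ quasi-finite, so being proper it is finite \cite[IV, 8.11.1]{EGA4c}, and being flat and finitely presented it is finite locally free.
\item The rank of $\pi_*\cO_Z$ is locally constant and equals $1$ on $X_s$; shrinking $S$ makes it $1$ everywhere, and for a finite locally free morphism of degree $1$ the unit map $\cO_X \to \pi_*\cO_Z$ is an isomorphism, whence $\pi$ is an isomorphism.
\end{enumerate}

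For the universal property, if $T \to S$ factors through $S_1$ then $\pi_T$ is the base change of the isomorphism $\pi_{S_1}$ and so is an isomorphism. Conversely, suppose $\pi_T$ is an isomorphism and $t \in T$ lies over $s \in S$; then the fiber $(\pi_T)_t = \pi_s \otimes_{\kappa(s)} \kappa(t)$ is an isomorphism, and since $\kappa(s) \to \kappa(t)$ is faithfully flat, fpqc descent of the property ``isomorphism'' shows $\pi_s$ is an isomorphism, i.e.\ $s \in S_1$. Thus the image of $T \to S$ is set-theoretically contained in the open subscheme $S_1$, so $T \to S$ factors through $S_1$ as schemes, as desired.

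I expect the main obstacle to be Steps (1)--(2): correctly applying the fibral flatness criterion and spreading flatness and quasi-finiteness out from the single fiber $Z_s$ to a neighborhood of $s$, since this is exactly where the hypotheses that \emph{both} $X$ and $Z$ are flat and proper over $S$ are genuinely used. The remaining ingredients are either formal (the gluing and the descent argument for the universal property) or standard structural facts about finite locally free morphisms.
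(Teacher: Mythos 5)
Your proposal is correct, and it hits the same milestones as the paper's proof --- make $\pi$ flat, then finite, then finite locally free, then use the rank-one unit map $\cO_X \to \pi_*\cO_Z$, with properness of $Z \to S$ and $X \to S$ pushing closed bad loci down to $S$, and fpqc-locality of ``isomorphism'' handling residue-field extensions --- but it is organized differently and substitutes a different key input at the crucial first step. Where the paper invokes a flattening-type theorem (\cite[Theorem 5.22(a)]{FGA-explained}) to ``assume $\pi$ is flat,'' you derive flatness of $\pi$ along a good fiber $Z_s$ from the fibral criterion of flatness together with openness of the flat locus; this is more elementary and self-contained, and it makes visible exactly where the hypothesis that \emph{both} $X$ and $Z$ are flat over $S$ is used. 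Second, the paper constructs the maximal open $U \subset X$ over which $\pi$ restricts to an isomorphism (via local freeness of $\pi_*\cO$ on the open set of finite fibers) and then sets $S_1 = S - p(X-U)$, verifying the universal property by the point-lifting argument of Lemma~\ref{sect-open}; you instead define $S_1$ pointwise as the set of $s$ with $\pi_s$ an isomorphism, prove openness by spreading out from each fiber, and check universality by descending ``isomorphism'' along the faithfully flat extension $\kappa(s) \to \kappa(t)$. Your descent check is cleaner and, as you note implicitly, never uses the locally noetherian hypothesis on $T$, so your argument establishes the universal property for arbitrary $S$-schemes $T$, slightly more than the lemma asserts. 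The only blemishes are citation numerology (openness of the flat locus is EGA IV$_3$, Th\'eor\`eme 11.1.1, and Chevalley semicontinuity on the source is 13.1.3), which do not affect the mathematics.
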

\begin{proof}
Since $X$ and $Z$ are both proper over $S$, we deduce that $\pi$ is a proper morphism. 
By \cite[Theorem 5.22(a)]{FGA-explained}, we can assume that $\pi$ is flat. 
From Chevalley's upper semi-continuity theorem for dimension of fibers
 \cite[Corollaire 13.1.5]{EGA4c}, the set of $x \in X$ such that $\pi^{-1}(x)$ is finite
form an open subset $U_1 \subset X$. The restriction $\pi : \pi^{-1}(U_1) \to U_1$ is 
a flat, finite morphism by \cite[Proposition 4.4.2]{EGA3a}. Therefore $\pi_*\cO_{\pi^{-1}(U_1)}$
is a locally free $\cO_{U_1}$-module. Let $U$ be the set of $x \in U_1$ where 
$\cO_{U_1} \ot \kappa(x) \to \pi_*\cO_{\pi^{-1}(U_1)} \ot \kappa(x)$ is an isomorphism. 
Since $\cO_{U_1} \to \pi_*\cO_{\pi^{-1}(U_1)}$ is a morphism between 
locally free $\cO_{U_1}$-modules, $U$ is the maximal open subscheme of $X$ 
on which this morphism is an isomorphism, which is equivalent to being the maximal open such that
$\pi^{-1}(U) \to U$ is an isomorphism. Since the property of being an isomorphism is
fpqc local on the base, the same argument as in the proof of 
Lemma~\ref{sect-open} shows that $S_1 = S - p(X-U)$ is the open subscheme with the desired
universal property. 
\end{proof}

Suppose we have a proper morphism $X \to S$ and a separated morphism $Y \to X$. 
Then for a section $f \in \Sect_S(X,Y)(T)$, the graph of $f$ over $X_T$ is
a closed immersion $X_T \to X_T \xt_{X_T} Y_T$. The isomorphism $X_T \xt_{X_T} Y_T \to Y_T$
implies $f$ is a closed immersion. Therefore $f : X_T \into Y_T$ represents an element of
$\Hilb_{Y/S}(T)$. Therefore we have defined an injection of sheaves 
\begin{equation}\label{eqn:sect-hilb} \Sect_S(X,Y) \to \Hilb_{Y/S}. \end{equation}

\begin{lem}\label{sect-proper-hilb} 
Let $p: X\to S$ and $Y \to S$ be finitely presented, proper morphisms, and suppose that $p$
is flat. Then \eqref{eqn:sect-hilb} is an open immersion.
\end{lem}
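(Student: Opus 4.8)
The plan is to verify that \eqref{eqn:sect-hilb} is an open immersion by checking the defining condition: for every scheme $T$ equipped with a morphism $T \to \Hilb_{Y/S}$, the fibered product $\Sect_S(X,Y) \xt_{\Hilb_{Y/S}} T$ should be representable by an open subscheme of $T$. First I would unwind this fibered product. A morphism $T \to \Hilb_{Y/S}$ corresponds to a closed subscheme $W \subset Y_T$ that is flat, proper, and finitely presented over $T$; composing its inclusion with the projection $Y_T \to X_T$ yields a morphism $\pi : W \to X_T$ over $T$. Chasing the graph construction defining \eqref{eqn:sect-hilb}, for any $T' \to T$ there is a (necessarily unique) section $f' \in \Sect_S(X,Y)(T')$ whose image is $W_{T'}$ exactly when $\pi_{T'} : W_{T'} \to X_{T'}$ is an isomorphism. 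Thus $\Sect_S(X,Y) \xt_{\Hilb_{Y/S}} T$ is the subfunctor of $T$ sending $T'$ to a one-point set when $\pi_{T'}$ is an isomorphism and to the empty set otherwise.

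The key step is to represent this subfunctor by an open subscheme $T_1 \subset T$, for which I would invoke Lemma~\ref{locus-iso}: both $X_T \to T$ and $W \to T$ are flat and proper (the former by base change from $p$, the latter as a point of the Hilbert functor), and $\pi : W \to X_T$ is a morphism over $T$, so the lemma produces an open locus detecting where $\pi$ becomes an isomorphism. Since that lemma assumes a noetherian base, I would first reduce to this situation by the limit arguments already used in Lemma~\ref{sect-vb}: localizing so that $S$ is affine and writing it as a cofiltered limit of noetherian affine schemes, the data $X \to S$, $Y \to S$, $Y \to X$ together with flatness and properness descend to a finite stage, the formations of $\Sect_S(X,Y)$ and $\Hilb_{Y/S}$ commute with base change, and \eqref{eqn:sect-hilb} is the base change of the corresponding morphism at that stage; as open immersions are stable under base change, it suffices to treat affine noetherian $S$. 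With $S$ affine and noetherian, I would work locally on $T$, write $T$ as a cofiltered limit of affine schemes $T_\mu$ of finite type (hence noetherian) over $S$, descend the pair $(W,\pi)$ to some $T_\mu$, apply Lemma~\ref{locus-iso} over $T_\mu$ to obtain an open $T_{1,\mu}\subset T_\mu$, and set $T_1 = T_{1,\mu} \xt_{T_\mu} T$.

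The point requiring the most care is that Lemma~\ref{locus-iso} only asserts its universal property against locally noetherian test schemes, whereas representing the fibered product requires it against every $T' \to T$. To close this gap I would argue via points. Taking the identity of $T_{1,\mu}$ in the universal property shows $\pi$ is an isomorphism over $T_{1,\mu}$, hence over $T_1$ by base change; so whenever $T' \to T$ factors through $T_1$ the morphism $\pi_{T'}$ is an isomorphism. Conversely, if $\pi_{T'}$ is an isomorphism, then for each $t' \in T'$ the base change $\pi_{\kappa(t')}$ over $\spec \kappa(t')$ is again an isomorphism, and since $\spec \kappa(t')$ is noetherian the universal property forces the image of $t'$ to lie in $T_{1,\mu}$, hence in $T_1$; as $T_1$ is open, this set-theoretic containment of the image of $T'$ means $T' \to T$ factors through $T_1$. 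Therefore $T_1$ represents $\Sect_S(X,Y) \xt_{\Hilb_{Y/S}} T$, and \eqref{eqn:sect-hilb} is an open immersion.
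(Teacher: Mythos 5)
Your proposal is correct and follows essentially the same route as the paper's proof: reduce to $S$ and $T$ affine noetherian by the limit techniques of \cite[\S 8]{EGA4c}, apply Lemma~\ref{locus-iso} to the composition $W \to X_T$, and identify isomorphisms $W_{T'} \simeq X_{T'}$ with sections. The only difference is that where the paper says the locally noetherian hypothesis on $T'$ ``can be removed as usual,'' you supply the standard argument explicitly (checking factorization through the open locus on residue fields, which are noetherian), which is a valid filling-in of that gloss.
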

\begin{proof} The assertion is Zariski local on the base, so 
we can use \cite[\S8]{EGA4c} as in the proof of Lemma~\ref{sect-vb} to 
assume $S$ is affine and noetherian. Let $T \to \Hilb_{Y/S}$ 
represent $Z\subset Y_T$, a closed subscheme flat over an affine $S$-scheme $T$. 
We may assume $T$ is noetherian by \cite[\S 8]{EGA4c}.
Applying Lemma~\ref{locus-iso} to the composition $Z \to X_T$, we deduce that
there exists an open subscheme $U \subset T$ such that for any locally noetherian scheme $T'
\to T$, the base change $Z_{T'} \to X_{T'}$ is an isomorphism if and only if $T' \to T$
factors through $U$. Observe that an isomorphism $Z_{T'} \to X_{T'}$ corresponds 
uniquely to a section $X_{T'} \simeq Z_{T'} \into Y_{T'}$. The locally noetherian
hypothesis on $T'$ can be removed as usual. Therefore 
$T \xt_{\Hilb_{Y/S}} \Sect_S(X,Y)$ is represented by $U$. 
\end{proof}

\begin{proof}[Proof of Theorem~\ref{sect-general}] 
Let $\cL$ be an invertible $\cO_Y$-module ample relative to $\pi : Y \to X$ 
and $\cK$ an invertible $\cO_X$-module ample relative to $p: X \to S$. 
Choose an integer $\chi_{n,m}$ for every pair of integers $n,m$. 
Define the subfunctor
\[ \Hilb_{Y/S}^{(\chi_{n,m})} \subset \Hilb_{Y/S} \]
to have $T$-points the closed subschemes $Z \in \Hilb_{Y/S}(T)$ such that
for all $t \in T$, the Euler characteristic 
$\chi((\cO_Z \ot \cL^{\otimes n} \ot \pi^*(\cK^{\otimes m}))_t) = \chi_{n,m}$. 
We claim that the $\Hilb_{Y/S}^{(\chi_{n,m})}$ form a disjoint open cover of 
$\Hilb_{Y/S}$. This can be checked Zariski locally on $S$, so we can assume $S$ is
noetherian using \cite[\S 8]{EGA4c}. Then $\Hilb_{Y/S}$ is representable by a locally
noetherian scheme \cite[Corollary 2.7]{AK80}. 
Since the Euler characteristic is locally constant \cite[Th\'eor\`eme 7.9.4]{EGA3b}
and the connected components of a locally noetherian scheme are open 
\cite[Corollaire 6.1.9]{EGA1}, we deduce the claim. 

Now it suffices to show that for any choice of $(\chi_{n,m})$, the functor
\[ \Sect_S(X,Y) \cap \Hilb_{Y/S}^{(\chi_{n,m})} \] 
is representable by a scheme 
finitely presented and locally quasi-projective over $S$. The assertion is Zariski local on $S$
\cite[0, Corollaire 4.5.5]{EGA1-2ed}, so we can assume that $S$ is affine and noetherian.  
Now by \cite[Propositions 4.4.6, 4.6.12, 4.6.13]{EGA2}, there exists a scheme $\wbar Y$
projective over $X$, an open immersion $Y \into \wbar Y$, an invertible module
$\wbar \cL$ very ample relative to $\wbar Y \to S$, and positive integers $a,b$
such that \[ \cL^{\otimes a} \ot \pi^*( \cK^{\otimes b}) \simeq \wbar \cL |_Y.\]
Let $\Phi \in \bQ[\lambda]$ be a polynomial such that $\Phi(n) = \chi_{na,nb}$ for all
integers $n$ (if no such polynomial exists, then $\Hilb_{Y/S}^{(\chi_{n,m})}$ is empty). 
Lemmas~\ref{sect-open} and \ref{sect-proper-hilb} and \cite[Theorem 2.6, Step IV]{AK80} 
imply that $\Sect_S(X,Y) \cap \Hilb_{Y/S}^{(\chi_{n,m})}$ is
an open subfunctor of $\Hilb_{\wbar Y/S}^{\Phi,\wbar \cL}$. 
The claim now follows from \cite[Corollary 2.8]{AK80}
as an open immersion to a noetherian scheme is finitely presented.
\end{proof}

\subsection{Morphisms between Hom stacks}
The goal of this subsection is to use our results on the scheme of sections to deduce
that the diagonal of $\Bun_G$ is schematic when $X$ is flat, finitely presented, and projective
over $S$. 

\begin{lem} \label{morphism-hom}
Suppose that $X \to S$ is flat, finitely presented, and projective. 
Let $F:\cY_1 \to \cY_2$ be a schematic 
morphism between pseudo-functors. If $F$ is quasi-projective (resp.~affine) and
of finite presentation, then the corresponding morphism 
\[ \cHom_S(X,\cY_1) \to \cHom_S(X,\cY_2) \] 
is schematic and locally of finite presentation 
(resp.~affine and of finite presentation). 
\end{lem}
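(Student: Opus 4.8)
The plan is to check schematicity directly from the definition. Given any scheme $T$ with a morphism $T \to \cHom_S(X,\cY_2)$, I must show that $\cHom_S(X,\cY_1) \xt_{\cHom_S(X,\cY_2)} T$ is representable by a scheme with the stated properties. By the $2$-Yoneda lemma the morphism $T \to \cHom_S(X,\cY_2)$ is the datum of a morphism $g : X_T \to \cY_2$ over $S$, and since $F$ is schematic the $2$-fibered product $Y := \cY_1 \xt_{\cY_2} X_T$ is representable by a scheme, equipped with a structure morphism $Y \to X_T$.

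The heart of the argument is to produce a natural isomorphism
\[ \cHom_S(X,\cY_1) \xt_{\cHom_S(X,\cY_2)} T \simeq \Sect_T(X_T, Y). \]
I would establish this on $T$-scheme points. For a $T$-scheme $T'$, an object of the left-hand side is a morphism $\sigma : X_{T'} \to \cY_1$ over $S$ together with a $2$-isomorphism identifying $F \circ \sigma$ with the pullback of $g$ along the canonical morphism $X_{T'} \to X_T$. By the universal property of $Y$ as a $2$-fibered product, this is exactly the data of a morphism $X_{T'} \to Y$ over $X_T$; since $X_{T'} = X_T \xt_T T'$, such morphisms are precisely the elements of $\Sect_T(X_T, Y)(T')$. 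The point requiring care is the verification that these identifications are functorial in $T'$ and compatible with the $2$-isomorphisms involved, and I expect this coherence bookkeeping to be the main (though routine) obstacle.

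Granting the isomorphism, the conclusion follows by applying our representability results. The morphism $X_T \to T$ is flat, finitely presented, and projective as a base change of $X \to S$, and $Y \to X_T$ is a base change of $F$ and hence finitely presented and quasi-projective (resp.\ affine), these properties being stable under base change. In the quasi-projective case Theorem~\ref{sect-general} shows that $\Sect_T(X_T, Y)$ is representable by a disjoint union of finitely presented, locally quasi-projective $T$-schemes; a disjoint union of schemes is a scheme, so the morphism of Hom stacks is schematic and locally of finite presentation. In the affine case Lemma~\ref{sect-affine} shows that $\Sect_T(X_T, Y)$ is representable by a scheme affine and finitely presented over $T$, so the morphism is affine and of finite presentation. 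As $T$ was arbitrary, this proves the lemma.
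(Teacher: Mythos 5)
Your proposal is correct and follows essentially the same route as the paper: identify the fiber product $\cHom_S(X,\cY_1) \xt_{\cHom_S(X,\cY_2)} T$ with $\Sect_T(X_T, Y)$ for $Y = \cY_1 \xt_{\cY_2} X_T$ via the universal property of the $2$-fibered product, then invoke Theorem~\ref{sect-general} in the quasi-projective case and Lemma~\ref{sect-affine} in the affine case. The coherence bookkeeping you flag as the main obstacle is exactly what the paper's proof spells out (the unique $f : X_{T'} \to Y_T$ together with the unique $2$-morphism $\phi$ making the pullback diagram of $2$-morphisms commute), and your treatment of it is consistent with that argument.
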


Before proving the lemma, we mention the corollaries of interest. 

\begin{cor} \label{BunG-diagonal} Suppose that $X\to S$ is flat, finitely presented, and 
projective. Then the diagonal of $\Bun_G$ is schematic, affine, and finitely presented.
\end{cor}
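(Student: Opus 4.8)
The plan is to realize the diagonal $\Delta_{\Bun_G}$ as the morphism of Hom stacks induced by the diagonal of $BG$, and then to invoke the affine case of Lemma~\ref{morphism-hom}. First I would record the general observation that $\cHom_S(X,-)$ carries fiber products to fiber products: for $S$-stacks $\cY_1,\cY_2$ and an $S$-scheme $T$, a morphism $X_T \to \cY_1 \xt_S \cY_2$ is the same datum as a pair of morphisms $X_T \to \cY_i$, so that $\cHom_S(X,\cY_1 \xt_S \cY_2) \simeq \cHom_S(X,\cY_1) \xt_S \cHom_S(X,\cY_2)$. Taking $\cY_1 = \cY_2 = BG \xt S$ gives a canonical identification $\cHom_S(X,(BG \xt BG)\xt S) \simeq \Bun_G \xt_S \Bun_G$.

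Next I would check that, under this identification, the morphism $\cHom_S(X,\Delta_{BG \xt S})$ induced by the relative diagonal $\Delta_{BG \xt S} : BG \xt S \to (BG \xt BG)\xt S$ is precisely the diagonal $\Delta_{\Bun_G}$. Indeed, $\cHom_S(X,\Delta_{BG \xt S})$ sends a section $\sigma : X_T \to BG \xt S$ to $\Delta_{BG \xt S}\circ \sigma$, which corresponds to the pair $(\sigma,\sigma)$; this is the diagonal by definition. It then remains to verify that $\Delta_{BG \xt S}$ satisfies the hypotheses of the affine case of Lemma~\ref{morphism-hom}, namely that it is schematic, affine, and of finite presentation. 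Lemma~\ref{BG-alg} already gives that $\Delta_{BG}$ is schematic and affine, with fibers of the form $\twist{\cP \xt_S \cP'}{(G)}$; since $G$ is of finite type over the field $k$, and finite presentation is fppf local on the base, these twisted bundles are finitely presented over $S$, so $\Delta_{BG}$, and hence its base change $\Delta_{BG \xt S}$, is finitely presented. Lemma~\ref{morphism-hom} then yields that $\cHom_S(X,\Delta_{BG \xt S}) = \Delta_{\Bun_G}$ is affine and of finite presentation, in particular schematic.

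The main obstacle is conceptual rather than computational: one must correctly identify the diagonal of the Hom stack with the Hom stack of the diagonal, which rests on $\cHom_S(X,-)$ commuting with fiber products over $S$. Once this identification is in place, the affine-and-finitely-presented conclusion of Lemma~\ref{morphism-hom} applies with essentially no further work, the only genuine verification being the finite presentation of $\Delta_{BG}$, which is immediate from $G$ being of finite type over $k$ together with fppf descent.
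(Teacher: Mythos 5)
Your proposal is correct and takes essentially the same approach as the paper: both identify $\Delta_{\Bun_G}$ with the morphism of Hom stacks induced by the diagonal of $BG$, and conclude from Lemma~\ref{BG-alg} together with the affine case of Lemma~\ref{morphism-hom}. The only cosmetic difference is that the paper routes through $\Bun_{G \xt G}$ via the isomorphism of Lemma~\ref{BG^2} instead of working directly with $BG \xt BG$, and your explicit verification that $\Delta_{BG}$ is finitely presented (from the description of the Isom sheaves as twists of $G$ plus fppf descent of finite presentation) is precisely the point the paper leaves implicit.
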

\begin{proof}
By Lemma~\ref{BG^2}, we deduce that the canonical morphism 
$\Bun_{G \xt G} \to \Bun_G \xt \Bun_G$ is an isomorphism.
Applying Lemmas~\ref{BG-alg} and \ref{morphism-hom} to $BG \to 
B(G \xt G)$, we deduce that $\Bun_G \to \Bun_{G\xt G}$ is schematic, affine,
and finitely presented, which proves the claim. 
\end{proof}

\begin{rem} 
In line with Remark~\ref{rem:general-group-scheme}, we note that Corollary~\ref{BunG-diagonal}
holds if $G$ is a group scheme affine and of finite presentation over a base $S$. 
\end{rem}

\begin{cor} \label{BunG-cog}
Let $H \into G$ be a closed subgroup of $G$. If $X \to S$ is flat, finitely presented,
and projective, then the corresponding morphism 
$\Bun_H \to \Bun_G$ is schematic and locally of finite presentation. 
\end{cor}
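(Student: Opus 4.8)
The plan is to realize $\Bun_H \to \Bun_G$ as the Hom-stack morphism induced by the change-of-group morphism $BH \to BG$, and then to invoke Lemma~\ref{morphism-hom}. By Definition~\ref{defn:BunG} we have $\Bun_H = \cHom_S(X, BH \xt S)$ and $\Bun_G = \cHom_S(X, BG \xt S)$, and the morphism $\Bun_H \to \Bun_G$ under consideration is precisely $\cHom_S(X,-)$ applied to the base change $BH \xt S \to BG \xt S$ of the morphism $BH \to BG$ from Lemma~\ref{BH-BG}.

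First I would recall from Lemma~\ref{BH-BG} that $BH \to BG$ is schematic, finitely presented, and quasi-projective. Since each of these properties is stable under base change along $S \to \spec k$, the induced morphism $BH \xt S \to BG \xt S$ is again schematic, quasi-projective, and of finite presentation, now as a morphism of $S$-pseudo-functors. This is the only verification required before the machinery of the previous subsection applies.

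With the hypothesis that $X \to S$ is flat, finitely presented, and projective, I would then apply the quasi-projective case of Lemma~\ref{morphism-hom} to the morphism $F = \bigl(BH \xt S \to BG \xt S\bigr)$. This yields directly that the induced morphism $\cHom_S(X, BH \xt S) \to \cHom_S(X, BG \xt S)$, that is, $\Bun_H \to \Bun_G$, is schematic and locally of finite presentation, as claimed.

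There is essentially no obstacle remaining at this stage: the substantive content has already been established in Lemma~\ref{BH-BG} (the quasi-projectivity of $BH \to BG$, which rests on the representability of $H \bs G$ by a quasi-projective scheme carrying a $G$-equivariant ample invertible sheaf) and in Lemma~\ref{morphism-hom} (which passes from a quasi-projective schematic morphism of targets to a schematic morphism of Hom stacks, ultimately via the representability of the scheme of sections in Theorem~\ref{sect-general}). The corollary is thus an immediate consequence once the stability of the relevant properties under base change is noted.
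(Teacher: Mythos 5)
Your proposal is correct and is exactly the paper's argument: the paper proves this corollary by citing Lemmas~\ref{BH-BG} and \ref{morphism-hom}, which is precisely the route you take, with the base-change-to-$S$ remark being a harmless elaboration of what the paper leaves implicit.
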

\begin{proof}
This follows from Lemmas~\ref{BH-BG} and \ref{morphism-hom}. 
\end{proof}

\begin{proof}[Proof of Lemma~\ref{morphism-hom}]
Let $\tau_2 : X_T \to \cY_2$ represent a morphism from $T \to \cHom_S(X,\cY_2)$. 
Since $F$ is schematic, the $2$-fibered product $\cY_1 \xt_{\cY_2,\tau_2} X_T$ is
representable by a scheme $Y_T$. Let 
\[ \xymatrix{ Y_T \ar[r]^{\tau_1} \ar[d]_\pi & \cY_1 \ar[d]^F \\ X_T \ar[r]^{\tau_2} & \cY_2 } \]
be the $2$-Cartesian square, with a $2$-morphism $\gamma : F(\tau_1) \simeq \pi^*\tau_2$. 
Now for a $T$-scheme $T'$, suppose that $\tau'_1 : X_{T'} \to \cY_1$ is a $1$-morphism such that 
the square 
\[ \xymatrix{ X_{T'} \ar[r]^{\tau'_1} \ar[d]_{\pr_1} & \cY_1 \ar[d]^F \\ X_T \ar[r]^{\tau_2} & 
\cY_2 } \]
is $2$-commutative via a $2$-morphism $\gamma' : F(\tau'_1) \simeq \pr_1^*\tau_2$.
By the definition of $2$-fibered products and the assumptions on $Y_T$, there exists a unique
morphism of schemes $f : X_{T'} \to Y_T$ over $X_T$ and a unique $2$-morphism 
$\phi : f^* \tau_1 \simeq \tau'_1$ such that 
\[ \xymatrix{ F(f^*\tau_1) \ar[r]^\sim \ar[d]_{F(\phi)}  & f^*F(\tau_1) \ar[r]^{f^*\gamma} & 
f^*\pi^*\tau_2 \ar[d]^\sim \\ 
F(\tau'_1) \ar[rr]^{\gamma'} && \pr_1^* \tau_2 } \]
commutes. On the other hand, we have that
\[ \Bigl(\cHom_S(X,\cY_1) \xt_{\cHom_S(X,\cY_2)} T \Bigr)(T') = 
\bigl\{ ( T' \to T,\, \tau'_1 : X_{T'} \to \cY_1,\,
\gamma' : F(\tau'_1) \simeq \pr_1^*\tau_2 ) \bigr\} \]
by the $2$-Yoneda lemma. Thus for a fixed $T$-scheme $T'$ and a pair $(\tau'_1, \gamma')$,  
there is a unique $f \in \Hom_{X_T}(X_{T'},Y_T) = \Sect_T(X_T,Y_T)(T')$ such that 
\[ \bigl(f^*\tau_1, f^*\gamma : F(f^*\tau_1) \simeq \pr_1^*\tau_2 \bigr) 
\in \Bigl( \cHom_S(X,\cY_1) \xt_{\cHom_S(X,\cY_2)} T \Bigr)(T') \]
and there is a unique $2$-morphism $(f^*\tau_1, f^*\gamma) \simeq (\tau'_1, \gamma')$
induced by $\phi$. Therefore we have a Cartesian square
\[ \xymatrix{ \Sect_T(X_T, Y_T) \ar[r] \ar[d] & \cHom_S(X,\cY_1) \ar[d] \\ T \ar[r] & 
\cHom_S(X,\cY_2) } \]
We have that $Y_T \to X_T$ is finitely presented and quasi-projective (resp.~affine).
By Lemma~\ref{sect-affine} and
Theorem~\ref{sect-general}, we deduce that $\Sect_T(X_T,Y_T) \to T$ 
is schematic and locally of finite presentation (resp.~affine and of finite presentation). 
\end{proof}

\section{Presentation of \texorpdfstring{$\Bun_G$}{BunG}} \label{section:Bun_G-presentation}

Recall the definition of $\Bun_G$ from Definition~\ref{defn:BunG}. 
In this section, we prove Theorem~\ref{BunG-alg}. To do this, we embed $G$ in $\GL_r$ 
and reduce to giving a presentation of the moduli stack of locally free modules of rank $r$.

\subsection{Proving \texorpdfstring{$\Bun_G$}{BunG} is algebraic} 
The general technique we use to prove results on $G$-bundles is to reduce to working with 
$\GL_r$-bundles, which are particularly nice because
$\GL_r$-bundles are in fact equivalent to locally free modules, or  
vector bundles, of rank $r$. 

\begin{lem} \label{GL_r-bundle} 
The morphism from the $k$-stack
\[ B_r: T \mapsto \{ \mbox{locally free $\cO_T$-modules of rank $r$} \} + \{\mbox{isomorphisms of
$\cO_T$-modules} \} \]
to $B\GL_r$ sending $\cE \mapsto \Isom_T(\cO_T^r, \cE)$ is an isomorphism.
\end{lem}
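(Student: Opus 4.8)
The plan is to construct an explicit quasi-inverse and check that the two composites are naturally isomorphic to the respective identities, using the standard dictionary between torsors and their associated bundles. The statement asserts that the stack $B_r$ of rank-$r$ locally free modules is isomorphic to $B\GL_r$, and the proposed map sends $\cE$ to the sheaf $\Isom_T(\cO_T^r, \cE)$ of $\cO_T$-module isomorphisms. The first thing I would verify is that this is well-defined, i.e.~that $\Isom_T(\cO_T^r,\cE)$ is genuinely a right $\GL_r$-torsor: it carries an obvious right action of $\GL_r = \Isom_T(\cO_T^r,\cO_T^r)$ by precomposition, and since $\cE$ is locally free of rank $r$ it is Zariski-locally (hence fppf-locally) isomorphic to $\cO_T^r$, which makes the $\Isom$ sheaf locally trivial and the action simply transitive. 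Functoriality in $T$ and in isomorphisms of $\cE$ is formal, so this does define a morphism of stacks $B_r \to B\GL_r$.

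Next I would produce the inverse morphism. Given a right $\GL_r$-bundle $\cP$ over $T$, I would form the associated bundle $\twist{\cP}{(\cO_T^r)}$ obtained by twisting the standard representation $\cO_T^r$ (on which $\GL_r$ acts on the left in the tautological way) by $\cP$, using the twisting-by-torsors construction reviewed earlier in the excerpt. By the descent description of $\twist{\cP}{(\cO_T^r)}$ given there, this is a locally free $\cO_T$-module of rank $r$, so it lands in $B_r(T)$; this assignment is functorial and respects isomorphisms, giving a morphism $B\GL_r \to B_r$.

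The core of the proof is then to exhibit natural isomorphisms realizing these two morphisms as mutually inverse. In one direction, starting from $\cE$, I must identify $\twist{\Isom_T(\cO_T^r,\cE)}{(\cO_T^r)}$ with $\cE$; the evaluation pairing $(\phi, v) \mapsto \phi(v)$ is $\GL_r$-invariant and descends to the desired isomorphism, which I would check is well-defined on $\GL_r$-orbits and is an isomorphism fppf-locally (where everything trivializes). In the other direction, starting from a torsor $\cP$, I must identify $\Isom_T(\cO_T^r, \twist{\cP}{(\cO_T^r)})$ with $\cP$ itself; here a section $p$ of $\cP$ induces the isomorphism $v \mapsto [p,v]$ of $\cO_T^r$ onto the twist, and this gives a $\GL_r$-equivariant map $\cP \to \Isom_T(\cO_T^r, \twist{\cP}{(\cO_T^r)})$ that is an isomorphism since it is so after passing to a trivializing cover. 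Because both source and target are stacks, it suffices to establish these isomorphisms fppf-locally and check compatibility with the transition data, which is exactly the content of the descent description of the twist recalled before the statement.

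I expect the main obstacle to be purely bookkeeping rather than conceptual: one must keep the left action used to form the associated bundle consistent with the right action defining the torsor, and verify that the evaluation and $v \mapsto [p,v]$ maps are equivariant for the correct actions so that they descend and assemble into \emph{natural} transformations of stacks (not merely objectwise isomorphisms). Naturality in $T$ and the cocycle compatibility of the trivializations are the only places where care is genuinely needed; the local triviality that makes each map an isomorphism is immediate from the definition of locally free modules and of torsors. Granting the twisting formalism from the preceding subsection, the argument reduces to these routine equivariance and descent checks.
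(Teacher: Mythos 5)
Your proposal is correct and takes essentially the same route as the paper: the quasi-inverse is twisting a $\GL_r$-bundle by the standard representation, and the verification reduces to trivializing covers and descent. The only cosmetic difference is that you exhibit explicit unit and counit maps (the evaluation pairing and $p \mapsto (v \mapsto [p,v])$) and check them locally, whereas the paper observes that the two constructions exchange the descent data $(\cO_{T_i}^r, g_{ij})$ and $(T_i \xt \GL_r, g_{ij})$ and concludes directly from the fact that $B_r$ and $B\GL_r$ are stacks.
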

\begin{proof}
First observe that $B_r$ is an fpqc stack because $\QCoh$ is an fpqc stack 
\cite[Theorem 4.2.3]{FGA-explained} and local freeness of rank $r$ persists under
fpqc morphisms \cite[Proposition 2.5.2]{EGA4b}. 
There is a canonical simply transitive right action of $\GL_r$ on $\Isom_T(\cO_T^r, \cE)$
by composition. Take a Zariski covering $(T_i \subset T)$ trivializing $\cE$. 
Then $\cE$ has a descent datum $(\cO_{T_i}^r, g_{ij})$ for $g_{ij} \in \GL_r(T_i \cap T_j)$. 
Since $\Isom_{T_i}(\cO_{T_i}^r, \cO_{T_i}^r) \simeq T_i \xt \GL_r$, we have
a descent datum $(T_i \xt \GL_r, g_{ij})$ corresponding to $\Isom_T(\cO_T^r, \cE)$.
Conversely, for any $\cP \in B\GL_r(T)$, there exists a descent datum 
$(T_i \xt \GL_r, g_{ij})$ for $\cP$ over some fppf covering $(T_i \to T)$.
For $V$ the standard $r$-dimensional representation of $\GL_r$, we have that the twist 
$\twist \cP V$ is a module in $B_r(T)$. By construction, $\twist \cP V$ has a descent datum   
$(\cO_{T_i}^r, g_{ij})$. By comparing descent data, we deduce that 
$\cE \mapsto \Isom_T(\cO^r_T, \cE)$ is an isomorphism with inverse morphism 
$\cP \mapsto \twist \cP V$. 
\end{proof}
Note that Lemma~\ref{GL_r-bundle} implies that any $\GL_r$-bundle is Zariski
locally trivial. We will henceforth implicitly use the isomorphism $B_r \simeq B\GL_r$ to
pass between locally free modules and $\GL_r$-bundles. 
\medskip

Fix a base $k$-scheme $S$ and let $p:X \to S$ be a flat, strongly projective morphism.
Fix a very ample invertible sheaf $\cO(1)$ on $X$. Lemma~\ref{GL_r-bundle} 
implies that $\Bun_{\GL_r} \simeq \Bun_r$, where $\Bun_r(T)$ is the groupoid
of locally free $\cO_{X_T}$-modules of rank $r$.  
We say that an $\cO_{X_T}$-module $\cF$ is \emph{relatively generated by global sections} if the 
counit of adjunction $p_T^*p_{T*} \cF \to \cF$ is surjective. 

Let $\cF$ be a quasi-coherent sheaf on $X_T$, flat over $T$. We say that $\cF$ is 
\emph{cohomologically flat over $T$ in degree $i$} (see \cite[8.3.10]{FGA-explained}) if for 
any Cartesian square 
\begin{equation}\label{cartesian-square}
\xymatrix{ X_{T'} \ar[r]^v \ar[d]_{p_{T'}} & X_T \ar[d]^{p_T} \\ T' \ar[r]^u & T } 
\end{equation}
the canonical morphism $u^* R^ip_{T*} \cF \to R^ip_{T'*}(v^* \cF)$
from \cite[8.2.19.3]{FGA-explained} is an isomorphism. 

\begin{lem}\label{cohom-flat}
For an $S$-scheme $T$, let $\cF$ be a quasi-coherent sheaf on $X_T$, flat over $T$. 
If $p_{T*} \cF$ is flat and $R^i p_{T*} \cF=0$ for $i>0$, then $\cF$ is 
cohomologically flat over $T$ in all degrees. For this lemma, we only need $p_T$ to be separated. 
\end{lem}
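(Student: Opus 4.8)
The plan is to reduce the entire statement to a piece of homological algebra about a bounded complex of flat modules, computed by a \v{C}ech complex. Since cohomological flatness in degree $i$ asserts that a morphism of quasi-coherent sheaves on $T'$ is an isomorphism, and this may be tested after restricting to affine opens of $T'$ and of $T$, I would first reduce to the case where $T = \spec A$ and $T' = \spec B$ are affine, with $u$ corresponding to a ring map $A \to B$. In the ambient setting $X_T$ is quasi-compact, and $p_T$ is separated by hypothesis, so I can choose a finite affine open cover $\mathfrak{U} = (U_\alpha)$ of $X_T$ whose finite intersections $U_{\alpha_0 \cdots \alpha_n}$ are again affine.

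The next step is to record the relevant properties of the \v{C}ech complex $C^\bullet = C^\bullet(\mathfrak{U}, \cF)$. Because $\cF$ is quasi-coherent and the cover is affine with affine intersections, $H^i(C^\bullet) = H^i(X_T, \cF) = \Gamma(T, R^i p_{T*}\cF)$. Since the cover is finite and $\cF$ is flat over $T$, each term $C^n$ is a finite direct sum of flat $A$-modules, hence flat, and $C^\bullet$ is bounded. Pulling $\mathfrak{U}$ back along $v : X_{T'} \to X_T$ gives a finite affine cover of $X_{T'}$ with affine intersections, and for affine opens flat base change identifies its \v{C}ech complex with $C^\bullet \otimes_A B$; thus $H^i(C^\bullet \otimes_A B) = H^i(X_{T'}, v^*\cF)$, and the base-change morphism is identified with the canonical map $H^i(C^\bullet) \otimes_A B \to H^i(C^\bullet \otimes_A B)$. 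The problem is therefore reduced to the following algebraic assertion: if $C^\bullet$ is a bounded complex of flat $A$-modules with $H^i(C^\bullet) = 0$ for $i > 0$, then for every $A$-algebra $B$ one has $H^i(C^\bullet \otimes_A B) = 0$ for $i > 0$ and $H^0(C^\bullet) \otimes_A B \xrightarrow{\sim} H^0(C^\bullet \otimes_A B)$.

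To prove this, say $C^i = 0$ for $i > N$ and write $Z^i$, $B^i$ for the cocycles and coboundaries of $C^\bullet$; the hypothesis $H^i = 0$ for $i \geq 1$ gives short exact sequences $0 \to Z^i \to C^i \to Z^{i+1} \to 0$ for all $i \geq 0$. I would then show by descending induction, starting at the top degree where $Z^{N+1} = 0$, that every $Z^i$ is flat, using the fact that in a short exact sequence with flat middle and flat quotient the submodule is flat; in particular this recovers the flatness of $p_{T*}\cF = Z^0$. Flatness of each $Z^{i+1}$ makes the above sequences remain exact after $- \otimes_A B$, and a diagram chase then shows $\ker(d^i \otimes B) = \operatorname{im}(d^{i-1} \otimes B) = Z^i \otimes_A B$ for $i \geq 1$, while $H^0(C^\bullet \otimes_A B) = Z^0 \otimes_A B$, giving exactly the desired vanishing and base-change isomorphism.

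The main obstacle is this last homological step: base change preserves the cohomology of $C^\bullet$ only because the truncation modules $Z^i$ are flat, and establishing their flatness relies essentially on the complex being bounded, so that the induction can start at the top. This is where finiteness of the affine cover — i.e.\ quasi-compactness of $X_T$ — enters, whereas separatedness of $p_T$ is used only to guarantee that the intersections $U_{\alpha_0 \cdots \alpha_n}$ are affine; properness plays no role, consistent with the final sentence of the lemma.
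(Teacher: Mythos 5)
Your proof is correct, and it takes a genuinely different route from the paper's. The paper stays in the derived category: it quotes the base change theorem \cite[Theorem 8.3.2]{FGA-explained}, which for a $T$-flat quasi-coherent $\cF$ gives a quasi-isomorphism $Lu^*Rp_{T*}\cF \simeq Rp_{T'*}(v^*\cF)$; the vanishing of $R^ip_{T*}\cF$ for $i>0$ truncates $Rp_{T*}\cF$ to $p_{T*}\cF$, the flatness hypothesis on $p_{T*}\cF$ converts $Lu^*$ into $u^*$, and the agreement of the resulting isomorphism with the canonical morphism of \cite[8.2.19.3]{FGA-explained} is again quoted from Theorem 8.3.2. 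Your \v{C}ech argument replaces this citation with elementary homological algebra on a bounded complex of flat $A$-modules, and the key steps are all sound: the two-out-of-three property of flatness in the short exact sequences $0 \to Z^i \to C^i \to Z^{i+1} \to 0$, the descending induction starting from $Z^N = C^N$, and the preservation of these sequences under $-\otimes_A B$ because the quotients are flat. An interesting byproduct of your route is that the flatness of $p_{T*}\cF$ comes out of the induction rather than being assumed: over an affine base $Z^0 = \Gamma(T,p_{T*}\cF)$ is flat automatically. In the paper this flatness is an input to the lemma and is established separately --- by exactly your \v{C}ech resolution and induction --- in the proof of Proposition~\ref{U_n-functor}, so your argument in effect fuses the lemma with that step. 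What each approach buys: the paper's proof is short modulo a heavy citation; yours is self-contained, avoids derived categories, and makes transparent that only quasi-compactness and separatedness of $p_T$ are used, at the cost of one point you assert but should record explicitly, namely that the identification of $C^\bullet \otimes_A B$ with the \v{C}ech complex of $v^*\cF$ intertwines the natural map $H^i(C^\bullet)\otimes_A B \to H^i(C^\bullet\otimes_A B)$ with the canonical base change morphism of \cite[8.2.19.3]{FGA-explained} --- a routine naturality check, and the precise analogue of the compatibility the paper itself has to quote at the end of its proof.
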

\begin{proof}
Consider the Cartesian square \eqref{cartesian-square}. To check that
$u^* R^ip_{T*} \cF \to R^ip_{T'*}(v^* \cF)$ is an isomorphism, 
it suffices to restrict to an affine subset of $T'$ lying over an affine subset of $T$. 
By the base change formula \cite[Theorem 8.3.2]{FGA-explained}, we have a canonical
quasi-isomorphism 
\[ Lu^* Rp_{T*} \cF \oarrow\sim Rp_{T'*}(v^*\cF) \]
in the derived category $D(T')$ of $\cO_{T'}$-modules. 
Since $R^ip_{T*}\cF = 0$ for $i>0$, truncation gives
a quasi-isomorphism $p_{T*}\cF \simeq Rp_{T*}\cF$. By flatness of $p_{T*}\cF$, we have
$Lu^*p_{T*}\cF \simeq u^*p_{T*}\cF$. Therefore $u^*p_{T*}\cF \simeq Rp_{T'*}(v^*\cF)$ 
in the derived category, and applying $H^i$ gives \[u^*R^ip_{T*}\cF \simeq R^ip_{T'*} (v^*\cF),\]
which is the canonical morphism of \cite[8.3.2.3]{FGA-explained}. By 
\cite[Theorem 8.3.2]{FGA-explained}, this morphism is equal to the base change morphism of 
\cite[8.2.19.3]{FGA-explained}.
\end{proof}

\begin{prop} \label{U_n-functor}
For an $S$-scheme $T$, let
\[ \cU_n(T) = \left\{ \begin{array}{c} \cE \in \Bun_r(T)  \mid R^i p_{T*}(\cE(n)) =0 
\text{ for all } i >0, \\ 
\cE(n) \text{ is relatively generated by global sections} 
\end{array}\right\} \] 
be the full subgroupoid of $\Bun_r(T)$. For $\cE \in \cU_n(T)$, the direct image 
$p_{T*}(\cE(n))$ is flat, and $\cE(n)$ is cohomologically flat over $T$ in all
degrees. In particular, the inclusion $\cU_n \into \Bun_r$ makes $\cU_n$ a pseudo-functor.
\end{prop}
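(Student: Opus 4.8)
The plan is to reduce everything to one key point, the flatness of $p_{T*}(\cE(n))$: once this is known, cohomological flatness in all degrees is immediate from Lemma~\ref{cohom-flat}, and the pseudo-functoriality of $\cU_n$ is a formal consequence obtained by checking that the two defining conditions are stable under base change. As a preliminary observation, note that $\cE(n) = \cE \otimes \cO(n)$ is locally free over $\cO_{X_T}$, and since $p_T : X_T \to T$ is flat (a base change of the flat morphism $p$), the sheaf $\cE(n)$ is flat over $T$. Thus the standing hypotheses of Lemma~\ref{cohom-flat} will be met as soon as the direct image is shown to be flat.

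The main step, and the real obstacle, is to prove that $p_{T*}(\cE(n))$ is flat over $T$. Since flatness is local on the target, I would first reduce to $T$ affine, and then, using finite presentation of $p$ together with the limit techniques of \cite[\S 8]{EGA4c} exactly as in the proof of Lemma~\ref{sect-vb}, reduce further to $T$ affine and noetherian. Over such a base the flat projective morphism $p_T$ and the $T$-flat coherent sheaf $\cE(n)$ make $Rp_{T*}(\cE(n))$ a perfect complex, so it is represented by a bounded complex $K^\bullet = (K^0 \to K^1 \to \cdots \to K^N)$ of finite locally free $\cO_T$-modules placed in cohomological degrees $0,\dots,N$, with $R^i p_{T*}(\cE(n)) \simeq H^i(K^\bullet)$ (this is the base-change complex underlying \cite[Theorem 8.3.2]{FGA-explained}). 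The hypothesis $R^i p_{T*}(\cE(n)) = 0$ for $i > 0$ then forces $H^i(K^\bullet) = 0$ for $i>0$, so the augmented sequence
\[ 0 \to p_{T*}(\cE(n)) \to K^0 \to K^1 \to \cdots \to K^N \to 0 \]
is exact. Splitting it from the right — each $K^i$ is locally free, hence every surjection onto it splits — shows by descending induction that each $\ker(d^i)$ is a direct summand of $K^i$, and finally that $p_{T*}(\cE(n)) = \ker(d^0)$ is a direct summand of $K^0$. Hence $p_{T*}(\cE(n))$ is locally free, in particular flat over $T$.

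With flatness established, Lemma~\ref{cohom-flat} applies (as $p_T$ is separated, $\cE(n)$ is $T$-flat, $p_{T*}(\cE(n))$ is flat, and the higher direct images vanish) and yields that $\cE(n)$ is cohomologically flat over $T$ in all degrees. To conclude that the inclusion $\cU_n \into \Bun_r$ exhibits $\cU_n$ as a pseudo-functor, it then remains to verify that the defining conditions are preserved under an arbitrary base change $u : T' \to T$. Cohomological flatness gives $R^i p_{T'*}(\cE(n)_{T'}) \simeq u^* R^i p_{T*}(\cE(n))$, which vanishes for $i > 0$; in degree $0$ it gives $p_{T'*}(\cE(n)_{T'}) \simeq u^* p_{T*}(\cE(n))$, so the counit $p_{T'}^* p_{T'*}(\cE(n)_{T'}) \to \cE(n)_{T'}$ is the pullback along $X_{T'} \to X_T$ of the surjective counit over $T$, hence is itself surjective. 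Therefore $\cE(n)_{T'} \in \cU_n(T')$, so pullback restricts to $\cU_n$, making it a full sub-pseudo-functor of $\Bun_r$. The content of the argument is entirely concentrated in the flatness step of the second paragraph; the remaining assertions are formal once Lemma~\ref{cohom-flat} is invoked.
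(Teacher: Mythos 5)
Your overall architecture matches the paper's — flatness of $p_{T*}(\cE(n))$ is the one real step, cohomological flatness then comes from Lemma~\ref{cohom-flat}, and pseudo-functoriality is a formal base-change check — and your last paragraph is correct (your counit argument is exactly the compatibility of Lemma~\ref{Y_n-compatible}, where the paper instead pulls back a surjection $\bigoplus \cO_{X_T} \onto \cE(n)$ over affine bases, which is even cheaper). The gap is the noetherian reduction in your main step. In Lemma~\ref{sect-vb} the limit argument of \cite[\S 8]{EGA4c} is legitimate because the assertion there is representability of a functor compatible with arbitrary base change, so it may be proven after replacing the base by a noetherian model. Here you must also transfer the \emph{hypotheses} on $\cE$: writing $T = \varprojlim T_\lambda$ and descending $\cE$ to $\cE_\lambda$ on $X_{T_\lambda}$, there is no reason that $R^i p_{T_\lambda*}(\cE_\lambda(n)) = 0$ for $i>0$ — one only knows $H^i(X_T,\cE(n)) \simeq \varinjlim_\mu H^i(X_{T_\mu}, \cE_\mu(n))$, and vanishing of a filtered colimit says nothing at any finite stage. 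Nor can you argue in the other direction: even granting flatness of $p_{T_\lambda*}(\cE_\lambda(n))$, transporting it back to $T$ would need the base-change isomorphism $u^*p_{T_\lambda*}(\cE_\lambda(n)) \simeq p_{T*}(\cE(n))$, i.e.\ cohomological flatness in degree $0$ at level $\lambda$, which is precisely the kind of statement being proven — so the reduction as written is circular. The fix is to descend only the \emph{data}: over the noetherian $T_\lambda$ form the Grothendieck complex (\cite[6.10.5]{EGA3b}), a bounded complex of finitely generated projective modules computing the cohomology of $\cE_\lambda(n)$ universally; its pullback $K^\bullet$ to $T$ then computes $Rp_{T*}(\cE(n))$ and all its base changes, the hypothesis over $T$ makes $0 \to p_{T*}(\cE(n)) \to K^0 \to \cdots \to K^N \to 0$ exact, and your splitting argument runs verbatim over $T$ — even yielding local freeness, which the paper only establishes later, in Remark~\ref{direct-image-free}.

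For comparison, the paper's proof never leaves the general (possibly non-noetherian) affine base: choosing a finite affine cover $\fU$ of the quasi-compact separated scheme $X_T$, it computes $H^i(X_T,\cF)$ by the {\v C}ech complex, whose terms are $T$-flat because $\cF = \cE(n)$ is; the hypothesis kills $\check H^i(\fU,\cF)$ for $i>0$, so
\[ 0 \to \Gamma(X_T,\cF) \to \check C^0(\fU,\cF) \to \cdots \to \check C^{N-1}(\fU,\cF) \to 0 \]
is exact, and a descending induction using the two-out-of-three property of flatness in short exact sequences gives flatness of $\Gamma(X_T,\cF)$. So the {\v C}ech argument buys exactly what your route is missing — it needs no noetherian hypothesis and no perfect complex, at the price of concluding only flatness rather than local freeness at this stage. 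If you repair the limit step as above, your version is a correct and slightly stronger alternative; as written, the step ``reduce to $T$ noetherian exactly as in Lemma~\ref{sect-vb}'' fails.
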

\begin{proof}
For $\cE \in \cU_n(T)$, let $\cF = \cE(n)$. To prove $p_{T*}\cF$ is flat, we may 
restrict to an open affine of $T$. Assuming $T$ is affine, $X_T$ is quasi-compact and separated. 
Thus we can choose a finite affine cover $\fU = (U_j)_{j=1,\dots,N}$ of $X_T$. 
Since $X_T$ is separated, \cite[Proposition 1.4.1]{EGA3a} implies there is a canonical
isomorphism
\[ \check H^i(\fU, \cF) \overset\sim\to H^i(X_T,\cF). \] 
Since $R^ip_{T*}\cF$ is the quasi-coherent sheaf associated to $H^i(X_T,\cF)$ by 
\cite[Proposition 1.4.10, Corollaire 1.4.11]{EGA3a},
we deduce that $\check H^i(\fU,\cF)=0$ for $i>0$. Therefore we have an exact sequence 
\[ 0 \to \Gamma(X_T,\cF) \to \check C^0(\fU,\cF) \to \dots \to \check C^{N-1}(\fU,\cF) \to 0. \]
Since $\cF$ is $T$-flat, $\check C^i(\fU,\cF) = \prod_{j_0 < \dots < j_i} \Gamma(U_{j_0} \cap 
\dots \cap U_{j_i},\cF)$ is also $T$-flat. By induction, 
we conclude that $\Gamma(X_T,\cF)$ is flat on $T$. Therefore $p_{T*}\cF$ is $T$-flat. 
Consequently, Lemma~\ref{cohom-flat} implies that $\cF$ is cohomologically flat over $T$ in all
degrees.

Now we wish to show that for a $T$-scheme $T'$, the pullback $\cF_{T'}$
is relatively generated by global sections. This property is local on the base, 
so we may assume $T$ and $T'$ are affine. In this case, there exists a surjection 
$\bigoplus \cO_{X_T} \onto \cF$, so pulling back gives a surjection $\bigoplus \cO_{X_{T'}}
\onto \cF_{T'}$, which shows that $\cF_{T'}$ is generated by global sections.
We conclude that $\cE_{T'} \in \cU_n(T')$, so $\cU_n$ is a pseudo-functor. 
\end{proof}

\begin{rem} \label{noetherian-base}
Suppose $S$ is affine. Then $\Gamma(S,\cO_S)$ is an inductive limit of
finitely presented $k$-algebras, so by \cite[\S 8]{EGA4c} there exists 
an affine scheme $S_1$ of finite presentation over $\spec k$, a morphism $S \to S_1$,
and a flat projective morphism $p_1: X_1 \to S_1$ which is equal to $p$ after base change.
We can then define all of our pseudo-functors with respect to $p_1 : X_1 \to S_1$
from $(\Sch_{/S_1})^\op \to \Gpd$. The corresponding pseudo-functors for $p$ are then 
obtained by the restriction $\Sch_{/S} \to \Sch_{/S_1}$. This will allow us to reduce 
to the case where the base scheme $S$ is noetherian.
\end{rem}

\begin{lem} \label{U_n-opencover} 
The pseudo-subfunctors $(\cU_n)_{n \in \bZ}$ form an open cover of $\Bun_r$. 
\end{lem}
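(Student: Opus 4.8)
The plan is to establish the two conditions defining an open cover separately: that each $\cU_n \into \Bun_r$ is representable by an open immersion, and that the family $(\cU_n)_{n \in \bZ}$ is jointly surjective. Fixing a scheme $T$ with $\cE \in \Bun_r(T)$, the fibered product $\cU_n \xt_{\Bun_r} T$ is a subsheaf of the sheaf represented by $T$, because $\cU_n$ is a full subgroupoid; hence it suffices to exhibit an open subscheme $U \subseteq T$ whose universal property is that $u : T' \to T$ factors through $U$ precisely when $\cE_{T'} \in \cU_n(T')$. After the usual reduction to a noetherian base (Remark~\ref{noetherian-base}), I would define $U$ as the set of $t \in T$ at which both defining conditions hold on the fiber, i.e.\ $H^i(X_t, \cE(n)_t) = 0$ for all $i > 0$ and $\cE(n)_t$ is globally generated on $X_t$.

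To see that $U$ is open, I would first handle the cohomological condition by semicontinuity: the locus $U_1 = \{ t : H^i(X_t,\cE(n)_t) = 0 \text{ for all } i>0 \}$ is open, only finitely many $i$ being relevant since the fibers of $p$ have bounded dimension. Over $U_1$ the theorem on cohomology and base change \cite[Theorem 8.3.2]{FGA-explained} forces $R^i p_{T*}(\cE(n)) = 0$ for $i>0$ and shows $p_{T*}(\cE(n))$ is locally free there, so Lemma~\ref{cohom-flat} makes $\cE(n)$ cohomologically flat over $U_1$. This cohomological flatness is exactly what controls the remaining condition: the counit $\phi : p_{U_1}^* p_{U_1 *}(\cE(n)) \to \cE(n)$ now commutes with base change, so its restriction to a fiber $X_t$ is the fiberwise counit, and $\cE(n)_t$ is globally generated iff $\phi_t$ is surjective. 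Letting $\cQ$ be the (coherent) cokernel of $\phi$, whose formation commutes with restriction to fibers, $\phi_t$ is surjective iff $t$ lies outside the image $p_{U_1}(\operatorname{Supp}\cQ)$; this image is closed by properness of $p_{U_1}$, so $U = U_1 \setminus p_{U_1}(\operatorname{Supp}\cQ)$ is open and over it $\phi$ is surjective, i.e.\ $\cE(n)|_U$ is relatively generated by global sections.

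The universal property then follows from two observations. Both fiberwise conditions are insensitive to field extension (cohomology by flat base change, global generation by faithfully flat descent of surjectivity), so $\cE_{T'} \in \cU_n(T')$ forces $u(t')$ into $U$ for every $t' \in T'$, hence $u$ factors through the open $U$; conversely $\cE|_U \in \cU_n(U)$ by construction of $U$, and pulling back along $T' \to U$ using the pseudo-functoriality of $\cU_n$ (Proposition~\ref{U_n-functor}) yields $\cE_{T'} \in \cU_n(T')$. For joint surjectivity it suffices that the $U_n = \cU_n \xt_{\Bun_r} T$ cover $T$: given $t \in T$, Serre's theorems applied to the coherent sheaf $\cE_t$ on the projective fiber $X_t$ with its relatively very ample $\cO(1)$ (see \cite[II, Theorem 5.17 and III, Theorem 5.2]{Hart}) produce an integer $n = n(t)$ with $t \in U_{n(t)}$. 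I expect the main obstacle to be the careful treatment of the global-generation condition: that it carves out an open locus genuinely relies on the cohomological flatness secured on $U_1$, and the passage from the pointwise description of $U$ to the full functorial universal property must be routed through the pseudo-functoriality of $\cU_n$ rather than checked naively on points.
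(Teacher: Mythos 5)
Your proposal is correct in substance and follows the same skeleton as the paper's proof: reduce to a noetherian situation, define $U_n \subset T$ as the locus of points where the two fiberwise conditions hold, prove openness by semicontinuity plus properness of $p_T$, verify the universal property through cohomological flatness and insensitivity of the fiberwise conditions to the field extension $\kappa(t')/\kappa(t)$, and obtain the covering from Serre's theorems. The genuine variations are local and mostly to your credit. For openness of the generation locus, you first pass to the cohomology-vanishing locus $U_1$, upgrade the fiberwise vanishing there to $R^ip_{T*}(\cE(n))=0$ and local freeness of $p_{T*}(\cE(n))$, invoke Lemma~\ref{cohom-flat} for cohomological flatness, and then remove $p_T(\operatorname{Supp}\coker\phi)$, which is closed by properness; the paper instead asserts openness of the fiberwise-generation locus directly ``by coherence and Nakayama,'' handles the cohomological condition via \cite[III, Theorem 12.8]{Hart}, and only afterwards checks $\cE(n)|_{U_n}\in\cU_n(U_n)$ using \cite[III, Theorem 12.11]{Hart}. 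Your two-stage argument is the more robust justification of the paper's terse claim. For the covering, you apply Serre fiberwise to get an $n(t)$ for each point, whereas the paper applies \cite[III, Theorem 5.2(b)]{Hart} over the (by then noetherian) base $T$ to produce a single $n$ with $\cE\in\cU_n(T)$, so that $U_n=T$ by the already-established universal property; both are fine.

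There is one real gap: Remark~\ref{noetherian-base} only makes the base $S$ noetherian, while the test scheme $T$ remains arbitrary, and every noetherian tool in your openness argument --- semicontinuity, coherence of $\coker\phi$, the base-change theorems of \cite[III, Theorem 12.11]{Hart} --- is applied on $T$. The paper supplies the missing step explicitly: it writes an affine $T$ as a projective limit of finite-type affine schemes $T_\lambda$ over $S$ and uses \cite[Th\'eor\`eme 8.5.2(ii), Proposition 8.5.5]{EGA4c} to descend $\cE$ to some $X_{T_\lambda}$, so that $T\to\Bun_r$ factors through a noetherian $T_\lambda$ and one may replace $T$ by $T_\lambda$. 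Without this limit argument your construction of $U$ is unjustified for general $T$, and your ``usual reduction'' does not cover it as cited. A second, minor imprecision: in the forward direction of the universal property, ``insensitivity to field extension'' is only the second of two steps; you must first convert the relative conditions defining $\cE_{T'}\in\cU_n(T')$ into the fiberwise conditions at each $t'\in T'$, which is exactly where the paper invokes the cohomological flatness of Proposition~\ref{U_n-functor} --- your text cites that proposition only for the converse direction, so make the forward use explicit.
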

\begin{proof}
Take an $S$-scheme $T$ and a locally free $\cO_{X_T}$-module $\cE \in \Bun_r(T)$. 
Then the $2$-fibered product $\cU_n \xt_{\Bun_r} T : (\Sch_{/T})^\op \to \Gpd$ sends $T' \to T$ 
to the equivalence relation\footnote{By an equivalence relation, we mean a groupoid
whose only automorphisms are the identities.}
\[ \bigl\{ (\cE' \in \cU_n(T'), \cE' \simeq \cE_{T'}) \bigr\} + \mbox{isomorphisms}. \]
Therefore we must prove that for each $n$, there exists an open subscheme $U_n \subset T$ with
the universal property that $T' \to T$ factors through $U_n$ if and only if
$R^ip_{T'*}(\cE_{T'}(n))=0$ in degrees $i>0$ and $\cE_{T'}(n)$ is relatively generated
by global sections over $T'$. 

The assertions are Zariski local on $S$, so
by Remark~\ref{noetherian-base} we can assume that $S$ is the spectrum of a 
finitely generated $k$-algebra (and hence an affine noetherian scheme). 

We can assume $T$ is affine and express it as a projective limit $\ilim T_\lambda$ of spectra 
of finitely generated $\Gamma(S,\cO_S)$-algebras. Since $X \to S$ is proper, $X$ is 
quasi-compact and
separated. The fibered products $X_{T_\lambda}$ are affine over $X$. 
Therefore applying \cite[Th\'eor\`eme 8.5.2(ii), Proposition 8.5.5]{EGA4c} to
$X_T = \ilim X_{T_\lambda}$, we find that there exists $\lambda$ and 
a locally free $\cO_{X_{T_\lambda}}$-module $\cE_\lambda$ that pulls back to $\cE$. 
Hence $T \to \Bun_r$ factors through $T_\lambda$. Replacing $T$ with $T_\lambda$, we
can assume that $T$ is affine and noetherian. 

Set $\cF = \cE(n)$. Let $U_n \subset T$ be the subset of points $t \in T$ where 
$H^i(X_t, \cF_t) =0$ for $i>0$ and $\Gamma(X_t,\cF_t) \ot \cO_{X_t} \onto \cF_t$ is surjective. 
By coherence of $\cF$ and Nakayama's lemma, the set of points where $\Gamma(X_t,\cF_t) \ot 
\cO_{X_t} \onto \cF_t$ forms an open subset of $T$.
Since $X$ is quasi-compact, it can be covered
by $N$ affines. Base changing, we have that $X_t$ can also be covered by $N$ affines. 
Since $X_t$ is quasi-compact and separated, $H^i(X_t, \cF_t)$ can be computed
using \v Cech cohomology, from which we see that $H^i(X_t,\cF_t)=0$ for $i\ge N$ and all 
$t \in T$. For fixed $i$, the set of $t \in T$ where $H^i(X_t,\cF_t)=0$ is an open subset
by \cite[III, Theorem 12.8]{Hart}. By intersecting finitely many open subsets, we 
conclude that $U_n$ is an open subset of $T$. 
 For $t \in U_n$, we have by \cite[III, Theorem 12.11]{Hart} that 
$R^i p_{T*}\cF \ot \kappa(t) =0$ for $i>0$. Since $p$ is proper, $R^ip_*\cF$ is 
coherent \cite[Th\'eor\`eme 3.2.1]{EGA3a}. By Nakayama's lemma, this implies that
\[ R^i p_{U_n *}(\cF_{U_n}) \simeq (R^ip_{T*}\cF)|_{U_n} = 0.\] 
We also have by Nakayama's lemma that $\cF_{U_n}$ is relatively generated by global sections
over $U_n$. 
Therefore $\cF_{U_n} \in \cU_n(U_n)$. 

Now suppose there is a morphism $u: T' \to T$ such that $R^ip_{T'*}\cF_{T'} = 0$ for $i>0$
and $\cF_{T'}$ is relatively generated by global sections. 
Take $t' \in T'$ and set $t = u(t')$. By Proposition~\ref{U_n-functor}, we have that 
$H^i(X_{t'}, \cF_{t'})=0$ for $i>0$. Since $\spec \kappa(t') \to \spec \kappa(t)$ is 
faithfully flat, \cite[III, Proposition 9.3]{Hart} implies that 
\[ H^i(X_t, \cF_t) \ot_{\kappa(t)} \kappa(t') \simeq H^i(X_{t'} , \cF_{t'}) =0,\]
and therefore $H^i(X_t,\cF_t)=0$ for $i>0$. 
We also have from Proposition~\ref{U_n-functor} and faithful flatness that 
$\Gamma(X_{t'},\cF_{t'}) \ot \cO_{X_{t'}} \onto \cF_{t'}$ implies 
$\Gamma(X_t,\cF_t) \ot \cO_{X_t} \onto \cF_t$. 
This proves that $u$ factors through $U_n$.

By \cite[III, Theorem 5.2(b)]{Hart}, there 
exists an $n \in \bZ$ such that $R^ip_{T*}(\cE(n))=0$ for $i >0$ and $\cE(n)$ is generated
by global sections. Therefore the 
collection $(U_n)_{n \in \bZ}$ form an open cover of $T$.
\end{proof}

\begin{cor} \label{factor-noetherian-U_n} Suppose $S$ is affine 
and $T$ is an affine scheme mapping to $\cU_n$. Then $T \to \cU_n$ factors through a 
scheme locally of finite type over $S$. \end{cor}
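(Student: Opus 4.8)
The plan is a spreading-out argument: I descend the locally free module classifying the morphism $T \to \cU_n$ to a scheme of finite type over $S$, and then carve out the open locus on which it continues to lie in $\cU_n$, using the openness from Lemma~\ref{U_n-opencover}.

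The morphism $T \to \cU_n$ corresponds to a locally free $\cO_{X_T}$-module $\cE$ of rank $r$ satisfying the conditions of Proposition~\ref{U_n-functor}. Since $T$ is affine over the affine scheme $S$, I would write $\Gamma(T,\cO_T)$ as the filtered union of its finitely generated $\Gamma(S,\cO_S)$-subalgebras, obtaining a projective limit $T = \ilim_\lambda T_\lambda$ of affine schemes $T_\lambda$ of finite type over $S$, with affine projection and transition morphisms. Because $p : X \to S$ is projective, $X_T = \ilim_\lambda X_{T_\lambda}$ is quasi-compact and separated with affine transition morphisms, so exactly as in the proof of Lemma~\ref{U_n-opencover} I can apply \cite[Th\'eor\`eme 8.5.2(ii), Proposition 8.5.5]{EGA4c} to the finitely presented module $\cE$: for some index $\lambda$ there is a locally free $\cO_{X_{T_\lambda}}$-module $\cE_\lambda$ of rank $r$ equipped with an isomorphism $\cE_\lambda|_{X_T} \simeq \cE$. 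Thus $T \to \Bun_r$ factors, up to this $2$-isomorphism, through the morphism $T_\lambda \to \Bun_r$ classified by $\cE_\lambda$.

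It remains to check that this factorization passes through $\cU_n$. By Lemma~\ref{U_n-opencover} the inclusion $\cU_n \into \Bun_r$ is an open immersion, so the fibered product $U = \cU_n \xt_{\Bun_r} T_\lambda$ is represented by an open subscheme $U \subset T_\lambda$, characterized by the property that a morphism $T' \to T_\lambda$ factors through $U$ if and only if $\cE_\lambda|_{X_{T'}} \in \cU_n(T')$. Applying this to $T \to T_\lambda$ and using $\cE_\lambda|_{X_T} \simeq \cE \in \cU_n(T)$, I conclude that $T \to T_\lambda$ factors through $U$. The composite $T \to U \to \cU_n$, where $U \to \cU_n$ is classified by $\cE_\lambda|_{X_U}$, is then the desired factorization, and $U$ is locally of finite type over $S$ because it is an open subscheme of the finite-type $S$-scheme $T_\lambda$.

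The only nonformal step is the descent of the finitely presented locally free module $\cE$ along the limit $T = \ilim_\lambda T_\lambda$; this is precisely the limit argument of \cite[\S 8]{EGA4c} already used in Lemma~\ref{U_n-opencover}. Once $\cE_\lambda$ is produced, the openness of $\cU_n$ in $\Bun_r$ furnishes the factorization formally, so I expect this descent to be the main (though routine) obstacle.
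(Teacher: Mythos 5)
Your proof is correct and follows essentially the same route as the paper: the paper's proof simply cites the spreading-out step (writing $T = \ilim_\lambda T_\lambda$ and descending $\cE$ to some $\cE_\lambda$ via \cite[\S 8]{EGA4c}) from the proof of Lemma~\ref{U_n-opencover}, which you have reproduced explicitly, and then, exactly as you do, invokes the openness of $\cU_n \into \Bun_r$ from that same lemma to represent $\cU_n \xt_{\Bun_r} T_\lambda$ by an open subscheme through which $T$ factors.
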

\begin{proof}
From the proof of Lemma~\ref{U_n-opencover}, the composition $T \to \Bun_r$ factors through a 
scheme $T_1$ of finite type over $S$. Thus $T \to \cU_n$ factors through 
$\cU_n \xt_{\Bun_r} T_1$, which is representable by an open subscheme
of $T_1$ by Lemma~\ref{U_n-opencover}. 
\end{proof}

\begin{rem} \label{direct-image-free} 
For an $S$-scheme $T$ and $\cE \in \cU_n(T)$, we claim that the direct image 
$p_{T*}(\cE(n))$ is a locally free $\cO_T$-module of finite rank. 
The claim is Zariski local on $T$, so we can assume both $S$ and $T$ are affine. 
We reduce to the case where $S$ is noetherian by Remark~\ref{noetherian-base}. 
By Corollary~\ref{factor-noetherian-U_n}, there exists a locally noetherian $S$-scheme $T_1$,
a Cartesian square 
\[ \xymatrix{ X_T \ar[r]^v \ar[d]_{p_T} & X_{T_1} \ar[d]^{p_{T_1}} \\ T \ar[r]^u & T_1 } \]
and $\cE_1 \in \cU_n(T_1)$ such that $v^*\cE_1 \simeq \cE$. 
The direct image $p_{T_1*}(\cE_1(n))$ is flat and coherent by Proposition~\ref{U_n-functor}
and \cite[Th\'eor\`eme 3.2.1]{EGA3a}, hence locally free of finite rank.
By cohomological flatness of $\cE_1(n)$, we have 
$p_{T*}(\cE(n)) \simeq u^*p_{T_1*}(\cE_1(n))$, which proves the claim.
\end{rem}

For a polynomial $\Phi \in \bQ[\lambda]$, define a pseudo-subfunctor $\Bun_r^\Phi \subset 
\Bun_r$ by letting 
\[ \Bun_r^\Phi(T) = \bigl\{ \cE \in \Bun_r(T) \mid 
\Phi(m)=\chi(\cE_t(m))\text{ for all } t \in T, m\in \bZ \bigr\} \]
be the full subgroupoid.
For a locally noetherian $S$-scheme $T$ and $\cE \in \Bun_r(T)$, the Hilbert polynomial of 
$\cE_t$ is a locally constant function on $T$ by \cite[Th\'eor\`eme 7.9.4]{EGA3b}. 
Therefore we deduce from Remark~\ref{noetherian-base} and the proof of Lemma~\ref{U_n-opencover} 
that the
$(\Bun_r^\Phi)_{\Phi\in \bQ[\lambda]}$ form a disjoint open cover of $\Bun_r$.
Let \[ \cU_n^\Phi = \cU_n \cap \Bun_r^\Phi,\] 
so that $(\cU_n^\Phi)_{\Phi \in \bQ[\lambda]}$ give a disjoint open cover of $\cU_n$. 
For a general $S$-scheme $T$ and $\cE \in \cU_n^\Phi(T)$, the direct image $p_{T*}(\cE(n))$
is locally free of finite rank by Remark~\ref{direct-image-free}. By cohomological 
flatness of $\cE(n)$ and the assumption that
$R^ip_{T*}(\cE(n))=0$ for $i>0$, we find that $H^0(X_t, \cE_t(n)) \simeq 
p_{T*}(\cE(n)) \ot \kappa(t)$ is a vector space of dimension $\Phi(n)$. Hence 
$p_{T*}(\cE(n))$ is locally free of rank $\Phi(n)$. 

Since $\Bun_r$ is an fpqc stack, \cite[Lemma \href{http://math.columbia.edu/algebraic_geometry/stacks-git/locate.php?tag=05UN}{05UN}]{stacks-project} shows that all the pseudo-functors 
defined in the previous paragraph are in fact also fpqc stacks.
\medskip

Our goal now is to find schemes with smooth surjective morphisms to the $\cU_n^\Phi$. 
To do this, we will introduce a few additional pseudo-functors. 
\medskip

We define the pseudo-functors $\cY_n^\Phi : (\Sch_{/S})^\op \to \Gpd$. 
For an $S$-scheme $T$, let 
\[ \cY_n^\Phi(T) = \left\{ \begin{array}{c} (\cE, \phi, \psi) \mid 
\cE \in \cU_n^\Phi(T),\, \mbox{$\phi:\cO_{X_T}^{\Phi(n)} \onto \cE(n)$ is surjective}, \\
\mbox{the adjoint morphism $\psi:\cO_T^{\Phi(n)} \to p_{T*}(\cE(n))$ is an isomorphism}
\end{array} \right\}, \]
where a morphism $(\cE,\phi,\psi) \to (\cE',\phi',\psi')$ is an isomorphism $f: \cE \to\cE'$ 
satisfying $f_n \circ \phi = \phi'$, for $f_n = f \ot \id_{\cO(n)} : \cE(n) \simeq \cE'(n)$.
The latter condition is equivalent to $p_{T*}(f_n) \circ \psi = \psi'$, by 
adjunction. An isomorphism $\psi$ is the same as specifying $\Phi(n)$ elements of 
$\Gamma(X_T, \cE(n))$ that form a basis of $p_{T*}(\cE(n))$ as an $\cO_T$-module. Note that
$\cY_n^\Phi(T)$ is an equivalence relation. 


\begin{lem} \label{Y_n-compatible}
Suppose we have a Cartesian square
\[ \xymatrix{ X_{T'} \ar[r]^v \ar[d]_{p_{T'}} & X_T \ar[d]^{p_T} \\ T' \ar[r]^u & T } \]
Let $\cM$ be an $\cO_T$-module, $\cN$ an $\cO_{X_T}$-module, and $\phi : p_T^*\cM \to \cN$
a morphism of $\cO_{X_T}$-modules. If $\psi : \cM \to p_{T*}\cN$ is the adjoint morphism, then
the composition of $u^*(\psi)$ with the base change morphism $u^*p_{T*}\cN \to p_{T'*}v^*\cN$
corresponds via adjunction of $p_{T'*}, p_{T'}^*$ to $v^*(\phi) : p_{T'}^* u^* \cM 
\simeq v^*p_T^*\cM \to v^*\cN$. 
\end{lem}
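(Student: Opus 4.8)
The statement is a formal compatibility between adjunction and base change, so the plan is to prove it by a diagram chase with the counits of the two pullback--pushforward adjunctions rather than by any local computation. Write $\epsilon_\cN : p_T^* p_{T*}\cN \to \cN$ for the counit of the adjunction $(p_T^*, p_{T*})$ and $\epsilon'$ for the counit of $(p_{T'}^*, p_{T'*})$. By the definition of the adjoint morphism, $\psi$ and $\phi$ are related by $\phi = \epsilon_\cN \circ p_T^*(\psi)$. Throughout I will use the canonical isomorphism $c : p_{T'}^* u^* \simeq v^* p_T^*$ arising from the equality $u \circ p_{T'} = p_T \circ v$ of the Cartesian square. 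The one genuinely nonformal input is the recollection of how the base change morphism $\theta : u^* p_{T*}\cN \to p_{T'*} v^*\cN$ of \cite[8.2.19.3]{FGA-explained} is built: by the $(p_{T'}^*, p_{T'*})$-adjunction it is determined by its adjoint $\theta^\sharp : p_{T'}^* u^* p_{T*}\cN \to v^*\cN$, and by its very construction this adjoint is $v^*(\epsilon_\cN)$ transported along $c$, that is $\theta^\sharp = v^*(\epsilon_\cN) \circ c_{p_{T*}\cN}$.

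With these conventions in hand, I would compute the $(p_{T'}^*, p_{T'*})$-adjoint of the composite $\theta \circ u^*(\psi)$. Since taking this adjoint means applying $p_{T'}^*$ and postcomposing with $\epsilon'$, and since $\theta^\sharp = \epsilon'_{v^*\cN} \circ p_{T'}^*(\theta)$, one gets $(\theta \circ u^*\psi)^\sharp = \epsilon'_{v^*\cN} \circ p_{T'}^*(\theta) \circ p_{T'}^* u^*(\psi) = \theta^\sharp \circ p_{T'}^* u^*(\psi)$. Substituting $\theta^\sharp = v^*(\epsilon_\cN) \circ c$ and using naturality of $c$ in the module to rewrite $c \circ p_{T'}^* u^*(\psi) = v^* p_T^*(\psi) \circ c_\cM$, this equals $v^*(\epsilon_\cN) \circ v^* p_T^*(\psi) \circ c_\cM = v^*(\epsilon_\cN \circ p_T^*\psi) \circ c_\cM = v^*(\phi) \circ c_\cM$, which is exactly $v^*(\phi)$ read through the identification $c$. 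This is precisely the claimed equality, so the lemma follows.

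The main obstacle is bookkeeping rather than conceptual: keeping the canonical isomorphism $c : p_{T'}^* u^* \simeq v^* p_T^*$ consistent at every stage and invoking its naturality in the module correctly, and confirming that the base change morphism of the cited reference is indeed the one characterized by $\theta^\sharp = v^*(\epsilon_\cN)$ rather than a variant differing by $c$. Once those two points are fixed, what remains is just the triangle identities of the adjunctions as carried out above. I would therefore spend most of the write-up being explicit about $c$ and about the characterization of $\theta$, and treat the final manipulation as the routine step it is.
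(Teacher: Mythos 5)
Your proof is correct, but it runs the adjunction chase in the direction dual to the paper's. The paper transposes both morphisms \emph{downward}: using the equality $u_* p_{T'*} = p_{T*} v_*$, it shows the two maps agree as morphisms $\cM \to p_{T*} v_* v^*\cN$, via the units of $(v^*,v_*)$ and $(p_T^*,p_{T*})$ and naturality of the unit. This matches the cited construction in \cite[8.2.19.2-3]{FGA-explained} verbatim, since there the base change morphism is \emph{defined} as the $(u^*,u_*)$-adjoint of $p_{T*}$ applied to the unit $\cN \to v_*v^*\cN$. You instead transpose \emph{upward} through $(p_{T'}^*, p_{T'*})$ and rest everything on the counit description $\theta^\sharp = v^*(\epsilon_\cN)\circ c$. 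That description is equivalent to the cited definition --- it is the standard mate (Beck--Chevalley) compatibility, provable by the triangle identities --- but it is not literally what the reference defines, so your argument carries one extra, routine but genuine, verification, which you correctly flag as the nonformal input. Granting it, your computation is sound: $(\theta\circ u^*\psi)^\sharp = \theta^\sharp\circ p_{T'}^*u^*(\psi)$, naturality of $c$ moves $\psi$ past $c$, and $\phi = \epsilon_\cN\circ p_T^*(\psi)$ finishes. What your route buys is a shorter final manipulation with no need to pass through $u_*p_{T'*} = p_{T*}v_*$; what the paper's route buys is that it uses the reference's definition of the base change map directly, so no auxiliary characterization of $\theta$ needs to be established first.
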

\begin{proof}
Take $v^*(\phi) : v^*p_T^*\cM \to v^*\cN$. The isomorphism $p_{T'}^* u^* \cM \simeq 
v^*p_T^* \cM$ follows by adjunction from the equality $u_* p_{T'*} = p_{T*} v_*$.
Therefore it suffices to show that the two morphisms in question are equal after adjunction 
as morphisms $\cM \to p_{T*}v_* v^* \cN = u_* p_{T'*} v^*\cN$. On the one hand, 
the adjoint of $v^*(\phi)$ under $v_*,v^*$ is the composition $p_T^*\cM \to v_*v^*p_T^*\cM
\to v_*v^*\cN$. By naturality of the unit of adjunction, this morphism is equal to
the composition $p_T^*\cM \to \cN \to v_*v^*\cN$. By adjunction under $p_{T*}, p_T^*$, this
morphism corresponds to \[ \cM \to p_{T*} \cN \to p_{T*} v_* v^*\cN. \]
On the other hand, the adjoint of $u^*\cM \to u^*p_{T*} \cN \to p_{T'*} v^*\cN$ under
$u_*,u^*$ is equal to $\cM \to p_{T*} \cN \to u_* p_{T'*} v^*\cN = p_{T*} v_* v^*\cN$. 
By definition of the base change morphism \cite[8.2.19.2-3]{FGA-explained}, the 
latter morphism $p_{T*} \cN \to p_{T*} v_* v^*\cN$ is $p_{T*}$ applied to the unit morphism.
Hence the two morphisms under consideration are the same.
\end{proof}

Letting $\cM = \cO_T^{\Phi(n)}$ and $\cN = \cE(n)$ in Lemma~\ref{Y_n-compatible}, we see that
the pullbacks of $\phi$ and $\psi$ are compatible. Thus $\cY_n^\Phi$ has the structure of a 
pseudo-functor in an unambiguous way.

\begin{lem} \label{W-rep}
For $\Psi \in \bQ[\lambda]$, define the pseudo-functor $\cW^\Psi : (\Sch_{/S})^\op \to \Gpd$ by 
\[ \cW^\Psi(T) = \bigl\{ (\cF,\phi) \mid \cF \in \Bun^\Psi_r(T),\,
\phi : \cO_{X_T}^{\Psi(0)} \onto \cF \bigr\} \]
where a morphism $(\cF,\phi) \to (\cF',\phi')$ is an isomorphism $f : \cF \to \cF'$ 
satisfying $f \circ \phi  =\phi'$. Then $\cW^\Psi$ is representable by a strongly
quasi-projective $S$-scheme. 
\end{lem}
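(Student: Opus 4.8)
First I would note that $\cW^\Psi$ is really a sheaf of sets disguised as a groupoid: since $\phi : \cO_{X_T}^{\Psi(0)} \onto \cF$ is surjective, a morphism $(\cF,\phi) \to (\cF',\phi')$ is uniquely determined if it exists, and it exists exactly when $\ker\phi = \ker\phi'$. Consequently $\cW^\Psi$ is identified with the subfunctor of the Quot functor $\operatorname{Quot}^\Psi := \operatorname{Quot}^\Psi_{\cO_X^{\Psi(0)}/X/S}$, which parametrizes quotients $\cO_{X_T}^{\Psi(0)} \onto \cF$ with $\cF$ flat over $T$ (automatic here, as $\cF$ is locally free) and of Hilbert polynomial $\Psi$ with respect to $\cO(1)$, carved out by the additional requirement that $\cF$ be locally free of rank $r$. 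The whole proof then amounts to representing $\operatorname{Quot}^\Psi$ and cutting out this locally free locus.

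Next I would use Remark~\ref{noetherian-base} together with base change to reduce to the case where $S$ is affine and noetherian; this is harmless because strong quasi-projectivity is stable under base change (a retrocompact immersion into $\bP(\cE)$ pulls back to one into $\bP(\cE_S)$, and finite presentation is preserved). Over such an $S$, since $p : X \to S$ is flat and strongly projective and $\cO_X^{\Psi(0)}$ is finitely presented, the scheme $Q := \operatorname{Quot}^\Psi$ is representable by a strongly projective $S$-scheme by \cite[Theorem 2.6]{AK80}. Let $\cF^{\mathrm{univ}}$ be the universal quotient on $X_Q$, which is $Q$-flat and finitely presented over $X_Q$.

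The main step would be to show that the rank-$r$ locally free condition defines an open subscheme of $Q$. The locus in $X_Q$ where the finitely presented module $\cF^{\mathrm{univ}}$ is locally free of rank $r$ is open (its complement being cut out, for instance, by Fitting ideals); call this complement $C$, which is closed. As the projection $g : X_Q \to Q$ is projective, hence proper, $g(C)$ is closed and $V := Q \setminus g(C)$ is open. Since $C \subset g^{-1}(g(C))$, we get $g^{-1}(V) \subset X_Q \setminus C$, so $\cF^{\mathrm{univ}}$ is locally free of rank $r$ over $X_V$. Arguing exactly as in the proofs of Lemmas~\ref{sect-open} and \ref{locus-iso} (pushing forward along the proper $g$ and using that each fiber $X_T \to X_Q$ surjects onto the corresponding fiber of $X_Q$), a morphism $T \to Q$ factors through $V$ if and only if the pullback of $\cF^{\mathrm{univ}}$ to $X_T$ is everywhere locally free of rank $r$. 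Hence $V$ represents $\cW^\Psi$.

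Finally, because $S$ is noetherian the open immersion $V \into Q$ is retrocompact and finitely presented over $S$, so composing the closed immersion $Q \into \bP(\cE)$ (with $\cE$ locally free of constant finite rank, from strong projectivity of $Q$) with $V \into Q$ exhibits $\cW^\Psi \simeq V$ as a retrocompact subscheme of $\bP(\cE)$ finitely presented over $S$; thus $\cW^\Psi$ is strongly quasi-projective over $S$. Base changing back along $S \to S_1$ via Remark~\ref{noetherian-base} then gives the statement over the original base. I expect the crux to be the openness of the rank-$r$ locus at the level of $Q$ itself — obtained by propagating the closed non-free locus in $X_Q$ forward along the proper projection — together with the retrocompactness bookkeeping needed to upgrade \emph{quasi-projective} to \emph{strongly quasi-projective}, which is precisely what motivates the reduction to a noetherian base.
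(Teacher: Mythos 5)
Your overall route is the paper's: identify $\cW^\Psi$ (an equivalence relation in each fiber) with a subfunctor of $Q=\Quot^{\Psi,\cO(1)}_{\cO_X^{\Psi(0)}/X/S}$, represent $Q$ as a strongly projective $S$-scheme by \cite[Theorem 2.6]{AK80}, reduce to a noetherian base via Remark~\ref{noetherian-base}, and cut out the locally free locus as the open complement of the image, under the proper projection, of the closed non-free locus of the universal quotient. Your final retrocompactness/finite-presentation bookkeeping over the noetherian base also matches the paper, which notes that $W^\Psi \into Q$ is finitely presented because $Q$ is noetherian. (Your Fitting-ideal remark incidentally handles the rank-$r$ condition, which the paper leaves implicit; and your ``once and for all on $X_Q$'' formulation versus the paper's argument with an arbitrary $T$-point of $Q$ are equivalent.)

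The one genuine gap is the converse half of your universal property for $V$, which you dispatch with ``arguing exactly as in the proofs of Lemmas~\ref{sect-open} and \ref{locus-iso}.'' Those proofs do not contain the needed ingredient. In Lemma~\ref{sect-open} the property tested is purely topological (a map of schemes factoring through an open subscheme), and in Lemma~\ref{locus-iso} the property (being an isomorphism) is fpqc local on the base; here you must show that if the pullback of $\cF^{\mathrm{univ}}$ along $X_T \to X_Q$ is locally free at a point $x'$ lying over $x \in X_Q$, then $\cF^{\mathrm{univ}}$ is already free at $x$ --- and $X_T \to X_Q$ is \emph{not flat} for an arbitrary $T \to Q$, so local freeness cannot be descended along it directly; the compositum-of-residue-fields trick only produces the point $x'$, it does not transfer the module-theoretic property. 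The paper fills exactly this hole by passing to fibers: with $t' \in T$ over $t$, one has $\cF_{t'} \simeq \cF_t \ot_{\kappa(t)} \kappa(t')$, and faithful flatness of $\kappa(t')/\kappa(t)$ descends flatness to $\cF_t$ over $\cO_{X_t}$; then the $T$-flatness of $\cF$ (built into the definition of $Q$) combined with the local criterion of flatness \cite[20.G]{Matsumura} shows $\cF \ot \cO_{X_T,x}$ is flat over $\cO_{X_T,x}$, hence free, contradicting $x$ lying in the non-free locus. Without this fiberwise reduction and the local criterion of flatness, your openness claim is unproved; once you insert it, your argument coincides with the paper's.
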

\begin{proof}
Note that the $\cW^\Psi(T)$ are equivalence relations, so by considering sets of equivalence
classes, $\cW^\Psi$ is isomorphic to a subfunctor \[ W^\Psi \subset
\Quot^{\Psi,\cO(1)}_{\cO_X^{\Psi(0)}/X/S} =: Q. \]
\cite[Theorem 2.6]{AK80} shows that $Q$ is representable by a scheme strongly projective 
over $S$. We show that $W^\Psi \into Q$ is schematic, open, and finitely presented, which
will imply that $\cW^\Psi \simeq W^\Psi$ is representable by a strongly quasi-projective 
$S$-scheme.
The claim is Zariski local on $S$, so by \cite[0, Corollaire 4.5.5]{EGA1-2ed} and 
Remark~\ref{noetherian-base}, we can assume $S$ is noetherian.

Let $T$ be an $S$-scheme and $(\cF,\phi) \in Q(T)$. Let $U \subset X_T$ 
be the set of points $x$ where the stalk $\cF \ot \cO_{X_T,x}$ is free. 
Since $\cF$ is coherent, $U$ is an open subset. In particular, it is the maximal open 
subset such that $\cF|_U$ is locally free. We claim that the open subset 
\[ T - p_T(X_T-U) \subset T\] 
represents the fibered product $W^\Psi \xt_Q T$. This is equivalent to
saying that a morphism $T' \to T$ lands in $T - p_T(X_T-U)$ if and only if $\cF_{T'}$ is locally free.
If $T' \to T$ lands in $T - p_T(X_T-U)$, then $X_{T'} \to X_T$ lands in $U$, which implies
$\cF_{T'}$ is locally free. Conversely, suppose we start with a morphism $T' \to T$ such that
$\cF_{T'}$ is locally free. Assume for the sake of contradiction that there exists $t' \in T$ and 
$x \in X_T$ that morphism to the same point $t \in T$ such that $\cF \ot \cO_{X_T,x}$ is not free.
We have Cartesian squares
\[ \xymatrix{ (X_{T'})_{t'} \ar[r] \ar[d] & (X_T)_t \ar[d]\ar[r] & X_T \ar[d]  \\ 
\spec \kappa(t')\ar[r] & \spec \kappa(t) \ar[r] & T } \]
where $\spec \kappa(t') \to \spec \kappa(t)$ is faithfully flat. 
Then $\cF_{t'}\simeq \cF_t \ot_{\kappa(t)} 
\kappa(t')$ is a flat $\cO_{X_{t'}}$-module. By faithful flatness, this implies that
$\cF_t$ is flat over $\cO_{X_t}$. Recall from the definition of $Q$ that
$\cF$ is assumed to be $T$-flat. Therefore $\cF \ot \cO_{X_T,x}$ is $\cO_{T,t}$-flat and 
$\cF_t \ot \cO_{X_t,x}$ is $\cO_{X_t,x}$-flat. By \cite[20.G]{Matsumura}, we conclude that
$\cF \ot \cO_{X_T,x}$ is $\cO_{X_T,x}$-flat and hence a free module.
We therefore have a contradiction, so $T' \to T$ must factor through $T - p_T(X_T-U)$.
Letting $T = Q$, we have shown that $W^\Psi$ is representable by an open subscheme 
of $Q$. Since $S$ is noetherian, $Q$ is also noetherian. Therefore $W^\Psi \into Q$
is finitely presented.
\end{proof}

\begin{lem} \label{Y_n-rep}
The pseudo-functor $\cY_n^\Phi$ is representable by a scheme $Y_n^\Phi$ which is
strongly quasi-projective over $S$. 
\end{lem}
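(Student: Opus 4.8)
The plan is to realize $\cY_n^\Phi$ as an open subfunctor of the scheme representing $\cW^\Psi$ from Lemma~\ref{W-rep}, for the polynomial $\Psi(\lambda) := \Phi(\lambda+n)$. Twisting by $\cO(n)$ is an autoequivalence of $\Bun_r$, so $(\cE,\phi,\psi) \mapsto (\cE(n),\phi)$ defines a morphism $\cY_n^\Phi \to \cW^\Psi$: if $\cE$ has Hilbert polynomial $\Phi$ then $\cE(n)$ has Hilbert polynomial $\Psi$, and $\Psi(0) = \Phi(n)$, so $\phi : \cO_{X_T}^{\Psi(0)} \onto \cE(n)$ is exactly the datum appearing in $\cW^\Psi(T)$. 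Both $\cY_n^\Phi(T)$ and $\cW^\Psi(T)$ are equivalence relations (a morphism is determined by its effect on the quotient since $\phi$ is surjective), hence sheaves of sets, and this twist identifies $\cY_n^\Phi$ with a subfunctor of $\cW^\Psi$.

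The next step is to isolate which conditions cut out this subfunctor. Writing $\cF = \cE(n)$, an object $(\cF,\phi) \in \cW^\Psi(T)$ comes from $\cY_n^\Phi(T)$ precisely when (a) $R^i p_{T*}\cF = 0$ for $i>0$; (b) $\cF$ is relatively generated by global sections; and (c) the adjoint $\psi : \cO_T^{\Psi(0)} \to p_{T*}\cF$ of $\phi$ is an isomorphism. I would first note that (b) is automatic: the surjection $\phi$ exhibits $\cF$ as generated by $\Psi(0)$ global sections, and these factor through the counit $p_T^* p_{T*}\cF \to \cF$, forcing it to be surjective. Thus only (a) and (c) remain to be imposed.

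I would then show $\cY_n^\Phi \to \cW^\Psi$ is representable by an open immersion, reducing to $S$ noetherian via Remark~\ref{noetherian-base}. Let $W$ represent $\cW^\Psi$, with universal $(\cF,\phi)$ where $\cF$ is locally free (hence flat over the base). For a test scheme $T \to W$, the locus where (a) holds is open by the semicontinuity argument already used in the proof of Lemma~\ref{U_n-opencover}; on that open locus $U_a \subset T$ the sheaf $p_{T*}\cF$ is locally free of rank $\Psi(0)=\Phi(n)$ and its formation commutes with base change by cohomological flatness. Over $U_a$, $\psi$ is a map of locally free sheaves of equal rank $\Psi(0)$, so the locus (c) where it is an isomorphism is the open complement of the vanishing of $\det\psi$. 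Both conditions commute with base change, the compatibility of the pullbacks of $\phi$ and $\psi$ being exactly Lemma~\ref{Y_n-compatible}. Hence $\cY_n^\Phi \times_{\cW^\Psi} T$ is represented by an open subscheme of $T$, so $\cY_n^\Phi$ is represented by an open subscheme $Y_n^\Phi \subset W$.

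Finally, since $W = \cW^\Psi$ is strongly quasi-projective over $S$ and, in the noetherian case, the open immersion $Y_n^\Phi \hookrightarrow W$ is quasi-compact and hence of finite presentation, the composite $Y_n^\Phi \hookrightarrow W \hookrightarrow \bP(\cE)$ is a retrocompact immersion of finite presentation; descending back along Remark~\ref{noetherian-base} gives that $Y_n^\Phi$ is strongly quasi-projective over $S$. The only nonformal points are the openness and base-change compatibility of conditions (a) and (c), but these are standard consequences of semicontinuity, cohomology and base change, and the open-ness of the isomorphism locus of a map of vector bundles; the rest is bookkeeping for the twist and the adjunction.
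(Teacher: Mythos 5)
Your proof is correct and follows essentially the same route as the paper: both realize $\cY_n^\Phi$ as an open subfunctor of the scheme $W^\Psi$ of Lemma~\ref{W-rep} via the twist $\Psi(\lambda)=\Phi(\lambda+n)$, impose the cohomological vanishing through an open locus as in Lemma~\ref{U_n-opencover}, cut out the condition that the adjoint $\psi$ be an isomorphism by an open locus (you via the nonvanishing of $\det\psi$, the paper via the complement of the support of $\coker(\psi)$ --- equivalent for a map of locally free sheaves of equal rank $\Phi(n)$), and invoke Lemma~\ref{Y_n-compatible} together with cohomological flatness for base-change compatibility, finishing with the same noetherian reduction. Your observation that relative generation by global sections is automatic in the presence of the surjection $\phi$ is a minor simplification which the paper sidesteps by instead pulling back the open substack $\cU_n^\Phi \subset \Bun_r^\Phi$ along $\cW^\Psi \to \Bun_r^\Phi$, $(\cF,\phi)\mapsto \cF(-n)$.
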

\begin{proof} 
Define the polynomial $\Psi \in \bQ[\lambda]$ by $\Psi(\lambda) = \Phi(\lambda+n)$. 
Let $\cW^\Psi$ be the pseudo-functor of Lemma~\ref{W-rep}. Then there is a morphism 
$\cW^\Psi \to \Bun^\Phi_r$ by sending $(\cF,\phi) \mapsto \cF(-n)$. The corresponding
$2$-fibered product $\cU_n^\Phi \xt_{\Bun^\Phi_r} \cW^\Psi$ is isomorphic to the pseudo-functor 
$\cZ_n^\Phi : (\Sch_{/S})^\op \to \Gpd$ defined by
\[ \cZ_n^\Phi(T) = \bigl\{ (\cE, \phi) \mid \cE \in \cU_n^\Phi(T), \phi : \cO^{\Phi(n)}_{X_T} \onto 
\cE(n) \bigr\} \]
where a morphism $(\cE, \phi) \to (\cE',\phi')$ is an isomorphism $f : \cE \to \cE'$ satisfying
$f_n \circ \phi = \phi'$. By Lemma~\ref{U_n-opencover}, the morphism
$\cZ_n^\Phi \to \cW^\Psi$ is schematic and open, so Lemma~\ref{W-rep} implies that $\cZ_n^\Phi$
is representable by an open subscheme $Z_n^\Phi$ of $W^\Psi$. We claim that $Z_n^\Phi 
\into W^\Psi$ is finitely presented. By Remark~\ref{noetherian-base}, we can reduce to
the case where $S$ is noetherian, and now the assertion is trivial. Therefore
$Z_n^\Phi$ is strongly quasi-projective over $S$.

Next we show that the morphism $\cY_n^\Phi \to \cZ_n^\Phi$ sending 
$(\cE,\phi,\psi) \mapsto (\cE,\phi)$
is schematic, open, and finitely presented. By \cite[0, Corollaire 4.5.5]{EGA1-2ed} and
Remark~\ref{noetherian-base}, we assume $S$ is noetherian. 
Take $(\cE,\phi) \in \cZ_n^\Phi(T)$ and let $\psi$ be the adjoint morphism
$\cO_T^{\Phi(n)} \to p_{T*}(\cE(n))$. Define the open subscheme $U \subset T$ to be the 
complement of the support of $\coker(\psi)$.
By Nakayama's lemma and faithful flatness of field extensions, a morphism $u:T' \to T$
lands in $U$ if and only if the pullback $u^*(\psi)$ is a surjection. 
Since $\cO_T^{\Phi(n)}$ and $p_{T*}(\cE(n))$ are locally free $\cO_T$-modules of the same
rank, $u^*(\psi)$ is surjective if and only if it is an isomorphism.
Recall from Proposition~\ref{U_n-functor} that the base change morphism $u^*p_{T*}(\cE(n)) \to
p_{T'*}(\cE_{T'}(n))$ is an isomorphism. Thus the compatibility assertion of 
Lemma~\ref{Y_n-compatible} shows that $u^*(\psi)$ is an isomorphism if and only if
the adjoint of $v^*(\phi)$ is an isomorphism, for $v : X_{T'} \to X_T$. 
Thus $U$ represents the fibered product $\cY_n^{\Phi} \xt_{\cZ_n^\Phi} T$. 
Taking $T = Z_n^\Phi$, which is noetherian, we deduce that $\cY_n^\Phi \to \cZ_n^\Phi$ 
is schematic, open, and finitely presented. Therefore $\cY_n^\Phi$ is representable
by a strongly quasi-projective $S$-scheme.
\end{proof}

\begin{lem} \label{Y_n-U_n} 
There is a canonical right $\GL_{\Phi(n)}$-action on $Y_n^\Phi$ such that 
the morphism $Y_n^\Phi \to \cU_n^\Phi$ sending 
$(\cE,\phi,\psi) \mapsto \cE$ is a $\GL_{\Phi(n)}$-bundle. 
Lemma~\ref{Z/G=X} gives an isomorphism 
\[ \cU_n^\Phi \simeq [Y_n^\Phi/\GL_{\Phi(n)}] \]
sending $\cE \in \cU_n^\Phi(T)$ to a $\GL_{\Phi(n)}$-equivariant
morphism $\Isom_T\bigl(\cO_T^{\Phi(n)}, p_{T*}(\cE(n))\bigr) \to Y_n^\Phi$.
\end{lem}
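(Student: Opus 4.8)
The plan is to exhibit $Y_n^\Phi \to \cU_n^\Phi$ as a $\GL_{\Phi(n)}$-bundle in the sense of \S\ref{section:G-bundle-stack} and then invoke Lemma~\ref{Z/G=X}. First I would define the right $\GL_{\Phi(n)}$-action on $Y_n^\Phi$: for an $S$-scheme $T$, a triple $(\cE,\phi,\psi) \in \cY_n^\Phi(T)$, and $g \in \GL_{\Phi(n)}(T)$, set
\[ (\cE,\phi,\psi).g = (\cE,\, \phi \circ p_T^* g,\, \psi \circ g), \]
where $g$ is viewed as an automorphism of $\cO_T^{\Phi(n)}$ and $p_T^* g$ as the induced automorphism of $\cO_{X_T}^{\Phi(n)}$. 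Precomposition by an automorphism carries surjections to surjections and isomorphisms to isomorphisms, and by naturality of the $(p_T^*,p_{T*})$-adjunction the adjoint of $\phi \circ p_T^* g$ is $\psi \circ g$; hence the triple stays in $\cY_n^\Phi(T)$. Associativity of composition makes this a right action, and it is plainly invariant under the projection $(\cE,\phi,\psi)\mapsto \cE$, so $Y_n^\Phi \to \cU_n^\Phi$ is $\GL_{\Phi(n)}$-invariant in the sense of \S\ref{sect:G-invariant}.

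Next I would identify the induced fibered product. Fix $\cE \in \cU_n^\Phi(T)$, regarded as a morphism $T \to \cU_n^\Phi$. By definition of the forgetful morphism, $Y_n^\Phi \xt_{\cU_n^\Phi} T$ sends $u : T' \to T$ to the set of data $(\cE',\phi',\psi',\iota)$ with $(\cE',\phi',\psi') \in \cY_n^\Phi(T')$ and $\iota : \cE' \simeq \cE_{T'}$ an isomorphism in $\cU_n^\Phi(T')$. Transporting $\phi',\psi'$ across $\iota$ replaces this by a single surjection $\phi : \cO_{X_{T'}}^{\Phi(n)} \onto \cE_{T'}(n)$ whose adjoint $\psi$ is an isomorphism. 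The crucial observation is that such a $\phi$ is determined by $\psi$ alone: since $\cE(n)$ is relatively generated by global sections, $\phi$ is the counit $p_{T'}^* p_{T'*}(\cE_{T'}(n)) \to \cE_{T'}(n)$ precomposed with $p_{T'}^*\psi$, while $\psi$ is recovered as the adjoint of $\phi$. Thus $\phi \mapsto \psi$ gives an isomorphism of sheaves
\[ Y_n^\Phi \xt_{\cU_n^\Phi} T \simeq \Isom_T\bigl(\cO_T^{\Phi(n)}, p_{T*}(\cE(n))\bigr), \]
where I use the cohomological flatness of Proposition~\ref{U_n-functor} to identify $u^* p_{T*}(\cE(n)) \simeq p_{T'*}(\cE_{T'}(n))$, and Lemma~\ref{Y_n-compatible} to check that this identification is natural in $T'$. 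Under it the $G$-action becomes precomposition on the $\Isom$ sheaf.

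Finally, by Remark~\ref{direct-image-free} the module $p_{T*}(\cE(n))$ is locally free, of rank $\Phi(n)$ because the fiberwise dimension equals $\chi(\cE_t(n)) = \Phi(n)$. Hence $\Isom_T(\cO_T^{\Phi(n)}, p_{T*}(\cE(n)))$ is a right $\GL_{\Phi(n)}$-torsor over $T$ for the precomposition action, exactly as in Lemma~\ref{GL_r-bundle}. This shows that $Y_n^\Phi \xt_{\cU_n^\Phi} T \to T$ is a $\GL_{\Phi(n)}$-bundle for every $T$ and every $\cE$, so $Y_n^\Phi \to \cU_n^\Phi$ is a $\GL_{\Phi(n)}$-bundle, and Lemma~\ref{Z/G=X} delivers the isomorphism $\cU_n^\Phi \simeq [Y_n^\Phi/\GL_{\Phi(n)}]$ sending $\cE$ to the bundle $\Isom_T(\cO_T^{\Phi(n)}, p_{T*}(\cE(n)))$ together with its tautological $\GL_{\Phi(n)}$-equivariant map to $Y_n^\Phi$. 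I expect the main obstacle to be the bookkeeping in the middle step: carefully passing between the triple $(\cE',\phi',\psi',\iota)$ and the single datum $\psi$, and verifying via Lemma~\ref{Y_n-compatible} that the resulting identification with the $\Isom$ sheaf is natural in $T'$ and intertwines the two $G$-actions.
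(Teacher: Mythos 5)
Your proposal is correct and takes essentially the same route as the paper's proof: the same action $(\cE,\phi,\psi).g = (\cE,\,\phi\circ p_T^*g,\,\psi\circ g)$, the same identification of the fibered product with $\Isom_T\bigl(\cO_T^{\Phi(n)},p_{T*}(\cE(n))\bigr)$ via the adjunction together with the key observation that surjectivity of $\phi$ is automatic from relative generation by global sections, and the same appeals to cohomological flatness (Proposition~\ref{U_n-functor}), Remark~\ref{direct-image-free}, Lemma~\ref{GL_r-bundle}, and Lemma~\ref{Z/G=X}. The only point the paper makes slightly more explicit is that the associativity condition of \S\ref{sect:G-invariant}(2) for the descended action on the scheme $Y_n^\Phi$ holds automatically because each $\cY_n^\Phi(T)$ is an equivalence relation, which your appeal to associativity of composition covers in substance.
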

\begin{proof} We define an action morphism 
$\alpha:\cY_n^\Phi \xt \GL_{\Phi(n)} \to \cY_n^\Phi$ by sending 
\[ ((\cE,\phi,\psi), g) \mapsto (\cE,\phi \circ p_T^*(g),\psi \circ g) \] 
over an $S$-scheme $T$.  Then the diagram 
\[ \xymatrix{ \cY_n^\Phi \xt \GL_{\Phi(n)} \ar[r]^-\alpha \ar[d]_{\pr_1} & \cY_n^\Phi \ar[d] \\
\cY_n^\Phi \ar[r] & \cU_n^\Phi } \]
is $2$-commutative where the $2$-morphism is just the identity. Then $\alpha$
induces a right $\GL_{\Phi(n)}$-action on $Y_n^\Phi$ by taking equivalence classes, i.e., 
we have a $2$-commutative square
\[ \xymatrix{ Y_n^\Phi \xt \GL_{\Phi(n)} \ar[r] \ar[d]_\sim & Y_n^\Phi \ar[d]^\sim \\ 
\cY_n^\Phi \xt \GL_{\Phi(n)} \ar[r]^-\alpha & \cY_n^\Phi } \]
and the associated $2$-morphism must satisfy the associativity condition of 
\ref{sect:G-invariant}(2) because $\cY_n^\Phi(T)$ is an equivalence relation. 
We deduce that $Y_n^\Phi \to \cU_n^\Phi$ is $\GL_{\Phi(n)}$-invariant.

For an $S$-scheme $T$, let $\cE \in \cU_n^\Phi(T)$. Then the fibered product 
on $(\Sch_{/T})^\op \to \Gpd$ is given by 
\[ \Bigl(\cY_n^\Phi \xt_{\cU_n^\Phi} T \Bigr)(T') = \bigl\{ (\cN, \phi, \psi, \gamma) \mid 
(\cN,\phi,\psi) \in \cY_n^\Phi, \gamma : \cN \simeq \cE_{T'}  \bigr\} \] 
where a morphism $(\cN, \phi,\psi,\gamma) \to (\cN',\phi',\psi',\gamma')$ is an isomorphism
$f : \cN \to \cN'$ such that $f_n\circ \phi = \phi'$ and $\gamma' \circ f = \gamma$. 
Observe that $(\cY_n^\Phi \xt_{\cU_n^\Phi} T)(T')$ is an equivalence relation, and 
$\cY_n^\Phi \xt_{\cU_n^\Phi} T$ is isomorphic to the functor $\cP : (\Sch_{/T})^\op \to \Set$ sending
\[ (T' \to T) \mapsto 
\bigl\{ (\phi,\psi) \mid \phi : \cO_{X_{T'}}^{\Phi(n)} \onto \cE_{T'}(n),\, \text{the adjoint } 
\psi \text{ is an isomorphism} \bigr\}. \]
In fact, the surjectivity of $\phi$ is automatic:
since $\cE_{T'}(n)$ is relatively generated by global sections, 
if we have an isomorphism $\psi : \cO_{T'}^{\Phi(n)} \simeq p_{T'*}(\cE_{T'}(n))$, then the 
adjoint morphism 
\[ \phi : \cO_{X_{T'}}^{\Phi(n)} \simeq p_{T'}^*p_{T'*}(\cE_{T'}(n)) \onto \cE_{T'}(n)\] 
is surjective. 
Therefore $\cP(T')$ is just equal to a choice of basis for $p_{T'*}(\cE_{T'}(n))$. 
The induced right action of $\GL_{\Phi(n)}$ on $\cP$ is defined by $\psi.g = \psi \circ g$ 
for $g \in \GL_{\Phi(n)}(T')$. By Remark~\ref{direct-image-free},
the direct image $p_{T*}(\cE(n))$ is locally free of rank $\Phi(n)$. 
By cohomological flatness of $\cE(n)$ over $T$ (see Proposition~\ref{U_n-functor}), 
we have an isomorphism $\cP \simeq \Isom_T \bigl(\cO_T^{\Phi(n)},p_{T*}(\cE(n))\bigr)$. 
The latter is the right $\GL_{\Phi(n)}$-bundle associated to $p_{T*}(\cE(n))$ 
by Lemma~\ref{GL_r-bundle}. 
\end{proof}

In fact, we can say a little more about the structure of $Y_n^\Phi$, and this
will be useful in \S\ref{section:level-structure}. 

\begin{lem} \label{Y_n-equivariant-line-bundle}
There exists a $\GL_{\Phi(n)}$-equivariant invertible sheaf on $Y_n^\Phi$ that is 
very ample over $S$.
\end{lem}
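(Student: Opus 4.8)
The plan is to realize $Y_n^\Phi$ as a $\GL_{\Phi(n)}$-invariant open subscheme of a Quot scheme carrying a natural polarization, and then to check that this polarization is $\GL_{\Phi(n)}$-equivariant. Recall from the proofs of Lemmas~\ref{W-rep} and~\ref{Y_n-rep} that, with $\Psi(\lambda)=\Phi(\lambda+n)$ and $N=\Psi(0)=\Phi(n)$, there is a chain of open immersions
\[ Y_n^\Phi \into Z_n^\Phi \into W^\Psi \into Q, \qquad Q = \Quot^{\Psi,\cO(1)}_{\cO_X^N/X/S}. \]
Writing $\cO_X^N = \cO_X \ot_k V$ with $V=k^N$, the group $\GL_N = \GL(V)$ acts on $Q$ by precomposing a quotient $\cO_{X_T}^N \onto \cF$ with the automorphism of $\cO_{X_T}^N$ induced by $g \in \GL_N(T)$. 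Under our identifications this is exactly the action on $Y_n^\Phi$ from Lemma~\ref{Y_n-U_n} (it sends $\phi \mapsto \phi \circ p_T^*(g)$), and it preserves each of the open subschemes above. Since very ampleness relative to $S$ is inherited by open subschemes and an equivariant structure restricts to invariant opens, it suffices to produce a $\GL_N$-equivariant invertible sheaf on $Q$ that is very ample over $S$.

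For the polarization I would use the invertible sheaf underlying the projective embedding of $Q$ in \cite[Theorem 2.6]{AK80}. Let $\cU$ denote the universal quotient on $X_Q$, so that $\cO_{X_Q}^N \onto \cU$. For $m$ sufficiently large the sheaf $\cU(m)$ is cohomologically flat over $Q$ (by the argument of Lemma~\ref{cohom-flat}), the direct image $p_{Q*}(\cU(m))$ is locally free of rank $\Psi(m)$, and
\[ \cM := \det p_{Q*}\bigl(\cU(m)\bigr) \]
is very ample relative to $S$; indeed $\cM$ is the restriction of the Pl\"ucker sheaf under the embedding of $Q$ into the relative Grassmannian of rank-$\Psi(m)$ quotients of $p_{Q*}(\cO_{X_Q}^N(m))$, the pullback to $Q$ of $(p_*\cO_X(m))\ot_k V$, used in the construction.

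To equip $\cM$ with a $\GL_N$-equivariant structure, let $a : Q \xt \GL_N \to Q$ be the action and $\pr : Q \xt \GL_N \to Q$ the projection. By the universal property of $Q$, the pullback $(\id_X \xt a)^*\cU$ is the quotient obtained from $\pr^*(\cO_{X_Q}^N \onto \cU)$ by precomposing the surjection with the tautological $g$; since the underlying quotient sheaf is unchanged, there is a canonical isomorphism $(\id_X\xt a)^*\cU \simeq \pr^*\cU$. Applying $\det p_{(Q\xt\GL_N)*}(-\ot \cO(m))$ and invoking flat base change along $a$ and $\pr$ (valid by cohomological flatness of $\cU(m)$), I obtain a canonical isomorphism $a^*\cM \simeq \pr^*\cM$. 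This is the desired linearization; its cocycle identity follows formally from associativity of the action together with functoriality of pushforward, base change, and the determinant. Restricting to the invariant open subscheme $Y_n^\Phi$ then yields a $\GL_{\Phi(n)}$-equivariant, relatively very ample invertible sheaf, namely $\det p_{Y_n^\Phi*}(\cE(n)(m))$ for the universal bundle $\cE$.

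The hard part will be the bookkeeping in the equivariance step: one must verify that the canonical isomorphism $(\id_X\xt a)^*\cU \simeq \pr^*\cU$ is genuinely the one induced by the group action, so that the determinant-of-pushforward isomorphism is a \emph{linearization} rather than an arbitrary identification, and that the resulting $a^*\cM \simeq \pr^*\cM$ satisfies the cocycle condition. All of this is formal once the action on $Q$ is identified with precomposition on the presenting surjection, which is why I would establish that identification at the very start.
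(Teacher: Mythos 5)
Your proposal is correct and takes essentially the same route as the paper: both use the polarization $\cO_Q(1)$ coming from the embedding of $Q = \Quot^{\Psi,\cO(1)}_{\cO_X^{\Phi(n)}/X/S}$ furnished by \cite[Theorem 2.6]{AK80}, identify its pullback along any $q : T \to Q$ with $\bigwedge^{\Psi(m)} p_{T*}(\cF(m))$ --- a sheaf depending only on the quotient sheaf $\cF$ and not on the presenting surjection --- and extract the $\GL_{\Phi(n)}$-equivariant structure from the canonicity of this identification. Constructing the linearization $a^*\cM \simeq \pr^*\cM$ on all of $Q$ via the universal quotient and then restricting to the invariant open $Y_n^\Phi$ is only a cosmetic repackaging of the paper's observation that $y^*(\cO_Q(1)|_{Y_n^\Phi})$ depends only on the image of $y$ in $\cU_n^\Phi$.
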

\begin{proof}
Define the polynomial $\Psi(\lambda) = \Phi(\lambda+n)$ and let $Q = 
\Quot_{\cO_X^{\Phi(n)}/X/S}^{\Psi,\cO(1)}$. By \cite[Theorem 2.6]{AK80}, 
there exists an integer $m$ and a closed immersion 
\[ Q \into \bP \Biggl( \bigwedge^{\Psi(m)} p_*(\cO_X^{\Phi(n)}(m)) \Biggr). \]
Let $\cO_Q(1)$ be the very ample invertible $\cO_Q$-module corresponding to this immersion. 
\cite[Theorem 2.6]{AK80} also implies that for a morphism
$q : T \to Q$ corresponding to $\cO_{X_T}^{\Phi(n)} \onto \cF$, we have 
a canonical isomorphism $q^*(\cO_Q(1)) \simeq \bigwedge^{\Psi(m)} p_{T*}(\cF(m))$.
From Lemmas~\ref{W-rep} and \ref{Y_n-rep}, we have an immersion 
\[ Y_n^\Phi \into Q : (\cE, \phi, \psi) \mapsto (\phi : \cO_{X_T}^{\Phi(n)} \onto \cE(n)). \]
Thus for any morphism $y : T \to Y_n^\Phi$ corresponding to $(\cE,\phi,\psi)$, 
there exists a canonical isomorphism $y^*(\cO_Q(1)|_{Y_n^\Phi}) \simeq \bigwedge^{\Phi(n+m)}
p_{T*}(\cE(n+m))$. Therefore $y^*(\cO_Q(1)|_{Y_n^\Phi})$ only depends on 
the image of $y$ in $\cU_n^\Phi(T)$. Since $Y_n^\Phi \to \cU_n^\Phi$ is 
$\GL_{\Phi(n)}$-invariant, it follows that for any $g \in \GL_{\Phi(n)}(T)$, there
is a canonical isomorphism $y^*(\cO_Q(1)|_{Y_n^\Phi}) \simeq (y.g)^*(\cO_Q(1)|_{Y_n^\Phi})$.
This gives the very ample invertible sheaf $\cO_Q(1)|_{Y_n^\Phi}$ a 
$\GL_{\Phi(n)}$-equivariant structure.
\end{proof}
\medskip

\begin{proof}[Proof of Theorem~\ref{BunG-alg}]
We know the diagonal of $\Bun_G$ is schematic, affine, and finitely presented by 
Corollary~\ref{BunG-diagonal}. For any affine algebraic group $G$, there exists an integer $r$ 
such that $G \subset \GL_r$ is a closed subgroup \cite[II, \S 2, Th\'eor\`eme 3.3]{De-Ga}. 

First, we prove the theorem for $\GL_r$. 
Lemma~\ref{Y_n-U_n} implies that the morphisms $\cY_n^\Phi \to \cU_n^\Phi$ are smooth and 
surjective. By taking an open covering of $S$ and using \cite[0, Corollaire 4.5.5]{EGA1-2ed}
and Lemma~\ref{Y_n-rep}, we see that each $\cY_n^\Phi$ is representable
by a scheme $Y_n^\Phi$ of finite presentation over $S$. The morphism 
\[ Y=\bigsqcup_{n\in \bZ, \Phi \in \bQ[\lambda]} Y_n^\Phi \to \Bun_r \]
is then smooth and surjective by Lemma~\ref{U_n-opencover}, where $Y$ is locally of finite 
presentation over $S$. Therefore $\Bun_r$ is an algebraic stack locally of finite presentation
over $S$. It follows from \cite[Lemma \href{http://math.columbia.edu/algebraic_geometry/stacks-git/locate.php?tag=05UN}{05UN}]{stacks-project} that all $\cU_n^\Phi \subset \Bun_r$ are algebraic
stacks, and they are of finite presentation over $S$ since the $Y_n^\Phi$ are.

By Corollary~\ref{BunG-cog}, the morphism $\Bun_G \to \Bun_r$ corresponding to
$G \subset \GL_r$ is schematic and locally of finite presentation. 
Consider the Cartesian squares
\[ \xymatrix{ \wtilde Y_n^\Phi \ar[r] \ar[d] & \wtilde\cU_n^\Phi \ar@{^(->}[r] \ar[d]& \Bun_G 
\ar[d] \\ Y_n^\Phi \ar[r] & \cU_n^\Phi \ar@{^(->}[r] & \Bun_r } \]
where by base change, the morphisms $\wtilde Y_n^\Phi \to \wtilde \cU_n^\Phi$ are smooth and
surjective, and the $\wtilde \cU_n^\Phi$ form an open covering of $\Bun_G$.
From Theorem~\ref{sect-general} and Corollary~\ref{BunG-cog}, we deduce that
$\wtilde Y_n^\Phi$ is representable by a disjoint union of schemes of finite
presentation over $S$. Applying \cite[Lemma \href{http://math.columbia.edu/algebraic_geometry/stacks-git/locate.php?tag=05UP}{05UP}]{stacks-project} to the smooth surjective morphism 
$\wtilde Y_n^\Phi \to \wtilde \cU_n^\Phi$, we conclude that 
$\Bun_G$ is covered by open substacks of finite presentation over $S$.
\end{proof}

\subsection{Examples} We can say a little more about the properties of $\Bun_G$
after imposing extra conditions on $G$ and $X \to S$. In this subsection we discuss
a few such examples.

\subsubsection{Case of a curve} \label{section:curve}
In the following example, assume $S = \spec k$ and $k$ is algebraically closed. 
Let $X$ be a smooth projective integral scheme of dimension $1$ over $k$. 
We mention some examples concerning the previous constructions in this situation.

First we show that for a locally free $\cO_X$-module $\cE$ of rank $r$, the
Hilbert polynomial is determined by $\deg \cE$. Let $K(X)$ denote the Grothendieck
group of $X$. By \cite[Corollary 10.8]{Manin-Ktheory}, the homomorphism
\[ K(X) \to \Pic(X) \oplus \bZ\]
sending a coherent sheaf $\cF \in \Coh(X)$ to $(\det\cF,\on{rk} \cF)$ is an isomorphism,
and the inverse homomorphism sends $(\cL,m) \mapsto [\cL] + (m-1)[\cO_X]$.
Therefore $[\cE] = [\det \cE] + (r-1)[\cO_X]$ in $K(X)$. Tensoring by $\cO(n)$, we deduce
that \[ [\cE(n)] = [(\det \cE) \ot \cO(n)] + (r-1)[\cO(n)]. \]
Applying $\deg$ to this equality, we find that $\deg \cE(n) = \deg \cE + r\deg \cO(n)$,
or equivalently $\chi(\cE(n)) = \deg \cE + r \chi(\cO(n))$. Therefore 
$\Bun_r$ is a disjoint union of $(\Bun_r^d)_{d\in \bZ}$, the substacks of locally free
sheaves of rank $r$ which are fiber-wise of degree $d$, and the previous 
results hold with $\Bun_r^\Phi$ replaced by $\Bun_r^d$. 
We also note that since $\dim X=1$, for any $\cO_X$-module $\cF$, the higher
cohomologies $H^i(X,\cF)=0$ for $i>1$ by \cite[Th\'eor\`eme 3.6.5]{Grothendieck-Tohoku}.
Therefore the substacks $\cU_n$ are characterized by relative generation by 
global sections and vanishing of $R^1 p_{T*}$. 

Theorem~\ref{BunG-alg} says that $\Bun_r$ is locally of finite type over $k$, so
the stack is a disjoint union of connected components (take the images of the
connected components of a presentation using \cite[Lemma \href{http://math.columbia.edu/algebraic_geometry/stacks-git/locate.php?tag=05UP}{05UP}]{stacks-project}). 
By considering degrees, $\Bun_r$ has infinitely many connected components, so it is not 
quasi-compact. One might ask if the connected components of $\Bun_r$ are quasi-compact. 
The following example shows that this should not be expected to be the case in general.

\begin{eg} \label{Bunr-notqc} 
There exists a connected component of $\Bun_2$ that is not quasi-compact. 
First we show that there exists $n_0$ such that $\cO_X^2$ and $\cO(-n) \oplus \cO(n)
\in \Bun_2(k)$ all lie in the same connected component of $\Bun_2$ for $n \ge n_0$.
By Serre's Theorem \cite[II, Theorem 5.17]{Hart}, there exists $n_0$ such that
$\cO_X^2(n)$ is generated by finitely many global sections for $n\ge n_0$. Fix such an $n$.
Using the fact that $\dim X=1$, it follows from a lemma of Serre 
\cite[pg.~148]{Mumford-curves} that there exists a section $s \in \Gamma(X,\cO_X^2(n))$
that is nonzero in $\cO_X^2(n) \ot \kappa(x)$ for all closed points $x \in X(k)$.
This gives a short exact sequence of $\cO_X$-modules
\[ 0 \to \cO(-n) \to \cO_X^2 \to \cL \to 0\]
where $\cL$ is locally free of rank $1$. From the definition of the determinant, 
we have \[ \cL \simeq \det \cL \simeq \bigwedge^2(\cO_X^2) \ot \cO(n) \simeq \cO(n).\]
Therefore $\cO_X^2$ is an extension of $\cO(n)$ by $\cO(-n)$. 
By \cite[Corollaire 4.2.3]{Grothendieck-Tohoku}, we have $\Ext^1_{\cO_X}(\cO(n),\cO(-n))
\simeq H^1(X, \cO(-2n))$, which is a finite dimensional $k$-vector space 
\cite[III, Theorem 5.2]{Hart}. Let 
\[ \bV = \spec \sym_k (H^1(X,\cO(-2n))^\vee).\] 
For an affine noetherian $k$-scheme $T = \spec A$, we have 
$\Hom_k(T, \bV) \simeq H^1(X,\cO(-2n)) \ot_k A$. 
By cohomology and flat base change \cite[III, Proposition 9.3]{Hart}, 
this gives \[ \Hom_k(T,\bV) \simeq H^1(X_T, \cO_{X_T}(-2n)) \simeq \Ext^1_{\cO_{X_T}}
(\cO_{X_T}(n), \cO_{X_T}(-n)) . \] 
The identity morphism $\id_\bV$ corresponds to some extension of $\cO_{X_\bV}(n)$
by $\cO_{X_\bV}(-n)$, which must be a locally free $\cO_{X_\bV}$-module of rank $2$.
This defines a morphism $\bV \to \Bun_2$. 
We have that $\cO_X^2$ and $\cO(-n) \oplus \cO(n)$ both correspond to $k$-points of $\bV$. 
Since $\bV$ is connected, this implies that these two points lie in the same connected
component of $\Bun_2$. Now suppose there exists a quasi-compact scheme $Y^\circ$ 
with a surjective morphism to the connected component of $\Bun_2$ in question. 
The morphism $Y^\circ \to \Bun_2$ corresponds to a locally free sheaf $\cE$
of rank $2$ on $X_{Y^\circ}$. 
We can assume $Y^\circ$ is noetherian by \cite[\S 8]{EGA4c}. Then 
by \cite[II, Theorem 5.17]{Hart}, there exists 
an integer $n \ge n_0$ such that $\cE(n)$ is relatively generated by global sections. 
By Nullstellensatz, there must exists a $k$-point of $Y^\circ$ mapping to the isomorphism
class of $\cO(-n-1) \oplus \cO(n+1) \in \Bun_2(k)$. Therefore we have that 
$\cO(-1) \oplus \cO(2n+1)$ is generated by global sections. In particular, $\cO(-1)$
is generated by global sections, which is a contradiction since $\Gamma(X,\cO(-1))=0$
by \cite[IV, Lemma 1.2, Corollary 3.3]{Hart}.
\end{eg}

\subsubsection{Picard scheme and stack}
In the previous example we considered locally free sheaves of rank $2$ on a dimension $1$
scheme. As the next example shows, if we consider locally free sheaves of rank $1$, then
the connected components of $\Bun_1$ will in fact be quasi-compact. 
Assume for the moment that $X \to S = \spec k$ is a curve as in \S\ref{section:curve}.

\begin{eg} Let $g$ be the genus of $X$. Then for any integer $d$, we have 
$\cU_n^d = \Bun_1^d$ for $n \ge 2g-d$. Take $Y \to \Bun_1$ a smooth surjective
morphism with $Y$ a scheme locally of finite type over $k$. It suffices to show that
$\cU_n^d \xt_{\Bun_1^d} Y \into Y$ is surjective. Since $k$ is algebraically closed,
it is enough to check surjectivity on $k$-points. Suppose we have $\cL \in \Bun_1^d(k)$. 
Then $\deg\cL(n) \ge 2g$ since $\deg \cO(1) >0$ by \cite[IV, Corollary 3.3]{Hart}. 
Therefore \cite[IV, Example 1.3.4, Corollary 3.2]{Hart} and Serre duality imply 
that $H^1(X,\cL(n))=0$ and $\cL(n)$ is generated by global sections.
We conclude that $\cU_n^d = \Bun_1^d$ for $n \ge 2g-d$, and in particular, 
$\Bun_1^d$ is quasi-compact. 
\end{eg}

Now suppose $p:X \to S$ is a separated morphism of finite type between schemes such that
the fppf sheaf $\Pic_{X/S}$ is representable by a scheme. This is satisfied, for example, 
when $S$ is locally noetherian and $p$ is flat and locally projective with geometrically
integral fibers \cite[Theorem 9.4.8]{FGA-explained}. 
Additionally assume that the unit morphism $\cO_T \to p_{T*}\cO_{X_T}$ 
is an isomorphism for all $S$-schemes $T$. 
By \cite[Exercise 9.3.11]{FGA-explained}, this holds when $S$ is locally noetherian and
$p$ is proper and flat with
geometric fibers that are reduced and connected. 

The stack $\Bun_1$ has the special property that 
the associated coarse space (see \cite[Remarque 3.19]{LMB}) is 
the Picard scheme $\Pic_{X/S}$. We will use
properties of $\Pic_{X/S}$ to deduce properties of $\Bun_1$. We thank Thanos D.~Papa\"ioannou 
for suggesting this approach. 

\begin{prop} \label{Bun1-Pic} 
Suppose $X \to S$ admits a section. Then there is an isomorphism 
\[ B\bG_m \xt \Pic_{X/S} \to \Bun_1 \] over $S$ such that the morphism to the coarse space
is the second projection. 
\end{prop}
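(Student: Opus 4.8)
The plan is to use the section to rigidify line bundles and thereby exhibit $\Bun_1$ as the trivial $\bG_m$-gerbe over $\Pic_{X/S}$. Write $\sigma : S \to X$ for the given section and $\sigma_T : T \to X_T$ for its base change along any $T \to S$, so that $p_T \circ \sigma_T = \id_T$ and hence $\sigma_T^* p_T^* = \id$. Under the standing hypotheses that $\Pic_{X/S}$ is representable and $\cO_T \to p_{T*}\cO_{X_T}$ is an isomorphism for all $T$, I would first invoke the rigidified description of the relative Picard functor (see \cite[\S9.2]{FGA-explained} together with \cite[Exercise 9.3.11]{FGA-explained}): a point of $\Pic_{X/S}(T)$ is represented by a pair $(\cL,\iota)$ consisting of an invertible $\cO_{X_T}$-module $\cL$ and a trivialization $\iota : \sigma_T^*\cL \simeq \cO_T$, two pairs being identified when they are isomorphic compatibly with the rigidifications. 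The condition $\cO_T \simeq p_{T*}\cO_{X_T}$ is exactly what forces a rigidified line bundle to have no nontrivial automorphisms (any such automorphism is a unit $u$ with $\sigma_T^* u = 1$, and $\sigma_T^*$ is injective on global units under this hypothesis), so $\Pic_{X/S}$ really is a sheaf of sets.

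Next I would define the morphism $F : B\bG_m \xt \Pic_{X/S} \to \Bun_1$. Using the identification $B\bG_m \simeq B_1$ of Lemma~\ref{GL_r-bundle}, an object of $B\bG_m(T)$ is an invertible $\cO_T$-module $\cM$. Given $\cM$ and a rigidified line bundle $(\cL,\iota)$ representing a point of $\Pic_{X/S}(T)$, set
\[ F(\cM, (\cL,\iota)) = p_T^*\cM \ot_{\cO_{X_T}} \cL \in \Bun_1(T). \]
On morphisms, an isomorphism $(\cM,(\cL,\iota)) \to (\cM',(\cL',\iota'))$ is a pair $(a,b)$ with $a : \cM \simeq \cM'$ and $b$ the unique isomorphism of rigidified bundles (which exists precisely when the two points of $\Pic_{X/S}$ agree), and $F$ sends it to $p_T^* a \ot b$. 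Since $p_T^*$ and $\sigma_T^*$ commute with base change, $F$ is a morphism of stacks over $S$.

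Then I would produce an explicit quasi-inverse $G : \Bun_1 \to B\bG_m \xt \Pic_{X/S}$. For $\cE \in \Bun_1(T)$, pull back along the section to obtain the invertible $\cO_T$-module $\sigma_T^*\cE$, and form $\cE \ot p_T^*(\sigma_T^*\cE)^{-1}$, which carries a canonical rigidification induced by $p_T\sigma_T = \id_T$; set $G(\cE) = \bigl(\sigma_T^*\cE,\ \cE \ot p_T^*(\sigma_T^*\cE)^{-1}\bigr)$. Using only that $p_T^*$ is monoidal and $p_T\sigma_T = \id$, one checks $F(G(\cE)) = p_T^*\sigma_T^*\cE \ot \cE \ot p_T^*(\sigma_T^*\cE)^{-1} \simeq \cE$, while $\sigma_T^*$ applied to $p_T^*\cM \ot \cL$ recovers $\cM$ via $\iota$ and $\bigl(p_T^*\cM \ot \cL\bigr) \ot p_T^*(\sigma_T^*(p_T^*\cM\ot\cL))^{-1}$ recovers $(\cL,\iota)$, so $GF \simeq \id$. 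Hence $F$ is an equivalence, naturally in $T$; in particular the automorphisms match, both source and target having automorphism group $\Gamma(T,\cO_T)^\times = \bG_m(T)$ by $\cO_T \simeq p_{T*}\cO_{X_T}$, which is the conceptual heart of why the $\bG_m$-factor appears and why the section is needed (it splits the gerbe).

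Finally, for the statement about the coarse space I would note that the associated space of $B\bG_m \xt \Pic_{X/S}$ is $\Pic_{X/S}$ with structure map $\pr_2$, since the coarse space of $B\bG_m$ is a point (see \cite[Remarque 3.19]{LMB}); and the tautological map $\Bun_1 \to \Pic_{X/S}$ sending a line bundle to its class agrees under $G$ with $\pr_2$, because $\cE$ and its rigidification $\cE \ot p_T^*(\sigma_T^*\cE)^{-1}$ differ by a pullback from $T$ and hence define the same point of $\Pic_{X/S}$. The main obstacle is the rigidification step: one must check carefully that the two hypotheses (existence of the section and $\cO_T \simeq p_{T*}\cO_{X_T}$) simultaneously identify $\Pic_{X/S}$ with the sheaf of rigidified line bundles and force those bundles to be automorphism-free, since this rigidity is exactly what upgrades the coarse-space map $\Bun_1 \to \Pic_{X/S}$ to a projection from a product rather than a mere gerbe.
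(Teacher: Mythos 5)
Your proof is correct, but it takes a genuinely different route from the paper. The paper also starts from the consequence of the section that $\Pic_{X/S}(T) = \Pic(X_T)/\Pic(T)$ \cite[Theorem 9.2.5]{FGA-explained}, but it then argues fiberwise: for a $T$-point of $\Pic_{X/S}$ represented by an actual line bundle $\cL$ on $X_T$, the fiber $\cF = \Bun_1 \xt_{\Pic_{X/S}} T$ is the stack of bundles of the form $\cL_{T'} \ot p_{T'}^*\cN$, and the map $B\bG_m \xt T \to \cF$, $\cN \mapsto \cL_{T'} \ot p_{T'}^*\cN$, is an isomorphism because $p_{T'}^*$ is fully faithful on line bundles \cite[Lemma 9.2.7]{FGA-explained}; the global statement then follows by taking $T = \Pic_{X/S}$ with the identity point, which is represented by an honest (Poincar\'e) bundle precisely thanks to the section. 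You instead build explicit mutually inverse morphisms globally, using the rigidification $\cE \mapsto \bigl(\sigma_T^*\cE,\, \cE \ot p_T^*(\sigma_T^*\cE)^{-1}\bigr)$; the triviality of automorphisms of rigidified bundles (which you correctly derive from $\cO_T \simeq p_{T*}\cO_{X_T}$ together with $\sigma_T^* p_T^* = \id$, making $\sigma_T^*$ injective on global units) plays exactly the role that full faithfulness of $p_{T'}^*$ plays in the paper, and your normalization $\cE \ot p_T^*(\sigma_T^*\cE)^{-1}$ substitutes for the universal bundle. What each approach buys: the paper's argument is shorter given the FGA citations and never needs the rigidified Picard functor; yours is more explicit and conceptually transparent --- it exhibits the concrete inverse, makes visible that the section splits the $\bG_m$-gerbe $\Bun_1 \to \Pic_{X/S}$, and checks the coarse-space compatibility essentially for free, since $p_T^*\cM \ot \cL$ and $\cL$ differ by a pullback from $T$ and so define the same point of $\Pic_{X/S}$. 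The one step you should make sure is fully justified is the identification of $\Pic_{X/S}$ with the sheaf of rigidified line bundles (existence of a rigidified representative for every point uses the quotient description $\Pic(X_T)/\Pic(T)$, and sheafhood uses that rigidified bundles have no automorphisms so that fppf descent is effective); you state the right reasons, and this is standard given the paper's hypotheses.
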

\begin{proof} 
Since $p$ has a section, we have that 
$\Pic_{X/S}(T) = \Pic(X_T)/\Pic(T)$ by \cite[Theorem 9.2.5]{FGA-explained}.
For an invertible sheaf $\cL \in \Pic(X_T)$, we
consider the Cartesian square 
\[ \xymatrix{ \cF \ar[r] \ar[d] & \Bun_1 \ar[d] \\ T \ar[r]^-\cL & \Pic_{X/S} } \]
where $\cF : (\Sch_{/T})^\op \to \Gpd$ is the stack sending 
\[ (T' \to T) \mapsto \{ \cM \in \Pic(X_{T'}) \mid \text{there exists } \cN \in \Pic(T'),\,
\cM \simeq  \cL_{T'} \ot p_{T'}^*\cN \} \]
and a morphism $\cM \to \cM'$ is an isomorphism of $\cO_{X_{T'}}$-modules. 
We define the morphism $B\bG_m \xt T \to \cF$ by sending an invertible $\cO_{X_{T'}}$-module
$\cN$ to $\cL_{T'} \ot p_{T'}^*\cN$. Now \cite[Lemma 9.2.7]{FGA-explained} shows
that $p_{T'}^*$ is fully faithful from the category of locally free sheaves of
finite rank on $T'$ to that on $X_{T'}$. Therefore the previous morphism is an isomorphism.
Taking $T = \Pic_{X/S}$ and considering the identity morphism on $\Pic_{X/S}$ proves the claim.
\end{proof}

\begin{cor} \label{Pic-coarse}
The morphism $\Bun_1 \to \Pic_{X/S}$ is smooth, surjective, and of finite presentation. 
\end{cor}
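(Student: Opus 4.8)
The plan is to reduce, by faithfully flat base change, to the situation of Proposition~\ref{Bun1-Pic}, in which $X \to S$ admits a section. Smoothness, surjectivity, and finite presentation are all fppf-local on the target, so to prove the corollary it suffices to exhibit a single fppf covering of $\Pic_{X/S}$ over which the morphism $f : \Bun_1 \to \Pic_{X/S}$ to the coarse space acquires these properties. Since $p : X \to S$ is faithfully flat and of finite presentation (in the main example it is flat and locally projective with geometrically integral, hence nonempty, fibers over a locally noetherian base), it is an fppf covering of $S$, and therefore the base change $\Pic_{X/S} \times_S X \to \Pic_{X/S}$ is an fppf covering of $\Pic_{X/S}$.

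First I would check that $f$ pulls back correctly along this covering. Both the formation of $\Bun_1$ (being a Hom stack, it commutes with base change by definition) and the formation of $\Pic_{X/S}$ (which commutes with base change under our representability hypotheses) are compatible with the base change $X \to S$. Writing $S' = X$ and $X' = X \times_S X$, we have $\Pic_{X'/S'} \simeq \Pic_{X/S} \times_S X$ and $\Bun_1^{S'} \simeq \Bun_1 \times_S X \simeq \Bun_1 \times_{\Pic_{X/S}} \Pic_{X'/S'}$, so the base change of $f$ along $\Pic_{X'/S'} \to \Pic_{X/S}$ is exactly the coarse-space morphism $f' : \Bun_1^{S'} \to \Pic_{X'/S'}$ for the family $X' \to S'$. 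Now $X' = X \times_S X \to X = S'$ admits the diagonal as a section, so Proposition~\ref{Bun1-Pic} identifies $f'$ with the second projection $B\bG_m \times \Pic_{X'/S'} \to \Pic_{X'/S'}$.

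It then remains to observe that this projection is smooth, surjective, and of finite presentation. It is the base change along $\Pic_{X'/S'} \to \spec k$ of the structure morphism $B\bG_m \to \spec k$, so it suffices to treat the latter. Since $\bG_m$ is smooth of finite presentation over $k$, the atlas $\spec k \to B\bG_m$ is smooth, surjective, and of finite presentation (it is a $\bG_m$-torsor), and its composite with $B\bG_m \to \spec k$ is the identity of $\spec k$; as these three properties are smooth-local on the source, $B\bG_m \to \spec k$ is smooth, surjective, and of finite presentation. All three are stable under base change, so $f'$ has them, and since they descend along the fppf covering $\Pic_{X'/S'} \to \Pic_{X/S}$, so does $f$.

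The main obstacle here is bookkeeping rather than depth: one must confirm that the formation of the Picard scheme commutes with the base change $X \to S$, so that the pulled-back morphism is genuinely the coarse-space morphism of the new family and Proposition~\ref{Bun1-Pic} applies verbatim, and one must confirm that $p$ is an fppf covering under the running hypotheses, so that descent of smoothness, surjectivity, and finite presentation along the target covering is legitimate.
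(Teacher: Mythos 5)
Your proof is correct and follows essentially the same route as the paper: both base change along the fppf covering $p : X \to S$, use the diagonal section of $X \times_S X \to X$ to invoke Proposition~\ref{Bun1-Pic}, verify that $B\bG_m \to \spec k$ is smooth, surjective, and of finite presentation via its atlas $\spec k \to B\bG_m$, and descend the three properties along the covering. Your extra bookkeeping (compatibility of $\Pic_{X/S}$ and $\Bun_1$ with base change, surjectivity of $p$) is implicit in the paper's terser argument and is handled correctly.
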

\begin{proof}
The claim is fppf local on $S$, so we assume $S$ is connected and $X$ nonempty. 
The image of $p$ is open and closed since $p$ is flat and proper, and hence $p$ is an fppf morphism.
The base change $X \xt_S X \to X$ admits a section via the diagonal. Then
Proposition~\ref{Bun1-Pic} implies that the morphism $\Bun_1 \xt_S X \to \Pic_{X\xt_S X/X}$
is smooth, surjective, and of finite presentation because $B\bG_m \to \cdot$ 
has these properties ($\cdot \to B\bG_m$ is a smooth presentation by Lemma~\ref{BG-alg} 
since $\bG_m$ is smooth). 
\end{proof}

For $\cL \in \Bun_1(T)$, the automorphisms of $\cL$
are given by $\Gamma(X_T,\cO_{X_T})^\times \simeq \Gamma(T,\cO_T^\times)$ by our assumption on
$\cO_T \to p_{T*} \cO_{X_T}$. 
Thus $\Isom(\cL,\cL) \simeq \bG_m \xt T$. 
We note that the smoothness and surjectivity of $\Bun_1 \to \Pic_{X/S}$ then follow 
from the proof of \cite[Corollaire 10.8]{LMB}.

\begin{eg} Let $S$ be locally noetherian and $X \to S$ flat and projective with geometrically
integral fibers. Then the connected components of $\Bun_1$ are of finite type over $S$. 
Indeed, the connected components of $\Pic_{X/S}$ are of finite type over $S$ by 
\cite[Theorem 9.6.20]{FGA-explained}, so their preimages under $\Bun_1 \to \Pic_{X/S}$ 
are also of finite type over $S$ by Corollary~\ref{Pic-coarse}. 
\end{eg}

\section{Level structure} \label{section:level-structure}

Let $p : X \to S$ be a separated morphism of schemes over $k$.
Suppose we have a section $x : S \to X$ over $S$.  
Then the graph $\Gamma_x : S \to S \xt_S X$ is a closed immersion, which implies $x$ is a closed
immersion. Let $\cI_x$ be the corresponding ideal sheaf. For any positive integer $n$,
define $i_n : (nx) \into X$ to be the closed immersion corresponding to the ideal sheaf $\cI^n$.
We assume that $X$ is infinitesimally flat in $x$ (see \cite[I, 7.4]{Jantzen}), i.e.,
we require that $\pi_n : (nx) \to S$ is finite and locally free for all $n$.

\begin{rem} 
We discuss two cases where the condition of infinitesimal flatness is automatic. 
First, suppose $S$ is the spectrum of a field. Then $x$ is a closed point, and
$(nx) = \spec \cO_{X,x}/\fm_x^n$ is the spectrum of an artinian ring, where $\fm_x$ is the 
maximal ideal of $\cO_{X,x}$. Thus $(nx)$ is finite and free over $S$. 
\smallskip

Next suppose that $x$ lands in the smooth locus $X_{\on{sm}} \subset X$ of $p : X \to S$.
Pick a point $s \in S$. By assumption, $p$ is smooth at $x(s)$. Therefore by 
\cite[Th\'eor\`eme 17.12.1]{EGA4d}, there exists an open neighborhood $U$ of $s$
where $\cI_x/\cI_x^2|_U$ is a locally free $\cO_U$-module, and the canonical morphism 
$\sym^\bullet_{\cO_U}(\cI_x/\cI_x^2|_U) \to \cG r^\bullet(x|_U)$ is an isomorphism.
This is true for all $s \in S$, so we can take $U = S$ in the above. Now we have 
short exact sequences of $\cO_X$-modules
\[ 0 \to x_*(\cG r^n(x)) \to \cO_X/\cI_x^{n+1} \to \cO_X/\cI_x^n \to 0, \]
where $\cG r^n(x)$ is a locally free $\cO_S$-module. We know that $\cO_X/\cI_x \simeq 
x_*\cO_S$ is a free $\cO_S$-module. By induction we find that the above sequence
is locally split exact, and hence $\cO_X/\cI_x^n$ is a locally free $\cO_S$-module 
for all $n$. The closed immersion $S \simeq (x) \into (nx)$ is an 
isomorphism on topological spaces, 
so we deduce that $(nx) \to S$ is finite and locally free.
\end{rem}

Define the pseudo-functor $\Bun_G^{(nx)} : (\Sch_{/S})^\op \to \Gpd$ by 
\[ \Bun_G^{(nx)}(T) = \bigl\{ ( \cP, \phi ) \mid \cP \in \Bun_G(T),\, \phi \in
\Hom_{BG((nx)_T)}((nx)_T \xt G, i_n^*\cP) \bigr\} \]
where a morphism $(\cP, \phi) \to (\cP', \phi')$ is a $G$-equivariant morphism $f : \cP \to \cP'$ 
over $X_T$ such that $i_n^*(f) \circ \phi = \phi'$. 
Since $BG$ and $\Bun_G$ are fpqc stacks, we observe that $\Bun_G^{(nx)}$ is also an fpqc stack.

The main result of this section is the following:

\begin{thm} \label{compact-open-level} 
Suppose $p : X \to S$ is a flat, finitely presented, projective morphism with
geometrically integral fibers. Let $\cV \subset \Bun_G$ be a quasi-compact open substack.
There exists an integer $N_0$ such that for all $N \ge N_0$, the $2$-fibered product
$\Bun_G^{(Nx)} \xt_{\Bun_G} \cV$ is representable by a quasi-compact scheme which is
locally of finite presentation over $S$. 
\end{thm}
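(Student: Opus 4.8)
The plan is to exhibit $\Bun_G^{(Nx)}\to\Bun_G$ as a torsor, transfer algebraicity along it, then make $N$ large enough to kill all automorphisms, and finally pass from the resulting algebraic space to a scheme using the presentation of $\Bun_G$ built in \S\ref{section:Bun_G-presentation}. First I would analyze the forgetful morphism $\Bun_G^{(Nx)}\to\Bun_G$. For $\cP\in\Bun_G(T)$ an object of the fiber is an isomorphism $(Nx)_T\xt G\to i_N^*\cP$ in $BG((Nx)_T)$, i.e.\ a trivialization of $i_N^*\cP$, and the sheaf of these is a torsor under the automorphism sheaf $\Isom_{(Nx)_T}((Nx)_T\xt G,(Nx)_T\xt G)$. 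Since $(Nx)\to S$ is finite and locally free by infinitesimal flatness and $G$ is affine of finite type, this sheaf is the base change to $T$ of the Weil restriction $\mathrm{Res}_{(Nx)/S}G$, an affine group scheme of finite presentation over $S$. As affine morphisms are effective under fpqc descent \cite[Theorem 4.33]{FGA-explained}, a torsor under it is affine, so $\Bun_G^{(Nx)}\to\Bun_G$ is schematic, affine, and of finite presentation. Hence $\cM:=\Bun_G^{(Nx)}\xt_{\Bun_G}\cV$ is an algebraic stack, quasi-compact, and of finite presentation over $S$, since $\cV$ is a quasi-compact open of the locally finitely presented $\Bun_G$ whose diagonal is affine and finitely presented by Corollary~\ref{BunG-diagonal}.

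The heart of the matter is the rigidification. Fix a closed embedding $G\subset\GL_r$ \cite[II, \S2, Th\'eor\`eme 3.3]{De-Ga}, and for $\cP$ let $\cE$ be the rank-$r$ vector bundle attached to the induced $\GL_r$-bundle. An automorphism of $(\cP,\phi)$ is an automorphism $f$ of $\cP$ over $X_T$ with $i_N^*(f)=\id$; under $G\subset\GL_r$ it injects into the endomorphisms of $\cE$ congruent to the identity modulo $\cI_x^N$, that is, into $\id+p_{T*}\bigl((\cE^\vee\ot\cE)\ot\cI_x^N\bigr)(T)$. So it suffices to produce $N_0$, depending only on $\cV$, with $p_{T*}\bigl((\cE^\vee\ot\cE)\ot\cI_x^N\bigr)=0$ for all $N\ge N_0$ and all $T\to\cV$. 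Choosing a quasi-compact scheme $W$ with a smooth surjection $W\to\cV$ and the universal $\cE$ on $X_W$, the family is bounded; after embedding $\cE^\vee\ot\cE$ into $\cO_{X_W}(d_0)^{\oplus M}$ for a uniform $d_0$, the vanishing reduces fibrewise to the fact that on a geometrically integral projective fibre a nonzero section of $\cO(d_0)$ cannot vanish to arbitrarily high order at the point $x$ --- this is exactly where geometric integrality of the fibres is used. This yields a uniform $N_0$ with $p_{W*}\bigl((\cE^\vee\ot\cE)\ot\cI_x^N\bigr)=0$ for $N\ge N_0$; because $\cO_{(Nx)}$ is $S$-flat the sheaf $(\cE^\vee\ot\cE)\ot\cI_x^N$ is $T$-flat, so cohomology and base change propagate the vanishing to every $T\to W$, hence to every $T\to\cV$. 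Thus for $N\ge N_0$ the stack $\cM$ has trivial inertia, its diagonal is a monomorphism, and $\cM$ is an algebraic space.

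It remains to upgrade $\cM$ to a scheme. I would cover $\cV$ by finitely many of the open substacks $\wtilde\cU_n^\Phi\subset\Bun_G$ from the proof of Theorem~\ref{BunG-alg}, reduce to showing each open piece $\cM_n:=\cM\xt_{\Bun_G}\wtilde\cU_n^\Phi$ is a scheme, and glue. By base change of the $\GL_{\Phi(n)}$-bundle of Lemma~\ref{Y_n-U_n} along $\Bun_G\to\Bun_r$, Lemma~\ref{Z/G=X} gives $\wtilde\cU_n^\Phi\simeq[\wtilde Y_n^\Phi/\GL_{\Phi(n)}]$ with $\wtilde Y_n^\Phi$ a disjoint union of finitely presented $S$-schemes carrying a $\GL_{\Phi(n)}$-equivariant invertible sheaf relatively ample over $S$, pulled back from Lemma~\ref{Y_n-equivariant-line-bundle} along the quasi-projective morphism $\wtilde Y_n^\Phi\to Y_n^\Phi$. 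Base-changing the torsor of the first paragraph produces $\widehat Y_n:=\Bun_G^{(Nx)}\xt_{\Bun_G}\wtilde Y_n^\Phi$, a $\mathrm{Res}_{(Nx)/S}G$-torsor over $\wtilde Y_n^\Phi$, hence an affine finitely presented $\wtilde Y_n^\Phi$-scheme, with $\cM_n\simeq[\widehat Y_n/\GL_{\Phi(n)}]$. By the rigidification the $\GL_{\Phi(n)}$-action on $\widehat Y_n$ is free, and the affine diagonal of $\Bun_G$ makes $\cM_n$ separated and the action proper; linearizing by the equivariant relatively ample sheaf makes every point stable, so geometric invariant theory for the reductive group $\GL_{\Phi(n)}$ yields a geometric quotient that is a quasi-projective $S$-scheme coinciding with the algebraic space $\cM_n$. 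Gluing the finitely many $\cM_n$ then shows $\cM$ is a quasi-compact scheme, finitely presented over $S$.

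I expect the main obstacle to be the uniform bound $N_0$ in the second step: extracting a single $N_0$ that rigidifies every bundle of the quasi-compact family simultaneously requires converting quasi-compactness of $\cV$ into an honest boundedness statement and then invoking geometric integrality of the fibres to annihilate highly vanishing sections. The passage from algebraic space to scheme via a free quotient is comparatively standard, but it genuinely relies on the $\GL_{\Phi(n)}$-equivariant ample sheaf of Lemma~\ref{Y_n-equivariant-line-bundle} and on a relative form of geometric invariant theory over the base $S$.
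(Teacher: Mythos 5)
Your first two steps are essentially sound and in fact reproduce the paper's preparatory results: the description of $\Bun_G^{(Nx)} \to \Bun_G$ as a torsor under the Weil restriction $\pi_{N*}G_{(Nx)_T}$ is Proposition~\ref{nx}, and your uniform rigidification is exactly the mechanism of Lemmas~\ref{section-res-field}, \ref{section-res} and \ref{section-res-family}: Krull's intersection theorem on a geometrically integral projective fiber, openness of the vanishing locus via cohomological flatness and Nakayama, and a quasi-compact smooth atlas of the substack to extract a single $N_0$. Note, however, that the paper applies this injectivity mechanism not to $\cE^\vee \ot \cE$ (to kill automorphisms) but to $\cE(n)$ itself: for $N \ge N_0$ the restriction $\Gamma(X_t,\cE_t(n)) \to \Gamma((Nx)_t, i_N^*(\cE_t(n)))$ is injective, equivalently the dual unit morphism \eqref{eqn-dual-unit} is surjective, and this stronger use is what powers the representability argument you are missing.

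The genuine gap is your third step, the passage from algebraic space to scheme via GIT, which fails at two points. First, the affine diagonal of $\Bun_G$ only makes the diagonal of $\cM_n$ an affine monomorphism; an affine monomorphism is not a closed immersion, so neither separatedness of $\cM_n$ nor properness of the $\GL_{\Phi(n)}$-action on $\widehat Y_n$ follows. Separatedness of these pieces is genuinely part of what must be proven --- it is contained in the quasi-projectivity assertion of Theorem~\ref{level-scheme} --- so it cannot be taken as an input to the quotient construction. Second, even granting a proper free action with a linearized relatively ample sheaf, freeness does not make every point GIT-stable: stability depends on the linearization, free orbits can be unstable, and there is no general theorem producing a quasi-projective geometric quotient of a free reductive action (Keel--Mori-type results return only an algebraic space, which you already have from trivial inertia). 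A minor further defect: pulling the equivariant sheaf of Lemma~\ref{Y_n-equivariant-line-bundle} back along the quasi-projective but non-immersive morphism $\wtilde Y_n^\Phi \to Y_n^\Phi$ does not yield an ample sheaf. The paper's route avoids all of this. With $N_0$ as above, the proof of Theorem~\ref{level-scheme} uses the classifying morphism $\cU_n^\Phi \to B\GL_{\Phi(n)} \xt S$, $\cE \mapsto p_{T*}(\cE(n))$ (schematic and quasi-projective by Lemmas~\ref{cos-representable} and \ref{Y_n-equivariant-line-bundle}), pulls back the Grassmannian $\Grass(\cM,\Phi(n))$ of quotients of $\cM = \bigl(\pi_{N*}i_N^*(\cO_X^r(n))\bigr)^{\!\vee}$ to get a quasi-projective scheme $F_0$, and sends a level structure $(\cE,\phi)$ to the surjection $q$ furnished by the surjective dual unit; the same rigidity you proved shows $F \to F_0$ is a finitely presented immersion, giving quasi-projectivity (in particular separatedness) of each piece $\Bun_r^{(Nx)} \xt_{\Bun_r} \cU_n^\Phi$ directly. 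One then descends from $\GL_r$ to $G$ by the closed immersion of Lemma~\ref{level-H-G} and glues the finitely many pieces covering $\cV$, as in the paper's proof of Theorem~\ref{compact-open-level}.
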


In the proofs that follow, we will abuse notation and use
$i_n$ and $\pi_n$ to denote the corresponding morphisms under a change of base $T \to S$.
\smallskip

The following proposition shows that Theorem~\ref{compact-open-level} is useful in
providing a smooth presentation of a quasi-compact open substack of $\Bun_G$ when 
$G$ is smooth.

\begin{prop}\label{nx}
The projection $\Bun_G^{(nx)} \to \Bun_G$ sending $(\cE,\phi)
\mapsto \cE$ is schematic, surjective, affine, and of finite presentation.
If $G$ is smooth over $k$, then the projection is also smooth. \end{prop}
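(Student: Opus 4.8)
The plan is to compute the fiber of the projection over a fixed $G$-bundle and to recognize it as a scheme of sections, to which the results of \S\ref{section:hom-stacks} apply directly. Fix an $S$-scheme $T$ and $\cP \in \Bun_G(T)$, viewed as a morphism $\cP : T \to \Bun_G$. A level structure $\phi$ is by definition an isomorphism of $G$-bundles $(nx)_T \xt G \oarrow\sim i_n^*\cP$ over $(nx)_T$, and such an isomorphism is the same datum as a section of the $G$-bundle $i_n^*\cP \to (nx)_T$ (evaluate at the identity section). Unwinding the $2$-fibered product, one checks that $\Bun_G^{(nx)} \xt_{\Bun_G} T$ is a sheaf of sets, canonically isomorphic to $\Sect_T\bigl((nx)_T, i_n^*\cP\bigr)$: a $T'$-point is exactly a section of $i_n^*\cP$ over $(nx)_{T'}$, the would-be automorphisms being rigidified away by the chosen identification with $\cP_{T'}$. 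Thus the entire statement reduces to properties of the morphism $\Sect_T\bigl((nx)_T, i_n^*\cP\bigr) \to T$, uniformly in $(\cP, T)$.

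First I would establish that this morphism is schematic, affine, and of finite presentation. By infinitesimal flatness, $\pi_n : (nx)_T \to T$ is finite and locally free, hence flat, finitely presented, and projective (a finite morphism is projective). Since $G$ is affine, the $G$-bundle $i_n^*\cP \to (nx)_T$ is affine, and it is finitely presented because $G$ is of finite type over $k$. Lemma~\ref{sect-affine}, applied with base $T$, source $(nx)_T$, and target $i_n^*\cP$, then shows that $\Sect_T\bigl((nx)_T, i_n^*\cP\bigr)$ is representable by a scheme affine and of finite presentation over $T$. This already yields the ``schematic, affine, of finite presentation'' assertions, uniformly in $(\cP,T)$.

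The remaining, and most delicate, points are surjectivity and---when $G$ is smooth---smoothness. The key observation is that the sheaf of sections is a pseudo-torsor under the group of sections of $G$ over $(nx)_T$, that is, under the Weil restriction of $G$ along $\pi_n$, acting by $s \mapsto s\cdot g$ for $g \in G\bigl((nx)_{T'}\bigr)$. When $G$ is smooth I would use two facts: the Weil restriction of a smooth group scheme along the finite locally free morphism $\pi_n$ is again smooth over $T$; and, by topological invariance of the small \'etale site, every \'etale trivialization of $i_n^*\cP$ on $(nx)_T$ descends from an \'etale cover of the reduced scheme $(x)_T = T$. Since a smooth $G$-bundle is \'etale-locally trivial, $i_n^*\cP$ is therefore \'etale-locally trivial on $T$, so the pseudo-torsor is a genuine torsor under a smooth $T$-group scheme; hence $\Sect_T\bigl((nx)_T, i_n^*\cP\bigr) \to T$ is smooth and surjective. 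For general (possibly non-smooth) $G$, I would argue surjectivity by reducing to nonemptiness of geometric fibers and invoking fppf-local triviality of $G$-bundles to produce a section after a suitable base change.

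I expect the main obstacle to be precisely this last step: controlling trivializations of $i_n^*\cP$ over the infinitesimal thickening $(nx)_T$ itself, rather than merely over $(nx)_T$ after an fppf cover of the thickening. For smoothness this is resolved cleanly by the smoothness of the Weil restriction together with the topological invariance of the \'etale site, which upgrades \'etale-local triviality on $(nx)_T$ to \'etale-local triviality on $T$; this is exactly the place where the hypothesis that $G$ is smooth is indispensable, since fppf covers of $(nx)_T$ do not in general come from covers of $T$. By contrast, the verification that the hypotheses of Lemma~\ref{sect-affine} are met, and the identification of the fiber with a scheme of sections, are routine once the finiteness and affineness inputs are recorded.
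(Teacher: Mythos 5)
Your identification of the fiber agrees with the paper's: over a point $\cP : T \to \Bun_G$, the fibered product is the sheaf of trivializations of $i_n^*\cP$ over the thickening, i.e.\ the Weil restriction $\pi_{n*}(i_n^*\cP) \simeq \Sect_T\bigl((nx)_T, i_n^*\cP\bigr)$, a pseudo-torsor under $\pi_{n*}G_{(nx)_T}$. For representability you invoke Lemma~\ref{sect-affine}, which is legitimate ($\pi_n$ is finite locally free, hence flat, finitely presented and projective, and $i_n^*\cP$ is affine and finitely presented by descent), whereas the paper instead quotes representability, affineness, finite presentation and smoothness of Weil restrictions from \cite[7.6]{Bosch-neron}; the two routes are interchangeable. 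Your treatment of the smooth case --- \'etale-local triviality of $i_n^*\cP$ together with topological invariance of the small \'etale site, so that a trivializing \'etale cover of $(nx)_T$ is, up to refinement, the base change along $\pi_n$ of an \'etale cover of $T$, whence the pseudo-torsor is a genuine torsor under a smooth affine group scheme --- is correct and is in fact more careful than the paper at precisely this point. (One terminological slip: $(x)_T \simeq T$ need not be reduced; what you use is that $(x)_T \into (nx)_T$ is a nilpotent immersion, which is all that topological invariance requires.)

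The genuine gap is the surjectivity claim for arbitrary affine $G$, which your final paragraph all but concedes is unproved: ``reducing to nonemptiness of geometric fibers and invoking fppf-local triviality'' cannot work, because a trivializing fppf cover of $(nx)_T$ need not be refined by one pulled back from $T$. Concretely, let $k$ be algebraically closed of characteristic $p$, $G = \mu_p$, $X = \spec k[u,(1+u)^{-1}]$ with $x$ the point $u=0$ and $n=2$, and let $\cP$ be the $\mu_p$-torsor $t^p = 1+u$. Its restriction to $(2x) = \spec k[\varepsilon]/(\varepsilon^2)$ is $t^p = 1+\varepsilon$, and for every $k$-algebra $R$ one has $(r_0 + r_1\varepsilon)^p = r_0^p$ in $R[\varepsilon]/(\varepsilon^2)$, so $1+\varepsilon$ is never a $p$-th power there; the fiber is the empty scheme and the projection is not surjective over this $T$. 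So the non-smooth half of the surjectivity assertion fails in the generality in which Proposition~\ref{nx} is stated, and no argument along your sketched lines can close the gap. It is worth recording that the paper's own proof stumbles at the same spot: its ``refinement'' step treats the two distinct maps $T'_i \xt_T (nx)_T \to (nx)_T$ --- the structural cover map factoring through the trivializing cover $T'_i$, versus the projection that computes $(nx)_{T'_i}$ --- as pulling back $i_n^*\cP$ to the same torsor, which the example above refutes; the mechanism is sound exactly when the trivializing cover can be taken \'etale, i.e.\ when $G$ is smooth, which is the case you handled correctly.
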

\begin{proof}
For an $S$-scheme $T$, let $\cP \in \Bun_G(T)$. Then the fibered product on
$(\Sch_{/T})^\op \to \Gpd$ is given by
\[ \Bigl(\Bun_G^{(nx)} \xt_{\Bun_G} T \Bigr)(T') = \bigl\{ (\cE,\phi,\gamma) \mid 
 (\cE,\phi) \in \Bun_G^{(nx)}(T'), \gamma : \cE \to \cP_{T'} \bigr\} \]
where a morphism $(\cE,\phi,\gamma) \to (\cE',\phi',\gamma')$ is a $G$-equivariant morphism 
$f: \cE \to \cE'$ such that $i_n^*(f) \circ \phi = \phi'$ and $\gamma' \circ f = \gamma$. 
We observe that the above groupoids are equivalence relations, and 
$\Bun_r^{(nx)} \xt_{\Bun_r} T$ is isomorphic to the functor $F : (\Sch_{/T})^\op \to \Set$
sending 
\[ (T' \to T) \mapsto \Hom_{BG((nx)_{T'})}\bigl((nx)_{T'} \xt G, i_n^*(\cP_{T'})\bigr). \]
There is a canonical simply transitive right action of $\pi_{n*}G_{(nx)_T}$ on $F$ over $T$
(here the direct image $\pi_{n*}$ of 
a functor is defined as in \cite[7.6]{Bosch-neron}). 
Since $G_{(nx)_T}$ is affine and finitely presented over $(nx)_T$, 
\cite[7.6, Theorem 4, Proposition 5]{Bosch-neron} show that $\pi_{n*}G_{(nx)_T}$ is
representable by a group scheme which is affine and finitely presented over $T$. 
If $G$ is smooth over $k$, then $\pi_{n*}G_{(nx)_T}$ is also smooth over $T$.
Let $\bigl( T'_i \to (nx)_T \bigr)$ be an fppf cover trivializing $i_n^*\cP$. Then
the refinement $\bigl( T'_i \xt_T (nx)_T \to (nx)_T \bigr)$ 
also trivializes $i_n^*\cP$. Since $(nx)_T \to T$ is fppf, the compositions 
$(T'_i \to (nx)_T \to T)$ give an fppf cover trivializing $F$, which
shows that $F$ is a $\pi_{n*}G_{(nx)_T}$-torsor. We conclude that $F \to T$ has
the desired properties by descent theory.
\end{proof}

\begin{rem} Proposition~\ref{nx} shows that if $\Bun_G$ is an algebraic stack, 
then $\Bun_G^{(nx)}$ is also algebraic by \cite[Lemma \href{http://math.columbia.edu/algebraic_geometry/stacks-git/locate.php?tag=05UM}{05UM}]{stacks-project}. 
\end{rem}

For positive integers $m < n$, we have the nilpotent thickening 
$i_{m,n} : (mx) \into (nx)$. 

\begin{prop} \label{mx-nx} Suppose that $G$ is smooth over $k$. 
For positive integers $m< n$, the morphism $\Bun_G^{(nx)} \to \Bun_G^{(mx)}$ sending
$(\cP,\phi) \mapsto (\cP, i_{m,n}^*(\phi))$ is schematic, surjective, affine, and of
finite presentation. 
\end{prop}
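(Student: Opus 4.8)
The plan is to describe, for an $S$-scheme $T$ and a fixed object $(\cP,\psi) \in \Bun_G^{(mx)}(T)$, the $2$-fibered product $\Bun_G^{(nx)} \xt_{\Bun_G^{(mx)}} T$ explicitly and to exhibit it as a torsor, exactly in the spirit of the proof of Proposition~\ref{nx}. As there, the groupoids involved are equivalence relations, and after using an identification $\gamma : \cE \simeq \cP_{T'}$ one checks that $\Bun_G^{(nx)} \xt_{\Bun_G^{(mx)}} T$ is isomorphic to the functor $F : (\Sch_{/T})^\op \to \Set$ sending
\[ (T' \to T) \mapsto \bigl\{ \phi \in \Hom_{BG((nx)_{T'})}\bigl((nx)_{T'} \xt G,\, i_n^*\cP_{T'}\bigr) \mid i_{m,n}^*\phi = \psi_{T'} \bigr\}, \]
i.e.\ the fiber over $\psi$ of the restriction map $i_{m,n}^*$ between the sheaves of trivializations of $i_n^*\cP$ and of $i_m^*\cP$.

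Next I would introduce the group scheme $K = \ker\bigl(\pi_{n*}G_{(nx)_T} \to \pi_{m*}G_{(mx)_T}\bigr)$, the homomorphism being induced by the thickening $i_{m,n} : (mx)_T \into (nx)_T$. Both Weil restrictions are representable by affine group schemes of finite presentation over $T$ by \cite[7.6, Theorem 4, Proposition 5]{Bosch-neron}, just as in Proposition~\ref{nx}; since $\pi_{m*}G_{(mx)_T}$ is affine, hence separated, over $T$, its identity section is a closed immersion, and $K$, being the base change of that section, is a closed subgroup scheme of $\pi_{n*}G_{(nx)_T}$ and therefore affine and of finite presentation over $T$. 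This $K$ acts on $F$ on the right by modifying a trivialization $\phi$ by an automorphism of $(nx)_{T'}\xt G$ restricting to the identity on $(mx)_{T'}$, and the action is simply transitive, since any two trivializations of $i_n^*\cP$ that agree after $i_{m,n}^*$ differ by a unique such automorphism.

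The crucial point — and the only place where smoothness of $G$ enters — is to show that $F \to T$ admits fppf-local sections, and this is where I expect the real work to lie. Given $\psi$, I would choose an fppf cover $T' \to T$ with $T'$ affine over which $i_n^*\cP$ becomes trivial (combining a cover trivializing $i_n^*\cP$ over $(nx)_T$ with the fppf morphism $(nx)_T \to T$, as in Proposition~\ref{nx}), pick a trivialization $\phi_0$, and compare $i_{m,n}^*\phi_0$ with $\psi_{T'}$; these differ by some $h \in G((mx)_{T'})$. Because $(mx)_{T'} \into (nx)_{T'}$ is defined by the nilpotent ideal $(\cI_x^m/\cI_x^n)_{T'}$ and $G$ is smooth, hence formally smooth, over $k$, the restriction $G((nx)_{T'}) \to G((mx)_{T'})$ is surjective; lifting $h$ to $\tilde h \in G((nx)_{T'})$ then gives $\phi_0 \cdot \tilde h \in F(T')$, the desired local section.

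Finally I would conclude as follows. Once $F$ has an fppf-local section over $T'\to T$, simple transitivity gives an isomorphism $F|_{T'} \simeq K|_{T'}$, so by descent of affine morphisms \cite[Theorem 4.33]{FGA-explained} the sheaf $F$ is representable by a scheme affine and of finite presentation over $T$; it is surjective over $T$ because $K \to T$ has the identity section and $T'\to T$ is surjective. As $T \to \Bun_G^{(mx)}$ was arbitrary, this establishes that $\Bun_G^{(nx)} \to \Bun_G^{(mx)}$ is schematic, surjective, affine, and of finite presentation. (Note that the affineness and finite presentation alone can also be obtained by cancellation from Proposition~\ref{nx} applied to both $(nx)$ and $(mx)$, since $\Bun_G^{(mx)} \to \Bun_G$ is affine, hence separated; the genuinely new content, requiring $G$ smooth, is the surjectivity.) I anticipate that the delicate bookkeeping is the reduction of the fibered product to $F$ and the verification that $F$ carries the structure of a $K$-torsor; the substantive mathematical input is simply the surjectivity of $G((nx)_{T'}) \to G((mx)_{T'})$ coming from formal smoothness of $G$.
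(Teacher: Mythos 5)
Your proposal is correct and follows essentially the same route as the paper's proof: the same reduction of the $2$-fibered product over $(\cP,\psi)$ to the sheaf $F$ of trivializations lifting $\psi$, the same kernel $N_{m,n} = \ker\bigl(\pi_{n*}G_{(nx)_T} \to \pi_{m*}G_{(mx)_T}\bigr)$ of Weil restrictions (representable, affine, finitely presented by \cite[7.6, Theorem 4, Proposition 5]{Bosch-neron}), the same simply transitive action making $F$ a pseudo-torsor, and the same use of the infinitesimal lifting property of the smooth group $G$ along the affine nilpotent thickening $(mx)_{T'} \into (nx)_{T'}$ to produce fppf-local sections, followed by descent. Your parenthetical observation that affineness and finite presentation follow from Proposition~\ref{nx} by cancellation, with smoothness needed only for surjectivity, is a small accurate addition not made explicit in the paper.
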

\begin{proof} For an $S$-scheme $T$, let $(\cP, \psi) \in \Bun_G^{(mx)}(T)$. 
By similar considerations as in the proof of Proposition~\ref{nx}, we see that 
the fibered product $\Bun_G^{(nx)} \xt_{\Bun_G^{(mx)}} T$ is isomorphic to the
functor $F : (\Sch_{/T})^\op \to \Set$ sending 
\[ (T' \to T) \mapsto \bigl\{ \phi : (nx)_{T'} \xt G \to  \cP_{T'}
\mid i_{m,n}^*(\phi) = \psi_{T'} \bigr\}. \]
The pullback $i_{m,n}^*$ defines a natural morphism 
\[ \pi_{n*}G_{(nx)_T} \to \pi_{m*}G_{(mx)_T}.\] 
Let $N_{m,n}$ denote the kernel of this morphism. 
By \cite[7.6, Theorem 4, Proposition 5]{Bosch-neron}, the Weil restrictions are representable by
group schemes affine and of finite presentation over $T$. Then $N_{m,n}$ is a finitely
presented closed subgroup of $\pi_{n*}G_{(nx)_T}$ (the identity section of $\pi_{m*}G_{(mx)_T}$
is a finitely presented closed immersion). Hence $N_{m,n}$ is representable by a scheme affine 
and of finite presentation over $T$. 
A $T'$-point of $N_{m,n}$ is an element  
$g \in G((nx)_{T'})$ such that $i_{m,n}^*(g) = 1$. 
From this description, it is evident that composition defines a canonical simply transitive
right $N_{m,n}$-action on $F$. 
We show that $F$ is an $N_{m,n}$-torsor. 

From the proof of Proposition~\ref{nx}, we can choose an fppf covering $(T_i \to T)$
such that $i_n^*\cP$ admits trivializations $\gamma_i : (nx)_{T_i} \xt G \simeq 
i_n^*\cP_{T_i}$. Since $(mx)_T,(nx)_T$ are finite over $T$, we 
may additionally assume that $(mx)_{T_i} \into (nx)_{T_i}$ is a nilpotent thickening of
affine schemes. Since $G$ is smooth, by the infinitesimal lifting property 
for smooth morphisms \cite[2.2, Proposition 6]{Bosch-neron}, we have a surjection 
\[ G( (nx)_{T_i} ) \onto G( (mx)_{T_i}). \]
Therefore there exists automorphisms $\phi_i$ of $(nx)_{T_i} \xt G$ 
such that \[ i_{m,n}^*(\phi_i) = i_{m,n}^*(\gamma_i^{-1})\circ \psi_{T_i}.\]
We have shown that $\gamma_i \circ \phi_i \in F(U_i) \ne \emptyset$. It follows that 
$F$ is an $N_{m,n}$-torsor, and $F \to T$ has the desired properties by descent theory. 
\end{proof}

Once again, our plan for proving Theorem~\ref{compact-open-level}
is to reduce to considering $\GL_r$-bundles. The following lemma makes this possible. 

\begin{lem} \label{level-H-G} 
Let $H \into G$ be a closed subgroup of $G$. There is a finitely presented 
closed immersion $\Bun_H^{(nx)} \into \Bun_G^{(nx)} \xt_{\Bun_G} \Bun_H$. 
\end{lem}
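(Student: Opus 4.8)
The plan is to exhibit the morphism explicitly via extension of structure group and then verify the closed-immersion property after a single base change. First I would define the map $\Bun_H^{(nx)} \to \Bun_G^{(nx)} \xt_{\Bun_G} \Bun_H$: given $(\cQ,\psi) \in \Bun_H^{(nx)}(T)$, extension of structure group along $H \into G$ produces $\twist{\cQ}{G} \in \Bun_G(T)$ by Corollary~\ref{BunG-cog}, and the canonical identification $i_n^*\twist{\cQ}{G} \simeq \twist{i_n^*\cQ}{G}$ turns the $H$-trivialization $\psi$ of $i_n^*\cQ$ into a $G$-trivialization $\twist{\psi}{G}$ of $i_n^*\twist{\cQ}{G}$. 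I send $(\cQ,\psi)$ to the triple $\bigl((\twist{\cQ}{G},\twist{\psi}{G}),\,\cQ,\,\id\bigr)$. Writing $\mathcal{M} := \Bun_G^{(nx)} \xt_{\Bun_G} \Bun_H$, this is a morphism of stacks over $\Bun_H$, where $\Bun_H^{(nx)} \to \Bun_H$ is the forgetful morphism and $\mathcal{M} \to \Bun_H$ is the second projection.

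Since being a finitely presented closed immersion is checked after base change along any scheme mapping to $\mathcal{M}$, and every such point lies over some $\cP_H \in \Bun_H(T)$ with both structure morphisms to $\Bun_H$ schematic, it suffices to base change along $\cP_H : T \to \Bun_H$ and show the resulting map of $T$-schemes $A \to B$ is a finitely presented closed immersion, where $A := \Bun_H^{(nx)} \xt_{\Bun_H} T$ and $B := \mathcal{M} \xt_{\Bun_H} T$. Arguing exactly as in Proposition~\ref{nx}, $A$ is the $\pi_{n*}H_{(nx)_T}$-torsor of $H$-trivializations of $i_n^*\cP_H$, while $B$ is the $\pi_{n*}G_{(nx)_T}$-torsor of $G$-trivializations of $\twist{i_n^*\cP_H}{G}$, and the map is $\psi \mapsto \twist{\psi}{G}$. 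Setting $H' = \pi_{n*}H_{(nx)_T}$ and $G' = \pi_{n*}G_{(nx)_T}$, these are affine, finitely presented $T$-group schemes by \cite[7.6, Theorem 4, Proposition 5]{Bosch-neron}, and I would note that Weil restriction along the finite locally free morphism $\pi_n$ sends the closed immersion $H \into G$ to a closed immersion (an assertion that is fppf-local on $T$, where $(nx)_T$ becomes a finite free cover), so that $H' \into G'$ is a finitely presented closed subgroup and $A \to B$ is equivariant along it.

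To conclude, I would identify $B$ with the contracted product $A \xt^{H'} G'$ via extension of structure group of trivializations, under which $A \to B$ becomes the reduction map $a \mapsto [a,e]$. This is a finitely presented closed immersion by fppf descent: base changing along the fppf surjection $A \xt G' \onto A \xt^{H'} G'$ (the quotient by $H'$) turns $A \to B$ into the map $A \xt H' \into A \xt G'$ induced by $H' \into G'$, which is a finitely presented closed immersion, and both properties descend. The main obstacle I expect is precisely this last step, namely pinning down the isomorphism $B \simeq A \xt^{H'} G'$ compatibly with the two torsor structures so that $A \to B$ is literally the reduction map; once that identification is in hand the descent computation is routine, and the Weil-restriction-preserves-closed-immersions fact is the only other point requiring care.
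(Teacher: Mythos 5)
Your overall route is sound and genuinely different from the paper's. You reduce over $\Bun_H$: the base change $A \to B$ of your morphism along a point $\cP_H : T \to \Bun_H$ identifies $A$ with the $\pi_{n*}H_{(nx)_T}$-torsor of $H$-trivializations of $i_n^*\cP_H$ and $B$ with the $\pi_{n*}G_{(nx)_T}$-torsor of $G$-trivializations of $\twist{(i_n^*\cP_H)}{G}$, and you conclude by exhibiting $A \to B$ as a reduction of structure group along the closed immersion $H' = \pi_{n*}H_{(nx)_T} \into \pi_{n*}G_{(nx)_T} = G'$. The reduction step itself is valid (any $T'' \to \cM$ factors its test through $\Bun_H$, and the fiber of $A \to B$ over the tautological section $T'' \to B$ recovers $\Bun_H^{(nx)} \xt_\cM T''$), and your appeal to Weil restriction preserving finitely presented closed immersions is exactly \cite[7.6, Propositions 2, 5]{Bosch-neron}, the same input the paper uses. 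The paper instead works directly over a $T$-point $(\cP,\phi)$ of the fiber product: using $(nx)_T \xt_{BG} BH \simeq (nx)_T \xt (H \bs G)$ it cuts out a finitely presented closed subscheme $T_0 \subset (nx)_T$ (the locus where the section of $H\bs G$ induced by $\phi$ hits the identity coset, closed since $H\bs G$ is separated and quasi-projective), and then applies $\pi_{n*}$ to $T_0 \into (nx)_T$. So the paper Weil-restricts the closed condition, while you Weil-restrict the groups; your version gives a cleaner structural picture of both sides as torsors, while the paper's avoids any intermediate reduction over $\Bun_H$ and leans on the De--Ga quasi-projectivity of $H\bs G$ already established in \S2.

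There is one genuine flaw in your last step as written: you descend along the quotient map $A \xt G' \onto A \xt^{H'} G'$ and call it an fppf surjection. That map is an $H'$-torsor, so its flatness amounts to flatness of $H'$ over $T$, which you have not established and which is not automatic: Weil restriction along a finite locally free morphism does not preserve flatness in general, and in this paper $H$ is allowed to be non-smooth (non-reduced in positive characteristic), so you cannot invoke preservation of smoothness either. (In characteristic $0$, or when $H$ is smooth, $H'$ is smooth by \cite[7.6, Proposition 5]{Bosch-neron} and your argument goes through verbatim.) The repair is easy and makes the contracted product unnecessary: choose an fppf cover $T' \to T$ trivializing the $H'$-torsor $A$ (such a cover is constructed directly in the proof of Proposition~\ref{nx}, with no flatness of $H'$ needed); the chosen section of $A_{T'}$ induces a compatible trivialization of $B_{T'}$, under which $A_{T'} \to B_{T'}$ becomes the finitely presented closed immersion $H'_{T'} \into G'_{T'}$, and both properties descend along $T' \to T$ since they are fppf local on the base. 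With that substitution your proof is complete.
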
 

\begin{proof}
The change of group morphism
$\twist{(-)}G : BH \to BG$ of Lemma~\ref{BH-BG} induces a $2$-commutative square
\[ \xymatrix{ \Bun_H^{(nx)} \ar[r] \ar[d] & \Bun_G^{(nx)} \ar[d] \\ 
\Bun_H \ar[r] & \Bun_G } \]
The $2$-fibered product $\Bun_G^{(nx)} \xt_{\Bun_G} \Bun_H$ is isomorphic to 
the stack $\cF : (\Sch_{/S})^\op \to \Gpd$ defined by
\[ 
\cF(T) = \bigl\{ (\cP,\phi) \mid \cP \in \Bun_H(T),\, \phi : (nx)_T \xt G \to 
\twist{(i_n^*\cP)}G \bigr\}, 
\]
where a morphism $(\cP,\phi) \to (\cP,\phi')$ is an $H$-equivariant morphism $f: \cP \to \cP'$
satisfying $\twist{(i_n^*f)}G \circ \phi = \phi'$. 
We show that the morphism $\Bun_H^{(nx)} \to \cF$
induced by the $2$-commutative square is a finitely presented closed immersion.
Fix an $S$-scheme $T$ and take $(\cP,\phi) \in \cF(T)$. 
We have a $2$-commutative square 
\[ \xymatrix{ (nx)_T \ar[d]_{i_n^*\cP} \ar@{=}[r] & (nx)_T \ar[d]^{(nx)_T \xt G} \\
BH \ar[r] & BG } \]
via $\phi$. From the proof of Lemma~\ref{BH-BG}, we know that
$(nx)_T \xt_{BG} BH \simeq (nx)_T \xt (H \bs G)$. This implies that
\[ (nx)_T \xt_{\ds  (i_n^*\cP,\phi), (nx)_T \xt_{BG} BH, ((nx)_T \xt H,\id) } (nx)_T \]
is representable by a finitely presented closed subscheme $T_0 \subset (nx)_T$. 
From the definition of the $2$-fibered product, we have that
$\Hom(T', T_0)$ consists of the morphisms $u : T' \to (nx)_T$ such that there exists
a $\psi : T' \xt H \to u^*i_n^*\cP$ with $\twist{(\psi)}G = u^*(\phi)$. 
If such a $\psi$ exists, then it must be unique by definition of the $2$-fibered product
and the fact that $T_0$ is a scheme. From this description, we observe that
\[ \Bun_H^{(nx)} \xt_\cF T \simeq \pi_{n*} T_0. \]
Then \cite[7.6, Propositions 2, 5]{Bosch-neron} imply that $\pi_{n*}T_0$
is a finitely presented closed subscheme of $\pi_{n*}(nx)_T \simeq T$.
\end{proof}

Lemma~\ref{GL_r-bundle} implies that 
$\Bun_{\GL_r}^{(nx)} \simeq \Bun_r^{(nx)}$, where $\Bun_r^{(nx)}(T)$ is the
groupoid of pairs $(\cE,\phi)$ for $\cE$ a locally free $\cO_{X_T}$-module of rank $r$
and an isomorphism \[\phi : \cO_{(nx)_T}^r \simeq i_n^*\cE\] of $\cO_{(nx)_T}$-modules.
We show that the projections $\Bun_r^{(nx)} \to \Bun_r $
give smooth presentations of the open substacks $\cU_n^\Phi \into \Bun_r$ defined in 
\S\ref{section:Bun_G-presentation}. 

\begin{thm}\label{level-scheme} 
Suppose $p:X \to S$ is a flat, strongly projective morphism with geometrically integral fibers 
over a quasi-compact base scheme $S$. For any integer $n$ and polynomial $\Phi \in \bQ[\lambda]$,
there exists an integer $N_0$
such that for all $N \ge N_0$, the $2$-fibered product 
$\Bun_r^{(Nx)} \xt_{\Bun_r} \cU_n^\Phi$ is representable by a finitely presented,
quasi-projective $S$-scheme.
\end{thm}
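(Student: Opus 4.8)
The plan is to present $\cW := \Bun_r^{(Nx)} \xt_{\Bun_r}\cU_n^\Phi$ as a free quotient of a quasi-projective scheme by $\GL_{\Phi(n)}$, and to show that for $N$ large the level structure kills all automorphisms so that this quotient is representable. Write $d = \Phi(n)$. First I would base change the projection $\Bun_r^{(Nx)}\to\Bun_r$ of Proposition~\ref{nx} — which is schematic, affine, smooth, surjective, and of finite presentation since $\GL_r$ is smooth — along $\cU_n^\Phi \into \Bun_r$; this shows $\cW \to \cU_n^\Phi$ has the same properties. As $\cU_n^\Phi$ is quasi-compact and of finite presentation over $S$, so is $\cW$, which gives the quasi-compactness and finite presentation for free; it remains to see that $\cW$ is a quasi-projective $S$-scheme. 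Using the presentation $\cU_n^\Phi \simeq [Y_n^\Phi/\GL_d]$ of Lemma~\ref{Y_n-U_n}, set $\tilde Y := Y_n^\Phi \xt_{\cU_n^\Phi}\cW$. Since $\tilde Y\to Y_n^\Phi$ is affine and $Y_n^\Phi$ is a strongly quasi-projective scheme by Lemma~\ref{Y_n-rep}, $\tilde Y$ is a scheme, and being affine of finite type over the quasi-projective scheme $Y_n^\Phi$ it is quasi-projective over $S$. The $\GL_d$-action lifts to $\tilde Y$, the projection $\tilde Y\to\cW$ is a $\GL_d$-torsor, and $\cW\simeq[\tilde Y/\GL_d]$.

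Now the crux: I claim that for $N$ large the $\GL_d$-action on $\tilde Y$ is free. Since $\tilde Y\to\cW$ is a torsor, the stabilizer of a point lying over $(\cE,\phi)\in\cW(T)$ is the automorphism group $\{f\in\Isom_{\cU_n^\Phi(T)}(\cE,\cE): i_N^*(f)=\id\}$, so freeness is equivalent to the vanishing of these groups for all $T$ and all $(\cE,\phi)$. Any such $f$ satisfies $i_N^*(f-\id)=0$, so $f-\id$ lies in $\Gamma(X_T, \cHom_{\cO_{X_T}}(\cE,\cE)\ot\cI_x^N)$ (tensor the sequence $0\to\cI_x^N\to\cO_{X_T}\to\cO_{(Nx)_T}\to 0$ with the locally free sheaf $\cHom_{\cO_{X_T}}(\cE,\cE)$), and it suffices to prove this group vanishes once $N\ge N_0$. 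Reducing to $S$ affine noetherian by Remark~\ref{noetherian-base} and working fppf-locally on $T$ — operations preserving the vanishing — I would reduce to the universal bundle $\cE$ on $X_{Y_n^\Phi}$ and show $p_*(\cHom(\cE,\cE)\ot\cI_x^N)=0$ compatibly with base change. By cohomology and base change this follows once $H^0(X_t,\cHom(\cE_t,\cE_t)\ot\cI_{x,t}^N)=0$ on every fibre; on the integral projective fibre $X_t$ a nonzero global section of the locally free sheaf $\cHom(\cE_t,\cE_t)$ cannot vanish to all orders at $x_t$ (the completion of the local ring injects and the sheaf is torsion-free), so the decreasing chain $H^0(X_t,\cHom(\cE_t,\cE_t)\ot\cI_{x,t}^N)$ of subspaces of the finite-dimensional $H^0(X_t,\cHom(\cE_t,\cE_t))$ stabilizes at $0$. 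Hence each point lies in the open locus $U_N=\{t: H^0(X_t,\cHom(\cE_t,\cE_t)\ot\cI_{x,t}^N)=0\}$ for some $N$; these loci are open by semicontinuity and increasing in $N$, and since $Y_n^\Phi$ is quasi-compact there is a uniform $N_0$ with $U_{N_0}=Y_n^\Phi$. This is the main obstacle, being where boundedness of $\cU_n^\Phi$ (through quasi-compactness of $Y_n^\Phi$) is essential.

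Finally, for $N\ge N_0$ the free action makes $\cW=[\tilde Y/\GL_d]$ an algebraic space. To upgrade it to a quasi-projective scheme I would use the $\GL_d$-equivariant very ample sheaf $\cM$ on $Y_n^\Phi$ of Lemma~\ref{Y_n-equivariant-line-bundle}: its pullback to $\tilde Y$ is $\GL_d$-equivariant and, being the pullback of a relatively ample sheaf along the affine morphism $\tilde Y\to Y_n^\Phi$, is relatively ample over $S$. Equivariant sheaves descend along the torsor $\tilde Y\to\cW$, so $\cM$ descends to a line bundle on $\cW$ that is relatively ample over $S$ by fppf descent of relative ampleness; an algebraic space with a relatively ample line bundle is a scheme, quasi-projective over $S$, which proves the theorem. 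Alternatively, one may invoke geometric invariant theory for the reductive group $\GL_d$ acting freely on the quasi-projective scheme $\tilde Y$ with the ample linearization $\cM$, the geometric quotient then representing $\cW$; over a general base this requires a relative form of GIT, which is a point one should check carefully.
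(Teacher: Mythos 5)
Your reduction of the problem to freeness of the $\GL_{\Phi(n)}$-action, and the fiberwise argument producing a uniform $N_0$ (Krull intersection on the geometrically integral fibers, semicontinuity, quasi-compactness of $Y_n^\Phi$ carrying the universal bundle), are sound and run closely parallel to the paper's Lemmas~\ref{section-res-field}, \ref{section-res} and \ref{section-res-family}. The genuine gap is the final step, where you pass from the algebraic space $\cW=[\tilde Y/\GL_{\Phi(n)}]$ to a quasi-projective scheme. The claim that the equivariant relatively ample sheaf on $\tilde Y$ ``descends to a line bundle on $\cW$ that is relatively ample over $S$ by fppf descent of relative ampleness'' is not a valid use of descent: the descent statement \cite[VIII, Proposition 7.8]{SGA1} (and its use in Lemma~\ref{cos-representable}) concerns a faithfully flat base change $S' \to S$, whereas $\tilde Y \to \cW$ is an fppf morphism of total spaces over the \emph{same} base, a torsor with positive-dimensional fibers. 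Moreover the statement is false in that generality: for the $\bG_m$-torsor $\bA^2\setminus\{0\} \to \bP^1$, every $\bG_m$-linearized line bundle has trivial, hence ample, underlying bundle (the total space is quasi-affine), yet the descended bundles are all the $\cO(m)$, most of them not ample. Ampleness upstairs cannot see the linearization, while the descended bundle depends on it, so no such principle can exist. Your fallback, relative GIT, is not a footnote but precisely the missing content: even for a \emph{free} action the semistable locus of a given linearization can be empty (same example, with the linearization descending to $\cO(-1)$), so you would have to prove every point of $\tilde Y$ is stable for the specific linearization of Lemma~\ref{Y_n-equivariant-line-bundle}, over an arbitrary quasi-compact base. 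As written, your argument establishes that $\cW$ is a finitely presented algebraic space, but not the theorem.

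It is instructive to see how the paper avoids this. There, the role of large $N$ is not to kill automorphisms directly but to make the dual unit morphism \eqref{eqn-dual-unit} surjective (Lemma~\ref{section-res-family}, proved by the same stabilization argument you use). A level structure $\phi$ then induces a canonical surjection $q : \cM_T \onto (p_{T*}(\cE(n)))^{\vee}$ from the \emph{fixed} locally free sheaf $\cM = \bigl(\pi_{N*}i_N^*(\cO_X^r(n))\bigr)^{\!\vee}$, i.e., a point of $\Grass(\cM,\Phi(n))$. This does two things at once: it rigidifies the groupoid (the uniqueness of $f$ in \eqref{eqn:q-phi}, since $q$ is surjective and $\cE(n)$ is relatively generated by global sections --- this subsumes your injectivity of the unit on $\cHom_{\cO_{X_T}}(\cE,\cE)$), and it supplies exactly the projective embedding your approach lacks: the paper realizes $\Bun_r^{(Nx)} \xt_{\Bun_r} \cU_n^\Phi$ as a finitely presented locally closed subfunctor of $F_0 = \cU_n^\Phi \xt_{B\GL_{\Phi(n)} \xt S} \Grass(\cM,\Phi(n))$, which is quasi-projective because $\cU_n^\Phi \to B\GL_{\Phi(n)} \xt S$ is schematic and quasi-projective --- the only place Lemma~\ref{Y_n-equivariant-line-bundle} is needed, and there via Lemma~\ref{cos-representable}, i.e., descent of ample sheaves along covers of the base, which is legitimate. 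To salvage your outline, replace the torsor-descent (or GIT) step by this Grassmannian factorization.
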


Given Theorem~\ref{level-scheme}, let us deduce Theorem~\ref{compact-open-level}.

\begin{proof}[Proof of Theorem~\ref{compact-open-level}]
Embed $G$ into some $\GL_r$.
Take an open covering $(S_i \subset S)_{i\in I}$
by affine subschemes such that the restrictions $X_{S_i} \to S_i$ are strongly projective. 
Let $J' = I \xt \bZ \xt \bQ[\lambda]$, and for $j = (i,n,\Phi) \in J'$, denote $\cU_j = 
\cU_n^\Phi \xt_S S_i$. 
Theorem~\ref{level-scheme} implies that for any $j \in J'$, there exists
an integer $N_{j,0}$ such that for all $N \ge N_{j,0}$, the $2$-fibered product 
$\Bun_r^{(Nx)} \xt_{\Bun_r} \cU_j$
is representable by a scheme. Let $\wtilde \cU_j = 
\Bun_G \xt_{\Bun_r} \cU_j$. 
By quasi-compactness of $\cV$, there exists a finite subset $J \subset J'$ such that 
\[ \cV \subset \bigcup_{j \in J} \wtilde \cU_j  \]
is an open immersion \cite[Lemmas \href{http://math.columbia.edu/algebraic_geometry/stacks-git/locate.php?tag=05UQ}{05UQ}, \href{http://math.columbia.edu/algebraic_geometry/stacks-git/locate.php?tag=05UR}{05UR}]{stacks-project}.
Letting $N_0$ equal the maximum of the $N_{j,0}$ over all $j \in J$, we have that
for all $N \ge N_0,\, j \in J$,
the $2$-fibered product $\Bun_r^{(Nx)} \xt_{\Bun_r} \cU_j$ is representable by an 
scheme. Corollary~\ref{BunG-cog} implies that $\Bun_G \to \Bun_r$ is schematic, 
and Lemma~\ref{level-H-G} shows that 
$\Bun_G^{(Nx)} \to \Bun_r^{(Nx)} \xt_{\Bun_r} \Bun_G$ is a closed 
immersion. Therefore by base change we deduce that 
\[ \Bun_G^{(Nx)} \xt_{\Bun_G} \wtilde \cU_j \] 
is representable by a scheme. 
Now it follows from \cite[Lemma \href{http://math.columbia.edu/algebraic_geometry/stacks-git/locate.php?tag=05WF}{05WF}]{stacks-project} that $\Bun_G^{(Nx)} \xt_{\Bun_G} \cV$ 
is representable by an $S$-scheme. 

From Theorem~\ref{BunG-alg} we know that $\cV$ is a quasi-compact algebraic stack
locally of finite presentation over $S$. 
Proposition~\ref{nx} implies that
$\Bun_G^{(Nx)} \to \Bun_G$ is schematic and finitely presented, so we deduce that
$\Bun_G^{(Nx)}\xt_{\Bun_G} \cV$ is represented by a quasi-compact scheme locally of 
finite presentation over $S$.
\end{proof}

The rest of this section is devoted to proving Theorem~\ref{level-scheme}.

\begin{lem}\label{section-res-field} 
Suppose $S$ is the spectrum of a field $k'$, and $X$ is integral and proper. 
For a locally free $\cO_X$-module $\cF$ of finite rank, there exists an integer $N_0$ such that
for all integers $N \ge N_0$, the unit morphism 
\[ \eta_N: \Gamma(X,\cF) \to \Gamma(X, i_{N*} i_N^* \cF) \simeq 
\Gamma(X, \cF \ot_{\cO_X} \cO_X/\cI_x^N) \]
is injective. \end{lem}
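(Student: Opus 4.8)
The plan is to identify $\ker\eta_N$ explicitly and then show it vanishes for large $N$. Since $i_N : (Nx) \into X$ is the closed immersion with ideal sheaf $\cI_x^N$, the given isomorphism $i_{N*}i_N^*\cF \simeq \cF \ot_{\cO_X}\cO_X/\cI_x^N = \cF/\cI_x^N\cF$ shows that $\eta_N$ is induced by the quotient map $\cF \onto \cF/\cI_x^N\cF$. Applying the left-exact functor $\Gamma(X,-)$ to the short exact sequence
\[ 0 \to \cI_x^N\cF \to \cF \to \cF/\cI_x^N\cF \to 0, \]
and regarding $\cI_x^N\cF$ as a subsheaf of $\cF$, I would conclude that $\ker\eta_N = \Gamma(X,\cI_x^N\cF)$. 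Thus proving $\eta_N$ injective is equivalent to proving $\Gamma(X,\cI_x^N\cF)=0$, and as $N$ grows these form a decreasing chain of $k'$-subspaces of $\Gamma(X,\cF)$.

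Next I would reduce to computing the intersection of this chain. Because $p:X\to\spec k'$ is proper and $\cF$ is coherent, $\Gamma(X,\cF)$ is a finite-dimensional $k'$-vector space by \cite[Th\'eor\`eme 3.2.1]{EGA3a}. The dimensions of a decreasing chain of subspaces are non-increasing, hence eventually constant, and once they are constant the nested subspaces must coincide; so the chain stabilizes at some index $N_0$, and its stable value equals the full intersection $\bigcap_N \Gamma(X,\cI_x^N\cF)$. It therefore suffices to show this intersection is zero, for then $\Gamma(X,\cI_x^N\cF)=0$ for all $N\ge N_0$, which is exactly the assertion.

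The heart of the argument, and the \emph{main obstacle}, is the vanishing $\bigcap_N \Gamma(X,\cI_x^N\cF)=0$. Away from $x$ the ideal $\cI_x$ is the unit ideal, so the subsheaf $\cI_x^N\cF$ agrees with $\cF$ outside $x$; consequently a global section $s$ of $\cF$ lies in $\Gamma(X,\cI_x^N\cF)$ if and only if its germ at $x$ lies in $\fm_x^N\cF_x$, where $\cF_x$ denotes the stalk of $\cF$ at $x$ as an $\cO_{X,x}$-module and $\fm_x=(\cI_x)_x$. A section in the intersection thus has germ in $\bigcap_N \fm_x^N\cF_x$. Since $X$ is of finite type over a field it is Noetherian, so $\cO_{X,x}$ is a Noetherian local ring and $\cF_x$ is a finite (indeed free) module over it; Krull's intersection theorem then gives $\bigcap_N \fm_x^N\cF_x=0$, so the germ at $x$ is zero. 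Finally, because $X$ is integral and $\cF$ locally free, $\cF_x$ is free over the domain $\cO_{X,x}$ and hence injects into the stalk at the generic point, while $\Gamma(X,\cF)$ injects into that generic stalk by torsion-freeness; so the restriction $\Gamma(X,\cF)\to\cF_x$ is injective, and vanishing of the germ at $x$ forces $s=0$. This proves the intersection is zero and completes the plan.
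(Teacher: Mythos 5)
Your proposal is correct and follows essentially the same route as the paper: identify $\ker\eta_N = \Gamma(X,\cI_x^N\cF)$ via the short exact sequence, stabilize the descending chain using finite-dimensionality from properness, and kill the intersection using Krull's intersection theorem at the stalk $\cF_x$ together with injectivity of $\Gamma(X,\cF)\to\cF_x$ from integrality. The only cosmetic differences are that the paper phrases Krull's theorem as injectivity of $\cF_x \to \what{\cF}_x$ into the completion rather than as $\bigcap_N \fm_x^N\cF_x = 0$, and that you spell out the justification (via the generic stalk) for the injectivity $\Gamma(X,\cF)\into\cF_x$ that the paper asserts directly.
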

\begin{proof} Let $N$ be an arbitrary positive integer.
Since $\cF$ is $X$-flat, we have a short exact sequence \[ 0 \to \cF \ot_{\cO_X} \cI_x^N 
\to \cF \to \cF \ot_{\cO_X} \cO_X/\cI^N_x \to 0.\] Thus the kernel of $\eta_N$ is 
$\Gamma(X,\cF \ot \cI_x^N)$. Since $X$ is proper,  
$\Gamma(X,\cF \ot \cI_x^N)$ is a finite dimensional $k'$-vector space for each $N$,
and we have a descending chain 
\[ \cdots \supset \Gamma(X,\cF \ot \cI_x^N) \supset \Gamma(X, \cF \ot \cI_x^{N+1}) \supset \cdots
\]
and this chain must stabilize. Suppose the chain does not stabilize to $0$. 
Then there is a nonzero global section $f \in \Gamma(X,\cF)$ lying in $\Gamma(X,\cF \ot \cI_x^N)$
for all $N$. Thus the image of $f$ in \[ \Gamma(X,\cF) \to \cF_x \to \what{\cF}_x \]
is $0$. Since $X$ is assume integral, $\Gamma(X,\cF) \into \cF_x$ is injective.
By Krull's intersection theorem \cite[11.D, Corollary 3]{Matsumura}, the morphism 
$\cF_x \into \what\cF_x$ is injective. Therefore $f = 0$, a contradiction.
We conclude that there exists $N_0$ such that for all $N \ge N_0$ for some $N_0$, 
the kernel $\ker \eta_N = \Gamma(X,\cF \ot \cI^N_x) = 0$.
\end{proof}

\begin{lem}\label{section-res} Suppose $p : X \to S$ is a flat, strongly projective morphism
with geometrically integral fibers. 
For a quasi-compact $S$-scheme $T$, let $\cE \in \cU_n(T)$. Then there exists an integer 
$N_0$ such that for all integers $N \ge N_0$, the dual of the unit morphism 
\begin{equation}\label{eqn-dual-unit} 
\bigl(p_{T*} i_{N*}i_N^*(\cE(n)) \bigr)^{\!\vee} \to \bigl(p_{T*}(\cE(n)) \bigr)^{\!\vee}
\end{equation}
is a surjective morphism of locally free $\cO_T$-modules. 
\end{lem}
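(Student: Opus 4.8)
The plan is to reduce the statement to the fibrewise injectivity of the \emph{undualized} unit morphism and then to promote the pointwise result of Lemma~\ref{section-res-field} to a uniform bound using quasi-compactness of $T$. First I would record that both sheaves appearing in \eqref{eqn-dual-unit} are locally free of finite rank with formation commuting with base change. For the target, note that $p_{T*} i_{N*} = \pi_{N*}$, where $\pi_N : (Nx)_T \to T$ is finite and locally free by our infinitesimal flatness hypothesis, and $i_N^*(\cE(n))$ is locally free on $(Nx)_T$; hence $\pi_{N*}i_N^*(\cE(n))$ is locally free and its formation commutes with arbitrary base change. For the source, $p_{T*}(\cE(n))$ is locally free of finite rank by Remark~\ref{direct-image-free}. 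Writing $u_N : p_{T*}(\cE(n)) \to \pi_{N*}i_N^*(\cE(n))$ for the pushforward of the unit $\cE(n) \to i_{N*}i_N^*(\cE(n))$, the morphism \eqref{eqn-dual-unit} is $u_N^\vee$. Since source and target are locally free, $\coker(u_N^\vee)$ is coherent and, by right-exactness of $\ot\kappa(t)$ together with Nakayama's lemma, its support is the closed set of points $t \in T$ at which $u_N^\vee \ot \kappa(t) = (u_N \ot \kappa(t))^\vee$ fails to be surjective, i.e.\ at which $u_N \ot \kappa(t)$ fails to be injective. Thus $u_N^\vee$ is surjective if and only if the open complement $U_N \subset T$ of this support equals $T$.

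Next I would identify the fibre map $u_N \ot \kappa(t)$ with the morphism $\eta_N$ of Lemma~\ref{section-res-field}. By cohomological flatness of $\cE(n)$ (Proposition~\ref{U_n-functor}) the source specializes to $\Gamma(X_t, \cE_t(n))$, while base change for the finite locally free morphism $\pi_N$ identifies the target fibre with $\Gamma\bigl((Nx)_t, i_N^*(\cE_t(n))\bigr) \simeq \Gamma\bigl(X_t, \cE_t(n) \ot_{\cO_{X_t}} \cO_{X_t}/\cI_{x(t)}^N\bigr)$, the ideal $\cI_x$ restricting to $\cI_{x(t)}$ on the fibre. By naturality of the unit of adjunction, $u_N \ot \kappa(t)$ is exactly the map $\eta_N$ attached to the locally free sheaf $\cF = \cE_t(n)$ on the fibre $X_t$. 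Since $X \to S$ is strongly projective with geometrically integral fibres, $X_t$ is integral and proper over $\kappa(t)$, so Lemma~\ref{section-res-field} applies: for each $t$ there is an integer $N_0(t)$ with $\eta_N$ injective, hence $t \in U_N$, for all $N \ge N_0(t)$. Because $\ker(\eta_{N+1}) \subset \ker(\eta_N)$ the loci are nested, $U_N \subset U_{N+1}$, and they cover $T$.

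The remaining and central point is to extract a single $N_0$ working for all $t$ at once. The $U_N$ form an increasing family of open subsets of $T$ whose union is $T$; since $T$ is quasi-compact, finitely many of them cover $T$, and by nestedness a single $U_{N_0}$ already equals $T$. Then for every $N \ge N_0$ we have $U_N = T$, so $u_N \ot \kappa(t)$ is injective for all $t$, $\coker(u_N^\vee)$ has empty support, and $u_N^\vee$ is the desired surjection of locally free $\cO_T$-modules. The main obstacle is precisely this passage from the pointwise Lemma~\ref{section-res-field} to a uniform bound; it is resolved by phrasing injectivity as an open, monotone-in-$N$ condition and invoking quasi-compactness of $T$, which is exactly why the hypothesis that $T$ be quasi-compact enters.
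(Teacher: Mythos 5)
Your proposal is correct and follows essentially the same route as the paper's proof: identify both sheaves as locally free with formation commuting with base change (via Remark~\ref{direct-image-free}, cohomological flatness from Proposition~\ref{U_n-functor}, and the finite locally free $\pi_N$), recognize the fibre of the unit map as the $\eta_N$ of Lemma~\ref{section-res-field}, and combine fibrewise injectivity with Nakayama-type openness and quasi-compactness of $T$. Your only departure is organizational --- you work with the nested global open loci $U_N \subset U_{N+1}$ indexed by $N$, making explicit the monotonicity $\ker(\eta_{N+1}) \subset \ker(\eta_N)$ that the paper leaves implicit when it instead covers $T$ by finitely many pointwise neighborhoods $U_t$ and takes the maximum of the $N_t$.
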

\begin{proof}
Let $\cF = \cE(n)$. From Proposition~\ref{U_n-functor}, we know that $p_{T*}\cF$ is
flat, and $\cF$ is cohomologically flat over $T$ in all degrees. Let $N$ be an 
arbitrary positive integer. By the projection formula,
$i_{N*}i_N^* \cF \simeq \cF \ot_{\cO_{X_T}} i_{N*}\cO_{(Nx)_T}$. Since $\pi_N : (Nx)_T 
\to T$ is finite and locally free, $i_{N*} \cO_{(Nx)_T}$ is $T$-flat. We deduce that
$i_{N*}i_N^*\cF$ is $T$-flat because $\cF$ is locally free on $X_T$. Additionally, 
$p_{T*} i_{N*} i_N^* \cF = \pi_{N*} (i_N^*\cF)$ is locally free on $T$ because 
$i_N^*\cF$ and $\pi_N$ are locally free. By the Leray spectral sequence, 
we have quasi-isomorphisms 
\[ Rp_{T*}i_{N*}i_N^*\cF \simeq R\pi_{N*}i_N^*\cF \simeq \pi_{N*}i_N^*\cF \]
in the derived category $D(T)$ of $\cO_T$-modules, since $\pi_N$ is affine. 
Now Lemma~\ref{cohom-flat} implies 
$i_{N*}i_N^* \cF$ is cohomologically flat over $T$ in all degrees. 
Remark~\ref{direct-image-free} implies $p_{T*}\cF$ is locally free, so 
$p_{T*} \cF \to p_{T*} i_{N*} i_N^* \cF$ is a morphism of locally free $\cO_T$-modules. 
Take a point $t \in T$. By Lemma~\ref{section-res-field}, there exists an integer
$N_t$ such that for all $N \ge N_t$, the unit morphism $\Gamma(X_t, \cF_t) \to 
\Gamma(X_t, i_{N*}i_N^*\cF_t)$ of finite dimensional $\kappa(t)$-vector spaces is injective. 
By cohomological flatness, this implies 
\[ (p_{T*}\cF) \ot_{\cO_T} \kappa(t) \to (p_{T*}i_{N*}i_N^*\cF) \ot_{\cO_T} \kappa(t) \]
is injective (observe that $i_{N*}i_N^*$ commutes with base change).
Since taking duals commutes with base change for locally free modules, we have that
$(p_{T*} i_{N*} i_N^* \cF)^\vee \ot \kappa(t) \to (p_{T*}\cF)^\vee \ot \kappa(t)$ is
surjective. By Nakayama's lemma, there exists an open subscheme $U_t \subset T$ containing $t$
on which the restriction $(p_{T*} i_{N*} i_N^* \cF)^\vee |_{U_t} \to (p_{T*}\cF)^\vee |_{U_t}$ 
is surjective. By quasi-compactness, we can cover $T$ by finitely many $U_t$. Taking
$N_0$ to be the maximum of the corresponding $N_t$ proves the claim.
\end{proof}

\begin{lem}\label{section-res-family} 
Suppose $p:X \to S$ is a flat, strongly projective morphism with geometrically integral fibers 
over a quasi-compact base scheme $S$. 
For any integer $n$ and polynomial $\Phi \in \bQ[\lambda]$, there exists an integer 
$N_0$ such that for all $N \ge N_0$, the morphism \eqref{eqn-dual-unit} is a surjective morphism
of locally free $\cO_T$-modules for any $S$-scheme $T$ and $\cE \in \cU_n^\Phi(T)$. 
\end{lem}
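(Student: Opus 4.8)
The plan is to extract a uniform $N_0$ from the universal family on a presentation of $\cU_n^\Phi$, and then spread surjectivity to all $T$ by base change and fppf descent. Recall from Lemmas~\ref{Y_n-rep} and \ref{Y_n-U_n} that $\cU_n^\Phi \simeq [Y_n^\Phi/\GL_{\Phi(n)}]$, where $Y_n^\Phi$ is strongly quasi-projective over $S$; in particular $Y_n^\Phi \to S$ is finitely presented, hence quasi-compact, and since $S$ is quasi-compact so is $Y_n^\Phi$. The tautological morphism $Y_n^\Phi \to \cU_n^\Phi$ corresponds to a universal object $\cE^{\mathrm{univ}} \in \cU_n^\Phi(Y_n^\Phi)$, i.e.\ a locally free sheaf on $X_{Y_n^\Phi}$ obtained from the universal triple by forgetting the trivialization data.

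First I would apply Lemma~\ref{section-res} to $\cE^{\mathrm{univ}}$ over the quasi-compact scheme $Y_n^\Phi$. This produces an integer $N_0$, depending only on $n$, $\Phi$, and $p : X \to S$, such that for every $N \ge N_0$ the morphism \eqref{eqn-dual-unit} attached to $\cE^{\mathrm{univ}}$ is a surjection of locally free $\cO_{Y_n^\Phi}$-modules. This $N_0$ is the desired uniform bound.

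Next, for an arbitrary $S$-scheme $T$ and $\cE \in \cU_n^\Phi(T)$, the classifying morphism $T \to \cU_n^\Phi$ pulls the $\GL_{\Phi(n)}$-bundle $Y_n^\Phi \to \cU_n^\Phi$ back to a $\GL_{\Phi(n)}$-torsor $q : T' := T \xt_{\cU_n^\Phi} Y_n^\Phi \to T$, which is in particular an fppf covering, together with a morphism $g : T' \to Y_n^\Phi$ and an isomorphism $g^*\cE^{\mathrm{univ}} \simeq \cE_{T'}$. The crucial point is that the formation of \eqref{eqn-dual-unit} commutes with base change: $p_{T*}(\cE(n))$ commutes with base change by cohomological flatness (Proposition~\ref{U_n-functor}); $i_{N*}i_N^*$ commutes with base change and, as shown in the proof of Lemma~\ref{section-res}, $i_{N*}i_N^*(\cE(n))$ is again cohomologically flat, so $p_{T*}i_{N*}i_N^*(\cE(n))$ commutes with base change too; the unit is natural; and duals of locally free modules commute with base change. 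Hence \eqref{eqn-dual-unit} for $\cE_{T'}$ is identified with $g^*$ of \eqref{eqn-dual-unit} for $\cE^{\mathrm{univ}}$, which is surjective for $N \ge N_0$; since $g^*$ preserves surjections of modules, \eqref{eqn-dual-unit} for $\cE_{T'}$ is surjective. Finally, surjectivity of a morphism of $\cO_T$-modules is fppf-local on the base, so descending along the fppf covering $q$ shows that \eqref{eqn-dual-unit} for $\cE$ over $T$ is surjective.

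The main obstacle is to guarantee that a single $N_0$ works uniformly in $T$ and $\cE$, and this rests on two things: that $Y_n^\Phi$ is quasi-compact, so that Lemma~\ref{section-res} supplies one bound for the universal object; and that \eqref{eqn-dual-unit} is stable under base change, so that this bound transfers to every $\cE_{T'}$ and then descends. The quasi-compactness is immediate, while the base-change compatibility is the only point requiring care, though each of its ingredients---cohomological flatness of $\cE(n)$ and of $i_{N*}i_N^*(\cE(n))$, naturality of the unit, and base change for duals---is already available from the earlier results.
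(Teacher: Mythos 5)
Your proposal is correct and matches the paper's proof essentially step for step: the paper likewise applies Lemma~\ref{section-res} to the universal sheaf $\cE_0 \in \cU_n^\Phi(Y_n^\Phi)$ on the quasi-compact presentation $Y_n^\Phi \to \cU_n^\Phi$, pulls back along the smooth surjective (hence faithfully flat) cover $T' = T \xt_{\cU_n^\Phi} Y_n^\Phi \to T$, and uses cohomological flatness of $\cE(n)$ and $i_{N*}i_N^*(\cE(n))$ to identify \eqref{eqn-dual-unit} with its base changes before descending surjectivity by faithful flatness. The only cosmetic difference is that you invoke the torsor structure and fppf locality where the paper simply cites smoothness and faithful flatness of $T' \to T$.
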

\begin{proof}
From Theorem~\ref{BunG-alg} we get a quasi-compact scheme $Y_n^\Phi$ and a schematic,
smooth, surjective morphism $Y_n^\Phi \to \cU_n^\Phi$. By the $2$-Yoneda lemma, 
the previous morphism corresponds to a locally free sheaf $\cE_0 \in \cU_n^\Phi(Y_n^\Phi)$.
By Lemma~\ref{section-res}, there exists $N_0$ such that for all $N \ge N_0$, 
the morphism \eqref{eqn-dual-unit} is surjective for $\cE_0$. 
Now for any $S$-scheme $T$ and $\cE \in \cU_n^\Phi(T)$, let 
\[ \xymatrix{ T' \ar[r] \ar[d] & T \ar[d]^{\cE} \\ Y_n^\Phi \ar[r]^{\cE_0} & \cU_n^\Phi } \]
be $2$-Cartesian, where $T'$ is a scheme and $T' \to T$ is smooth and surjective (and hence
faithfully flat). Thus there is an isomorphism $(\cE_0)_{T'} \simeq \cE_{T'}$. 
From the proof of Lemma~\ref{section-res}, we know that 
$i_{N*}i_N^*\cE_0(n)$ and $\cE_0(n)$ are cohomologically flat over $Y_n^\Phi$, and
the analogous assertion is true for $\cE$ over $T$. Therefore pulling back along $T' \to Y_n^\Phi$
implies that \eqref{eqn-dual-unit} is surjective for $\cE_{T'} \in \cU_n^\Phi(T')$. 
Since $T' \to T$ is faithfully flat, we deduce that \eqref{eqn-dual-unit} is surjective
for $\cE \in \cU_n^\Phi(T)$.
\end{proof}

\begin{proof}[Proof of Theorem~\ref{level-scheme}]  
Let $N_0$ be an integer satisfying the assertions of Lemma~\ref{section-res-family}.
Fix an integer $N \ge N_0$, and consider the coherent sheaf 
$\cM = \bigl(\pi_{N*}i_N^*(\cO_X^r(n)) \bigr)^{\!\vee}$, which is locally free of finite rank
since $i_N^*(\cO_X^r(n))$ and $\pi_N$ are. 
By \cite[Proposition 9.7.7, 9.8.4]{EGA1-2ed}, the Grassmannian functor $\Grass(\cM,\Phi(n))$
is representable by a strongly projective $S$-scheme. 
We have a morphism 
\[ \Grass(\cM,\Phi(n)) \to B\GL_{\Phi(n)}\xt S \] sending a $T$-point
$\cM_T \onto \cF$ to $\cF^\vee$. 
From Lemmas~\ref{Z/G=X} and \ref{Y_n-U_n}, we have an isomorphism $\cU_n^\Phi \simeq
[Y_n^\Phi/\GL_{\Phi(n)}]$ which sends $\cE \in \cU_n^\Phi(T)$ to
$\Isom_T \bigl(\cO_T^{\Phi(n)},p_{T*}(\cE(n))\bigr) \to Y_n^\Phi$, where the $\GL_{\Phi(n)}$-bundle
corresponds to the locally free $\cO_T$-module $p_{T*}(\cE(n))$. 
Therefore the change of space morphism 
$[Y_n^\Phi/\GL_{\Phi(n)}] \to [S/\GL_{\Phi(n)}]$ is isomorphic to the morphism 
$\cU_n^\Phi \to B\GL_{\Phi(n)} \xt S$ sending $\cE \mapsto p_{T*}(\cE(n))$ by 
Remark~\ref{BGxZ}. By Lemmas~\ref{cos-representable} 
and \ref{Y_n-equivariant-line-bundle}, 
we deduce that the morphism $\cU_n^\Phi \to B\GL_{\Phi(n)} \xt S$ is schematic, 
quasi-projective, and finitely presented. We have a Cartesian square
\[ \xymatrix{ F_0 \ar[r] \ar[d] & \Grass(\cM,\Phi(n)) \ar[d] \\
\cU_n^\Phi \ar[r] & B\GL_{\Phi(n)} \xt S } \]
where $F_0$ is a functor $(\Sch_{/S})^\op \to \Set$ sending 
\[ (T \to S) \mapsto \bigl\{ (\cE, q) \mid \cE \in \cU_n^\Phi(T), q : \cM
\onto (p_{T*}(\cE(n)))^\vee \bigr\}/{\sim} \]
where $(\cE,q) \sim (\cE',q')$ if there exists (a necessarily unique) isomorphism
$f : \cE \simeq \cE'$ such that $(p_{T*}(f_n))^\vee \circ q' = q$. Note that
$F_0$ is representable by a scheme finitely presented and quasi-projective over 
$\Grass(\cM,\Phi(n))$. By \cite[Proposition 5.3.4(ii)]{EGA2},
we find that $F_0$ is finitely presented and quasi-projective over $S$.

For an $S$-scheme $T$, let $(\cE,\phi),(\cE',\phi') \in \Bun_r^{(Nx)}(T)$ such that 
$\cE,\cE' \in \cU_n^\Phi(T)$. Suppose we have a morphism $f: (\cE,\phi) \to (\cE',\phi')$
in $\Bun_r^{(Nx)}$. Then Lemma~\ref{section-res} gives us a commutative diagram
\[ \xymatrix { \cM_T \ar[rr]^-{(\pi_{N*}(\phi_n'^{-1}))^\vee}_-\sim \ar@{=}[d] & & 
\bigl(p_{T*}i_{N*} i_N^*(\cE'(n)) \bigr)^{\!\vee} \ar@{->>}[r] 
\ar[d]^{(p_{T*}i_{N*}i_N^*(f_n) )^\vee} 
& \bigl(p_{T*}(\cE'(n))\bigr)^{\!\vee} \ar[d]^{(p_{T*}(f_n))^\vee}  \\ 
\cM_T \ar[rr]^-{(\pi_{N*}(\phi_n^{-1}))^\vee}_-\sim & & 
\bigl(p_{T*}i_{N*}i_N^*(\cE(n)) \bigr)^{\!\vee} \ar@{->>}[r] & \bigl(p_{T*}(\cE(n))\bigr)^{\!\vee} 
}\]
where the horizontal arrows $q',q$ are surjections. Observe that $f$ gives an equivalence
$(\cE,q) \sim (\cE',q')$ in $F_0(T)$, and consequently $f$ must be unique. 
Therefore $\Bun_r^{(Nx)} \xt_{\Bun_r} \cU_n^\Phi$ is
isomorphic to the functor $F: (\Sch_{/S})^\op \to \Set$ sending 
\[ (T \to S) \mapsto \bigl\{ (\cE,\phi) \mid \cE \in \cU_n^\Phi(T), \phi : \cO^r_{(Nx)_T} 
\simeq i_N^*\cE \bigr\}/{\sim}. \]
By the previous discussion, we have a morphism $F \to F_0$ by sending 
$(\cE,\phi) \mapsto (\cE,q)$. We show that this morphism is a finitely presented,
locally closed immersion, which will then imply that $F$ is representable by
a finitely presented, quasi-projective $S$-scheme.

The claim is Zariski local on $S$, so we reduce to the case where $S$ is noetherian
by Remark~\ref{noetherian-base}. Let $T$ be an $S$-scheme and $(\cE,q) \in F_0(T)$. 
Then $\cR = \pi_{N*}\cO_{(Nx)_T}$ is a coherent $\cO_T$-algebra on $T$, and 
since $\pi_N$ is affine, \cite[Proposition 1.4.3]{EGA2} implies that
\[ \pi_{N*} : \QCoh((Nx)_T) \to \QCoh(T) \] 
is a faithful embedding to the subcategory
of quasi-coherent $\cO_T$-modules with structures of $\cR$-modules, and
morphisms are those of $\cR$-modules. Since $\cE(n)$ is relatively generated by
global sections, the counit $p_{T}^*p_{T*}(\cE(n)) \to \cE(n)$ is surjective. 
Applying $i_N^*$ gives a surjection $\pi_N^* p_{T*}(\cE(n)) \to i_N^*(\cE(n))$. 
As $\pi_{N*}$ is exact on quasi-coherent sheaves, we have a surjection 
\[ \id_\cR \oot \eta :
\cR \ot_{\cO_T} p_{T*}(\cE(n)) \simeq \pi_{N*} \pi_N^* p_{T*} (\cE(n)) \onto 
\pi_{N*} i_N^* (\cE(n)), 
\]
and this is the morphism of $\cR$-modules induced by the unit
$\eta : p_{T*}(\cE(n)) \to p_{T*}i_{N*}i_N^*(\cE(n))$. Now the dual morphism 
$q^\vee : p_{T*}(\cE(n)) \to \cM_T^\vee$ also induces a morphism of $\cR$-modules
\[ 
\id_\cR \oot q^\vee :
\cR \ot_{\cO_T} p_{T*}(\cE(n)) \to \cM_T^\vee \simeq \pi_{N*}i_N^*(\cO_X^r(n)). 
\]
If $(\cE,q)$ is the image of some $(\cE,\phi) \in F(T)$, then the diagram 
\begin{equation} \label{eqn:q-phi} 
\xymatrix{ & \cR \ot p_{T*}(\cE(n)) \ar[ld]_{\id_\cR \oot q^\vee} 
\ar@{->>}[rd]^{\id_\cR \oot \eta} \\ 
\pi_{N*}i_N^*(\cO_X^r(n)) \ar[rr]_\sim^{\pi_{N*}\phi_n} && \pi_{N*} i_N^*(\cE(n)) } 
\end{equation}
is commutative. In this case, $\id_\cR \oot q^\vee$ is surjective. 
Since $\pi_{N*}$ is faithful, \eqref{eqn:q-phi} implies that 
if such a $\phi$ exists, then it is unique. Conversely, if $\id_\cR \oot q^\vee$ is surjective
and a factorization as in \eqref{eqn:q-phi} exists, then such a $\phi$ does exist. 
Indeed, any morphism
\[ \pi_{N*} i_N^*(\cO^r_X(n)) \to \pi_{N*} i_N^*(\cE(n)) \]
making the triangle commute is necessarily a surjective morphism of $\cR$-modules. 
Therefore it is in the image
of $\pi_{N*}$. Tensoring by $i_N^*(\cO(-n))$ gives the desired $\phi$,
which is an isomorphism since it is a surjection of locally free $\cO_{(Nx)_T}$-modules 
of rank $r$. Define the open subscheme $U \subset T$ to be the
complement of the support of $\coker( \id_\cR \oot q^\vee )$. 
By cohomological flatness (see Lemma~\ref{section-res}), 
the morphisms $\eta, q^\vee$ commute with pullback along a morphism $T' \to T$. 
We also have $\cR_{T'} \simeq \pi_{N*} \cO_{(Nx)_{T'}}$ since $\pi_N$ is affine. 
Therefore $\id_\cR \oot \eta,\, \id_\cR \oot q^\vee$ commute with pullback along $T' \to T$.
By Nakayama's lemma and faithful flatness of field extensions, we deduce that 
$\id_{\cR_{T'}} \oot q_{T'}^\vee$ is surjective if and only if $T' \to T$ lands in $U$. 
Let 
\[ \cK = \ker( \id_\cR \oot q^\vee |_U), \]
which is a coherent $\cO_U$-module. By \cite[Lemme 9.7.9.1]{EGA1-2ed}, there exists
a closed subscheme $V \subset U$ with the universal property that a morphism
$u : T' \to U$ factors through $V$ if and only if $u^*(\id_\cR \oot \eta)(\cK_{T'}) = 0$.
Since $\id_\cR \oot q^\vee|_U$ is surjective and $\pi_{N*}i_N^*(\cO^r_X(n))|_U$ is 
$U$-flat, we have $\cK_{T'} \simeq \ker( u^*(\id_\cR \oot q^\vee ) )$. 
Therefore $u$ factors through $V$ if and only if the diagram \eqref{eqn:q-phi} admits
a factorization on $T'$. We conclude that $V$ represents the fibered product 
$F \xt_{F_0} T$. Taking $T = F_0$, which is noetherian, we see that
$F \to F_0$ is a finitely presented immersion.
\end{proof}

\section{Smoothness}\label{section:smoothness}

We have already seen that $\Bun_G$ is locally of finite presentation. We now show
that $\Bun_G$ is smooth over $S$ when $G$ is smooth and $X \to S$ is a relative curve. 

\begin{prop} \label{smoothness} 
Suppose $G$ is smooth over $k$ and 
$p : X \to S$ is a flat, finitely presented, projective morphism of $k$-schemes with 
fibers of dimension $1$. Then $\Bun_G$ is smooth over $S$. 
\end{prop}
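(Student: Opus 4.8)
The plan is to verify smoothness through the infinitesimal lifting criterion. Since $\Bun_G$ is locally of finite presentation over $S$ by Theorem~\ref{BunG-alg}, it is smooth over $S$ as soon as it is formally smooth, i.e.\ (see \cite{LMB}) for every square-zero extension of affine $S$-schemes $T_0 \into T$ with ideal $I$ (so $I^2=0$) the restriction functor $\Bun_G(T) \to \Bun_G(T_0)$ is essentially surjective; there is no further base datum to match, since the $S$-structure on $T$ restricts to the given one on $T_0$. Concretely, one must show that every $G$-bundle $\cP_0$ on $X_{T_0}$ lifts to a $G$-bundle on $X_T$. By the limit techniques used throughout (cf.\ Remark~\ref{noetherian-base}), the question being local on $S$, I may assume $S$ and $T$ noetherian. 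First I would record that, because $X \to S$ is flat, the closed immersion $X_{T_0} \into X_T$ is again a square-zero thickening whose ideal is $p_{T_0}^* I$, a coherent $\cO_{X_{T_0}}$-module.

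Next I would invoke the deformation theory of torsors under the smooth group $G$. Write $\mathfrak g = \mathrm{Lie}(G)$ for the Lie algebra, a finite-dimensional $G$-representation via the adjoint action; the \emph{adjoint bundle} $\twist{\cP_0}{\mathfrak g}$, obtained by the twisting construction of \S\ref{section:quotient-stacks}, is then a locally free $\cO_{X_{T_0}}$-module. Because $G$ is smooth, the deformation theory of the $G$-torsor $\cP_0$ across the square-zero thickening $X_{T_0}\into X_T$ is governed by this adjoint bundle: a lift of $\cP_0$ exists provided an obstruction class in
\[ H^2\bigl(X_{T_0},\, \twist{\cP_0}{\mathfrak g} \ot_{\cO_{X_{T_0}}} p_{T_0}^* I \bigr) \]
vanishes, and the lifts, when they exist, form a torsor under the corresponding $H^1$ (we need only existence). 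This is the nonabelian $H^2$ obstruction of \cite{Giraud}; smoothness of $G$ is exactly what guarantees that the problem is controlled by the adjoint bundle and is unobstructed apart from this class, with no hypothesis of connectedness or reductivity required.

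It then remains to show that this $H^2$ vanishes, and here I would use the hypothesis that the fibers of $X\to S$ have dimension $1$. Abbreviate $\cG = \twist{\cP_0}{\mathfrak g} \ot p_{T_0}^*I$, a coherent sheaf on $X_{T_0}$. Since $T_0$ is affine, the Leray spectral sequence for $p_{T_0} : X_{T_0} \to T_0$ together with the vanishing of higher cohomology of quasi-coherent sheaves on $T_0$ gives $H^2(X_{T_0}, \cG) \simeq \Gamma(T_0, R^2 p_{T_0 *}\cG)$. As $p_{T_0}$ is proper with fibers of dimension $\le 1$, the theorem on formal functions \cite[III, Theorem 11.1]{Hart} expresses the completed stalks of $R^2 p_{T_0 *}\cG$ as inverse limits of the groups $H^2$ of $\cG$ on the infinitesimal neighborhoods of a fiber, all of which have underlying space of dimension $\le 1$; Grothendieck's vanishing theorem \cite[Th\'eor\`eme 3.6.5]{Grothendieck-Tohoku} then kills them. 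Hence $R^2 p_{T_0 *}\cG = 0$, so $H^2(X_{T_0}, \cG) = 0$, the obstruction vanishes, and $\cP_0$ lifts. This establishes formal smoothness, and with local finite presentation it yields smoothness of $\Bun_G$ over $S$.

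The step I expect to be the main obstacle is the second one: cleanly setting up the obstruction theory for torsors under an \emph{arbitrary} smooth affine group scheme $G$ and identifying the obstruction group as the $H^2$ of the adjoint bundle twisted by the ideal. The vanishing itself is then an essentially formal consequence of the one-dimensionality of the fibers. I note that passing to $\GL_r$ does not shortcut this, since the morphism $\Bun_G \to \Bun_{\GL_r}$ of Corollary~\ref{BunG-cog} is only schematic, not smooth; one genuinely needs the intrinsic deformation theory of $G$-bundles controlled by $\twist{\cP_0}{\mathfrak g}$.
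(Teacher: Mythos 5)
Your proposal is correct and follows essentially the same route as the paper: both verify the infinitesimal lifting criterion (after reducing to a noetherian base), identify the obstruction to lifting a $G$-bundle $\cP_0$ across the square-zero thickening $X_{T_0} \into X_T$ as a class in $H^2\bigl(X_{T_0}, \twist{\cP_0}{(\fg)} \ot p_{T_0}^*I\bigr)$, and kill it using the dimension-$1$ fibers. The differences are only expository: where you cite Giraud for the obstruction theory, the paper proves the key statement itself (Lemma~\ref{lift-gerbe}, exhibiting the stack of lifts as a gerbe over $\Tors(\twist{\cP}{\cA})$ on the small \'etale site, together with the computation of the band $\cA \simeq \fg \ot_k \cI$ and the comparison $H^2_{\textrm{\'et}} \simeq H^2_{\textrm{Zar}}$ for quasi-coherent sheaves, a point your write-up silently elides by working in the Zariski topology), and where the paper cites the cohomological vanishing you sketch the standard Leray/formal-functions argument.
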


We remind the reader that, following \cite[D\'efinition 4.14]{LMB}, an algebraic $S$-stack 
$\cX$ is smooth over $S$ if there exists a scheme $U$ smooth over $S$
and a smooth surjective morphism $U \to \cX$. 

\subsection{Infinitesimal lifting criterion} 
We first provide an infinitesimal lifting criterion for smoothness of an algebraic $S$-stack.
The next lemma in fact follows from the proofs of \cite[2.2, Proposition 6]{Bosch-neron}
and \cite[Proposition 4.15]{LMB}, but we reproduce the proof to emphasize that we can impose
a noetherian assumption. 

\begin{lem} \label{stack-smooth-lift} 
Let $S$ be a locally noetherian scheme and $\cX$ an algebraic stack locally of finite type 
over $S$ with a schematic diagonal. 
Suppose that for any affine noetherian scheme $T$ and a closed
subscheme $T_0$ defined by a square zero ideal, given a $2$-commutative square
\[ \xymatrix{ T_0 \ar[r] \ar[d] & \cX \ar[d] \\ T \ar@{-->}[ru] \ar[r] & S } \]
of solid arrows, there exists a lift $T \to \cX$ making the diagram $2$-commutative. 
Then $\cX \to S$ is smooth. 
\end{lem}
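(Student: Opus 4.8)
The plan is to choose a presentation $P : U \to \cX$ (which exists because $\cX$ is algebraic) and to prove that the composite $U \to S$ is smooth; since $P$ is smooth and surjective, this exhibits $\cX$ as smooth over $S$ in the sense of \cite[D\'efinition 4.14]{LMB}. First I would observe that $U \to S$ is locally of finite type: $P$ is locally of finite presentation and $\cX \to S$ is locally of finite type by hypothesis, so the composite is locally of finite type, hence locally of finite presentation because $S$ is locally noetherian. By the infinitesimal criterion for smoothness of morphisms of schemes \cite[Th\'eor\`eme 17.14.2]{EGA4d}, it then remains to verify that $U \to S$ is formally smooth; and since $U \to S$ is locally of finite presentation over a locally noetherian base, a standard limit argument reduces the lifting property to square-zero extensions $T_0 \into T$ of affine noetherian $S$-schemes. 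This reduction is exactly where the noetherian hypothesis on the test scheme is used.

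So I would fix such a square-zero extension $T_0 \into T$ of affine noetherian $S$-schemes together with a morphism $T_0 \to U$ over $S$ to be extended over $T$. Composing with $P$ gives $T_0 \to \cX$, and since $T_0 \to U$ is an $S$-morphism the outer square over $S$ is $2$-commutative, so the hypothesis of the lemma produces a $2$-commutative lift $T \to \cX$ over $S$. Because $\Delta_\cX$ is schematic, the morphism $P$ is schematic: for $\xi : T \to \cX$ the fibre product $U \xt_\cX T \to U \xt_S T$ is a base change of $\Delta_\cX$, and $U \xt_S T$ is a scheme, so $V := U \xt_\cX T$ is representable by a scheme and $V \to T$ is smooth and surjective as a base change of $P$. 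The given map $T_0 \to U$ and the $2$-isomorphism $P|_{T_0} \simeq (T \to \cX)|_{T_0}$ supplied by the lift assemble into a section $s_0 : T_0 \to V$ of $V \to T$, and $V \to U$ recovers the original $T_0 \to U$ on $T_0$.

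Finally, $V \to T$ is smooth, hence formally smooth \cite[2.2, Proposition 6]{Bosch-neron}, so applied to the square-zero extension $T_0 \into T$ the section $s_0$ lifts to a section $s : T \to V$. The composite $T \to V \to U$ is then a morphism over $S$ (because $s$ is a section of $V \to T$ and $V \to U$, $V \to T$ agree after mapping to $\cX$ and hence to $S$) whose restriction to $T_0$ is the original $T_0 \to U$. This proves formal smoothness of $U \to S$, and the lemma follows. I expect the main obstacle to be the $2$-categorical bookkeeping: turning the $2$-commutative lift $T \to \cX$ and the strict $1$-morphism $T_0 \to U$ into an honest section $s_0$ of the \emph{scheme} $V \to T$, and confirming that the resulting $T \to U$ lies over $S$ and restricts on the nose to the prescribed $T_0 \to U$. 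Once $V$ is recognized as a scheme via the schematic diagonal, the remaining lifting is the familiar infinitesimal property of smooth morphisms of schemes.
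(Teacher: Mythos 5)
Your argument is correct, and its core coincides with the paper's proof: both use the hypothesis to produce a lift $T \to \cX$ of $T_0 \to U \to \cX$, use the schematic diagonal to see that $V = U \xt_\cX T$ is a scheme smooth over $T$, assemble the section $s_0 : T_0 \to V$ from the $2$-isomorphism, and lift it using the infinitesimal property of smooth morphisms of schemes \cite[2.2, Proposition 6]{Bosch-neron}, establishing surjectivity of $\Hom_S(T,U) \to \Hom_S(T_0,U)$ for noetherian affine square-zero extensions. Where you genuinely differ is in extracting smoothness of $U \to S$ from this \emph{restricted} lifting property. The paper deliberately avoids quoting the formal-smoothness criterion, precisely because its hypothesis only covers noetherian test schemes: it localizes so that $S = \spec R$ is affine noetherian and $U = \spec B$ is closed in $\bA^n_S$ (noting that the lifting property passes to open subschemes of $U$ since $T_0 \into T$ is a homeomorphism), applies the lifting property to the single noetherian test object $\spec A/I \into \spec A/I^2$ with $A = R[t_1,\dots,t_n]$ to lift $\id_{A/I}$, splits the conormal sequence $0 \to I/I^2 \to \Omega_{A/R} \ot_A B \to \Omega_{B/R} \to 0$ by an explicit derivation computation, and concludes via \cite[II, Th\'eor\`eme 4.10]{SGA1}. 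You instead bridge the gap by noetherian approximation: since $U \to S$ is locally of finite presentation, an arbitrary affine square-zero test is a filtered limit of noetherian ones (write $A$ as a filtered union of finitely generated subalgebras $A_\lambda$ over an affine chart of $S$, with $I \cap A_\lambda$ square zero), the morphism on $T_0$ factors through a stage by \cite[\S 8]{EGA4c}, the noetherian case supplies the lift, and then one invokes that smooth means locally of finite presentation plus formally smooth \cite{EGA4d}. Both routes are sound and buy different things: yours is the more generic reduction and makes transparent that the noetherian restriction costs nothing once $U \to S$ is locally of finite presentation, while the paper's is self-contained (no approximation machinery), a choice it explicitly motivates in the remark preceding the lemma, which stresses that the by-hand proof is reproduced exactly so that the noetherian assumption can be imposed. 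The one point your sketch leaves implicit is the localization needed to run the limit argument: an affine $T$ need not map into an affine open of $S$, so one should first reduce to $S$ and $U$ affine using that smoothness is Zariski-local and that the lifting property is inherited by open subschemes of $U$ (the same observation the paper makes) — routine bookkeeping, not a gap.
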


\begin{proof}
Let $U$ be a scheme locally of finite type over $S$ with a smooth surjective morphism 
to $\cX$. We want to show $U \to S$ is smooth. This is local on $S$, so we may
assume $S = \spec R$ is an affine noetherian scheme. Suppose we have a morphism 
$T_0 \to U$ over $S$. Then the hypothesis allows us to find a lift $T \to \cX$. 
We have a $2$-commutative diagram 
\[ \xymatrix{ T_0 \ar[r] \ar[rd] & U \xt_\cX T \ar[r] \ar[d] & U \ar[d] \\ 
& T \ar[r] & \cX } \]
where $U \xt_\cX T \to T$ is isomorphic to a smooth morphism of scheme. 
By \cite[2.2, Proposition 6]{Bosch-neron}, there exists a section $T \to U \xt_\cX T$. 
This shows that 
\begin{equation} \label{eqn:smooth-lift}
\Hom_S(T,U) \onto \Hom_S(T_0, U) 
\end{equation}
is surjective. Note that \eqref{eqn:smooth-lift} still holds if we replace $U$ by
an open subscheme, since $T_0$ and $T$ have the same underlying topological space. 
Since smoothness is local on the source, we can assume $U = \spec B$ is affine. 
Then $U \to S$ of finite type implies that $U$ is a closed subscheme of $\bA_S^n$. 
Let $A = R[t_1,\dots,t_n]$ and $I \subset A$ the ideal corresponding to $U \into \bA_S^n$.
By \cite[II, Th\'eor\`eme 4.10]{SGA1}, it suffices to show that the canonical sequence
\[
0 \to \cI/\cI^2 \to \Omega_{\bA_S^n/S} \ot \cO_U \to \Omega_{U/S} \to 0 
\]
is locally split exact (we know {\it a priori} that the sequence is right exact).
Note that $A$ is noetherian, and $\cI/\cI^2$ is the coherent sheaf corresponding
to $I/I^2$. Since $A/I^2$ is noetherian, by \eqref{eqn:smooth-lift} the $R$-morphism 
\[ \id : A/I \to A/I = (A/I^2)/(I/I^2) \]
lifts to a morphism $\varphi : A/I \to A/I^2$ of $R$-algebras. This implies that 
the short exact sequence
\[ 0 \to I/I^2 \oarrow\iota A/I^2 \oarrow\pi A/I \to 0 \]
splits since $\pi \circ \varphi = \id_{A/I}$. Let 
$\delta = \id_{A/I^2} - \varphi \circ \pi : A/I^2 \to I/I^2$ be a left inverse to $\iota$.
Since $\delta(a)\delta(b) = 0$ for all $a,b \in A/I^2$, we have 
\[ \delta(ab) = ab - (\varphi \circ \pi)(ab) + (a-(\varphi \circ \pi)(a))(b - 
(\varphi \circ \pi)(b)) = a\delta(b) + b \delta(a) \]
using the fact that $\varphi \circ \pi$ is a morphism of $R$-algebras.
Thus $\delta$ gives an $R$-derivation $A \to I/I^2$, which corresponds to a morphism 
$\Omega_{A/R} \to I/I^2$ of $A$-modules. Since $\delta \circ \iota = \id_{I/I^2}$,
this morphism defines a left inverse $\Omega_{A/R} \ot_A B \to I/I^2$, which
shows that 
\[ 0 \to I/I^2 \to \Omega_{A/R} \ot_A B \to \Omega_{B/R} \to 0 \]
is split exact. We conclude that $U \to S$ is smooth. 
\end{proof}

\subsection{Lifting gerbes} 
Our aim is to prove Proposition~\ref{smoothness} by showing that $\Bun_G$
satisfies the lifting criterion of Lemma~\ref{stack-smooth-lift}. In other words, 
we want to lift certain torsors. 
To discuss when torsors lift, we will need to use the categorical language of a
gerbe over a Picard stack. We refer the reader to \cite[3]{Donagi-DG} or \cite{Giraud} 
for the relevant definitions. 
\smallskip

Let $\cC$ be a subcanonical site and $X$ a terminal object of $\cC$. 
Suppose we have a short exact sequence of sheaves of groups
\[ 1 \to \cA \to \cG \oarrow\pi \cG_0 \to 1 \]
on $\cC$, where $\cA$ is abelian. Then $\cG$ acts on $\cA$ via conjugation, 
which induces a left action of $\cG_0$ on $\cA$, since $\cA$ is abelian. Fix a right 
$\cG_0$-torsor $\cP$ over $X$. Then the twisted sheaf ${_\cP\cA}$ is
still a sheaf of abelian groups because the $\cG_0$-action is induced by conjugation.
We define a stack $\cQ_\cP : \cC \to \Gpd$ by letting
\[ \cQ_\cP(U) = \bigl\{ (\wtilde \cP, \phi) \mid \wtilde \cP \in \Tors(\cG)(U),\, \phi : \cP|_U \to 
\twist{\wtilde \cP}{\cG_0} \bigr\} \]
where $\phi$ is $\cG_0$-equivariant, and a morphism $(\wtilde \cP,\phi) \to ({\wtilde \cP}',\phi')$
in the groupoid is a $\cG$-equivariant morphism $f : \wtilde \cP \to {\wtilde \cP}'$ such that
$\twist{f}{\cG_0} \circ \phi = \phi'$. 

\begin{lem}\label{lift-gerbe}
The stack $\cQ_\cP$ of liftings of $\cP$ to 
$\cG$ is a gerbe over the Picard stack $\Tors(\twist\cP\cA)$.
\end{lem}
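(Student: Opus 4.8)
The plan is to verify directly that $\cQ_\cP$ carries a locally nonempty, simply transitive action of the Picard stack $\Tors(\twist\cP\cA)$; since $\cQ_\cP$ is a stack (the datum $(\wtilde\cP,\phi)$ and the morphisms between such data are imposed by local conditions, and $\Tors(\cG)$ is already a stack), this is precisely the assertion that $\cQ_\cP$ is a gerbe over $\Tors(\twist\cP\cA)$. So I must (i) construct an action $\Tors(\twist\cP\cA) \times \cQ_\cP \to \cQ_\cP$, (ii) show $\cQ_\cP$ is locally nonempty, and (iii) show that after fixing an object $q$ of $\cQ_\cP(U)$ the functor $T \mapsto T\cdot q$ is an equivalence $\Tors(\twist\cP\cA)|_U \xrightarrow{\sim} \cQ_\cP|_U$.

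The conceptual input, which I would establish first, is the identification of two a priori different twists of $\cA$. Because $\cA$ is abelian, the conjugation action of $\cG$ on $\cA$ is trivial on the subgroup $\cA$ itself and hence factors through $\cG_0$; this is the action used to form $\twist\cP\cA$. On the other hand, for any lift $(\wtilde\cP,\phi)$ the inner twist $\twist{\wtilde\cP}\cA$ (conjugation of $\cG$ on its normal subgroup $\cA$, contracted along $\wtilde\cP$) is canonically the sheaf of $\cG$-equivariant automorphisms of $\wtilde\cP$ whose reduction along $\pi$ is the identity automorphism of $\twist{\wtilde\cP}{\cG_0}$. Since the conjugation action factors through $\cG_0$, the isomorphism $\phi : \cP \simeq \twist{\wtilde\cP}{\cG_0}$ induces a canonical isomorphism of abelian sheaves $\twist\cP\cA \simeq \twist{\wtilde\cP}\cA$. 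Thus $\twist\cP\cA$ acts on $\wtilde\cP$ through $\cG$-automorphisms, and I define the action of a $\twist\cP\cA$-torsor $T$ on $(\wtilde\cP,\phi)$ by the contracted product $\wtilde\cP \wedge^{\twist\cP\cA} T$. As this action comes from $\cA \subset \cG$, it disappears after pushing forward along $\pi$, so $\twist{(\wtilde\cP \wedge^{\twist\cP\cA} T)}{\cG_0} \simeq \twist{\wtilde\cP}{\cG_0}$ canonically and $\phi$ transports to a structure on the new lift. Associativity and unitality for the Baer-sum monoidal structure on $\Tors(\twist\cP\cA)$ are the standard formal properties of contracted products.

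For local nonemptiness I would use that a $\cG_0$-torsor is locally trivial on $\cC$: over a covering $\{U_i \to X\}$ with $\cP|_{U_i} \simeq \cG_0|_{U_i}$, the trivial $\cG$-torsor $\cG|_{U_i}$ together with the tautological $\phi$ gives an object of $\cQ_\cP(U_i)$. For simple transitivity I would write down the quasi-inverse to $T \mapsto T\cdot q$: given a second lift $(\wtilde\cP',\phi')$, the sheaf of $\cG$-equivariant isomorphisms $\wtilde\cP \to \wtilde\cP'$ whose reduction along $\pi$ is the fixed isomorphism $\phi' \circ \phi^{-1} : \twist{\wtilde\cP}{\cG_0} \simeq \twist{\wtilde\cP'}{\cG_0}$ is a torsor under $\twist\cP\cA$ by the automorphism computation above, and contracting $\wtilde\cP$ with it recovers $\wtilde\cP'$ compatibly with $\phi'$. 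Checking that these two constructions are mutually inverse, and that the functor induces the expected bijection on isomorphism sheaves, reduces after a local trivialization of $\cP$ and $\wtilde\cP$ to the tautological statement for the split situation $\cA \subset \cG \to \cG_0$, which is a routine computation.

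The main obstacle is the bookkeeping in the second paragraph: pinning down the canonical identification $\twist\cP\cA \simeq \twist{\wtilde\cP}\cA$ together with the resulting action of this abelian sheaf on $\wtilde\cP$, and then verifying that the very same sheaf governs the isomorphisms between two lifts, all compatibly with the monoidal structure. Once these identifications are in place the gerbe axioms follow formally, but keeping the left and right actions and the direction of $\phi$ mutually consistent is where the care is required.
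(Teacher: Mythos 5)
Your proposal is correct, but it proves the lemma by a genuinely different route than the paper. You argue torsor-theoretically and without choosing covers: you first identify $\twist{\wtilde\cP}{\cA}$ with the sheaf of $\cG$-equivariant automorphisms of $\wtilde\cP$ inducing the identity on $\twist{\wtilde\cP}{\cG_0}$, use the fact that conjugation on $\cA$ factors through $\cG_0$ to get the canonical identification $\twist\cP\cA \simeq \twist{\wtilde\cP}\cA$ via $\phi$, and then define the action by the contracted product $\wtilde\cP \wedge^{\twist\cP\cA} \cT$ and simple transitivity by exhibiting $\Isom^{\phi,\phi'}(\wtilde\cP,\wtilde\cP')$ as a $\twist\cP\cA$-torsor with evaluation as quasi-inverse. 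The paper instead trivializes $\cP$, $\wtilde\cP$, $\cT$ on a common refinement and works entirely with descent data: the action is given by the explicit cocycle $(\cG|_{U_i},\, h_{ij}\,c_j a_{ij} c_j^{-1})$, and both fully faithfulness and essential surjectivity of $\cT \mapsto (\wtilde\cP \ot \cT,\phi)$ are verified by direct manipulation of transition functions such as \eqref{eqn:gerbe}. Your approach buys canonicity (no covers or choices of $c_i$ appear, so independence of choices is automatic) and is closer to Giraud's general framework; its cost is exactly the bookkeeping you flag at the end, in particular the verification that the left $\twist{\wtilde\cP'}\cA$- and right $\twist{\wtilde\cP}\cA$-actions on the $\Isom$ bitorsor agree under the identifications induced by $\phi,\phi'$ (they do, precisely because any compatible isomorphism $f$ reduces to $\phi'\circ\phi^{-1}$ and conjugation on $\cA$ factors through $\cG_0$, so the identification induced by $f$ is independent of $f$) — this point should be spelled out rather than left implicit, since it is where an argument of this shape could silently fail if $\cA$ were not central in the relevant sense. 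The paper's cocycle computation buys concreteness: the explicit \v Cech data connect directly to the $H^2$ classification of gerbes invoked immediately afterwards in the proof of Proposition~\ref{smoothness}. Both proofs are complete in outline, and your reduction of the final mutual-inverse checks to the split case $\cA \subset \cG \to \cG_0$ after local trivialization is legitimate, since all the data involved are local and $\cQ_\cP$ and $\Tors(\twist\cP\cA)$ are stacks.
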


\begin{proof}
We first describe the action of $\Tors(\twist\cP\cA)(U)$ on $\cQ_\cP(U)$ for 
$U \in \cC$. Let $\cT,\, (\wtilde \cP,\phi)$ be
objects in the aforementioned categories, respectively. We can pick a covering $(U_i \to U)$ 
trivializing $\cP|_U, \cT$, and $\wtilde \cP$. 
This in particular implies that $\cT|_{U_i} \simeq \cA|_{U_i}$. 
Denote $U_{ij} = U_i \xt_U U_j$ and 
$U_{ijk} = U_i \xt_U U_j \xt_U U_k$ for indices $i,j,k$. 
Let 
\[ (\cG_0|_{U_{ij}}, g_{ij}) \]
be a descent datum for $\cP|_U$, where
$g_{ij} \in \cG_0(U_{ij})$. Then $\twist\cP\cA|_U$ has a descent datum 
$(\cA|_{U_{ij}}, \gamma_{ij})$
with $\gamma_{ij}(a) = g_{ij} a g_{ij}^{-1}$. Now since $\cT$ is a $\twist\cP\cA|_U$-torsor,
the transition morphisms in a descent datum for $\cT$ are $\twist\cP\cA|_U$-equivariant. 
Thus $\cT$ has a descent datum $(\cA|_{U_{ij}}, \psi_{ij})$
where 
\[ \psi_{ij}(a) = \gamma_{ij}(a_{ij} a) = g_{ij} a_{ij} a g_{ij}^{-1} \]
for $a_{ij} \in \cA(U_{ij})$ under the group operation of $\cG$ (we omit restriction symbols). 
Now let $(\cG|_{U_{ij}}, h_{ij})$ be a descent datum for $\wtilde \cP$, where 
$h_{ij} \in \cG(U_{ij})$. The cocycle condition says that $h_{ij} h_{jk} = h_{ik} 
\in \cG(U_{ijk})$. 
Now $\phi$ corresponds to a morphism of descent data 
\[ (\phi_i) : (\cG_0|_{U_i}, g_{ij}) \to (\cG_0|_{U_i}, \pi(h_{ij}) ). \]
By possibly refining our cover further,
we may assume that $\phi_i = \pi(c_i)$ for $c_i \in \cG(U_i)$. 
Set $\wtilde g_{ij} = c_i^{-1} h_{ij} c_j \in \cG(U_{ij})$. Now 
$\wtilde g_{ij} \wtilde g_{jk} =  \wtilde g_{ik}$
and $\pi(\wtilde g_{ij}) = g_{ij}$. Therefore the cocycle condition $\psi_{ij} \circ \psi_{jk}
= \psi_{ik}$ is equivalent to
\begin{equation} \label{eqn:gerbe}
\wtilde g_{ij} a_{ij} \wtilde g_{jk} a_{jk} \wtilde g_{jk}^{-1} \wtilde g_{ij}^{-1} 
= \wtilde g_{ik} a_{ik} \wtilde g_{ik}^{-1}
\iff h_{ij} c_j a_{ij} c_j^{-1} h_{jk} c_k a_{jk} = h_{ik} c_k a_{ik}. 
\end{equation}
Therefore $(\cG|_{U_i}, h_{ij} c_j a_{ij} c_j^{-1})$
defines a descent datum. Note that $c_j a_{ij} c_j^{-1}$ does not depend on our choice of $c_i$. 
Therefore this descent datum defines a canonical $\cG|_U$-torsor $\wtilde \cP \ot \cT$. 
Since $\pi(h_{ij}a'_{ij}) = \pi(h_{ij})$, we see that $\phi$ defines a morphism 
$\cP|_U \to \twist{(\wtilde \cP \ot \cT)}{\cG_0}$. 
\smallskip

Suppose we have a morphism of descent data 
\[ (f_i) : (\cG|_{U_i}, h_{ij}) \to (\cG|_{U_i}, h'_{ij}),\, f_i \in \cG(U_i) \] 
corresponding to a morphism $(\wtilde \cP, \phi) \to 
(\wtilde \cP', \phi')$ in $\cQ_\cP(U)$. 
We refine the cover to assume $\phi_i = \pi(c_i)$ and $\phi'_i = 
\pi(c'_i)$ (notation as in the previous paragraph). Then from the definitions we have
$h'_{ij} f_j = f_i h_{ij}$ and $\pi(f_i c_i) = \pi(c'_i)$. Thus 
\[ h'_{ij} c'_j a_{ij} c_j'^{-1} f_j = h'_{ij} f_j c_j a_{ij} c_j^{-1} = f_i h_{ij} 
c_j a_{ij} c_j^{-1}.\]
Therefore $(f_i)$ defines a morphism $(\wtilde \cP \ot \cT,\phi) \to
(\wtilde \cP' \ot \cT,\phi')$.
\smallskip

Let $(b_i) : (\cA|_{U_i}, \psi_{ij}) \to (\cA|_{U_i}, \psi'_{ij}),\, b_i \in \cA(U_i)$ 
be a morphism of descent data corresponding to a morphism $\cT \to \cT'$ 
in $\Tors(\twist \cP \cA)(U)$. Here $\psi_{ij}, \psi'_{ij}$ are defined with respect to
$a_{ij}, a'_{ij} \in \cA(U_{ij})$, respectively, as described above. 
The compatibility condition $\psi'_{ij}(b_j) = b_i \psi_{ij}(1)$ required for $(b_i)$ to be
a morphism of descent data is equivalent to 
\[ g_{ij} a'_{ij} b_j g_{ij}^{-1} = b_i g_{ij} a_{ij}
g_{ij}^{-1} \iff h_{ij} c_j a'_{ij} c_j^{-1} c_j b_j c_j^{-1} = 
c_i b_i c_i^{-1} h_{ij} c_j a_{ij}c_j^{-1}. \]
Therefore $(c_i b_i c_i^{-1})$ defines a morphism $(\wtilde \cP \ot \cT,\phi) \to
(\wtilde \cP \ot \cT', \phi)$. 
\medskip

We have now defined an action of $\Tors(\twist \cP \cA)$ on $\cQ_\cP$. 
To check that $\cQ_\cP$ is a gerbe over $\Tors(\twist \cP \cA)$, we show that
for a fixed $(\wtilde \cP, \phi) \in \cQ_\cP(U)$, the functor 
\[ \Tors(\twist \cP \cA)(U) \to \cQ_\cP(U) : \cT \mapsto (\wtilde \cP \ot \cT, \phi) \]
is an equivalence of groupoids. We use the notation for descent data from the preceding
paragraphs. First, we show fully faithfulness. 
Let $f_i \in \cG(U_i)$ correspond to a morphism $(\wtilde \cP \ot \cT, \phi) \to 
(\wtilde \cP \ot \cT', \phi)$. The compatibility conditions require that 
$\pi(f_i) = \pi(1)$ and 
\[ h_{ij} c_j a'_{ij} c_j^{-1} f_j = f_i h_{ij} c_j a_{ij} c_j^{-1} \iff 
\psi'_{ij}( c_j^{-1} f_j c_j) = (c_i^{-1} f_i c_i) \psi_{ij}(1). \]
Therefore $b_i = c_i^{-1} f_i c_i \in \cA(U_i)$ corresponds bijectively to a morphism 
$\cT \to \cT'$.
\smallskip

Next, let us show essential surjectivity of the functor. Take $(\wtilde \cP', \phi')
\in \cQ_\cP(U)$. We have $\pi(c_i^{-1} h_{ij} c_j) = \pi(c_i'^{-1} h'_{ij} c'_j) = g_{ij}$. 
Thus there exist unique $a_{ij} \in \cA(U_{ij})$ such that 
\[ c_i'^{-1} h'_{ij} c'_j =  c_i^{-1} h_{ij} c_j a_{ij}. \] 
Since the $h'_{ij}$ satisfy the cocycle condition, \eqref{eqn:gerbe} shows that
the transition morphisms $\psi_{ij}$ associated to $a_{ij}$ satisfy the cocycle condition. 
Therefore $(\cA|_{U_i}, \psi_{ij})$ defines a descent datum, which corresponds to 
a torsor $\cT \in \Tors(\twist \cP \cA)(U)$, and 
$(c'_i c_i^{-1})$ corresponds to a morphism of descent
data for $(\wtilde \cP \ot \cT, \phi) \to (\wtilde \cP', \phi')$. 
Therefore the functor in question is an equivalence, and we conclude that
$\cQ_\cP$ is a gerbe over $\Tors(\twist \cP \cA)$.
\end{proof}

\subsection{Proving smoothness} 

\begin{proof}[Proof of Proposition~\ref{smoothness}]
Smoothness is local on $S$, so we may assume $S$ is noetherian by Remark~\ref{noetherian-base}.
We will prove $\Bun_G \to S$ is smooth by showing that it satisfies the conditions
of Lemma~\ref{stack-smooth-lift}. 
Let $T = \spec A$ be an affine noetherian scheme and $T_0$ a closed subscheme 
defined by a square zero ideal $I \subset A$. Suppose we have 
a $G$-bundle $\cP$ on $X_{T_0}$ corresponding to a morphism $T_0 \to \Bun_G$. 
Since $X_{T_0} \to X_T$ is a homeomorphism of underlying topological spaces, 
\cite[I, Th\'eor\`eme 8.3]{SGA1} implies that base change gives an 
equivalence of categories between small \'etale sites 
\[ (X_T)_{\textrm{\'et}} \to (X_{T_0})_{\textrm{\'et}} : U \mapsto U_0 = U \xt_T T_0. \]
Using this equivalence, we define two sheaves of groups $\cG_0, \cG$ on 
$(X_{T_0})_{\textrm{\'et}}$ by 
\[ \cG_0(U_0) = \Hom_k(U_0,G) \text{ and } \cG(U_0)= \Hom_k(U,G). \]
Since $G$ is smooth, infinitesimal lifting \cite[2, Proposition 6]{Bosch-neron} implies that
$\cG \to \cG_0$ is a surjection of sheaves. Additionally, smoothness of $G$ implies 
that $\cP$ is \'etale locally trivial. Thus $\cP$ is
a $\cG_0$-torsor on $(X_{T_0})_{\textrm{\'et}}$, and a lift of $\cP$ to a $G$-bundle on 
$X_T$ is the same as a $\cG$-torsor inducing $\cP$. This is equivalent to an 
element of the lifting gerbe $\cQ_\cP(X_{T_0})$. 

Consider the short exact sequence of sheaves of groups
\[ 1 \to \cA \to \cG \to \cG_0 \to 1. \]
We give an explicit description of $\cA$. 
Suppose $U = \spec B$ is affine and $U_0 = \spec B/IB$ for an $A$-algebra $B$. 
We will use the notation $k[G] = \Gamma(G,\cO_G)$. 
Let $\vareps : k[G] \to k$ be the morphism of $k$-algebras corresponding to $1 \in G(k)$. Then 
$\cA(U_0)$ consists of the $\varphi \in \Hom_k(k[G],B)$
such that the compositions $k[G] \oarrow\varphi B \to B/IB$ and $k[G] \oarrow{\vareps} k 
\into B/IB$ coincide.
Define the morphism $\delta = \varphi - \vareps : k[G] \to IB$ of
$k$-modules. Observe that for $x,y \in k[G]$, we have $\delta(x)\delta(y)=0$ since $I^2=0$.
Thus 
\[ \varphi(xy) - \varphi(x)\varphi(y) = \delta(xy) - \vareps(x) \delta(y) - 
\vareps(y)\delta(x) =0, \]
which implies that $\varphi$ corresponds bijectively to a derivation
\[ \delta \in \Der_k(k[G], IB) \simeq \Hom_k(\fm_1/\fm_1^2, IB), \]
where $IB$ is a $k[G]$-module via $\vareps$ and $\fm_1 = \ker \vareps$ is the maximal ideal. 
Since $U\to X_T \to T$ is flat, 
$B$ is a $A$-flat. Therefore $IB = I \ot_A B$. Noting that $\fm_1/\fm_1^2 = \fg^\vee$, 
we have 
\[ \cA(U_0) \simeq \Hom_k(\fg^\vee, IB)\simeq 
(\fg \ot_k I) \ot_A B \simeq (\fg \ot_k I) \ot_{A/I} B/IB \]
since $I^2=0$ makes $I$ into an $A/I$-module. Let $\cI$ be the ideal sheaf 
corresponding to $X_{T_0} \into X_T$, which is coherent because we assume $T$ is noetherian. 
We have shown that $\cA$ is the abelian sheaf corresponding to $\fg \ot_k \cI \in \Coh(X_{T_0})$
(see \cite[Lemma \href{http://math.columbia.edu/algebraic_geometry/stacks-git/locate.php?tag=03DT}{03DT}]{stacks-project}). Since $\cG_0$ acts on $\cA$ by conjugation, we see
from the above isomorphisms that $\cG_0$ acts on $\fg \ot_k \cI$ via the 
adjoint representation on $\fg$. Therefore 
\[ \twist \cP \cA \simeq \twist \cP {(\fg)} \ot_{\cO_{X_{T_0}}} \cI, \]
which is a coherent sheaf by quasi-coherent descent \cite[Theorem 4.2.3]{FGA-explained} 
and persistence of finite presentation under fpqc morphisms \cite[Proposition 2.5.2]{EGA4b}.

There is a bijection between equivalence classes of gerbes over $\Tors(\twist \cP \cA)$
and the \'etale cohomology group $H^2( (X_{T_0})_{\textrm{\'et}}, \twist \cP \cA)$ by
\cite[Th\'eor\`eme 3.4.2]{Giraud}. Since $\twist \cP \cA$ is in fact quasi-coherent, 
\cite[VII, Proposition 4.3]{SGA4-2} gives an isomorphism 
\[ H^2( X_{T_0}, \twist \cP \cA ) \simeq H^2( (X_{T_0})_{\textrm{\'et}}, \twist \cP \cA), \]
where the left hand side is cohomology in the Zariski topology. 
By assumption, the fibers of $X \to S$ are of dimension $1$. It follows from 
\cite[Corollaire 4.1.4]{EGA4b} that $X_{T_0} \to T_0$ also has fibers of dimension $1$.
Now since $\twist \cP \cA$ is coherent, \cite[Th\'eoreme 3.6.5]{Grothendieck-Tohoku} and 
\cite[8.5.18]{FGA-explained} imply that $H^2(X_{T_0}, \twist \cP \cA)=0$. 
Therefore by Lemma~\ref{lift-gerbe}, the lifting gerbe $\cQ_\cP$ is neutral,
i.e., $\cQ_\cP(X_{T_0})$ is nonempty. 
\end{proof}

\begin{eg} \label{eg:igusa}
We mention that Proposition~\ref{smoothness} need not be true if $X \to S$ has
fibers of higher dimension. If $S = \spec k$ is the spectrum of an algebraically
closed field of positive characteristic, \cite{Igusa} shows that there exists a 
smooth surface $X$ over $k$ such that $\Pic_{X/k}$ is not reduced, and hence not smooth.
Corollary~\ref{Pic-coarse} gives a smooth surjective morphism
of algebraic stacks $\Bun_1 \to \Pic_{X/S}$. Since the property of smoothness
is local on the source in the smooth topology \cite[Th\'eor\`eme 17.11.1]{EGA4d}, 
we conclude that $\Bun_1$ is not smooth over $S$. 
\end{eg}


\newcommand{\etalchar}[1]{$^{#1}$}
\providecommand{\bysame}{\leavevmode\hbox to3em{\hrulefill}\thinspace}
\providecommand{\MR}{\relax\ifhmode\unskip\space\fi MR }
\providecommand{\MRhref}[2]{%
  \href{http://www.ams.org/mathscinet-getitem?mr=#1}{#2}
}
\providecommand{\href}[2]{#2}

\end{document}